\documentclass[leqno]{article}

\usepackage[frenchb,english]{babel}
\usepackage[utf8]{inputenc}
\usepackage{amsmath}
\usepackage{amssymb}
\usepackage{amsfonts}
\usepackage{enumerate}
\usepackage{vmargin}
\usepackage[all]{xy}
\usepackage{mathrsfs}
\usepackage{mathtools}
\usepackage{lmodern}
\usepackage{slashed}
\usepackage[colorlinks=true,linkcolor=blue,pagebackref=true]{hyperref}%
\setmarginsrb{3cm}{3cm}{3.5cm}{3cm}{0cm}{0cm}{1.5cm}{3cm}
\usepackage{comment}

\usepackage{accents}

\newlength{\dhatheight}

\newcommand{\C}{\mathrm{C}}
\newcommand{\D}{\mathrm{D}}
\newcommand{\E}{\ensuremath{\mathbb{E}}}

\newcommand{\B}{\mathrm{B}} 

\let\L\relax 
\newcommand{\L}{\mathrm{L}}
\let\O\relax 
\newcommand{\J}{\mathcal{J}} 
\newcommand{\O}{\ensuremath{\mathbb{O}}}
\let\P\relax 
\newcommand{\P}{\mathsf{P}} 

\newcommand{\scr}{\mathscr}

\newcommand{\M}{\mathrm{M}}

\let\cal\relax
\newcommand{\cal}{\mathcal}

\newcommand{\R}{\ensuremath{\mathbb{R}}}
\newcommand{\T}{\ensuremath{\mathbb{T}}}

\newcommand{\W}{\mathrm{W}}
\newcommand{\sgn}{\mathrm{sgn}}
 
\newcommand{\Id}{\mathrm{Id}}
\newcommand{\PL}{\mathrm{PL}}
\newcommand{\Asym}{\mathrm{Asym}} 
\newcommand{\la}{\langle}
\newcommand{\ra}{\rangle}

\renewcommand{\leq}{\ensuremath{\leqslant}}
\renewcommand{\geq}{\ensuremath{\geqslant}}
\newcommand{\qed}{\hfill \vrule height6pt  width6pt depth0pt}
\newcommand{\bnorm}[1]{ \big\| #1  \big\|}

\newcommand{\norm}[1]{\left\Vert#1\right\Vert}

\newcommand{\co}{\colon}

\newcommand{\ot}{\otimes}
\newcommand{\ovl}{\overline}
\newcommand{\otvn}{\ovl\ot}

\newcommand{\dec}{\mathrm{dec}}

\newcommand{\cb}{\mathrm{cb}}
\newcommand{\TRO}{\mathrm{TRO}}

\let\i\relax 
\newcommand{\i}{\mathrm{i}}
\let\k\relax 
\newcommand{\k}{\mathrm{k}} 

\newcommand{\ov}{\overset}

\newcommand{\JW}{\mathrm{JW}}
\newcommand{\JC}{\mathrm{JC}}
\newcommand{\JB}{\mathrm{JB}}

\newcommand{\JBW}{\mathrm{JBW}}

\newcommand{\w}{\mathrm{w}} 

\renewcommand{\d}{\mathop{}\mathopen{}\mathrm{d}} 
\newcommand{\e}{\mathrm{e}} 
\renewcommand{\d}{\mathop{}\mathopen{}\mathrm{d}}

\DeclareMathOperator{\Span}{span} 
\DeclareMathOperator{\tr}{Tr} 
\DeclareMathOperator{\Tr}{Tr} 
\let\ker\relax 
\DeclareMathOperator{\ker}{Ker} 
\DeclareMathOperator{\Ran}{Ran} 
\DeclareMathOperator{\dom}{dom} 
\let\Re\relax 
\DeclareMathOperator{\Re}{Re} 
\let\Im\relax 
\DeclareMathOperator{\Im}{Im} 

\newtheorem{thm}{Theorem}[section]
\newtheorem{defi}[thm]{Definition}
\newtheorem{prop}[thm]{Proposition}

\newtheorem{cor}[thm]{Corollary}
\newtheorem{lemma}[thm]{Lemma}

\newtheorem{remark}[thm]{Remark}
\newtheorem{example}[thm]{Example}

\newenvironment{proof}[1][]{\noindent {\it Proof #1} : }{\hbox{~}\qed
\smallskip
}

\usepackage{tocloft}
\numberwithin{equation}{section}
\usepackage[nottoc,notlot,notlof]{tocbibind}

\let\OLDthebibliography\thebibliography
\renewcommand\thebibliography[1]{
  \OLDthebibliography{#1}
  \setlength{\parskip}{0pt}
  \setlength{\itemsep}{0pt plus 0.3ex}
}

\newcommand\reallywidehat[1]{\arraycolsep=0pt\relax%
\begin{array}{c}
\stretchto{
  \scaleto{
    \scalerel*[\widthof{\ensuremath{#1}}]{\kern-.5pt\bigwedge\kern-.5pt}
    {\rule[-\textheight/2]{1ex}{\textheight}} 
  }{\textheight} %
}{0.5ex}\\           
#1\\                 
\rule{-1ex}{0ex}
\end{array}
}

\begin{document}
\selectlanguage{english}
\title{\bfseries{A modular theory of contractive projections on Banach spaces with applications to noncommutative $\L^p$-spaces}}
\date{}
\author{\bfseries{C\'edric Arhancet}}%
\maketitle

\begin{abstract}
We develop a differential-geometric approach to contractive projections on complex Banach spaces based on Levi forms of plurisubharmonic functions. We show that if a projection $P$ contracts a $\mathrm{C}^2$ plurisubharmonic function $\Phi$, then, at any point of its range, the Levi form of $\Phi$ defines a tangent Hilbertian geometry for which $P$ becomes an orthogonal projection. When the range is finite-dimensional and the relevant Levi forms are nondegenerate, comparing the Levi geometries associated with different points leads naturally to positive relative operators and associated one-parameter groups. We apply this framework to tracial noncommutative $\mathrm{L}^4$-spaces. Exploiting the cubic form of the duality mapping, we characterize contractive projections by a Jordan triple orthogonality property and prove that a projection on $\mathrm{L}^4(\mathcal{M})$ is $2$-contractive if and only if it is completely contractive. When the ambient von Neumann algebra is finite-dimensional, we identify the relative operator associated with the left and right tangent forms on the range with a restriction of an inverse square root of a modular operator on a linking von Neumann algebra. This entails that the range of any $2$-contractive projection is completely isometric to a rectangular noncommutative $\mathrm{L}^4$-space associated with a $\mathrm{W}^*$-ternary ring of operators. As a consequence, every such projection is contractively decomposable. Finally, for every $1 < p < \infty$ with $p \neq 2$, we prove that every finite-rank $2$-contractive projection on a tracial noncommutative $\mathrm{L}^p$-space is completely contractive. For $p > 2$, we obtain a weighted ternary description of its range in terms of a finite-dimensional $\mathrm{W}^*$-$\mathrm{TRO}$.
\end{abstract}


\makeatletter
 \renewcommand{\@makefntext}[1]{#1}
 \makeatother
 \footnotetext{
 2020 {\it Mathematics subject classification:}
 46L51, 17C65, 47B01, 46L07.
\\
{\it Key words}: contractive projections, Banach spaces, plurisubharmonic functions, noncommutative $\L^p$-spaces, Levi forms, duality mappings, ternary rings of operators, complete contractivity.}

{
  \hypersetup{linkcolor=blue}
 \tableofcontents
}

\section{Introduction}
\label{sec:Introduction}

The study of contractive projections and contractively complemented subspaces is one of the classical themes of the isometric theory of Banach spaces. Given a Banach space $X$ and a closed subspace $Y\subset X$, the basic problem is to decide whether there exists a projection $P\co X\to X$ such that
\begin{equation}
P^2=P,\qquad \Ran P=Y,\qquad \norm{P}_{X \to X}
\leq 1,
\end{equation}
and, when such a projection exists, to understand both the intrinsic geometry of $Y$ and the structure of $P$. These two questions are closely related. Indeed, on a smooth Banach space, a subspace can be the range of at most one projection of norm one, see \cite{CoS70}. Since noncommutative $\L^p$-spaces are smooth for $1<p<\infty$, the classification of their contractively complemented subspaces is, in this range, essentially equivalent to the classification of their contractive projections.

For classical $\L^p$-spaces, this problem admits a rigid answer. By the work of Ando \cite{And66}, completed in greater generality by Bernau and Lacey \cite{BeL74}, a contractively complemented subspace of a classical $\L^p$-space, for $1<p<\infty$ and $p\neq2$, is itself isometric to an $\L^p$-space. Moreover, the corresponding projections are described in terms of weighted conditional expectations. Thus, in the commutative setting, norm-one complementation essentially preserves the category of $\L^p$-spaces.

The noncommutative situation is substantially richer. The first obstruction already occurs for Schatten spaces. If $\sigma\co S^p\to S^p$ denotes the transpose map, then
\begin{equation}
P=\frac12(\Id+\sigma)
\end{equation}
is a contractive projection whose range is the space of symmetric matrices. This range is not, in general, a Schatten space. More generally, the seminal work of Arazy and Friedman \cite{ArF92} gives a complete description of contractively complemented subspaces of Schatten spaces in terms of the $\L^p$-versions of Cartan factors of types I--IV. In particular, rectangular operator spaces, symmetric and antisymmetric operator spaces, and spin factors all occur naturally. The same work explicitly raises the problem of describing the contractively complemented subspaces of general noncommutative $\L^p$-spaces.

This problem remains one of the basic structural questions concerning noncommutative $\L^p$-spaces. Its difficulty is not merely technical. The examples above indicate that the range of a contractive projection need not carry an associative structure and that Jordan and ternary structures are unavoidable. At the endpoint $p=1$, this phenomenon is reflected by the appearance of preduals of $\JBW^*$-triples, see \cite{FrB85b}. On the other hand, once the canonical operator space structure of a noncommutative $\L^p$-space is taken into account, rectangular structures become especially natural. In the Schatten setting, Le Merdy, Ricard and Roydor \cite{LRR09} showed that completely $1$-complemented subspaces are precisely, up to the appropriate direct sums, spaces of the form $S^p(H,K)$. These are the $\L^p$-counterparts of ternary rings of operators.

This suggests that the problem of contractive complementation has several levels of rigidity. A scalar norm-one condition may detect a Jordan structure, whereas matricial contractivity should remember the non-symmetrized ternary product. This principle is consistent with a number of results obtained in recent years. The range of a $2$-positive contractive projection on an arbitrary noncommutative $\L^p$-space is completely order and completely isometrically isomorphic to another noncommutative $\L^p$-space \cite{ArR24}; in the $\sigma$-finite case the projection can be viewed as a conditional expectation modulo a symmetric two-sided change of density. Positive contractive projections lead more generally to Jordan conditional expectations and to nonassociative $\L^p$-spaces associated with $\JW^*$-algebras, see \cite{Arh24a,Arh26b}. In a different direction, contractively decomposable projections are naturally related to $\W^*$-ternary rings of operators and to rectangular noncommutative $\L^p$-spaces \cite{Arh24b}. Recent results give a more intrinsic description of their ranges in terms of rectangular $\L^p$-spaces \cite{Bof26,Arh26c}.

The general contractive case, however, is much less understood. In particular, there is no general mechanism which starts only from the metric inequality
\begin{equation}
\norm{P(x)}_{\L^p(\cal{M})}
\leq\norm{x}_{\L^p(\cal{M})}
\end{equation}
and recovers the hidden Jordan or ternary structure of the range. The main purpose of the present paper is to introduce such a mechanism. Our approach is based on the differential geometry of the norm, and more precisely on the Levi forms of plurisubharmonic functions on complex Banach spaces.

Differential methods have appeared before in the study of norm-one projections. In particular, second derivatives of norms were successfully used in \cite{LRG07} to study contractive complementation in vector-valued spaces. Here we take a different point of view. Instead of considering only the real Hessian of the norm, we exploit the complex structure and the associated Levi form. Levi forms on complex Banach spaces were investigated in \cite{Lig76}. The key observation is that a contractive projection produces, at any point of its range, a canonical Hilbertian geometry.

More precisely, let $\Omega$ be an open subset of a complex Banach space $X$, let $\Phi\co\Omega\to\R$ be a $\C^2$ plurisubharmonic function and suppose that a projection $P\co X\to X$ satisfies $P(\Omega) \subset \Omega$ and
\begin{equation}
\Phi(Px)
\leq \Phi(x),\quad x \in \Omega.
\end{equation}
If $a\in\Omega\cap\Ran P$, then the Levi form $\scr L_{\Phi,a}$ satisfies
\begin{equation}
\scr L_{\Phi,a}(Pz)
\leq \scr L_{\Phi,a}(z),\quad z \in X.
\end{equation}
After quotienting by the null space of the Levi form and completing, one obtains a Hilbert space on which $P$ becomes an orthogonal projection. Consequently, although the complement $\Id-P$ need not be contractive for the original Banach norm, both $P$ and $\Id-P$ are contractive for every tangent Levi geometry based at a point of $\Ran P$. Equivalently, the involution
\begin{equation}
2P-\Id
\end{equation}
becomes a unitary involution on each of these tangent Hilbert spaces.

Thus a single Banach space projection gives rise to a whole family of Hilbertian geometries indexed by the points of its range. At first order, contractivity is governed by duality mappings and norming functionals, in accordance with the classical characterization of Calvert \cite{Cal75}. The first-order condition identifies the directions annihilated by the corresponding norming functional. Differentiating this condition along the range yields the orthogonality relations for the Levi form. The role of the second-order viewpoint is therefore not to provide an independent contractivity condition, but to package these relations into a Hilbertian geometry. When the Levi form is positive, the decomposition
\begin{equation}
X
=\Ran P \oplus \ker P
\end{equation}
becomes orthogonal for this tangent geometry, making relative positive operators and their spectral calculus available.

Comparing two such Hilbertian geometries leads naturally to relative positive operators. Suppose first that $\Ran P$ is finite-dimensional and that the restrictions to $\Ran P$ of the Levi forms based at two points $a,b \in \Omega \cap \Ran P$ are nondegenerate. If $\langle\cdot,\cdot\rangle_a$ and $\langle\cdot,\cdot\rangle_b$ denote the corresponding inner products on $\Ran P$, there exists a unique positive invertible operator $\Delta_{b|a}\co\Ran P\to\Ran P$ characterized by
\begin{equation}
\la x,y \ra_b
=\la x,\Delta_{b|a}y \ra_a,
\quad x,y \in \Ran P.
\end{equation}
It is selfadjoint for the first Hilbertian structure, and its imaginary powers form a one-parameter group of isometries for both tangent Hilbertian geometries.

A stronger phenomenon appears when the relative operator can be constructed on a common ambient Hilbert completion. More precisely, if the Levi forms at $a$ and $b$ are nondegenerate on $X$ and define equivalent Hilbertian norms, the corresponding ambient relative operator $\widehat{\Delta}_{b|a}$ commutes with the orthogonal extension of $P$. Consequently, the closures of $\Ran P$ and $\ker P$ are reducing subspaces for $\widehat{\Delta}_{b|a}$, and
\begin{equation}
P\widehat{\Delta}_{b|a}^{\i t}
=
\widehat{\Delta}_{b|a}^{\i t}P,
\quad t\in\R.
\end{equation}
Thus the relative one-parameter group dynamically preserves the decomposition determined by the projection. This is the origin of the modular terminology used in this paper. The appearance of positive relative operators and their imaginary powers is reminiscent of modular constructions associated with standard subspaces, see for instance \cite{CoL25}. The important difference is that here the modular object is not given in advance: it is extracted directly from a norm-one projection and from the local differential geometry of the ambient Banach space.

The case of noncommutative $\L^4$-spaces is particularly well suited to this approach. There are two reasons for this. First, the fourth power of the norm has an explicit Levi form. If
\begin{equation}
\Phi_4(x)=\norm{x}_{\L^4(\cal{M})}^4,
\end{equation}
then
\begin{equation}
\label{intro-Levi-L4}
\scr L_{\Phi_4,a}(z)
=
2\bigl(\norm{a^*z}_{\L^2(\cal{M})}^2+\norm{za^*}_{\L^2(\cal{M})}^2\bigr).
\end{equation}
Thus the tangent Hilbert geometry separates naturally into a left part and a right part. This is precisely the two-sided structure which is needed in order to recover rectangular operator spaces.

Second, the duality mapping of $\L^4(\cal M)$ is cubic:
\begin{equation}
\label{intro-duality-L4}
\frak j_4(x)=xx^*x.
\end{equation}
Polarization of this cubic mapping produces the Jordan triple product
\begin{equation}
\label{intro-Jordan-triple}
\{x,y,z\}
=
\frac12(xy^*z+zy^*x).
\end{equation}
The special role of the exponent $4$ in noncommutative $\L^p$-geometry also appears in the theory of $2$-isometries, where selfpolar forms can be used to recover operator-algebraic structure; see \cite{JRS05}. Our situation is different: we start with a projection rather than an isometry, and no positivity or matricial assumption is imposed initially. Nevertheless, the cubic duality mapping allows us to extract a ternary structure directly from scalar contractivity.

Indeed, let $P\co\L^4(\cal M)\to\L^4(\cal M)$ be a contractive projection and set
\begin{equation}
Y=\Ran P,\qquad
Y^\sharp=\Ran P^\dagger,
\end{equation}
where $P^\dagger$ denotes the adjoint projection under the $\L^4$--$\L^{\frac{4}{3}}$ trace duality. We prove that
\begin{equation}
\label{intro-dual-range}
Y^\sharp
=
\frak j_4(Y)
=
\{xx^*x:x\in Y\}.
\end{equation}
In particular, the nonlinear image $\frak j_4(Y)$ is a linear subspace of $\L^{\frac{4}{3}}(\cal M)$. Polarization then yields
\begin{equation}
\label{intro-Jordan-dual-range}
\{Y,Y,Y\}
\subset Y^\sharp.
\end{equation}
Thus scalar contractivity forces Jordan triple relations taking values in the dual range. At this stage, however, the product is not internal to $Y$, so these relations do not yet equip $Y$ with a Jordan triple structure. Recovering an internal ternary product requires the additional deweighting argument developed below.

The first nontrivial matrix level yields a stronger conclusion. One of the main results of this paper is that for projections on $\L^4$ the level $2$ condition already contains all matrix levels. More precisely, we prove that
\begin{equation}
\label{intro-main-equivalence}
P\text{ is $2$-contractive}
\quad\Longleftrightarrow\quad
YY^*Y\subset Y^\sharp
\quad\Longleftrightarrow\quad
P\text{ is completely contractive}.
\end{equation}
Hence, for projections on noncommutative $\L^4$-spaces, complete contractivity can be tested on $2\times2$ matrices. This collapse of an infinite family of matricial norm inequalities to a single matrix level is one of the main rigidity phenomena established here.

By trace-adjoint duality, the same equivalence holds for projections on $\L^{\frac43}(\cal M)$, see Corollary~\ref{cor-dual-two-contractive-L43}.

For Schatten spaces, the equivalence between $2$-contractive and completely contractive complementation follows from \cite[Theorem~1.1]{LRR09}. The contribution of \eqref{intro-main-equivalence} is a direct duality-mapping proof beyond the Schatten setting, together with the intrinsic criterion $YY^*Y\subset Y^\sharp$. Theorem~\ref{thm-Haagerup-even-finite-level} extends this equivalence to Haagerup $\L^4$-spaces associated with arbitrary von Neumann algebras. We refer also to \cite{ArL26}, \cite{Bof25} and \cite{You83} for related results.

The mechanism behind \eqref{intro-main-equivalence} is transparent. Scalar contractivity gives only the symmetrized expression \eqref{intro-Jordan-triple}. By applying the scalar result to the amplification $\Id_{S_2^4}\ot P$ and choosing suitable $2\times2$ block matrices, one can separate the two outer variables and isolate the non-symmetrized product $xy^*z$. Once
\begin{equation}
xy^*z\in Y^\sharp,\quad x,y,z\in Y,
\end{equation}
is known, the same property at every matrix level follows entrywise, and the cubic characterization of contractivity implies complete contractivity. In this sense, scalar contractivity detects Jordan triple geometry, whereas $2$-contractivity detects TRO geometry.

The same polynomial principle gives a finite-level criterion at every even exponent. If $p=2m$ for some integer $m \geq 2$ and
\begin{equation}
\label{intro-even-level}
N_m
\ov{\mathrm{def}}{=}
\left\lceil\frac{2m}{3}\right\rceil,
\end{equation}
we prove that, for an arbitrary von Neumann algebra $\cal M$, an $N_m$-contractive projection on its Haagerup space $\L^{2m}(\cal M)$ is completely contractive.

The proof extracts an arbitrary alternating product of length $2m-1$ as a torus Fourier coefficient of the power duality mapping at the matrix level $N_m$. Trace-adjoint duality gives the analogous result at the conjugate exponent $\frac{2m}{2m-1}$. In particular, level $2$ suffices for $p=6$ and $p=\frac{6}{5}$. We do not claim that the level in \eqref{intro-even-level} is optimal. Indeed, for Schatten spaces, level $2$ is already known to suffice for any $1 <p < \infty$, $p \neq 2$, by \cite[Theorem~1.1]{LRR09}.

The remaining difficulty is that the ternary products still belong a priori to the dual range $Y^\sharp$, not to $Y$. In the finite-dimensional setting, the unsymmetrized ternary relation yields natural left and right multiplication maps from $Y$ into $Y^\sharp$. Choose a basis $x_1,\ldots,x_k$ of $Y$ and define
\begin{equation}
\label{intro-a-b}
a
=\sum_{j=1}^k x_jx_j^*,
\qquad
b
=\sum_{j=1}^k x_j^*x_j.
\end{equation}
We prove the exact two-sided relation
\begin{equation}
\label{intro-density-dual-range}
aY=Y^\sharp=Yb.
\end{equation}
This algebraic relation has a natural tangent-Hilbert interpretation. Indeed, if $\frak H^\ell$ and $\frak H^r$ denote the sums of the left and right tangent forms associated with the points $x_1,\ldots,x_k$, then
\[
\frak H^\ell(x,y)=\Tr(y^*ax),
\qquad
\frak H^r(x,y)=\Tr(y^*xb),
\]
and
\[
\sum_{j=1}^k\scr L_{\Phi_4,x_j}(x,y)
=
2\bigl(\frak H^\ell(x,y)+\frak H^r(x,y)\bigr).
\]
The multiplication maps $y\mapsto ay$ and $y\mapsto yb$ are precisely the Riesz representatives of these two Hilbertian structures in the common dual range $Y^\sharp$. Their relative operator is therefore
\begin{equation}
\label{intro-relative-operator}
\Delta(y)=a^{-1}yb.
\end{equation}
Moreover, we have
\begin{equation}
\label{mod-intro}
\Delta^{\i t}(Y)
=Y,\quad t \in \R.
\end{equation}
In fact, these maps are complete isometries for the inherited operator space structure, and the corresponding left-right unitary action commutes with the projection. The relation \eqref{mod-intro} admits a modular interpretation. Let $e=s_\ell(P)$ and $f=s_r(P)$ and consider the linking von Neumann algebra
\begin{equation}
\cal R(e\cal Mf)
=
\begin{bmatrix}
e\cal Me&e\cal Mf\\
f\cal Me&f\cal Mf
\end{bmatrix}
\end{equation}
together with the density
\begin{equation}
D=
\begin{bmatrix}
a^2&0\\
0&b^2
\end{bmatrix}.
\end{equation}
Then the restriction of the modular automorphism group associated with $D$ to the upper-right corner is
\begin{equation}
\sigma_t^D(y)=a^{2\i t}yb^{-2\i t},
\end{equation}
and the relative group coming from the two Levi geometries satisfies
\begin{equation}
\Delta^{\i t}(y)=\sigma_{-\frac t2}^D(y).
\end{equation}
Thus the ``modular group'' obtained abstractly by comparing tangent Hilbert geometries is exactly the restriction of a genuine von Neumann algebra modular group on the appropriate linking algebra.

This identification leads to a two-sided deweighting of the range. Namely, we set
\begin{equation}
\label{intro-deweighting}
\cal V
=
a^{-\frac14}Yb^{-\frac14}.
\end{equation}
Using the invariance under the relative modular group together with the ternary relation $YY^*Y\subset Y^\sharp$, we prove that $\cal V$ is a finite-dimensional $\W^*$-$\TRO$. Hence
\begin{equation}
Y=a^{\frac14}\cal Vb^{\frac14}.
\end{equation}
The quarter powers in this formula are not an arbitrary change of variables: they are dictated simultaneously by the exponent $p=4$, by the Levi geometry and by the modular structure of the linking algebra.

We further show that this algebraic deweighting is compatible with the operator space norms. More precisely, after equipping the linking algebra of $\cal V$ with the natural faithful state determined by $D$, the preceding identification induces a surjective complete isometry
\begin{equation}
\L^4(\cal{V},\varphi)
\to \Ran P.
\end{equation}
Consequently, when the ambient von Neumann algebra is finite-dimensional, the range of any $2$-contractive projection on its noncommutative $\L^4$-space is completely isometric to a rectangular noncommutative $\L^4$-space associated with a $\W^*$-$\TRO$. Moreover, we obtain an explicit formula for the original projection in terms of a state-preserving conditional expectation on the linking algebra. More precisely, if $\E_{12}$ denotes the upper-right corner of this conditional expectation, then
\begin{equation*}
P(x)
=a^{\frac{1}{4}}\E_{12}\big(a^{-\frac{1}{4}}exf b^{-\frac{1}{4}}\big)b^{\frac{1}{4}},
\quad x \in \L^4(\cal M).
\end{equation*}
This realizes $P$ as the off-diagonal part of a completely positive contractive projection on a matrix amplification and proves directly that $P$ is contractively decomposable.

The preceding results fit naturally into the broader program of understanding which nonassociative structures are selected by norm-one projections on noncommutative $\L^p$-spaces. They also provide a new route to these structures. The existing classifications for Schatten spaces rely on the fine structure theory of Cartan factors, while the results for positive and decomposable projections use, in one form or another, order, lifting or change-of-density arguments. Our approach starts instead from the bare contractive inequality and differentiates it. The Jordan triple relations arise from the first-order geometry encoded by the duality mapping, while the TRO structure is recovered from the matricial ternary relations and the subsequent deweighting. The modular group arises from comparing the left and right tangent Hilbertian structures associated with the range.

This viewpoint suggests a useful general principle. The correct object attached to a contractively complemented subspace may not be visible directly inside the ambient $\L^p$-space. At first it can appear in a dual range or in a tangent Hilbert space. A suitable relative modular operator can then provide the change of density which transports this infinitesimal structure back to the original range. The case $p=4$ treated here gives a concrete realization of this principle and provides evidence that differential and modular methods may be useful for the general problem of classifying contractively complemented subspaces of noncommutative $\L^p$-spaces.

\paragraph{Organization of the paper}
In Section~\ref{sec-preliminaries}, we first recall the necessary background on noncommutative $\L^p$-spaces, ternary rings of operators, Jordan structures and Levi forms on complex Banach spaces. In Section~\ref{sec-Duality}, we study duality mappings associated with gauge functions and their interaction with contractive projections. This provides the first-order part of our approach. In Section~\ref{sec-tangent-Hilbert-geometry}, we introduce the tangent Hilbert geometry associated with a plurisubharmonic function contracted by a projection and establish the corresponding orthogonality, bicontractivity and intertwining properties. In finite dimensions, we compare different tangent Hilbert structures by means of relative positive operators and their imaginary powers.

In Section~\ref{sec-the-case-NC}, we specialize the general theory to noncommutative $\L^4$-spaces. We compute explicitly the Levi form of the fourth power of the norm and relate it to the Peirce decomposition associated with partial isometries. In Section~\ref{sec-L4}, we use the cubic duality mapping to characterize scalar contractivity in terms of the Jordan triple product and to prove the equivalence between $2$-contractivity and complete contractivity. We then return to the tangent Hilbert geometry. When the ambient von Neumann algebra is finite-dimensional, the left and right tangent forms associated with a basis of the range define two Hilbertian structures whose relative operator is identified with the restriction of an inverse square root of a modular operator on a linking von Neumann algebra.

This leads to the two-sided deweighting \eqref{intro-deweighting}, the $\W^*$-$\TRO$ structure of the deweighted range and the complete isometric identification with a rectangular noncommutative $\L^4$-space. In Section~\ref{sec-even-exponents}, we prove the finite matrix-level criterion \eqref{intro-even-level} for all even exponents and its trace-dual counterpart.

Finally, in Section~\ref{sec-weighted-tangent-general-p}, we return to arbitrary exponents $p>2$. We introduce weighted left and right tangent geometries and associated Gram densities for finite-rank $2$-contractive projections, identify the resulting relative operators with modular dynamics, and obtain a weighted $\W^*$-$\TRO$ description of the range. This yields, by duality, complete contractivity of every finite-rank $2$-contractive projection on a tracial noncommutative $\L^p$-space for $1<p<\infty$, $p\neq2$.

\section{Preliminaries}
\label{sec-preliminaries}Unless explicitly stated otherwise, $\cal M$ is a semifinite von Neumann algebra equipped with a normal semifinite faithful trace $\tau$. For $1\leq p<\infty$, we denote by $\L^p(\cal M)$ the space of all $\tau$-measurable operators $x$ affiliated with $\cal M$ such that $\tau(|x|^p)<\infty$, equipped with the norm
\begin{equation}
\label{Def-norm-Lp}
\norm{x}_{\L^p(\mathcal{M})}
\ov{\mathrm{def}}{=} \bigl(\tau(\vert x\vert^p)\bigr)^{\frac{1}{p}}.
\end{equation}
Here $\vert x \vert \ov{\mathrm{def}}{=}(x^*x)^{\frac{1}{2}}$ denotes the modulus of $x$. We also set $\L^{\infty}(\mathcal{M}) \ov{\mathrm{def}}{=} \mathcal{M}$, equipped with its operator norm. The subspace $\L^p(\mathcal{M}) \cap \mathcal{M}$ is dense in $\L^p(\mathcal{M})$. Unless explicitly stated otherwise, we use the sesquilinear trace duality
\[
\la x,y\ra_{\L^p(\cal M),\L^{p^*}(\cal M)}
\ov{\mathrm{def}}{=}\tau(y^*x),
\quad \frac1p+\frac1{p^*}=1.
\]
In the remainder of the paper, we write $\tr$ for $\tau$. See \cite{Ray03}, \cite{PiX03}, \cite{Pis98} for more information on noncommutative $\L^{p}$-spaces and to the books \cite{Bla06}, \cite{KaR97a}, \cite{KaR97b} and \cite{Tak03} for the topic of von Neumann algebras.

For $n\geq1$, we equip $\M_n\otvn\cal M$ with the trace $\Tr_n\ot\tau$ and use the canonical identification $
\L^p(\M_n\otvn\cal M)
=S_n^p[\L^p(\cal M)]$. If $T\co\L^p(\cal M)\to\L^p(\cal M)$ is bounded, its $n$th amplification is the entrywise map
\[
T_n
\ov{\mathrm{def}}{=} \Id_{S_n^p} \ot T,
\quad
T_n([x_{ij}])
=[T(x_{ij})].
\]
We say that $T$ is $n$-contractive if $\norm{T_n}\leq1$, and completely contractive if it is $n$-contractive for any integer $n \geq 1$. We equip $\L^p(\cal{M})$ with its natural operator space structure \cite{Pis98}. With this structure, the canonical identification $
\L^p(\M_n\otvn\cal M)
=
S_n^p[\L^p(\cal M)]$ is completely isometric. Consequently, the completely bounded norm of a bounded map $T$ between noncommutative $\L^p$-spaces is
\[
\norm{T}_{\cb}
=
\sup_{n \geq 1}
\norm{\Id_{S_n^p}\ot T}.
\]
Likewise, a linear map is a complete isometry if and only if all its matrix amplifications are isometries. 

We also recall the notion of decomposability used below. Let $\cal M$ and $\cal N$ be von Neumann algebras and let $T \co \L^p(\cal M) \to \L^p(\cal N)$ be a bounded map. Put
\[
T^\circ(x)\ov{\mathrm{def}}{=} T(x^*)^*,
\quad x\in\L^p(\cal M).
\]
The map $T$ is decomposable if there exist bounded linear maps $v_1,v_2 \co \L^p(\cal M) \to \L^p(\cal N)$ such that the block map $
\begin{bmatrix}
v_1 & T \\
T^\circ & v_2
\end{bmatrix}
\co \L^p(\M_2 \otvn \cal M) \to \L^p(\M_2 \otvn\cal N),
\quad
\begin{bmatrix}a&b\\c&d\end{bmatrix}
\mapsto
\begin{bmatrix}v_1(a)&T(b)\\
T^\circ(c)&v_2(d)\end{bmatrix}$, is completely positive. Its decomposable norm is $
\norm{T}_{\mathrm{dec}}
\ov{\mathrm{def}}{=}
\inf\max\{\norm{v_1},\norm{v_2}\}$, where the infimum is taken over all such pairs $(v_1,v_2)$. We say that $T$ is contractively decomposable if $\norm{T}_{\mathrm{dec}}\leq1$. See \cite[Definition~4.1]{Arh24b}.

\subsection{Jordan structures}

\paragraph{Ternary rings of operators} Rectangular operator spaces are naturally modeled by off-diagonal corners of von Neumann algebras. The intrinsic objects corresponding to such corners are ternary rings of operators, which we briefly recall. A ternary ring of operators (or simply $\TRO$) is a norm closed subspace $V$ of the space $\B(H,K)$, for some complex Hilbert spaces $H$ and $K$, which is closed under the triple product $(x,y,z) \mapsto xy^*z$, see, e.g., \cite[4.4.1 p.~161]{BLM04}. A TRO $V$ is called a $\W^*$-$\TRO$ if it is weak* closed in the dual Banach space $\B(H,K)$. A sub-TRO of a TRO $V$ is a closed subspace $W$ of $V$ satisfying $W W^*W \subset W$. We refer to \cite{BLM04}, \cite{BFT12}, \cite{BuT19}, \cite{BuT13a}, \cite{BuT13b}, \cite{DoR07}, \cite{EsM23}, \cite{Ham92}, \cite{Ham99}, \cite {KaR02}, \cite{NeR03}, \cite{PlR19}, \cite{SaS13}, \cite{SaS17} and \cite{Zet83} for more information on TROs.

\begin{example} \normalfont
\label{typical-TRO}
A basic example of $\W^*$-$\TRO$ is given by $e\cal{M}f$, where $e,f$ are projections of a von Neumann algebra $\cal{M}$.
\end{example}

\paragraph{Operator space structures of TROs}
Since the results of this paper are formulated in terms of complete isometries, the canonical operator space structure of a TRO will play an essential role. Every TRO admits an operator space structure. Indeed, let us assume that $V$ is a TRO contained in $\B(H,K)$. Then for each integer $n \geq 1$, the matrix space $\M_n(V)$ can be identified with a TRO contained in $\M_n(\B(H,K)) \cong \B(H^n,K^n)$. This provides a canonical operator space matrix norm on $V$ such that each matrix space $\M_n(V)$ is a TRO. By \cite[Proposition 2.1 p.~265]{KaR02} or \cite[Proposition 2.1 p.~83]{Ham99}, the TRO-matrix norms are uniquely determined on each TRO and do not depend on the choice of the representing Hilbert spaces.

\paragraph{$\TRO$-homomorphisms} A $\TRO$-homomorphism (or triple morphism) is a linear map $T \co V \to W$ between two $\TRO$s respecting the triple product, i.e.,
$$
T(xy^*z) 
= T(x)T(y)^*T(z), \quad x,y,z \in V.
$$
If, in addition, $T$ is an injection from $V$ onto $W$, we call $T$ a $\TRO$-isomorphism from $V$ onto $W$. By \cite[Proposition 2.4 p.~498]{EOR01}, a TRO-isomorphism between $\W^*$-TROs is necessarily weak* continuous. By  \cite[Proposition 2.1 p.~495]{EOR01}, every TRO-homomorphism is completely contractive, and every injective $\TRO$-homomorphism is completely isometric. Finally, by \cite[Proposition 2.1 p.~83]{Ham99} (see also \cite[Corollary 4.4.6 p.~163]{BLM04}), if $T \co V \to W$ is a linear map between TRO's then $T$ is a surjective complete isometry if and only if $T$ is a surjective 2-isometry if and only if $T$ is a TRO-isomorphism. Thus, for TROs, the complete isometric structure already determines the triple product.

\begin{example} \normalfont
\label{E-finite-dim-TRO}
Every finite-dimensional $\TRO$ $V$ is completely isometric to a finite direct sum of rectangular matrix spaces, i.e., it has the form
$$
V 
= \M_{m_1,n_1} \oplus_\infty \cdots \oplus_\infty \M_{m_k,n_k}, 
$$
see \cite[Corollary A.2 p.~302]{Kan13}.
\end{example}


\paragraph{Linking algebra} The linking algebra allows us to pass back and forth between the rectangular setting of TROs and the usual setting of von Neumann algebras. If $\cal{M}$ is a von Neumann algebra and $e$ is an orthogonal projection in $\cal{M}$, then $e\cal{M} e^\perp$ is a $\W^*$-TRO by Example \ref{typical-TRO}. Conversely, let $V$ be a $\W^*$-TRO contained in $\B(H,K)$. Replacing $H$ and $K$, if necessary, with the essential subspaces $\ovl{V^*K}$ and $\ovl{VH}$, respectively, we may and do assume that the representation is nondegenerate. We can then consider the adjoint space $V^* \ov{\mathrm{def}}{=} \{x^* : x \in V \}$, which is a subspace of the space $\B(K,H)$ and the von Neumann subalgebras 
\begin{equation}
\label{def-MV-NV}
\mathrm{M}(V) \ov{\mathrm{def}}{=} \ovl{\Span VV^*}^{\w^*}
\quad \text{and} \quad 
\mathrm{N}(V) \ov{\mathrm{def}}{=} \ovl{\Span V^*V}^{\w^*}
\end{equation}
of the algebras $\B(K)$ and $\B(H)$. Since $\mathrm{M}(V) \cdot V \subset V$ and $V \cdot \mathrm{N}(V) \subset V$, we can introduce the von Neumann algebra
\begin{equation}
\label{Linking-algebra}
\mathrm{R}(V)
\ov{\mathrm{def}}{=} \begin{bmatrix}
  \mathrm{M}(V)   &  V \\
  V^*   &  \mathrm{N}(V) \\
\end{bmatrix}
\subset \B(K \oplus H),
\end{equation}
which is called the linking von Neumann algebra of $V$, see \cite{KaR02}, \cite[p.~846]{Rua04} and \cite{WCW24}. Then we have a TRO-isomorphism 
\begin{equation}
\label{TRO-linking}
V 
= e \mathrm{R}(V)e^\perp,
\end{equation}
where $e 
\ov{\mathrm{def}}{=} \begin{bmatrix}
    \Id_K & 0  \\
    0 & 0  \\
\end{bmatrix}
$ 
and 
$e^\perp
\ov{\mathrm{def}}{=} \begin{bmatrix}
   0  &  0 \\
   0  &  \Id_H \\
\end{bmatrix}$. So a $\W^*$-TRO $V$ can be identified with the off-diagonal corner at the (1,2) position of its linking von Neumann algebra. By \cite[Lemma 2.1 p.~847]{Rua04}, the central covers $c(e)$ and $c(e^\perp)$ of $e$ and $e^\perp$ in the linking von Neumann algebra $\mathrm{R}(V)$ are equal to $1$.

Conversely, it is worth noting that, according to \cite[Theorem 2.1]{WCW24}, a von Neumann algebra is $*$-isomorphic to the linking von Neumann algebra of a $\W^*$-TRO if and only if it contains no abelian direct summand.

\paragraph{$\JB^*$-triples and $\JBW^*$-triples} 
We recall that an element $a$ in a Banach algebra $A$ is called hermitian \cite[p.~156]{Chu12} if $\norm{\exp \i ta} = 1$ for all $t \in \R$.

A $\JB^*$-triple, e.g. \cite[p.~130]{CGRP14} \cite[Definition 2.5.25 p.~164]{Chu12}, is a complex Banach space $X$ equipped with a continuous triple product $X \times X \times X \to X$, $(x,y,z) \mapsto \{x,y,z\}$ such that

\begin{enumerate}
\item $\{x,y,z\}$ is linear in $x$ and $z$, conjugate linear in $y$, and symmetric in $x$ and $z$,

\item $\{a,b,\{x,y,z\}\} = \{\{a,b,x\},y,z\} - \{x,\{b,a,y\},z\} + \{x,y,\{a,b,z\}\}$ for any $x,y,z,a,b \in X$,

\item the left multiplication $\mathrm{D}(x,x) \co X \to X$, defined by
\begin{equation}
\label{Left-mult-JBW-triple}
\mathrm{D}(x,x)y
\ov{\mathrm{def}}{=} \{x,x,y\},\quad y\in X,
\end{equation}
is Hermitian in the algebra $\B(X)$ of bounded operators on $X$ and has positive spectrum,

\item for any $x \in X$, we have $\norm{\{x,x,x\}}_X = \norm{x}_X^3$.
\end{enumerate}
The last axiom is a Jordan analogue of the Gelfand-Naimark axiom of $\mathrm{C}^*$-algebras. Moreover, if $X$ is a dual Banach space, then it is called a $\JBW^*$-triple \cite[Definition 2.5.30]{Chu12} \cite[p.~528]{CGRP14}. In this case, the predual is unique by \cite[Theorem 5.7.38 p.~233]{CGRP18}. 
We refer to the books \cite{Chu12}, \cite{CGRP14}, \cite{CGRP18}, \cite{Fri05}, \cite{Isi19} and \cite{Upm85} and to the important papers \cite{Hor87a}, \cite{Hor87b} and \cite{HoN88} for more information on $\JB^*$-triples and $\JBW^*$-triples.

\begin{example} \normalfont
\label{Ex-JBW-algebra-triple}
A $\JBW^*$-algebra $\cal{M}$ admits a structure of $\JBW^*$-triple with triple product 
\begin{equation}
\label{triple-prod-JBWstar}
(x,y,z) \mapsto \{x,y,z\} 
\ov{\mathrm{def}}{=} (x \circ y^*) \circ z + (z \circ y^*) \circ x - (x \circ z) \circ y^*.
\end{equation}
See \cite[Lemma 3.1.6 p.~174]{Chu12} and \cite[p.~224]{CGRP18}. 
\end{example}

\paragraph{Triple homomorphisms} A linear map $T \co X \to Y$ between two $\JB^*$-triples $X$ and $Y$ is called a triple homomorphism if it preserves the triple product:
$$
T(\{x,y,z\}) 
= \{T(x), T (y), T(z)\}, \quad x,y, z \in X.
$$
A bijective triple homomorphism is called a triple isomorphism. In $\JB^*$-triples, the metric structure and the algebraic structure determine each other. Indeed, by \cite[Theorem 3.1.7 p.~175]{Chu12} and \cite[Theorem 3.1.20 p.~183]{Chu12} a linear bijection between two $\JB^*$-triples is an isometry if and only if it is a triple-isomorphism. When both triples are $\JBW^*$-triples, such a bijection is weak* continuous with respect to their canonical preduals, see \cite[Corollary 5.7.39 p.~234]{CGRP18}.

\paragraph{$\JC^*$-triples and $\JW^*$-triples} A $\JC^*$-triple (or $\mathrm{J}^*$-algebra) \cite[Definition 2.5.34 p.~169]{Chu12} \cite[p.~440]{CGRP18} \cite[Definition 22.7.1 p.~460]{Isi19} is a norm closed subspace of $\B(H,K)$ which is also closed under the map $x \mapsto xx^*x$. By a polarization argument, any $\JC^*$-triple is closed under the triple product $(x,y,z) \mapsto \{x,y,z\}\ov{\mathrm{def}}{=}\frac{1}{2}(xy^*z+zy^*x)$. A $\JC^*$-triple is a $\JB^*$-triple. A weak* closed $\JC^*$-subtriple of $\B(H,K)$ is called a $\JW^*$-triple. We shall also use this terminology for a $\JBW^*$-triple which is isometrically triple isomorphic to such a concrete $\JW^*$-triple.

\begin{example} \normalfont
A $\W^*$-$\TRO$ is a $\JW^*$-triple for the symmetrized triple product
\[
\{x,y,z\}
\ov{\mathrm{def}}{=}\frac12(xy^*z+zy^*x).
\]
\end{example}

\begin{example} \normalfont
By \cite[Proposition 6.1.41 p.~362]{CGRP18}, the complexification $\mathrm{H}_3(\O)_\C=\mathrm{H}_3(\O_\C)$ of the $\JBW$-factor $\mathrm{H}_3(\O)$ is equipped with a structure of $\JBW^*$-factor. The underlying $\JBW^*$-triple is called a Cartan factor of type VI.
\end{example}

\begin{example} \normalfont
By \cite[p.~140]{Isi19}, the space $\Asym_n \ov{\mathrm{def}}{=} \{x \in \M_n : x^\top=-x \}$ of skew-symmetric complex matrices of $\M_n$ is a $\JBW^*$-triple (called a Cartan factor of type $\mathrm{III}_n$). If $n$ is even, it is even equipped with a structure of reversible $\JW^*$-algebra by \cite[Proposition 25.2.2 p.~513]{Isi19}.
\end{example}

Let $\cal{A}$ be a $\C^*$-algebra. The formula
\begin{equation}
\label{triple product-C-star}
\{x,y,z\}
\ov{\mathrm{def}}{=}
\frac12(xy^*z+zy^*x),
\quad
x,y,z \in \cal{A},
\end{equation}
endows $\cal{A}$ with a structure of $\JB^*$-triple. The same formula defines a $\JB^*$-triple structure on $\B(H,K)$ for any complex Hilbert spaces $H$ and $K$; see \cite[Fact~4.1.41 p.~465]{CGRP14}. Furthermore, each $\JB^*$-algebra is a $\JB^*$-triple with the same norm and triple product 
$$
\{x,y,z\}
= (x\circ y^*) \circ z + (z\circ y^*)\circ x - (x\circ z)\circ y^*
$$ 
(see \cite[Theorem 4.1.45]{CGRP14}). By a $\JB^*$-subtriple of a $\JB^*$-triple $E$ we mean a norm closed subspace of $E$ which is also closed for the triple product.

\paragraph{Tripotents and Peirce projections} An element $u$ in a $\JB^*$-triple $X$ is called a tripotent if it is a fixed point of the triple product, i.e., $\{u,u,u\} = u$, see \cite[Definition 4.2.13 p.~505]{CGRP14} and \cite[Definition 1.2.31 p.~28]{Chu12} for a larger context.

\begin{example}
\normalfont
\label{ex-trip-partial-iso}
If $X$ is a $\JC^*$-triple (in particular if $X$ is a $\JW^*$-triple), by \eqref{triple product-C-star} the equality $\{u,u,u\} = u$ is equivalent to $uu^*u=u$, for any $u \in X$. So the tripotents are precisely the partial isometries of $X$, by \cite[Exercise 15 p.~244]{Con90}.
\end{example}

With a tripotent $u$ and $0 \leq i \leq 2$, we can introduce the Peirce projections $\P_i^u \co X \to X$ with range $X_i(u)$. For any $x \in X$, we have
\begin{equation}
\label{Peirce-1}
\P_2^u(x) 
\ov{\mathrm{def}}{=} \{u, \{u, x, u\},u\}, \quad
\P_1^u \ov{\mathrm{def}}{=} 2\big(\mathrm{D}(u, u) - \P_2^u\big) 
\end{equation}
and 
\begin{equation}
\label{Peirce-2}
\P_0^u+ \P_1^u+ \P_2^u 
= \Id_{X}.
\end{equation}
These maps are contractive linear projections by \cite[Corollary 1.2]{FrB85b}. A crucial property of $\JBW^*$-triples is that for a tripotent $u$ of $X$ the Peirce-2 subspace $X_2(u)$ is a $\JBW^*$-algebra with product $(a,b) \mapsto a \circ b \ov{\mathrm{def}}{=} \{a,u,b\}$, involution $a^* \ov{\mathrm{def}}{=} \{u, a, u\}$ where $a,b \in X_2(u)$, and unit $u$.

\begin{example} \normalfont
\label{Pierce-example}
In the case where $X$ is a $\JBW^*$-algebra and $e \in X$ is a projection, i.e. a selfadjoint idempotent, the Peirce projections are given by the following expressions (essentially \cite[p.~48]{HOS84})
\begin{equation}
\label{Pierce-example-fo}
\P_2^e(x)
=\{e, x^*, e\}, \quad \P_1^e(x)= 2\{e,x^*,1-e\}, \quad \P_0^e(x)=\{1-e,x^*,1-e\},
\end{equation}
where $x \in X$.
\end{example}

A tripotent $u$ in a $\JBW^*$-triple $X$ is called complete if $X_0(u) = \{0\}$ \cite[p.~16]{HKPP20} \cite[p.~517]{CGRP14} and unitary if $X = X_2(u)$. 

\subsection{Banach space geometry}

Following \cite[Definition 5.1.1 p.~426]{Meg98} and \cite[Definition 1.1 p.~41]{Cio90}, we say that a Banach space $X$ is strictly convex (or rotund) if for any $t \in (0,1)$ and any $x,y \in X$ with $\norm{x}_X=\norm{y}_X=1$ and $x \not= y$ we have $\norm{tx+(1-t)y}_X < 1$. According to \cite[Proposition 1.2 p.~41]{Cio90}, this is equivalent to say that if $x \not= y$ with $\norm{x}_X=\norm{y}_X=1$ then $\norm{x+y}_X < 2$. A Banach space $X$ is called smooth \cite[Definition 3.1 p.~21]{Cio90} (see also \cite[pp.~480-481]{Meg98}) if for any $x \in X \setminus \{0\}$ there exists a unique $x^* \in X^*$ such that $\norm{x^*}_{X^*} = 1$ and $\la x,x^* \ra_{X,X^*} = \norm{x}_X$. If $X$ is a \textsl{reflexive} Banach space, it is known \cite[Definition 5.4.7 p.~481]{Meg98} \cite[Corollary 1.4 p.~43]{Cio90} that $X$ is smooth if and only if $X^*$ is strictly convex and $X$ is strictly convex if and only if $X^*$ is smooth.

The differentiability of the norm is a very useful tool in understanding the geometry of the underlying Banach space. Various notions of differentiability of the norm of Banach spaces include Fr\'echet, Gâteaux, and strong subdifferentiability.

\paragraph{Gâteaux differentiability}
Let $\Omega$ be an open subset of a normed vector space $E$, let $f \co \Omega \to \R$ be a real-valued function, let $x \in \Omega$ and let $h$ be a nonzero element of $E$. The function $f_h$ given by $t \mapsto f(x + t h)$ is defined on an open interval containing $0$. If the limit
$$
\lim_{t \to 0} \frac{f(x + t h) - f (x)}{t}
$$
exists, then we denote this derivative by $\nabla_x f (h)$. It is called the G\^ateaux derivative (directional derivative) of $f$ at $x$ in the direction $h$. If $\nabla_x f(h)$ is defined and $\lambda \in \R \setminus \{0\}$, then $\nabla_x f (\lambda h)$ is defined and $\nabla_x f (\lambda h) = \lambda \nabla_x f(h)$. The function $f$ is G\^ateaux differentiable at $x$ if $\nabla_x f(h)$ exists for any $h \in E$ and the map $h \mapsto \nabla_x f(h)$ is a continuous real linear functional on $E$.

If $f$ is Fr\'echet differentiable at $x$, it is also G\^ateaux differentiable there, and 
\begin{equation}
\label{Gateaux-Frechet}
\mathrm{D}_\R f(x).h
=\nabla_x f (h), \quad h \in E.
\end{equation}
For a Banach space $X$ and $x\in X\setminus\{0\}$, the norm is Fr\'echet differentiable at $x$ if and only if the norming functional at $x$ is unique and is strongly exposed by $x$ in the closed unit ball of $X^*$. We refer to the book \cite{DGZ93} for more information.

According to \cite[Theorem 3.5 p.~22]{Cio90} (see also \cite[Theorem 1 p.~22]{Die75}), a Banach space $X$ is smooth if and only if its norm $\norm{\cdot}_X$ is G\^ateaux differentiable on $X \setminus \{0\}$.

Recall the following result \cite{CoS70}.

\begin{prop}[Cohen--Sullivan]
\label{prop-Sullivan}
A subspace $Y$ of a smooth Banach space $X$ can be the range of at most one projection of norm one. 
\end{prop}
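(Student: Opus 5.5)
The argument goes through the first-order characterization of norm-one projections on a smooth space. Let $P,Q\co X\to X$ be projections of norm one with $\Ran P=\Ran Q=Y$; we aim to show $P=Q$. For each $y\in Y\setminus\{0\}$, let $J(y)\in X^*$ be the unique norming functional, so that $\norm{J(y)}_{X^*}=1$ and $\la y,J(y)\ra=\norm{y}_X$. Uniqueness holds because $X$ is smooth.

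First, we show that $P^*J(y)=J(y)$. Since $y\in\Ran P$, we have $Py=y$. Therefore
\[
\la y,P^*J(y)\ra=\la Py,J(y)\ra=\norm{y}_X .
\]
Also $\norm{P^*J(y)}\leq\norm{P}\,\norm{J(y)}\leq 1$. So $P^*J(y)$ is a norming functional for $y$. By smoothness it must equal $J(y)$. The same reasoning gives $Q^*J(y)=J(y)$.

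Second, we show that $\ker P\subset\bigcap_{y\in Y\setminus\{0\}}\ker J(y)$. Let $z\in\ker P$. Then
\[
\la z,J(y)\ra=\la z,P^*J(y)\ra=\la Pz,J(y)\ra=0 .
\]
The same holds for $\ker Q$.

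Third, we conclude. Let $x\in X$ and put $w=Px-Qx$. This vector lies in $Y$, and it can also be written as
\[
w=(x-Qx)-(x-Px)\in \ker Q+\ker P .
\]
By the second step, $\la w,J(y)\ra=0$ for every nonzero $y\in Y$. If $w\neq0$, we may take $y=w$ and get $\la w,J(w)\ra=\norm{w}_X\neq0$, which is a contradiction. Hence $w=0$, so $P=Q$.

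The only point that needs care is the use of smoothness in the first step, where we identify $P^*J(y)$ with $J(y)$. This requires the norming functional to be unique and $\norm{P}\leq1$. Everything else is formal linear algebra. If the sesquilinear convention is used, the same argument applies with $\la\cdot,\cdot\ra$ conjugate-linear in the second variable.
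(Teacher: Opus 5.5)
Your proof is correct and complete. It uses only uniqueness of norming functionals at nonzero points of $Y$, so it proves the statement under the weaker hypothesis that $X$ is smooth at those points. The paper does not prove this proposition itself; it only cites \cite{CoS70}. Your argument is the classical one and uses the same first-order mechanism the paper develops elsewhere: your first step is the smoothness half of Lemma~\ref{Lemma-smooth}, and your second step is the annihilation relation \eqref{complex-first-tangent-orthogonality} between $\ker P$ and the duality mapping evaluated on $\Ran P$.
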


The following is \cite[Theorem 3.2.6 p.~297]{Meg98} combined with \cite[5.10 p.~148]{FHHMPZ01}.

\begin{prop}
\label{Prop-Fabian}
Let $X$ be a Banach space and consider a bounded linear operator $P \co X \to X$. Then $P$ is a projection if and only if $P^*\co X^* \to X^*$ is a projection. In this case and if $P$ is contractive, $P(X)^*$ is canonically isometrically isomorphic to $P^*(X^*)$, i.e.,
\begin{equation}
\label{Ran-P-P*}
(\Ran P)^*
\cong \Ran(P^*).
\end{equation}
\end{prop}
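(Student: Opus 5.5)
The proof splits into two independent parts: the equivalence between $P$ and $P^*$ being projections, which is purely algebraic, and the isometric identification \eqref{Ran-P-P*}, which I would obtain from the restriction map together with the Hahn--Banach theorem.

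\textbf{Projections.} Since taking adjoints is multiplicative, $(P^*)^2=(P^2)^*$. If $P^2=P$, then $(P^*)^2=P^*$ at once. Conversely, suppose $(P^*)^2=P^*$, so $(P^2)^*=P^*$ and hence $(P^2-P)^*=0$. For every $x\in X$ and $\varphi\in X^*$ this gives $\varphi\bigl((P^2-P)x\bigr)=0$. By the Hahn--Banach theorem, $X^*$ separates the points of $X$, so $(P^2-P)x=0$ for all $x$, i.e. $P^2=P$.

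\textbf{The isometry.} Assume now that $P$ is a contractive projection and put $Y=\Ran P$. I would use the restriction map
\[
R\co \Ran(P^*)\to Y^*,\qquad \varphi\mapsto \varphi|_Y .
\]
\begin{enumerate}[(i)]
\item \emph{Contractivity.} Clearly $R$ is linear and $\norm{\varphi|_Y}\leq\norm{\varphi}$.
\item \emph{Reverse inequality.} If $\varphi\in\Ran P^*$, then $\varphi=P^*\varphi=\varphi\circ P$. Hence for every $x\in X$,
\[
|\varphi(x)|=|\varphi|_Y(Px)|\leq \norm{\varphi|_Y}\,\norm{P}\,\norm{x}\leq \norm{\varphi|_Y}\,\norm{x}.
\]
This gives $\norm{\varphi}\leq\norm{\varphi|_Y}$, so $R$ is isometric and in particular injective. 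This is the only place where the hypothesis $\norm{P}\leq 1$ is used.
\item \emph{Surjectivity.} Given $\psi\in Y^*$, extend it by Hahn--Banach to $\tilde\psi\in X^*$ and set $\varphi=P^*\tilde\psi=\tilde\psi\circ P$. Then $\varphi\in\Ran P^*$, and for $y\in Y$ we have $Py=y$, so $\varphi(y)=\tilde\psi(y)=\psi(y)$. Thus $R\varphi=\psi$.
\end{enumerate}
The inverse of $R$ is therefore $\psi\mapsto \tilde\psi\circ P$. This map is independent of the chosen extension, because two extensions agree on $Y=\Ran P$.

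\textbf{Obstacle.} None of these steps is a serious difficulty. The only point that needs care is the norm inequality in (ii), and it is exactly why contractivity of $P$ is needed for the identification to be isometric.
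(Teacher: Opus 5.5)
Your proof is correct. The paper gives no argument of its own here and only cites Megginson and Fabian et al.; your adjoint computation for the projection equivalence, together with the restriction map $\varphi\mapsto\varphi|_{\Ran P}$ (which is isometric precisely because $\varphi=\varphi\circ P$ and $\norm{P}\leq 1$), is the standard argument behind those references. One small simplification: Hahn--Banach is not needed for surjectivity, since $\psi\circ P$, with $P$ regarded as a bounded map $X\to\Ran P$, already lies in $X^*$ and is fixed by $P^*$.
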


\subsection{Plurisubharmonic functions on open subsets of Banach spaces}

If $\Omega$ is an open set of the complex plane $\mathbb{C}$, we recall \cite[p.~246]{Muj86} \cite[Definition 3.1.2 p.~60]{ArG01} that a function $f \co \Omega \to [-\infty,\infty)$ is said to be subharmonic if $f$ is upper semicontinuous and if for each $a \in \Omega$ and each $r > 0$ such that $\ovl{B}(a,r) \subset \Omega$ we have the inequality
$$
f(a)
\leq \frac{1}{2\pi} \int_{0}^{2\pi} f(a+r\e^{\i \theta}) \d \theta
$$
and if $f$ is not identically $-\infty$ on any connected component of $\Omega$.

Let $\Omega$ be an open subset of a \textit{complex} Banach space $X$. Following \cite[Definition 34.3 p.~247]{Muj86}, we say that a function $f \co \Omega \to [-\infty, \infty)$ is plurisubharmonic if $f$ is upper semicontinuous and if for any $a \in \Omega$ and any $b \in X$ such that $
\{a+\lambda b : |\lambda|\leq 1\} \subset \Omega$ we have
$$
f(a)
\leq \frac{1}{2\pi} \int_{0}^{2\pi} f(a+b\e^{\i \theta}) \d \theta.
$$
Since $f$ is Borel measurable and bounded above by \cite[Proposition 34.1 p.~245]{Muj86} on each disc $a + \ovl{B}(0,1)b \subset \Omega$, the integral $\int_{0}^{2\pi} f(a+b\e^{\i \theta}) \d \theta $ is well-defined, though it can be $-\infty$. 

An upper semicontinuous function $f\colon\Omega\to[-\infty,\infty)$ is plurisubharmonic if and only if for any $x,y \in X$, its restriction $z \mapsto f(x+zy)$ is subharmonic or identically $-\infty$ on each connected component of $\{z \in \mathbb{C} : x+z y \in \Omega \}$.

\begin{example} \normalfont
\label{Ex-C-conv-pluri}
Let $\Omega$ be an open convex subset of a complex Banach space $X$. Any continuous convex function $f \co \Omega \to \R$ is plurisubharmonic.
\end{example}

\subsection{Levi forms of functions defined on open subsets of Banach spaces}


Levi forms in the setting of complex Banach spaces have been studied in \cite{Lig76}. 
We briefly recall the relevant terminology. Let $\Omega$ be an open subset of a complex Banach space $X$ and let $\Phi \co \Omega \to \R$ be a real function of class $\mathrm{C}^2$, where Fr\'echet differentiability is understood with respect to the underlying real Banach space $X_{\R}$. Thus, for each $a \in \Omega$, the real Hessian $
\D_{\R}^2\Phi(a) \co X_{\R} \times X_{\R} \to \R$ is a continuous symmetric $\R$-bilinear form. In general it is not complex bilinear. The complex structure nevertheless determines a canonical Hermitian form from this real Hessian, namely the Levi form of $\Phi$ at $a$. 
With the convention that Hermitian forms are complex linear in the first variable, it is given by 
\begin{equation}
\label{Levi-form-diagonal}
\scr{L}_{\Phi,a}(z)
=\frac14\left(\D_{\R}^2\Phi(a)(z,z)+\D_{\R}^2\Phi(a)(\i z,\i z)\right), \quad z \in X,
\end{equation}
explicitly introduced in \cite[p.~1]{Lig76}. In particular, we have
\begin{equation}
\label{Hermitian-Levi}
\scr{L}_{\Phi,a}(z,w)
=\ovl{\scr{L}_{\Phi,a}(w,z)}, \quad z,w \in X.
\end{equation}
Its polar form is given by polarization by the following formula.

\begin{prop}
For any $z,w \in X$, we have
\begin{equation}
\label{Levi-form-general}
\scr{L}_{\Phi,a}(z,w)
=\frac14\big(\D_{\R}^2\Phi(a)(z,w)+\D_{\R}^2\Phi(a)(\i z,\i w)
+\i\D_{\R}^2\Phi(a)(z,\i w)-\i\D_{\R}^2\Phi(a)(\i z,w)\big).
\end{equation}
\end{prop}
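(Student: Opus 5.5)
The identity is a polarization statement, so I would obtain it directly from the definition \eqref{Levi-form-diagonal}. The plan is to introduce the sesquilinear form on the right-hand side of \eqref{Levi-form-general} and check that it is Hermitian and restricts to \eqref{Levi-form-diagonal} on the diagonal. Uniqueness of a Hermitian form with a given quadratic form then identifies it with the Levi form. Write $B=\D_{\R}^2\Phi(a)$, a continuous symmetric $\R$-bilinear form on $X_{\R}$, and define
\[
H(z,w)=\tfrac14\big(B(z,w)+B(\i z,\i w)+\i B(z,\i w)-\i B(\i z,w)\big).
\]

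\textbf{Step 1: the diagonal.} Setting $w=z$ gives $H(z,z)=\frac14(B(z,z)+B(\i z,\i z))+\frac{\i}{4}(B(z,\i z)-B(\i z,z))$. By symmetry of $B$ the imaginary part vanishes, so $H(z,z)=\scr L_{\Phi,a}(z)$.

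\textbf{Step 2: sesquilinearity.} Additivity in each variable and real homogeneity are immediate from the $\R$-bilinearity of $B$. It therefore suffices to check how $H$ behaves under multiplication of each variable by $\i$.
\begin{itemize}
\item For the first variable, $H(\i z,w)=\frac14(B(\i z,w)+B(-z,\i w)+\i B(\i z,\i w)-\i B(-z,w))$. This equals $\i\cdot\frac14(B(z,w)+B(\i z,\i w)+\i B(z,\i w)-\i B(\i z,w))=\i H(z,w)$, after matching the four terms.
\item For the second variable, the same bookkeeping gives $H(z,\i w)=-\i H(z,w)$.
\end{itemize}
So $H$ is complex linear in the first variable and conjugate linear in the second.

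\textbf{Step 3: Hermitian symmetry.} Using $B(u,v)=B(v,u)$, we get $\overline{H(w,z)}=\frac14(B(w,z)+B(\i w,\i z)-\i B(w,\i z)+\i B(\i w,z))$. This coincides termwise with $H(z,w)$.

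\textbf{Step 4: conclusion.} A sesquilinear form is determined by its values on the diagonal, via the complex polarization identity $H(z,w)=\frac14\sum_{k=0}^{3}\i^k H(z+\i^k w,z+\i^k w)$. This identity uses only sesquilinearity. Hence the Hermitian form $\scr L_{\Phi,a}(\cdot,\cdot)$ attached to \eqref{Levi-form-diagonal} via \eqref{Hermitian-Levi} must equal $H$, which gives \eqref{Levi-form-general}.

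The only delicate point is the sign bookkeeping in Step 2. In particular one has to use $B(\i\cdot\i z,\cdot)=-B(z,\cdot)$ correctly. Beyond that there is no real obstacle.
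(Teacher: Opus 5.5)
Your proof is correct, but it runs in the opposite direction from the paper's. The paper takes the polar form to be the polarization $\frac14\sum_{k=0}^{3}\i^k\scr L_{\Phi,a}(z+\i^k w)$ of the diagonal form \eqref{Levi-form-diagonal} and simply expands it. Writing $\phi=\D_{\R}^2\Phi(a)$, the real identity $\phi(x+y,x+y)-\phi(x-y,x-y)=4\phi(x,y)$ turns $\scr L_{\Phi,a}(z+w)-\scr L_{\Phi,a}(z-w)$ into $\phi(z,w)+\phi(\i z,\i w)$. It likewise turns $\scr L_{\Phi,a}(z+\i w)-\scr L_{\Phi,a}(z-\i w)$ into $\phi(z,\i w)-\phi(\i z,w)$, and the formula follows with no candidate needed in advance.

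You instead start from the candidate $H$ and verify the diagonal and sesquilinearity; your sign bookkeeping in Step~2 is right. You then apply the complex polarization identity to $H$, which gives $H(z,w)=\frac14\sum_k\i^k\scr L_{\Phi,a}(z+\i^k w)$, i.e.\ exactly the polar form as the paper defines it.

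The paper's route is a direct derivation and slightly shorter. Yours is a verification, but it proves something the paper leaves implicit: the polarization of the diagonal form really is a sesquilinear Hermitian form. This justifies \eqref{Hermitian-Levi}, and the sesquilinearity later needed for Cauchy--Schwarz and the Hilbert completions. Strictly speaking, Step~3 is not needed for the identity itself, only Steps~1, 2 and 4.
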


\begin{proof}
For any $z,w \in X$, we use the shorthand notation $\phi(z,w) \ov{\mathrm{def}}{=} \D_{\R}^2\Phi(a)(z,w)$. Then $\phi$ is a continuous symmetric $\R$-bilinear form on $X_{\R} \times X_{\R}$. The diagonal form can be written
\[
\scr{L}_{\Phi,a}(z) 
\ov{\eqref{Levi-form-diagonal}}{=} \frac{1}{4} \left( \phi(z,z) + \phi(\i z, \i z) \right)
\]
For any $z,w \in X$, the polarization identity \cite[Theorem 1.1 p.~2]{Wei80} 
gives
\begin{align*}
\MoveEqLeft
\scr{L}_{\Phi,a}(z,w) 
= \frac{1}{4} \sum_{k=0}^{3} \i^k \scr{L}_{\Phi,a}(z + \i^k w) \\       
&=\frac{1}{4} \Big( \underbrace{\scr{L}_{\Phi,a}(z+w) - \scr{L}_{\Phi,a}(z-w)}_{A} + \i \big( \underbrace{\scr{L}_{\Phi,a}(z+\i w) - \scr{L}_{\Phi,a}(z-\i w)}_{B} \big) \Big).
\end{align*}
On the one hand, using the identity $\phi(x+y,x+y) - \phi(x-y,x-y) = 4\phi(x,y)$ we see that
\begin{align*}
A &= \scr{L}_{\Phi,a}(z+w) - \scr{L}_{\Phi,a}(z-w) \\
&= \frac{1}{4} \left( \phi(z+w, z+w) + \phi(\i z + \i w, \i z + \i w) \right) - \frac{1}{4} \left( \phi(z-w, z-w) + \phi(\i z - \i w, \i z - \i w) \right) \\
&= \frac{1}{4} \left( \phi(z+w, z+w) - \phi(z-w, z-w) \right) + \frac{1}{4} \left( \phi(\i z + \i w, \i z + \i w) - \phi(\i z - \i w, \i z - \i w) \right) \\
&= \frac{1}{4} \left( 4\phi(z,w) \right) + \frac{1}{4} \left( 4\phi(\i z, \i w) \right) 
= \phi(z,w) + \phi(\i z, \i w)
\end{align*}
On the other hand, we have
\begin{align*}
B &= \scr{L}_{\Phi,a}(z+\i w) - \scr{L}_{\Phi,a}(z-\i w) \\
&= \frac{1}{4} \left( \phi(z+\i w, z+\i w) + \phi(\i z - w, \i z - w) \right) - \frac{1}{4} \left( \phi(z-\i w, z-\i w) + \phi(\i z + w, \i z + w) \right) \\
&= \frac{1}{4} \left( \phi(z+\i w, z+\i w) - \phi(z-\i w, z-\i w) \right) - \frac{1}{4} \left( \phi(\i z + w, \i z + w) - \phi(\i z - w, \i z - w) \right) \\
&= \frac{1}{4} \left( 4\phi(z, \i w) \right) - \frac{1}{4} \left( 4\phi(\i z, w) \right) 
= \phi(z, \i w) - \phi(\i z, w)
\end{align*}
Combining these results, we deduce that
\[
\scr{L}_{\Phi,a}(z,w) 
= \frac{1}{4} \Big( \phi(z,w) + \phi(\i z, \i w) + \i \left( \phi(z, \i w) - \phi(\i z, w) \right) \Big)
\]
Distributing $\i$ and using the original notation $\D_{\R}^2\Phi(a)$, we obtain the equation \eqref{Levi-form-general}, i.e.,
\[
\scr{L}_{\Phi,a}(z,w) 
= \frac{1}{4} \left( \D_{\R}^2\Phi(a)(z,w) + \D_{\R}^2\Phi(a)(\i z,\i w) + \i \D_{\R}^2\Phi(a)(z,\i w) - \i \D_{\R}^2\Phi(a)(\i z,w) \right)
\]
\end{proof}

It is stated in \cite[Proposition 2.3 p.~67]{Lig76} \cite[Proposition 35.2 p.~257]{Muj86} that a real function $\Phi \co \Omega \to \R$ of class $\mathrm{C}^2$ is plurisubharmonic if and only if for any $a \in \Omega$ and any $z \in X$ we have 
\begin{equation}
\label{Levi-def-pos}
\scr{L}_{\Phi,a}(z,z) 
\geq 0.
\end{equation}


\section{Duality mappings associated with a gauge function}
\label{sec-Duality}

\subsection{Definitions}

Following \cite[Definition 4.1 p.~25]{Cio90} and \cite[Definition 2.13, p.~123]{Pat18}, a gauge function (or a weight function) is a continuous strictly increasing function $\varphi \co \R_+ \to \R_+$ such that $\varphi(0)=0$ and $\lim_{t \to \infty} \varphi(t) = \infty$. Let $X$ be a Banach space and let $\varphi \co \R_+ \to \R_+$ be a gauge function. For each $x \in X \setminus \{0\}$, we define, as in \cite[Definition 2.14, p.~123]{Pat18} \cite[Definition 4.1 p.~25]{Cio90}, the subset
\begin{equation}
\label{Def-JX-gauge}
\J_{X,\varphi}(x)
\ov{\mathrm{def}}{=}
\big\{x^* \in X^* : \la x,x^*\ra_{X,X^*}
=\norm{x}_X\norm{x^*}_{X^*}, 
\norm{x^*}_{X^*}=\varphi(\norm{x}_X) \big\}
\end{equation}
of the dual $X^*$, and we let $\J_{X,\varphi}(0) \ov{\mathrm{def}}{=} \{0\}$. According to \cite[Theorem 2.11, p.~124]{Pat18} or \cite[Remark 4.2 p.~25]{Cio90}, the subset $\J_{X,\varphi}(x)$ is nonempty for any $x \in X$. The multivalued map $\J_{X,\varphi} \co X \to 2^{X^*}$ is called the duality mapping associated with the gauge function $\varphi$. The duality mapping associated with the gauge function $\varphi(t)=t$ is called the normalized duality mapping, which we denote simply by $\J_X$. According to \cite[Corollary 4.5 p.~27]{Cio90}, the Banach space $X$ is smooth if and only if each duality mapping $\J_{X,\varphi}$ is single-valued. In this case, set
\[
F_\varphi(x)
\ov{\mathrm{def}}{=}
\int_0^{\norm{x}_X}\varphi(s)\d s,
\quad x\in X.
\]
The same result gives
\begin{equation}
\label{derivation-duality-mapping}
\nabla_xF_\varphi(y)
=
\Re\la y,\J_{X,\varphi}(x) \ra_{X,X^*},
\quad x\in X\setminus\{0\},\quad y\in X.
\end{equation}
The choice of the gauge only modifies the radial normalization of the duality mapping. More precisely, by \cite[Proposition 4.7(f) p.~28]{Cio90}, if $x \neq 0$, then
\begin{equation}
\label{relation-duality-gauge-normalized}
\J_{X,\varphi}(x)
=
\frac{\varphi(\norm{x}_X)}{\norm{x}_X} \J_X(x).
\end{equation}
In particular, all duality mappings associated with gauge functions determine the same norming functionals up to a positive scalar factor.


\begin{example} \normalfont
Suppose that $1 < p < \infty$. We will mainly use the gauge
\begin{equation}
\label{power-gauge}
\varphi_p(t)
\ov{\mathrm{def}}{=}t^{p-1}, \quad t \geq 0
\end{equation}
and we write
\begin{equation}
\label{Def-JpX}
\frak{j}_{p,X}
\ov{\mathrm{def}}{=}
\J_{X,\varphi_p}.
\end{equation}
Then we have
\begin{equation}
\label{relation-Jp-normalized}
\frak{j}_{p,X}(x)
\ov{\eqref{relation-duality-gauge-normalized}}{=}
\norm{x}_X^{p-2} \J_X(x),
\quad x \neq 0.
\end{equation}
If $X$ is smooth, then
\begin{equation}
\label{derivation-duality-mapping-2}
\nabla_x\left(\frac1p\norm{\cdot}_X^p\right)(y)
=
\Re\la y,\frak{j}_{p,X}(x)\ra_{X,X^*},
\quad
x\in X\setminus\{0\}, y\in X.
\end{equation}
In this case, for any $x\in X$, the element $\frak{j}_{p,X}(x)$ is characterized by the equalities
\begin{equation}
\label{properties-Jp}
\norm{\frak{j}_{p,X}(x)}_{X^*}
\ov{\eqref{Def-JX-gauge}}{=} \norm{x}_X^{p-1}
\quad\text{and}\quad
\big\la x,\frak{j}_{p,X}(x) \big\ra_{X,X^*}
\ov{\eqref{Def-JX-gauge}}{=} \norm{x}_X^p.
\end{equation}
\end{example}


Now, suppose that the Banach space $X$ is reflexive, smooth and strictly convex. Then \cite[Proposition 4.7(e) p.~28]{Cio90} implies that the duality mapping associated with a gauge $\varphi$ is bijective with inverse given by the duality mapping on $X^*$ associated with the inverse gauge $\varphi^{-1}$. Since
\[
\varphi_p^{-1}(t)=t^{\frac{1}{p-1}}=t^{p^*-1}=\varphi_{p^*}(t),
\quad \frac1p+\frac1{p^*}=1,
\]
we obtain
\begin{equation}
\label{inverse-power-duality-mapping}
\frak{j}_{p^*,X^*}\frak{j}_{p,X}=\Id_X
\quad\text{and}\quad
\frak{j}_{p,X}\frak{j}_{p^*,X^*}=\Id_{X^*}.
\end{equation}

\subsection{Contractive projections and duality mappings}

The following lemma is a generalization of \cite[Lemma 1]{And66} and is fundamental for us.

\begin{lemma}
\label{Lemma-smooth}
Let $X$ be a smooth strictly convex Banach space. Suppose that $1 < p < \infty$. Let $P \co X \to X$ be a contractive projection. For any $x \in X$, we have
\begin{equation}
\label{power-duality-map-range-projection}
x \in \Ran P
\quad \Longleftrightarrow \quad
\frak{j}_{p,X}(x) \in \Ran P^*.
\end{equation}
\end{lemma}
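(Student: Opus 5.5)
First reduce to the normalized duality mapping. By \eqref{relation-Jp-normalized}, for $x\neq0$ we have $\frak{j}_{p,X}(x)=\norm{x}_X^{p-2}\J_X(x)$, a positive multiple of $\J_X(x)$. Since $\Ran P^*$ is a linear subspace, the statement for $p$ is equivalent to the statement for the normalized mapping. The case $x=0$ is trivial because $\frak{j}_{p,X}(0)=0$. Smoothness makes $\J_X$ single-valued, so we may write $\J_X(x)$ for the unique norming functional $x^*$ with $\norm{x^*}=\norm{x}$ and $\la x,x^*\ra=\norm{x}^2$.

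For ($\Rightarrow$), let $x\in\Ran P$, $x\neq0$, and put $x^*=\J_X(x)$. We will show that $P^*x^*$ is also a norming functional at $x$. Since $\norm{P^*}=\norm{P}\leq1$, we get $\norm{P^*x^*}\leq\norm{x}$. Moreover $\la x,P^*x^*\ra=\la Px,x^*\ra=\la x,x^*\ra=\norm{x}^2$. Together these force $\norm{P^*x^*}=\norm{x}$, so $P^*x^*\in\J_X(x)$. Uniqueness from smoothness then gives $P^*x^*=x^*$, that is, $\J_X(x)\in\Ran P^*$. This direction uses only smoothness and contractivity.

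For ($\Leftarrow$), suppose $x^*=\J_X(x)\in\Ran P^*$ with $x\neq0$, so $P^*x^*=x^*$. Then $\la Px,x^*\ra=\la x,P^*x^*\ra=\norm{x}^2$ and $\norm{Px}\leq\norm{x}$. Consider the normalized vectors $u=x/\norm{x}$, $v=Px/\norm{x}$ and $\phi=x^*/\norm{x}$, where $\norm{\phi}=1$. We have $\la u,\phi\ra=1$, $\norm{v}\leq1$ and $\la v,\phi\ra=1$, so $\norm{v}=1$. Hence
\[
\Re\Big\la \frac{u+v}{2},\phi\Big\ra=1,
\]
which gives $\norm{u+v}=2$. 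If $u\neq v$, this contradicts strict convexity in the form $\norm{u+v}<2$ recalled from \cite{Cio90}. Therefore $Px=x$, and $x\in\Ran P$.

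The only delicate point is locating where each hypothesis enters: smoothness gives uniqueness of the norming functional in ($\Rightarrow$), and strict convexity gives uniqueness of the norm-attaining point on the face $\{\phi=1\}$ in ($\Leftarrow$). I expect no real obstacle beyond this bookkeeping. The equality case $\la v,\phi\ra=1$ with $\norm{v}\leq1$ forcing $\norm{v}=1$ should be stated explicitly, since strict convexity applies only to vectors of norm exactly one.
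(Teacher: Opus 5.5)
Your proposal is correct and follows the paper's proof essentially step for step: uniqueness of the norming functional (smoothness) gives $P^*$-invariance in the forward direction, and strict convexity applied to $x/\norm{x}_X$ and $P(x)/\norm{x}_X$ gives $P(x)=x$ in the converse. The only difference is cosmetic: you first rescale to the normalized mapping via $\frak{j}_{p,X}(x)=\norm{x}_X^{p-2}\J_X(x)$, whereas the paper runs the same argument directly with the gauge $t^{p-1}$.
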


\begin{proof}
Suppose first that $x \in \Ran P$. Since $P(x)=x$, using the contractivity of $P^* \co X^* \to X^*$, we obtain 
\begin{align*}
\MoveEqLeft
\norm{x}_X^p         
\ov{\eqref{properties-Jp}}{=} \big\la x,\frak{j}_{p,X}(x) \big\ra_{X,X^*} 
=\big\la P(x),\frak{j}_{p,X}(x) \big\ra_{X,X^*}  \\
&=\big\la x,P^*(\frak{j}_{p,X}(x)) \big\ra_{X,X^*}
\leq \norm{x}_X \norm{P^*(\frak{j}_{p,X}(x))}_{X^*}
\leq \norm{x}_X\norm{\frak{j}_{p,X}(x)}_{X^*}
\ov{\eqref{properties-Jp}}{=} \norm{x}_X^{p}.
\end{align*}
Consequently, if $x \neq 0$, we obtain $\big\la x,P^*(\frak{j}_{p,X}(x)) \big\ra_{X,X^*}
=
\norm{x}_X^p$ and
\[
\norm{P^*(\frak{j}_{p,X}(x))}_{X^*}
=
\norm{x}_X^{p-1}.
\]
Thus $P^*(\frak{j}_{p,X}(x))$ belongs to the duality mapping of $x$ associated with the gauge $\varphi_p$. Since this duality mapping is single-valued, we obtain $
P^*(\frak{j}_{p,X}(x))
=\frak{j}_{p,X}(x)$ and therefore $\frak{j}_{p,X}(x) \in \Ran P^*$. The case $x = 0$ is obvious.

Conversely, suppose that $\frak{j}_{p,X}(x)$ belongs to the subspace $\Ran P^*$. Then we have $
P^*(\frak{j}_{p,X}(x))
=\frak{j}_{p,X}(x)$ and hence
\begin{align*}
\MoveEqLeft
\norm{x}_X^p
\ov{\eqref{properties-Jp}}{=}
\la x,\frak{j}_{p,X}(x) \ra_{X,X^*} 
=\big\la x,P^*(\frak{j}_{p,X}(x)) \big\ra_{X,X^*}
=\big\la P(x),\frak{j}_{p,X}(x) \big\ra_{X,X^*} \\
&\leq \norm{P(x)}_X \norm{\frak{j}_{p,X}(x)}_{X^*}
\ov{\eqref{properties-Jp}}{=} \norm{P(x)}_X \norm{x}_X^{p-1}
\leq \norm{x}_X^p.         
\end{align*}
If $x \neq 0$, it follows that $\norm{P(x)}_X=\norm{x}_X$ and
\begin{equation}
\label{inter-82}
\big\la P(x),\frak{j}_{p,X}(x) \big\ra_{X,X^*}
=
\norm{x}_X^p.
\end{equation}
By \eqref{properties-Jp} and \eqref{inter-82}, the functional $
x^*\ov{\mathrm{def}}{=}\frac{\frak{j}_{p,X}(x)}{\norm{x}_X^{p-1}}$ satisfies $\norm{x^*}_{X^*}=1$ and
\[
\bigg\la \frac{x}{\norm{x}_X},x^*\bigg\ra_{X,X^*}
=
\bigg\la \frac{P(x)}{\norm{x}_X},x^*\bigg\ra_{X,X^*}
=1.
\]
Consequently, we have
\[
1
=
\bigg\la \frac{x+P(x)}{2\norm{x}_X},x^*\bigg\ra_{X,X^*}
\leq
\norm{\frac{x+P(x)}{2\norm{x}_X}}_X
\leq
\frac{\norm{x}_X+\norm{P(x)}_X}{2\norm{x}_X}
=1.
\]
Hence
\[
\norm{\frac{x+P(x)}{2\norm{x}_X}}_X=1.
\]
Since both $x/\norm{x}_X$ and $P(x)/\norm{x}_X$ belong to the unit sphere and $X$ is strictly convex, we obtain $P(x)=x$. We conclude that $x \in \Ran P$.
\end{proof}

\begin{remark} \normalfont
In Lemma~\ref{Lemma-smooth}, the implication from $\Ran P$ to $\Ran P^*$ only uses smoothness, whereas the converse implication only uses strict convexity.
\end{remark}

The preceding property actually characterizes smooth strictly convex Banach spaces for any fixed power gauge. 

\begin{prop}
\label{Prop-characterization-smooth-strict-convex}
Let $X$ be a Banach space, let $1<p<\infty$, and let $j_p \co X \to X^*$ satisfy 
\begin{equation}
\label{power-selec}
\norm{j_{p}(x)}_{X^*}
= \norm{x}_X^{p-1}
\quad\text{and}\quad
\big\la x,j_{p}(x) \big\ra_{X,X^*}
= \norm{x}_X^p, \quad x \in X.
\end{equation}
The following assertions are equivalent.
\begin{enumerate}
\item The Banach space $X$ is smooth and strictly convex, and $j_p=\frak{j}_{p,X}$.
\item For any contractive projection $P \co X \to X$ and any $x \in X$, we have
\begin{equation}
\label{characterization-ranges-power-duality-map}
x\in\Ran P
\quad\Longleftrightarrow\quad
j_p(x)\in\Ran P^*.
\end{equation}
\end{enumerate}
\end{prop}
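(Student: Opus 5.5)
The implication 1 $\Rightarrow$ 2 is exactly Lemma~\ref{Lemma-smooth}, so the work is in 2 $\Rightarrow$ 1. The tool is to test condition \eqref{characterization-ranges-power-duality-map} on rank-one contractive projections. Fix $x \neq 0$ and any $f \in X^*$ with $\norm{f}_{X^*}=1$ and $\la x,f\ra = \norm{x}_X$. Then $P(y)=\frac{\la y,f\ra}{\norm{x}_X}x$ is a projection with $\norm{P} \leq 1$ and $\Ran P=\C x$. Its adjoint is $P^*(g)=\frac{\la x,g\ra}{\norm{x}_X} f$, so $\Ran P^* = \C f$.

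\textbf{Smoothness and $j_p=\frak{j}_{p,X}$.} Since $x\in\Ran P$, condition 2 gives $j_p(x)\in \C f$, say $j_p(x)=\lambda f$. From \eqref{power-selec}, $\abs{\lambda}=\norm{x}^{p-1}$ and $\lambda\norm{x}=\norm{x}^p$, so $\lambda=\norm{x}^{p-1}$. Hence $f=j_p(x)/\norm{x}^{p-1}$. Thus every norming functional of $x$ equals this fixed element, so the norming functional is unique and $X$ is smooth. Moreover $j_p(x)$ satisfies the defining relations \eqref{Def-JX-gauge} of $\frak{j}_{p,X}(x)$, so $j_p=\frak{j}_{p,X}$. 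The case $x=0$ is trivial, because \eqref{power-selec} forces $j_p(0)=0$.

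\textbf{Strict convexity.} Suppose $x\neq y$ with $\norm{x}=\norm{y}=1$ and $\norm{x+y}=2$. By Hahn--Banach choose $f$ of norm one with $\la x+y,f\ra=2$; then $\la x,f\ra=\la y,f\ra=1$. Let $P(z)=\la z,f\ra x$, a contractive projection with $\Ran P^*=\C f$. Since $f$ norms $y$, the previous step (now valid, as $X$ is smooth) gives $j_p(y)=f\in\Ran P^*$. Condition 2 then yields $y\in\Ran P=\C x$. Writing $y=\mu x$, the relation $\la y,f\ra=1$ forces $\mu=1$, so $y=x$, a contradiction. Hence $X$ is strictly convex.

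The main care point is ensuring these rank-one maps are genuine contractive projections. We have $P^2=P$ because $\la x,f\ra=\norm{x}$, and $\norm{P}\leq 1$ because $\norm{f}=1$. One must also compute $\Ran P^*$ correctly under the bilinear or sesquilinear duality convention. Beyond that the argument is short.
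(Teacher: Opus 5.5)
Your proof is correct and follows the paper's argument essentially step for step. You obtain 1 $\Rightarrow$ 2 from Lemma~\ref{Lemma-smooth}, and you get smoothness together with $j_p=\frak{j}_{p,X}$ by testing condition 2 on the rank-one contractive projection built from a norming functional. Strict convexity comes from applying that same projection to two unit vectors that share a norming functional. The only difference is cosmetic: you work with an arbitrary nonzero $x$ instead of normalizing to a unit vector, which changes nothing.
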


\begin{proof}
1. $\Rightarrow 2.$: Suppose first that $X$ is smooth and strictly convex. The equivalence \eqref{characterization-ranges-power-duality-map} follows from Lemma~\ref{Lemma-smooth}.

2. $\Rightarrow 1.$: Conversely, assume \eqref{characterization-ranges-power-duality-map}. We first prove that $X$ is smooth. Let $u \in X$ satisfy $\norm{u}_X=1$, and let $\phi \in X^*$ be any norming functional for $u$, that is,
\begin{equation}
\label{inter-99}
\norm{\phi}_{X^*}=1
\quad\text{and}\quad
\phi(u)=1.
\end{equation}
Consider the rank-one operator $P_{\phi,u} \co X \to X$ defined by $P_{\phi,u}(x) \ov{\mathrm{def}}{=} \phi(x)u$. It is a contractive projection with $\Ran P_{\phi,u} = \mathbb{C}u$. Since $u \in \Ran P_{\phi,u}$, condition \eqref{characterization-ranges-power-duality-map} implies that $
j_p(u) \in \Ran P_{\phi,u}^*$. For any $g \in X^*$, we have
\[
P_{\phi,u}^*(g)
=g(u)\phi,
\]
and therefore $\Ran P_{\phi,u}^*=\mathbb{C}\phi$. Hence $j_p(u)=\lambda \phi$ for some scalar $\lambda$. Since $\norm{u}_X=1$, \eqref{power-selec} gives
\[
1=j_p(u)(u)
=\lambda \phi(u)
\ov{\eqref{inter-99}}{=} \lambda.
\]
Thus $j_p(u)=\phi$. Since $\phi$ was an arbitrary norming functional for $u$, the norming functional of $u$ is unique. Hence $X$ is smooth. By the single-valuedness of the duality mapping associated with $\varphi_p$, we also obtain $
j_p
=\frak{j}_{p,X}$. It remains to prove strict convexity. Suppose, by contradiction, that $X$ is not strictly convex. Then, by \cite[Proposition 1.2 p.~41]{Cio90}, there exist distinct elements $u,v \in X$ such that
\[
\norm{u}_X=\norm{v}_X=1
\quad\text{and}\quad
\norm{\frac{u+v}{2}}_X=1.
\]
Choose $\phi \in X^*$ with $\norm{\phi}_{X^*}=1$ and $
\phi\left(\frac{u+v}{2}\right)
=1$. Since $|\phi(u)|\leq1$ and $|\phi(v)|\leq1$, the equality $\phi(u)+\phi(v)=2$ implies
\[
\phi(u)=\phi(v)=1.
\]
By the smoothness proved previously and since $\norm{u}_X=\norm{v}_X=1$, we have
\[
j_p(u)
=\phi
=j_p(v).
\]
Consider again the contractive rank-one projection $P_{\phi,u}(x)=\phi(x)u$. Since $j_p(v)=\phi$ belongs to $\Ran P_{\phi,u}^*$, condition \eqref{characterization-ranges-power-duality-map} yields
\[
v \in \Ran P_{\phi,u}
=\mathbb{C}u.
\]
Hence $v=\lambda u$ for some scalar $\lambda$ with $|\lambda|=1$. Since $\phi(u)=\phi(v)=1$, we obtain $\lambda=1$, and therefore $u=v$, a contradiction. Thus $X$ is strictly convex.
\end{proof}

There is also a classical characterization of contractively complemented subspaces in terms of duality mappings. If $X$ is reflexive, smooth and strictly convex, Calvert showed that a closed subspace $Y$ of $X$ is the range of a contractive projection if and only if the image of $Y$ under the normalized duality mapping is a linear subspace of $X^*$, see \cite{Cal75}. This characterization is unchanged if one uses the power duality mapping $\frak{j}_{p,X}$. Indeed, by \cite[Proposition 4.7 (d),(f) pp.~27--28]{Cio90}, the normalized duality mapping and $\frak{j}_{p,X}$ have the same radial directions, and since $Y$ is a subspace, we have
\begin{equation}
\label{same-image-power-normalized}
\frak{j}_{p,X}(Y)=\J_X(Y).
\end{equation}
Consequently, we have the equivalence
\[
Y\text{ is contractively complemented}
\quad\Longleftrightarrow\quad
\frak{j}_{p,X}(Y)\text{ is a linear subspace of }X^*,
\]
under the assumptions of Calvert's theorem. Moreover, we have the following observation.

\begin{prop}
Let $X$ be a reflexive smooth strictly convex Banach space. Suppose that $1 < p < \infty$. Let $P \co X \to X$ be a contractive projection. If $Y=\Ran P$ then
\begin{equation}
\label{power-duality-map-range-equality}
\frak{j}_{p,X}(Y)=\Ran P^*.
\end{equation}
\end{prop}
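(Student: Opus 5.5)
The inclusion $\frak{j}_{p,X}(Y)\subset\Ran P^*$ is exactly the forward implication of Lemma~\ref{Lemma-smooth}. Indeed, for $x\in Y=\Ran P$ we get $\frak{j}_{p,X}(x)\in\Ran P^*$, and this part only uses smoothness. The whole task is therefore the reverse inclusion $\Ran P^*\subset\frak{j}_{p,X}(Y)$. Here I plan to use reflexivity through the inverse duality mapping \eqref{inverse-power-duality-mapping}.

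Let $\phi\in\Ran P^*$. Since $X$ is reflexive, smooth and strictly convex, the map $\frak{j}_{p,X}\colon X\to X^*$ is a bijection with inverse $\frak{j}_{p^*,X^*}$. Set $x\ov{\mathrm{def}}{=}\frak{j}_{p^*,X^*}(\phi)\in X$. Then $\frak{j}_{p,X}(x)=\phi\in\Ran P^*$. By the converse implication of Lemma~\ref{Lemma-smooth}, which only uses strict convexity of $X$, we get $x\in\Ran P=Y$. Hence $\phi=\frak{j}_{p,X}(x)\in\frak{j}_{p,X}(Y)$, which gives equality \eqref{power-duality-map-range-equality}.

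The step needing care is the surjectivity of $\frak{j}_{p,X}$ onto $X^*$. I take it from \cite[Proposition 4.7(e)]{Cio90}, as recalled before \eqref{inverse-power-duality-mapping}. If one wants a self-contained argument, one notes that by reflexivity each $\phi\neq0$ attains its norm at some unit vector $u$. The vector $x=\norm{\phi}^{1/(p-1)}u$ then satisfies the two characterizing equalities \eqref{properties-Jp} with $\phi$ in place of $\frak{j}_{p,X}(x)$. Smoothness forces $\frak{j}_{p,X}(x)=\phi$. This surjectivity is the only point where reflexivity enters, and the rest is immediate from Lemma~\ref{Lemma-smooth}.
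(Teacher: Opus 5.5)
Your argument is correct, but it takes a slightly different route from the paper's for the reverse inclusion.

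The paper dualizes. Since $X$ is reflexive, $X^*$ is smooth and strictly convex, so it applies the forward (smoothness) half of Lemma~\ref{Lemma-smooth} to the contractive projection $P^*$ on $X^*$. This gives $\mathfrak{j}_{p^*,X^*}(z)\in\Ran P^{**}=Y$ for $z\in\Ran P^*$, and then $z=\mathfrak{j}_{p,X}\mathfrak{j}_{p^*,X^*}(z)\in\mathfrak{j}_{p,X}(Y)$.

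You stay in $X$ instead. You use the converse (strict convexity) half of the lemma for $P$ itself, combined with surjectivity of $\mathfrak{j}_{p,X}$ onto $X^*$.

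The two arguments are mirror images across the duality, and both rest on \eqref{inverse-power-duality-mapping}. Yours avoids the bidual identification $P^{**}=P$ and the transfer of strict convexity of $X$ to smoothness of $X^*$. Your self-contained proof of surjectivity (norm attainment via reflexivity, plus single-valuedness from smoothness) also shows that reflexivity enters only at that one point. The paper's version has the modest advantage of needing only the forward implication of the lemma, applied once in $X$ and once in $X^*$.
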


\begin{proof}
Lemma~\ref{Lemma-smooth} gives the inclusion $\frak{j}_{p,X}(Y)\subset\Ran P^*$. Conversely, let $z \in \Ran P^*$ and let $p^*$ be the conjugate exponent of $p$. Since $X$ is reflexive, the space $X^*$ is smooth and strictly convex. Applying Lemma~\ref{Lemma-smooth} to the contractive projection $P^* \co X^* \to X^*$, we obtain
\[
\frak{j}_{p^*,X^*}(z) \in \Ran P^{**}.
\]
Under the canonical identification $X^{**}=X$, we have $P^{**}=P$ and therefore $\Ran P^{**}=Y$. Hence $\frak{j}_{p^*,X^*}(z)\in Y$. Consequently, we have
\[
\frak{j}_{p,X}\frak{j}_{p^*,X^*}(z)
\ov{\eqref{inverse-power-duality-mapping}}{=} z,
\]
and therefore $z \in \frak{j}_{p,X}(Y)$. This proves \eqref{power-duality-map-range-equality}.
\end{proof}

\subsection{Case of noncommutative $\L^p$-spaces}

If the Banach space  $X$ is a noncommutative $\L^p$-space, the power duality mapping admits a particularly simple algebraic expression. This is the noncommutative generalization of the classical formula $\frak{j}_{p,\L^p(\Omega)}(f) = |f|^{p-1}\sgn(f)$ for any $f \in \L^p(\Omega)$, stated in \cite[Proposition 4.9 p.~72]{Cio90}, where $\Omega$ is a measure space.

\begin{prop}
\label{prop-J-lp}
Suppose that $1<p<\infty$. Let $\cal{M}$ be a von Neumann algebra equipped with a normal semifinite faithful trace. For this proposition, we identify $\L^{p^*}(\cal{M})$ with the dual of $\L^p(\cal{M})$ through the bilinear duality bracket
\begin{equation}
\label{bilinear-duality-bracket}
\la x,y \ra_{\L^p(\cal{M}),\L^{p^*}(\cal{M})}
\ov{\mathrm{def}}{=} \tr(xy).
\end{equation}
If $x \in \L^p(\cal{M})$ has polar decomposition $x=u|x|$, then
\begin{equation}
\label{comput-J}
\frak{j}_{p,\L^p(\cal{M})}(x)
= |x|^{p-1}u^*.
\end{equation}
\end{prop}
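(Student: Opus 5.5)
Since $\L^p(\cal M)$ is smooth for $1<p<\infty$, the duality mapping $\frak{j}_{p,\L^p(\cal M)}$ is single-valued. By \eqref{properties-Jp}, it therefore suffices to check that $y \ov{\mathrm{def}}{=} |x|^{p-1}u^*$ satisfies two conditions: it lies in $\L^{p^*}(\cal M)$ with $\norm{y}_{\L^{p^*}(\cal M)}=\norm{x}_{\L^p(\cal M)}^{p-1}$, and $\tr(xy)=\norm{x}_{\L^p(\cal M)}^p$. Any element of the dual with these two properties is a norming functional of the correct size, so uniqueness forces it to equal $\frak{j}_{p,\L^p(\cal M)}(x)$. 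We take the bilinear bracket \eqref{bilinear-duality-bracket} to be the identification of $\L^{p^*}(\cal M)$ with $\L^p(\cal M)^*$, which is isometric by noncommutative Hölder duality; this is standard and we assume it. The case $x=0$ is trivial, so we assume $x\neq 0$.

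\textbf{Pairing.} The polar decomposition gives $u^*u = s(|x|)$, the support projection of $|x|$. Hence
\[
xy = u|x|\,|x|^{p-1}u^* = u|x|^pu^*.
\]
By traciality, applied to the $\tau$-measurable operators $u|x|^{p/2}$ and $|x|^{p/2}u^*$ (the product is trace class, so the trace property applies), we get
\[
\tr(u|x|^pu^*) = \tr(|x|^{p/2}u^*u|x|^{p/2}) = \tr(|x|^p) = \norm{x}_p^p.
\]
Here we used $u^*u|x|^{p/2}=|x|^{p/2}$.

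\textbf{Norm.} Compute
\[
y^*y = u|x|^{p-1}|x|^{p-1}u^* = u|x|^{2(p-1)}u^*.
\]
Since $u$ is a partial isometry whose initial projection $u^*u$ contains the support of $|x|$, functional calculus gives $|y^*|=(yy^*)^{1/2}$ with $yy^*=|x|^{p-1}u^*u|x|^{p-1}=|x|^{2(p-1)}$. Thus $|y^*|=|x|^{p-1}$. Using $\tr(|y|^{q})=\tr(|y^*|^{q})$, which follows from $|y|=u|y^*|u^*$ (equivalently, $y^*=u|x|^{p-1}$ is a polar decomposition), we obtain
\[
\tr(|y|^{p^*})=\tr(|x|^{(p-1)p^*})=\tr(|x|^p),
\]
since $(p-1)p^*=p$. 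Therefore $\norm{y}_{p^*}=\norm{x}_p^{p/p^*}=\norm{x}_p^{p-1}$. In particular $y\in\L^{p^*}(\cal M)$.

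\textbf{Conclusion.} The two required identities hold, so by uniqueness $y=\frak{j}_{p,\L^p(\cal M)}(x)$. The only mildly delicate point is justifying the trace cyclicity and the identity $\tr(|y|^q)=\tr(|y^*|^q)$ for unbounded $\tau$-measurable operators. We would handle both by the standard facts that, for $\tau$-measurable $a,b$ with $ab$ and $ba$ integrable, $\tr(ab)=\tr(ba)$, and that $|y|$ and $|y^*|$ are unitarily equivalent on their supports via the partial isometry $u$.
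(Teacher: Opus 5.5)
Your proposal is correct and follows essentially the same argument as the paper. Both proofs use smoothness to get single-valuedness of the duality mapping, then verify $\norm{|x|^{p-1}u^*}_{p^*}=\norm{x}_p^{p-1}$ and $\tr(x|x|^{p-1}u^*)=\tr(u|x|^pu^*)=\norm{x}_p^p$ via $u^*u=s(|x|)$ and traciality; the only difference is that you spell out the norm step ($|y^*|=|x|^{p-1}$ and $|y|=u|x|^{p-1}u^*$), which the paper does in one line.
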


\begin{proof}
Recall that $\L^p(\cal{M})$ is strictly convex and smooth by \cite[Corollary 5.2]{PiX03}. Hence $\frak{j}_{p,\L^p(\cal{M})}$ is single-valued. Since $u^*u=s(|x|)$, we have
\[
\bnorm{|x|^{p-1}u^*}_{\L^{p^*}(\cal{M})}
=
\bnorm{|x|^{p-1}}_{\L^{p^*}(\cal{M})}
=
\norm{x}_{\L^p(\cal{M})}^{p-1}.
\]
Furthermore, using the tracial property,
\[
\big\la x,|x|^{p-1}u^*\big\ra_{\L^p(\cal{M}),\L^{p^*}(\cal{M})}
\ov{\eqref{bilinear-duality-bracket}}{=}
\tr\big(u|x|^pu^*\big)
=
\tr\big(|x|^pu^*u\big)
=
\tr(|x|^p)
=
\norm{x}_{\L^p(\cal{M})}^p.
\]
Thus $|x|^{p-1}u^*$ satisfies \eqref{properties-Jp}, and therefore it is equal to $\frak{j}_{p,\L^p(\cal{M})}(x)$.
\end{proof}

In particular, if $p=4$, we obtain $\frak{j}_{4,\L^4(\cal{M})}(x)=|x|^{3}u^*=x^*xx^*$.

\begin{remark} \normalfont
\label{Remark-sesquilinear-duality}
For $1<p<\infty$, let $p^* \ov{\mathrm{def}}{=} \frac{p}{p-1}$. In the remainder of the paper, we identify $\L^{p^*}(\cal M)$ with the dual of $\L^p(\cal M)$ through the sesquilinear pairing
\begin{equation}
\label{sesquilinear-pairing}
\la x,y\ra
=
\Tr(y^*x),
\quad
x\in\L^p(\cal M), y\in\L^{p^*}(\cal M),
\end{equation}
rather than through the bilinear pairing \eqref{bilinear-duality-bracket}. More precisely, the map
\[
\iota_p\co \L^{p^*}(\cal M)\to(\L^p(\cal M))^*,
\qquad
[\iota_p(y)](x)=\Tr(y^*x),
\]
is a conjugate linear isometric bijection. Under this identification, if $x=u|x|$ is the polar decomposition of $x$, the power duality mapping associated with the gauge $t\mapsto t^{p-1}$ is represented by
\begin{equation*}
\frak{j}_{p,\L^p(\cal M)}(x)=u|x|^{p-1},
\end{equation*}
in the sense that
\[
\J_{\L^p(\cal M),\varphi_p}
=
\iota_p\circ\frak{j}_{p,\L^p(\cal M)}.
\]
In particular, $\frak{j}_{2,\L^2(\cal M)}=\Id_{\L^2(\cal M)}$ and, when $p=4$, we have
\begin{equation}
\label{duality-map-L4}
\frak{j}_{4,\L^4(\cal M)}(x)
=
u|x|^3
=
xx^*x.
\end{equation}
Moreover, \eqref{inverse-power-duality-mapping} becomes
\begin{equation}
\label{inverse-power-duality-mapping-NC}
\frak{j}_{\frac43,\L^{\frac43}(\cal M)}
\frak{j}_{4,\L^4(\cal M)}
=
\Id_{\L^4(\cal M)},
\qquad
\frak{j}_{4,\L^4(\cal M)}
\frak{j}_{\frac43,\L^{\frac43}(\cal M)}
=
\Id_{\L^{\frac43}(\cal M)}.
\end{equation}
The formula \eqref{duality-map-L4} is especially adapted to our purposes, since polarization of the cubic map $x\mapsto xx^*x$ produces the Jordan triple product $(x,y,z)\mapsto\frac12(xy^*z+zy^*x)$. Moreover, this convention preserves rectangular supports. Indeed, if $e$ and $f$ are projections in $\cal M$ and $x\in e\L^p(\cal M)f$, then $\frak{j}_{p,\L^p(\cal M)}(x)$ belongs to $e\L^{p^*}(\cal M)f$, whereas the bilinear realization of the duality mapping belongs to the opposite corner $f\L^{p^*}(\cal M)e$.
\end{remark}

\section{Tangent Hilbert geometry}
\label{sec-tangent-Hilbert-geometry}
\subsection{First order geometry}

We first explain the geometric idea underlying our approach. The basic principle is purely Banach geometric. Let $X$ be a smooth complex Banach space, considered below as a real Banach space whenever differentiation of real-valued functions is involved. Let $P\co X\to X$ be a contractive projection and fix $a\in\Ran P$ with $a\neq0$. Suppose that $1<p<\infty$ and consider the convex function
\begin{equation}
\label{def-Phi-p-tangent}
\Psi_p(x)\ov{\mathrm{def}}{=}\frac1p\norm{x}_X^p,
\quad x\in X.
\end{equation}
Since $P(a)=a$, for any $z \in X$ and any $t \in \R$, we have
\begin{equation}
\label{basic-tangent-contractivity}
\Psi_p(a+tP(z))
=
\Psi_p(P(a+tz))
\leq
\Psi_p(a+tz).
\end{equation}
Thus the geometry of the $p$-power of the norm near a point $a$ of the range is more curved in the ambient direction $z$ than in its projected direction $P(z)$. This elementary observation naturally leads to differential methods. Such methods have already proved useful in the study of contractive projections. In particular, second derivatives of norms were used in \cite[Section 3]{LRG07} to detect structural properties of their ranges. Since $X$ is smooth, the function $\Psi_p$ is Gâteaux differentiable at every nonzero point. By the identification of the power duality mapping with the Gâteaux derivative of $\frac1p\norm{\cdot}_X^p$, we have
\begin{equation}
\label{first-variation-Phi-p}
\nabla_a\Psi_p(z)
=
\Re\big\la z,\frak{j}_{p,X}(a)\big\ra_{X,X^*},
\quad z\in X.
\end{equation}
Here $z\mapsto\nabla_a\Psi_p(z)$ is a continuous real linear functional on the underlying real Banach space $X_{\R}$.
To exploit \eqref{basic-tangent-contractivity}, fix $z\in X$ and define
\[
F_z(t)
\ov{\mathrm{def}}{=}
\Psi_p(a+tz)-\Psi_p(a+tP(z)),
\quad t \in \R.
\]
By \eqref{basic-tangent-contractivity}, $F_z(t) \geq 0$ for any $t \in \R$, whereas $F_z(0)=0$. Hence $t=0$ is a minimum of $F_z$. Since $F_z$ is differentiable at $0$, we obtain $F_z'(0)=0$. Therefore
\begin{align}
\label{first-tangent-orthogonality}
0
=\nabla_a\Psi_p(z)-\nabla_a\Psi_p(P(z))
=\nabla_a\Psi_p(z-P(z)).
\end{align}
Using \eqref{first-variation-Phi-p}, this says
\[
\Re\big\la z-P(z),\frak{j}_{p,X}(a)\big\ra_{X,X^*}
=0.
\]
Applying the same identity to $\i z$ instead of $z$ gives
\[
\Im\big\la z-P(z),\frak{j}_{p,X}(a)\big\ra_{X,X^*}
=0,
\]
and hence
\begin{equation}
\label{complex-first-tangent-orthogonality}
\big\la z-P(z),\frak{j}_{p,X}(a)\big\ra_{X,X^*}=0.
\end{equation}
Thus the directions belonging to $\ker P$ are tangent, at $a$, to the level hypersurface of $\Psi_p$, or equivalently to the sphere through $a$. In particular, the first-order geometry is precisely the geometry encoded by the duality mapping.

\subsection{Second order geometry}

Suppose now that $\Psi_p$ is twice Fr\'echet differentiable at $a$ as a real-valued function on $X_{\R}$. Since $F_z$ has a minimum at $0$, we have
\begin{equation}
\label{second-derivative-minimum}
F_z''(0)\geq0.
\end{equation}
Indeed, we have
\[
F_z''(0)
=
\lim_{t\to0}
\frac{F_z(t)+F_z(-t)-2F_z(0)}{t^2},
\]
and the numerator is positive. On the other hand, by the chain rule,
\[
F_z''(0)
=
\D_{\R}^2\Psi_p(a)[z,z]
-
\D_{\R}^2\Psi_p(a)[P(z),P(z)].
\]
Consequently,
\begin{equation}
\label{abstract-second-tangent-contraction}
\D_{\R}^2\Psi_p(a)[P(z),P(z)]
\leq
\D_{\R}^2\Psi_p(a)[z,z],
\quad z \in X.
\end{equation}
Since $\Psi_p$ is convex, the real symmetric bilinear form $\D_{\R}^2\Psi_p(a)$ is positive semidefinite. Indeed, for any $z\in X$, the function $ t\mapsto \Psi_p(a+tz)$ is convex on $\R$, and therefore
\[
\D_{\R}^2\Psi_p(a)[z,z]
=
\left.\frac{\d^2}{\d t^2}\right|_{t=0}\Psi_p(a+tz)
\geq 0.
\]
Consequently, the quadratic form $z\mapsto\D_{\R}^2\Psi_p(a)[z,z]$ defines a possibly degenerate local Hilbertian metric. If
\[
\cal N_a
\ov{\mathrm{def}}{=}
\{z\in X:\D_{\R}^2\Psi_p(a)[z,z]=0\},
\]
then the Cauchy--Schwarz inequality for positive semidefinite bilinear forms shows the equality $\D_{\R}^2\Psi_p(a)[z,w]=0$ for any $z\in\cal N_a$ and every $w\in X$. Hence $\D_{\R}^2\Psi_p(a)$ induces a positive definite real inner product on $X/\cal N_a$, whose completion is a real Hilbert space. The Levi form introduced below may be viewed as the Hermitian counterpart of this real second-order geometry.

The following result says that the Levi form measures exactly the second-order deviation of $\Phi$ from the mean-value property along complex lines.

\begin{prop}
Let $\Omega$ be an open subset of a complex Banach space $X$ and let $\Phi\co\Omega\to\R$ be of class $\mathrm C^2$. For any $z \in X$ and any $a \in \Omega$, we have
\begin{equation}
\label{Levi-average-limit}
\scr{L}_{\Phi,a}(z)
=\lim_{r \to 0}\frac1{r^2}
\left(
\frac1{2\pi}\int_0^{2\pi}\Phi(a+r\e^{\i t}z) \d t-\Phi(a)
\right).
\end{equation}
\end{prop}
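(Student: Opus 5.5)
The plan is a second-order Taylor expansion of $\Phi$ at $a$, averaged over the circle. Since $\Omega$ is open, there is $r_0>0$ with $a+\lambda z\in\Omega$ for all $|\lambda|\leq r_0$. So for $0<r<r_0$ the average in \eqref{Levi-average-limit} is well defined. Since $\Phi$ is $\mathrm C^2$ on $X_{\R}$, Taylor's formula with Peano remainder gives, for $h=r\e^{\i t}z$,
\[
\Phi(a+h)=\Phi(a)+\D_{\R}\Phi(a)h+\tfrac12\D_{\R}^2\Phi(a)(h,h)+R(h),
\qquad |R(h)|\leq \epsilon(\norm{h}_X)\norm{h}_X^2,
\]
with $\epsilon(s)\to0$ as $s\to0$. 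The remainder bound is uniform in $t$ because $\norm{h}_X=r\norm{z}_X$. Hence the averaged remainder divided by $r^2$ tends to $0$.

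Next I average the first two terms. The linear term contributes $r\,\frac1{2\pi}\int_0^{2\pi}\D_{\R}\Phi(a)(\cos t\, z+\sin t\,\i z)\,\d t=0$, by real linearity and $\int\cos=\int\sin=0$. For the quadratic term, I write $\e^{\i t}z=\cos t\,z+\sin t\,\i z$ and expand by real bilinearity and symmetry:
\[
\D_{\R}^2\Phi(a)(\e^{\i t}z,\e^{\i t}z)=\cos^2 t\,\phi(z,z)+2\sin t\cos t\,\phi(z,\i z)+\sin^2 t\,\phi(\i z,\i z),
\]
where $\phi=\D_{\R}^2\Phi(a)$. Averaging uses $\frac1{2\pi}\int\cos^2=\frac1{2\pi}\int\sin^2=\frac12$ and $\int\sin\cos=0$. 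This gives $\frac12(\phi(z,z)+\phi(\i z,\i z))$. With the factor $\tfrac12 r^2$ from Taylor, the contribution is $\frac{r^2}{4}(\phi(z,z)+\phi(\i z,\i z))=r^2\scr L_{\Phi,a}(z)$ by \eqref{Levi-form-diagonal}.

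Dividing by $r^2$ and letting $r\to0$ then yields \eqref{Levi-average-limit}. The only delicate point is justifying the uniform Peano remainder. I plan to obtain it from the integral form $R(h)=\int_0^1(1-s)\big(\D_{\R}^2\Phi(a+sh)-\D_{\R}^2\Phi(a)\big)(h,h)\,\d s$. This requires the segment $[a,a+h]$ to lie in $\Omega$, which holds for small $r$ since a small ball around $a$ is contained in $\Omega$. The bound then follows from the continuity of $\D_{\R}^2\Phi$ at $a$. Measurability of $t\mapsto\Phi(a+r\e^{\i t}z)$ is clear because the function is continuous.
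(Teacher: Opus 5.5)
Your proposal is correct and follows the paper's proof essentially step for step: a second-order Taylor expansion at $a$ with remainder $o(r^2)$ uniform in $t$, vanishing of the averaged linear term, and averaging of the Hessian term to $\frac12\big(\D_{\R}^2\Phi(a)(z,z)+\D_{\R}^2\Phi(a)(\i z,\i z)\big)=2\scr{L}_{\Phi,a}(z)$. The only difference is that you justify the uniform remainder through the integral form of Taylor's formula, whereas the paper cites Dieudonn\'e's Taylor formula directly.
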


\begin{proof}
For any $a \in \Omega$ and any $z \in X$, Taylor's formula \cite[(8.14.3) p.~190]{Die69} on the underlying real Banach space gives, as $r \to 0$,
\begin{equation}
\label{Taylor-1}
\Phi(a+r\e^{\i t}z)
=\Phi(a)+r\D_{\R}\Phi(a)(\e^{\i t}z)
+\frac{r^2}{2}\D_{\R}^2\Phi(a)(\e^{\i t}z,\e^{\i t}z)+o(r^2),
\end{equation}
where the remainder is uniform in $t \in [0,2\pi]$. Since $\e^{\i t}z = (\cos t)z+(\sin t)\i z$, we have
\[
\frac1{2\pi} \int_0^{2\pi}\D_{\R} \Phi(a)(\e^{\i t}z) \d t
=\frac1{2\pi} \int_0^{2\pi} \big[(\cos t)\D_{\R} \Phi(a)(z) + (\sin t)\D_{\R}\Phi(a)(\i z)\big] \d t
=0
\]
and
\begin{align*}
\MoveEqLeft
\frac1{2\pi} \int_0^{2\pi}
\D_{\R}^2\Phi(a)(\e^{\i t}z,\e^{\i t}z) \d t 
=\frac1{2\pi} \int_0^{2\pi}
(\cos t)^2\D_{\R}^2  \Phi(a)(z,z) + (\cos t \sin t)\D_{\R}^2\Phi(a)(z,\i z) \\
&+(\sin t \cos t)\D_{\R}^2\Phi(a)(\i z,z)+(\sin t)^2\D_{\R}^2\Phi(a)(\i z,\i z) \d t \\
&=\frac12\left(\D_{\R}^2\Phi(a)(z,z)+\D_{\R}^2\Phi(a)(\i z,\i z)\right) 
\ov{\eqref{Levi-form-diagonal}}{=} 2\scr{L}_{\Phi,a}(z).
\end{align*}
Consequently, we have
\begin{equation}
\label{circular-average-Levi}
\frac1{2\pi} \int_0^{2\pi} \Phi(a+r\e^{\i t}z) \d t
\ov{\eqref{Taylor-1}}{=} \Phi(a)+r^2\scr{L}_{\Phi,a}(z)+o(r^2).
\end{equation}
We conclude that
\begin{equation*}
\scr{L}_{\Phi,a}(z)
\ov{\eqref{circular-average-Levi}}{=} \lim_{r \to 0} \frac1{r^2}
\left(
\frac1{2\pi} \int_0^{2\pi} \Phi(a+r\e^{\i t}z) \d t - \Phi(a)
\right).
\end{equation*}
\end{proof}

There is also a useful positivity interpretation. Recall that a $\mathrm{C}^2$ real-valued function $\Phi \co \Omega \to \R$ on an open subset of a complex Banach space is plurisubharmonic precisely when its restriction to every complex affine line is subharmonic, or equivalently when $
\scr{L}_{\Phi,a}(z)
\ov{\eqref{Levi-def-pos}}{\geq} 0$ 
for any $a \in \Omega$ and any $z \in X$. Since $\Phi_p=\norm{\cdot}_X^p$ is convex for $p \geq 1$, its restriction to every complex affine line is a convex, hence subharmonic, function on $\mathbb{C}$. Consequently, wherever $\Phi_p$ is of class $C^2$,
\begin{equation}
\label{Levi-p-positive}
\scr{L}_{\Phi_p,a}(z)
\geq0.
\end{equation}

\subsection{Hilbertian geometry associated to a projection contracting a plurisubharmonic function}

A contractive projection is automatically contractive for the infinitesimal Hermitian geometry determined by the Levi form at every point of its range.

\begin{prop}
\label{prop-Levi-contractive-projection}
Let $\Omega$ be an open subset of a complex Banach space $X$. Consider a plurisubharmonic function $\Phi \co \Omega \to \R$ of class $\mathrm{C}^2$. Let $P \co X \to X$ be a bounded projection such that $P(\Omega) \subset \Omega$  satisfying
\begin{equation}
\label{Phi-contractivity}
\Phi(P(x))
\leq \Phi(x),\quad x \in \Omega.
\end{equation}
Let $a \in \Omega \cap \Ran P$. Then
\begin{equation}
\label{Levi-projection-contraction}
\scr{L}_{\Phi,a}(P(z))
\leq \scr{L}_{\Phi,a}(z),
\quad z \in X.
\end{equation}
Let
\begin{equation}
\label{Levi-null-space}
\cal{N}_{\Phi,a}
\ov{\mathrm{def}}{=}
\{z \in X : \scr{L}_{\Phi,a}(z)=0 \}.
\end{equation}
Then $P$ leaves $\cal N_{\Phi,a}$ invariant and induces an orthogonal projection on the Hilbert space obtained by completing $X/\cal N_{\Phi,a}$ for the inner product induced by $\scr{L}_{\Phi,a}(\cdot,\cdot)$. Moreover, we have
\begin{equation}
\label{Levi-orthogonality}
\scr{L}_{\Phi,a}(P(z),y)
=\scr{L}_{\Phi,a}(z,y),
\quad z\in X,\quad y \in \Ran P.
\end{equation}
\end{prop}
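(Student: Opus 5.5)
The plan is to reduce everything to the circular-average characterization \eqref{Levi-average-limit} of the Levi form. Fix $a\in\Omega\cap\Ran P$ and $z\in X$. For small $r>0$ the closed disc $\{a+\lambda z:|\lambda|\leq r\}$ lies in $\Omega$. Since $P(a)=a$, the projection maps this disc onto the disc $\{a+\lambda P(z)\}$, because $P(a+r\e^{\i t}z)=a+r\e^{\i t}P(z)$. The hypothesis $P(\Omega)\subset\Omega$ together with \eqref{Phi-contractivity} then gives
\[
\Phi(a+r\e^{\i t}P(z))\leq \Phi(a+r\e^{\i t}z),\quad t\in[0,2\pi].
\]
Integrate in $t$, subtract $\Phi(a)$, divide by $r^2$ and let $r\to0$. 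By \eqref{Levi-average-limit} this yields \eqref{Levi-projection-contraction}. Only complex linearity of $P$ is used here, so the same argument works with the real Hessian.

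Plurisubharmonicity and \eqref{Levi-def-pos} make $\scr L_{\Phi,a}(\cdot,\cdot)$ a positive semidefinite Hermitian form; it is bounded since $\Phi$ is $\C^2$. By Cauchy--Schwarz, $\cal N_{\Phi,a}$ is exactly its radical, hence a closed linear subspace. If $z\in\cal N_{\Phi,a}$, then $0\leq\scr L_{\Phi,a}(Pz)\leq\scr L_{\Phi,a}(z)=0$, so $P$ leaves $\cal N_{\Phi,a}$ invariant. Thus $P$ induces a linear idempotent $\widetilde P$ on $X/\cal N_{\Phi,a}$ that is contractive for the induced norm. It therefore extends to a contractive idempotent on the completion $H_a$, and $\widetilde P$ still squares to itself by continuity.

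To conclude, invoke the standard Hilbert space fact that a contractive idempotent is an orthogonal projection. For $u\in\Ran\widetilde P$, $v\in\ker\widetilde P$ and $\lambda\in\mathbb C$, we have $\norm{u}^2=\norm{\widetilde P(u+\lambda v)}^2\leq\norm{u+\lambda v}^2$. Expanding to first order in $\lambda$ forces $\la v,u\ra=0$. Hence $\widetilde P=\widetilde P^*$.

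Finally, \eqref{Levi-orthogonality} follows by applying this orthogonality on the dense image of $X$. For $z\in X$ and $y\in\Ran P$, the class of $z-P(z)$ lies in $\ker\widetilde P$ and that of $y$ lies in $\Ran\widetilde P$, so $\scr L_{\Phi,a}(z-P(z),y)=0$. Alternatively, one can run the $\lambda$-expansion directly in $X$ with $u=y$ and $v=z-P(z)$, using \eqref{Levi-projection-contraction}.

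I expect no real obstacle. The only points needing care are the domain issue (that the small discs stay in $\Omega$, which is where $P(\Omega)\subset\Omega$ enters) and checking that $\cal N_{\Phi,a}$ is a linear subspace. The latter relies on positivity, which is the sole place the plurisubharmonicity hypothesis is used.
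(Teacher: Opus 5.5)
Your proposal is correct and follows the paper's proof essentially step by step. You use circular averaging via \eqref{Levi-average-limit} for the contraction inequality, Cauchy--Schwarz for the radical and its invariance under $P$, and the orthogonality of contractive idempotents on the completion for \eqref{Levi-orthogonality}. The only difference is that you prove this last Hilbert-space fact by the first-order $\lambda$-expansion, and note it can be run directly in $X$, whereas the paper simply cites it.
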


\begin{proof}
Let $z \in X$. Note that $P(a)=a$. For all sufficiently small $r > 0$ and any $t \in \R$, using the contractivity \eqref{Phi-contractivity}, we see that 
\[
\Phi(a+r\e^{\i t}P(z))
=\Phi(P(a)+r\e^{\i t}P(z))
=\Phi(P(a+r\e^{\i t}z))
\ov{\eqref{Phi-contractivity}}{\leq} \Phi(a+r\e^{\i t}z).
\]
Averaging over $t\in[0,2\pi]$ and using \eqref{circular-average-Levi}, we obtain
\[
\Phi(a)+r^2\scr{L}_{\Phi,a}(Pz)+o(r^2)
\leq
\Phi(a)+r^2\scr{L}_{\Phi,a}(z)+o(r^2).
\]
After division by $r^2$ and passage to the limit $r\to0^+$, this gives \eqref{Levi-projection-contraction}.

Since $\Phi$ is plurisubharmonic, its Levi form is positive semidefinite. Therefore the Cauchy--Schwarz inequality for positive sesquilinear forms shows that
\[
\scr{L}_{\Phi,a}(z)=0
\quad\Rightarrow\quad
\scr{L}_{\Phi,a}(z,w)=0,\quad w\in X.
\]
Consequently, $\cal N_{\Phi,a}$ is the radical of $\scr{L}_{\Phi,a}(\cdot,\cdot)$ and the latter induces an inner product on $X/\cal N_{\Phi,a}$. If $z \in \cal N_{\Phi,a}$, then \eqref{Levi-projection-contraction} implies
\[
0 \leq \scr{L}_{\Phi,a}(Pz)
\ov{\eqref{Levi-projection-contraction}}{\leq} \scr{L}_{\Phi,a}(z)
\ov{\eqref{Levi-null-space}}{=} 0,
\]
and therefore $P(z)$ belongs to $\cal N_{\Phi,a}$. Hence $P$ induces a projection $\widetilde{P} \co X/\cal N_{\Phi,a} \to X/\cal N_{\Phi,a}$, $[z] \mapsto [P(z)]$ on the quotient space $X/\cal N_{\Phi,a}$. Moreover, we have
\[
\scr{L}_{\Phi,a}(\widetilde{P}[z])
=
\scr{L}_{\Phi,a}(Pz)
\ov{\eqref{Levi-projection-contraction}}{\leq}
\scr{L}_{\Phi,a}(z).
\]
So  the map $\widetilde{P}$ is contractive for the induced Hilbertian norm. It therefore extends to a contractive projection on the Hilbert space completion of $X/\cal N_{\Phi,a}$. Recall that every contractive projection on a Hilbert space is orthogonal by \cite[Proposition 3.3 p.~37]{Con90}. Since $[y]$ belongs to the range of the induced projection for any $y \in \Ran P$, we obtain
\[
\scr{L}_{\Phi,a}(z-Pz,y)
=0,
\]
which is \eqref{Levi-orthogonality}.
\end{proof}

The Levi form also defines a conjugate linear map into the dual space.
\begin{defi}
Let $\Omega$ be an open subset of a complex Banach space $X$, let $\Phi \co \Omega \to \R$ be of class $\mathrm{C}^2$ and let $a\in\Omega$. We introduce the conjugate linear operator $\Lambda_{\Phi,a} \co X \to X^*$ defined by
\begin{equation}
\label{Levi-dual-map}
\big\la z,\Lambda_{\Phi,a}(y) \big\ra_{X,X^*}
\ov{\mathrm{def}}{=}
\scr{L}_{\Phi,a}(z,y),
\quad z,y \in X.
\end{equation}
We call $\Lambda_{\Phi,a}$ the Levi dual map of $\Phi$ at $a$.
\end{defi}

\begin{prop}
Under the assumptions of Proposition~\ref{prop-Levi-contractive-projection}, let $a\in\Omega\cap\Ran P$. Then
\begin{equation}
\label{Levi-adjoint-fixed}
P^*\Lambda_{\Phi,a}(y)
=\Lambda_{\Phi,a}(y),
\quad y \in \Ran P.
\end{equation}
\end{prop}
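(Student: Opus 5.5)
The plan is to unwind the definition of the Levi dual map and reduce the statement to the orthogonality relation \eqref{Levi-orthogonality} of Proposition~\ref{prop-Levi-contractive-projection}. Two functionals in $X^*$ coincide as soon as they agree on every $z \in X$. So fix $y \in \Ran P$ and $z \in X$, and compute the pairing of $P^*\Lambda_{\Phi,a}(y)$ with $z$.

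By the definition of the Banach adjoint, $\la z, P^*\Lambda_{\Phi,a}(y)\ra_{X,X^*} = \la P(z), \Lambda_{\Phi,a}(y)\ra_{X,X^*}$. By the definition \eqref{Levi-dual-map} of the Levi dual map, this equals $\scr{L}_{\Phi,a}(P(z),y)$. Since $y \in \Ran P$ and $a \in \Omega\cap\Ran P$, the relation \eqref{Levi-orthogonality} gives $\scr{L}_{\Phi,a}(P(z),y) = \scr{L}_{\Phi,a}(z,y)$. Applying \eqref{Levi-dual-map} once more, this is $\la z,\Lambda_{\Phi,a}(y)\ra_{X,X^*}$. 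As $z$ is arbitrary, we obtain $P^*\Lambda_{\Phi,a}(y)=\Lambda_{\Phi,a}(y)$.

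Two routine points should be recorded. First, $\Lambda_{\Phi,a}(y)$ really is a bounded functional: the map $z\mapsto \scr{L}_{\Phi,a}(z,y)$ is complex linear by \eqref{Levi-form-general}, and it is continuous because $\D^2_{\R}\Phi(a)$ is a continuous bilinear form. Second, the pairing $\la\cdot,\cdot\ra_{X,X^*}$ is the canonical bilinear one, linear in the functional. Under that convention the adjoint identity $\la z,P^*\phi\ra=\la Pz,\phi\ra$ holds without conjugations, so no complex-conjugation issue arises even though $\Lambda_{\Phi,a}$ is conjugate linear in $y$.

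There is no real obstacle. The only substantive input is \eqref{Levi-orthogonality}, which was already obtained from the orthogonality of contractive projections on Hilbert spaces. The one thing to check is that the hypothesis $a\in\Omega\cap\Ran P$ is exactly the one under which \eqref{Levi-orthogonality} was proved, and it is.
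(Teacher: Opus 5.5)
Your proposal is correct and is the same argument as the paper's: pair $P^*\Lambda_{\Phi,a}(y)$ with an arbitrary $z$, move $P$ across, apply \eqref{Levi-dual-map}, use \eqref{Levi-orthogonality} to drop $P$, and apply \eqref{Levi-dual-map} again. Your two side remarks are accurate and harmless. The first is that $z\mapsto\scr{L}_{\Phi,a}(z,y)$ is a bounded complex-linear functional. The second is that the canonical pairing introduces no conjugation.
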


\begin{proof}
For any $z \in X$ and $y \in \Ran P$, we have
\begin{align*}
\MoveEqLeft
\big \la z,P^*\Lambda_{\Phi,a}(y) \big\ra_{X,X^*}
=\big \la P(z),\Lambda_{\Phi,a}(y) \big\ra_{X,X^*}
\ov{\eqref{Levi-dual-map}}{=} \scr{L}_{\Phi,a}(Pz,y) \\
&\ov{\eqref{Levi-orthogonality}}{=} \scr{L}_{\Phi,a}(z,y)
\ov{\eqref{Levi-dual-map}}{=} \big\la z,\Lambda_{\Phi,a}(y) \big\ra_{X,X^*}.         
\end{align*}
We conclude that $P^*\Lambda_{\Phi,a}(y)=\Lambda_{\Phi,a}(y)$.
\end{proof}

The next result may be regarded as an infinitesimal bicontractivity principle. A contractive projection $P$ need not have a contractive complement $\Id-P$ for the original Banach norm. Nevertheless, at every point $a\in \Omega \cap \Ran P$, the projection and its complement are simultaneously contractive for the Hilbertian geometry determined by the Levi form. Equivalently, the Banach space symmetry $2P-\Id$, which need not be an isometry of $X$, becomes an involutive isometry after passing to the Levi Hilbert space at $a$.

\begin{cor}
\label{cor-Levi-Pythagoras-bicontractivity}
Let $\Omega$ be an open subset of a complex Banach space $X$. Consider a plurisubharmonic function $\Phi \co \Omega \to \R$ of class $\mathrm{C}^2$. Let $P \co X \to X$ be a bounded projection such that $P(\Omega) \subset \Omega$ satisfying $\Phi(P(x)) \leq \Phi(x)$ for any $x \in \Omega$. Let $a \in \Omega \cap \Ran P$. Then, for any $z,w \in X$, we have
\begin{equation}
\label{Levi-Pythagoras}
\scr{L}_{\Phi,a}(z,w)
=
\scr{L}_{\Phi,a}(Pz,Pw)
+
\scr{L}_{\Phi,a}((\Id-P)z,(\Id-P)w).
\end{equation}
In particular, we have
\begin{equation}
\label{Levi-Pythagoras-diagonal}
\scr{L}_{\Phi,a}(z)
=
\scr{L}_{\Phi,a}(Pz)
+
\scr{L}_{\Phi,a}((\Id-P)z),
\quad z\in X.
\end{equation}
Consequently, both $P$ and $\Id-P$ are contractive for the Levi seminorm at $a$:
\begin{equation}
\label{Levi-bicontractivity}
\scr{L}_{\Phi,a}(Pz)
\leq
\scr{L}_{\Phi,a}(z),
\qquad
\scr{L}_{\Phi,a}((\Id-P)z)
\leq
\scr{L}_{\Phi,a}(z).
\end{equation}
Moreover, the symmetry
\begin{equation}
\label{Levi-symmetry}
S\ov{\mathrm{def}}{=}2P-\Id
\end{equation}
is an isometry for the Levi form:
\begin{equation}
\label{Levi-symmetry-isometry}
\scr{L}_{\Phi,a}(Sz,Sw)
=
\scr{L}_{\Phi,a}(z,w),
\quad z,w \in X.
\end{equation}
Hence $S$ leaves $\cal{N}_{\Phi,a}$ invariant and induces an involutive unitary operator on the Hilbert space $\cal{H}_{\Phi,a}$ obtained by completing $X/\cal{N}_{\Phi,a}$ for the inner product induced by $\scr{L}_{\Phi,a}(\cdot,\cdot)$. In particular, the operators induced by $P$ and $\Id-P$ on this Hilbert space are complementary orthogonal projections.
\end{cor}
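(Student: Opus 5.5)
The plan is to derive everything from the orthogonality relation \eqref{Levi-orthogonality} of Proposition~\ref{prop-Levi-contractive-projection}, together with the Hermitian symmetry \eqref{Hermitian-Levi}. First, for $z,w\in X$ we expand the sesquilinear form using $z=Pz+(\Id-P)z$ and $w=Pw+(\Id-P)w$. This gives
\[
\scr{L}_{\Phi,a}(z,w)=\scr{L}_{\Phi,a}(Pz,Pw)+\scr{L}_{\Phi,a}((\Id-P)z,(\Id-P)w)+C_1+C_2,
\]
where the two cross terms are
\[
C_1=\scr{L}_{\Phi,a}((\Id-P)z,Pw),\qquad C_2=\scr{L}_{\Phi,a}(Pz,(\Id-P)w).
\]

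It remains to show that both cross terms vanish.

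\begin{itemize}
\item For $C_1$, set $y=Pw\in\Ran P$. By \eqref{Levi-orthogonality}, $\scr{L}_{\Phi,a}(Pz,y)=\scr{L}_{\Phi,a}(z,y)$, so $\scr{L}_{\Phi,a}(z-Pz,y)=0$ and $C_1=0$.
\item For $C_2$, apply \eqref{Hermitian-Levi}: $C_2=\ovl{\scr{L}_{\Phi,a}((\Id-P)w,Pz)}$. The term inside the conjugate is of the same type as $C_1$ with the roles of $z$ and $w$ exchanged, so it vanishes as well.
\end{itemize}

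This yields \eqref{Levi-Pythagoras}. Taking $w=z$ gives \eqref{Levi-Pythagoras-diagonal}. The bicontractivity \eqref{Levi-bicontractivity} then follows because each summand on the right is nonnegative by plurisubharmonicity \eqref{Levi-def-pos}.

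For the symmetry $S=2P-\Id$, note that $S=P-(\Id-P)$. Hence $PS=P$ and $(\Id-P)S=-(\Id-P)$, using $P^2=P$. Applying \eqref{Levi-Pythagoras} to the pair $Sz,Sw$ gives
\[
\scr{L}_{\Phi,a}(Sz,Sw)=\scr{L}_{\Phi,a}(Pz,Pw)+\scr{L}_{\Phi,a}((\Id-P)z,(\Id-P)w),
\]
since the two sign changes cancel in the second term. The right-hand side equals $\scr{L}_{\Phi,a}(z,w)$ by \eqref{Levi-Pythagoras} again, which proves \eqref{Levi-symmetry-isometry}.

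In particular $\scr{L}_{\Phi,a}(Sz)=\scr{L}_{\Phi,a}(z)$, so $S$ preserves $\cal N_{\Phi,a}$. It therefore induces an isometry on $X/\cal N_{\Phi,a}$, which extends by continuity to $\cal H_{\Phi,a}$. Since $S^2=\Id$, the extension is surjective, hence unitary, and it is an involution.

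The induced operators $\widetilde P$ and $\Id-\widetilde P$ are already known to be contractive projections on the completion, by Proposition~\ref{prop-Levi-contractive-projection} and \eqref{Levi-bicontractivity}. Since every contractive projection on a Hilbert space is orthogonal and these two sum to the identity, they are complementary orthogonal projections. Equivalently, $\widetilde P=\frac12(\widetilde S+\Id)$ with $\widetilde S$ a selfadjoint unitary.

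The argument has no real obstacle. The only point needing care is the second cross term, which must be handled through Hermitian symmetry, because \eqref{Levi-orthogonality} only controls the first slot against elements of $\Ran P$ in the second slot.
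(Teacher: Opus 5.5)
Your proof is correct and follows essentially the same route as the paper: you kill the two cross terms using \eqref{Levi-orthogonality} and Hermitian symmetry, expand by sesquilinearity, and then deduce the isometry property of $S$ and the orthogonality of the induced projections. Recycling \eqref{Levi-Pythagoras} via $PS=P$ and $(\Id-P)S=-(\Id-P)$ is only a slightly tidier version of the paper's direct expansion of $\scr{L}_{\Phi,a}(Sz,Sw)$.
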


\begin{proof}
Let $z,w \in X$. Since $P(w) \in \Ran P$, applying \eqref{Levi-orthogonality} with $(\Id-P)z$ instead of $z$, we obtain
\begin{equation}
\label{inter-525}
\scr{L}_{\Phi,a}((\Id-P)z,Pw)
\ov{\eqref{Levi-orthogonality}}{=}
\scr{L}_{\Phi,a}(P(\Id-P)z,Pw)
=0,
\end{equation}
where the last equality follows from $P(\Id-P)=0$. Since the Levi form is Hermitian, \eqref{inter-525} also gives
\[
\scr{L}_{\Phi,a}(Pz,(\Id-P)w)
=\overline{\scr{L}_{\Phi,a}((\Id-P)w,Pz)}
\ov{\eqref{Levi-orthogonality}}{=}\overline{\scr{L}_{\Phi,a}(P(\Id-P)w,Pz)}
=0.
\]
Using the decompositions $z=Pz+(\Id-P)z$ and $w=Pw+(\Id-P)w$, sesquilinearity gives
\begin{align*}
\MoveEqLeft
\scr{L}_{\Phi,a}(z,w)
=\scr{L}_{\Phi,a}\big(Pz+(\Id-P)z,Pw+(\Id-P)w\big) \\
&=\scr{L}_{\Phi,a}\big(Pz,Pw\big) +\scr{L}_{\Phi,a}\big(Pz,(\Id-P)w\big) 
+\scr{L}_{\Phi,a}\big((\Id-P)z,Pw\big) \\
&+\scr{L}_{\Phi,a}\big((\Id-P)z,(\Id-P)w\big)  
\ov{\eqref{inter-525}}{=} \scr{L}_{\Phi,a}(Pz,Pw) + \scr{L}_{\Phi,a}((\Id-P)z,(\Id-P)w),
\end{align*}
which proves \eqref{Levi-Pythagoras}. Taking $w=z$ gives \eqref{Levi-Pythagoras-diagonal}. Since $\Phi$ is plurisubharmonic, its Levi form is positive semidefinite, and hence both terms on the right-hand side of \eqref{Levi-Pythagoras-diagonal} are nonnegative. This proves \eqref{Levi-bicontractivity}.

Now, we prove \eqref{Levi-symmetry-isometry}, i.e., $\scr{L}_{\Phi,a}(Sz,Sw) = \scr{L}_{\Phi,a}(z,w)$ for any $z,w \in X$. Since
\begin{equation}
\label{def-S-inter-46}
Sz
\ov{\eqref{Levi-symmetry}}{=}  Pz-(\Id-P)z
\quad\text{and}\quad
Sw
\ov{\eqref{Levi-symmetry}}{=} Pw-(\Id-P)w,
\end{equation}
we have
\begin{align*}
\MoveEqLeft
\scr{L}_{\Phi,a}(Sz,Sw)
\ov{\eqref{def-S-inter-46}}{=} \scr{L}_{\Phi,a}\big(Pz-(\Id-P)z,Pw-(\Id-P)w\big) =\scr{L}_{\Phi,a}\big(Pz,Pw\big)\\
&-\scr{L}_{\Phi,a}\big((\Id-P)z,Pw\big)-\scr{L}_{\Phi,a}\big(Pz,(\Id-P)w\big)+\scr{L}_{\Phi,a}\big((\Id-P)z,(\Id-P)w\big)\\
&=\scr{L}_{\Phi,a}(Pz,Pw)
+
\scr{L}_{\Phi,a}((\Id-P)z,(\Id-P)w) 
=
\scr{L}_{\Phi,a}(z,w),         
\end{align*}
since the mixed terms vanish by the preceding orthogonality relations \eqref{inter-525}. Moreover, $S^2=\Id$. Thus $S$ preserves the null space $\cal N_{\Phi,a}$ and induces an involutive isometry on the associated Hilbert completion. Since this induced operator is an involutive isometry on a Hilbert space, it is a selfadjoint unitary. The identities
\[
P=\frac12(\Id+S)
\quad\text{and}\quad
\Id-P=\frac12(\Id-S)
\]
then show that the operators induced by $P$ and $\Id-P$ are complementary orthogonal projections.
\end{proof}

\begin{cor}
\label{cor-Levi-intertwining}
Under the assumptions of Proposition~\ref{prop-Levi-contractive-projection}, let $a\in\Omega\cap\Ran P$. Then the Levi dual map $\Lambda_{\Phi,a} \co X \to X^*$ defined by \eqref{Levi-dual-map} satisfies the intertwining relation
\begin{equation}
\label{Levi-intertwining}
P^*\Lambda_{\Phi,a}
=
\Lambda_{\Phi,a}P.
\end{equation}
Equivalently, $P$ is selfadjoint with respect to the Levi form:
\begin{equation}
\label{Levi-selfadjointness}
\scr{L}_{\Phi,a}(Pz,y)
=
\scr{L}_{\Phi,a}(z,Py),
\quad z,y \in X.
\end{equation}
In particular, if the Levi form $\scr{L}_{\Phi,a}(\cdot,\cdot)$ is nondegenerate, then
\begin{equation}
\label{Levi-kernel-orthogonal-range}
\ker P
=
(\Ran P)^{\perp_{\Phi,a}},
\end{equation}
where
\[
(\Ran P)^{\perp_{\Phi,a}}
\ov{\mathrm{def}}{=}
\{ z \in X : \scr{L}_{\Phi,a}(z,y)=0\text{ for any }y \in \Ran P\}.
\]
Consequently, in this case $P$ is uniquely determined by its range as the orthogonal projection onto $\Ran P$ for the Hilbertian geometry induced by the Levi form at $a$.
\end{cor}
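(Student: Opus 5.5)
The plan is to read everything off the Pythagorean decomposition \eqref{Levi-Pythagoras} from Corollary~\ref{cor-Levi-Pythagoras-bicontractivity}, which already holds under the assumptions of Proposition~\ref{prop-Levi-contractive-projection}.

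\emph{Selfadjointness.} Fix $z,y\in X$ and expand $z=Pz+(\Id-P)z$ in the first variable of $\scr{L}_{\Phi,a}(Pz,y)$ and $y=Py+(\Id-P)y$ in the second. The orthogonality relations established in the proof of that corollary state
\[
\scr{L}_{\Phi,a}((\Id-P)u,Pv)=0
\quad\text{and}\quad
\scr{L}_{\Phi,a}(Pu,(\Id-P)v)=0,
\quad u,v\in X.
\]
Using them, both $\scr{L}_{\Phi,a}(Pz,y)$ and $\scr{L}_{\Phi,a}(z,Py)$ reduce to $\scr{L}_{\Phi,a}(Pz,Py)$, which proves \eqref{Levi-selfadjointness}.

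\emph{Intertwining.} By the definition \eqref{Levi-dual-map} of the Levi dual map and the definition of the adjoint,
\[
\langle z,P^*\Lambda_{\Phi,a}(y)\rangle=\langle Pz,\Lambda_{\Phi,a}(y)\rangle=\scr{L}_{\Phi,a}(Pz,y)
\quad\text{and}\quad
\langle z,\Lambda_{\Phi,a}(Py)\rangle=\scr{L}_{\Phi,a}(z,Py).
\]
Hence \eqref{Levi-intertwining} is exactly \eqref{Levi-selfadjointness} tested against all $z$, so the two statements are equivalent. The only point to watch is that $\Lambda_{\Phi,a}$ is conjugate linear; this does not matter, because the identity is checked pointwise in $y$.

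\emph{Kernel as orthogonal complement.} Assume the Levi form is nondegenerate.
\begin{itemize}
\item If $z\in\ker P$ and $y\in\Ran P$, then $\scr{L}_{\Phi,a}(z,y)=\scr{L}_{\Phi,a}(z,Py)=\scr{L}_{\Phi,a}(Pz,y)=0$, so $\ker P\subset(\Ran P)^{\perp_{\Phi,a}}$.
\item Conversely, let $z\in(\Ran P)^{\perp_{\Phi,a}}$. For every $w\in X$ we have $Pw\in\Ran P$, so $\scr{L}_{\Phi,a}(Pz,w)=\scr{L}_{\Phi,a}(z,Pw)=0$. Nondegeneracy then forces $Pz=0$.
\end{itemize}
Thus $\ker P$ is determined by $\Ran P$ and the form at $a$. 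Since a projection is determined by its range together with its kernel, $P$ is the unique projection onto $\Ran P$ along the $\scr{L}_{\Phi,a}$-orthogonal complement of $\Ran P$.

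Nothing here is a real obstacle. The substantive input, namely \eqref{Levi-orthogonality} and the vanishing of the mixed terms, was already established in Proposition~\ref{prop-Levi-contractive-projection} and Corollary~\ref{cor-Levi-Pythagoras-bicontractivity}. The only care needed is to handle the conjugate-linear slot consistently in the intertwining step.
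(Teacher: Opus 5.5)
Your proposal is correct and follows essentially the same route as the paper. It derives selfadjointness from the vanishing of the mixed terms together with \eqref{Levi-orthogonality}, obtains the intertwining relation by testing against every $z$, and identifies the kernel through selfadjointness and nondegeneracy. The only cosmetic difference is in the converse inclusion: you apply nondegeneracy to $\scr{L}_{\Phi,a}(Pz,\cdot)$ directly, whereas the paper computes $\scr{L}_{\Phi,a}(Pz,Pz)=\scr{L}_{\Phi,a}(z,Pz)=0$ and implicitly uses the positivity of the Levi form.
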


\begin{proof}
Let $z,y \in X$. We first observe that
\[
\scr{L}_{\Phi,a}(Pz,(\Id-P)y)=0.
\]
Indeed, by the Hermitian property of the Levi form and \eqref{Levi-orthogonality},
\[
\scr{L}_{\Phi,a}(Pz,(\Id-P)y)
= \ovl{\scr{L}_{\Phi,a}((\Id-P)y,Pz)}
\ov{\eqref{Levi-orthogonality}}{=}
\ovl{\scr{L}_{\Phi,a}(P(\Id-P)y,Pz)}
=0.
\]
Consequently, since $y = (\Id-P)(y)+Py$, we have
\[
\scr{L}_{\Phi,a}(Pz,y)
=\scr{L}_{\Phi,a}\big(Pz,(\Id-P)(y)+Py\big)
=
\scr{L}_{\Phi,a}(Pz,Py).
\]
Since $P(y)$ belongs to the subspace $\Ran P$, another application of \eqref{Levi-orthogonality} yields
\[
\scr{L}_{\Phi,a}(Pz,Py)
=
\scr{L}_{\Phi,a}(z,Py).
\]
This proves \eqref{Levi-selfadjointness}. Now, for any $z,y\in X$, using \eqref{Levi-dual-map}, we obtain
\begin{align*}
\MoveEqLeft
\big\la z,P^*\Lambda_{\Phi,a}(y)\big\ra_{X,X^*}
=
\big\la Pz,\Lambda_{\Phi,a}(y)\big\ra_{X,X^*}
\ov{\eqref{Levi-dual-map}}{=}
\scr{L}_{\Phi,a}(Pz,y) \\
&=
\scr{L}_{\Phi,a}(z,Py)
\ov{\eqref{Levi-dual-map}}{=}
\big\la z,\Lambda_{\Phi,a}(Py) \big\ra_{X,X^*}.         
\end{align*}
Hence
\[
P^*\Lambda_{\Phi,a}(y)
=\Lambda_{\Phi,a}(Py),
\]
which is \eqref{Levi-intertwining}. Suppose finally that the Levi form is nondegenerate. If $z \in \ker P$ and $y\in \Ran P$, then \eqref{Levi-selfadjointness} gives
\[
\scr{L}_{\Phi,a}(z,y)
=
\scr{L}_{\Phi,a}(z,Py)
\ov{\eqref{Levi-selfadjointness}}{=}
\scr{L}_{\Phi,a}(Pz,y)
=
0.
\]
Thus
\[
\ker P
\subset (\Ran P)^{\perp_{\Phi,a}}.
\]
Conversely, let $z \in (\Ran P)^{\perp_{\Phi,a}}$. Since $P(z)$ belongs to $\Ran P$, we have
\[
0
=
\scr{L}_{\Phi,a}(z,Pz)
=\scr{L}_{\Phi,a}(z,P^2z)
=\scr{L}_{\Phi,a}(Pz,Pz).
\]
The nondegeneracy of the Levi form implies $Pz=0$. Hence $z \in \ker P$, proving \eqref{Levi-kernel-orthogonal-range}.
\end{proof}

\begin{remark} \normalfont
\label{rem-Levi-intertwining-meaning}
Relation \eqref{Levi-intertwining} is stronger than the fixed-point relation \eqref{Levi-adjoint-fixed}. It says that the Levi dual map intertwines the projection $P$ with its Banach adjoint $P^*$. Thus, although $P$ need not be selfadjoint for any ambient Hilbert space structure, it becomes selfadjoint for every Levi geometry based at a point $a\in\Omega\cap\Ran P$. When the Levi form at $a$ is nondegenerate, the decomposition
\[
X
= \Ran P \oplus \ker P
\]
is therefore orthogonal for this Hilbertian geometry.
\end{remark}

\begin{remark} \normalfont
\label{rem-Levi-orthogonality-first-order}
The Levi orthogonality relation \eqref{Levi-orthogonality} can also be obtained by differentiating the first-order condition along the range. Indeed, let $\Phi\co\Omega\to\R$ be of class $\C^2$, let $P$ be a bounded projection such that $P(\Omega)\subset\Omega$ and $\Phi\circ P\leq\Phi$, and fix $a\in \Omega \cap \Ran P$. For any $a'\in\Omega\cap\Ran P$ and any $k\in\ker P$, the function
\[
t\mapsto \Phi(a'+tk)-\Phi(a')
\]
has a minimum at $0$. Hence
\[
\D_{\R}\Phi(a')[k]=0.
\]
Differentiating this identity at $a'=a$ in a direction $y\in\Ran P$ gives
\[
\D_{\R}^2\Phi(a)[y,k]=0.
\]
Since both $\Ran P$ and $\ker P$ are complex subspaces, applying the same argument to $\i y$ and $\i k$ and using \eqref{Levi-form-general} yields
\[
\scr L_{\Phi,a}(k,y)=0,
\quad
k \in \ker P, y \in \Ran P.
\]
Thus the orthogonality itself is already encoded in the variation of the first-order condition. Plurisubharmonicity supplies the positivity which turns the Levi form into a possibly degenerate Hilbertian geometry; this Hilbertian realization is what allows us to introduce relative positive operators and their imaginary powers.
\end{remark}

\subsection{Relative metric operators and modular groups}
\label{sec-relative-modular}

\begin{prop}
\label{prop-relative-Riesz-operator}
Let $Y$ be a complex vector space equipped with two positive definite Hermitian forms $\langle\cdot,\cdot\rangle_0$ and $\langle\cdot,\cdot\rangle_1$, linear in the first variable. Let $Y'$ denote the algebraic dual of $Y$. For $j\in\{0,1\}$, consider the conjugate linear Riesz map $\cal R_j\co Y\to Y'$ defined by
\begin{equation}
\label{relative-metric-Riesz-maps}
\cal R_j(y)(x)
\ov{\mathrm{def}}{=}
\langle x,y\rangle_j,
\quad x,y\in Y.
\end{equation}
Suppose that
\begin{equation}
\label{common-Riesz-range}
\cal R_0(Y)=\cal R_1(Y).
\end{equation}
Then the formula
\begin{equation}
\label{relative-metric-operator-definition}
\Delta
\ov{\mathrm{def}}{=}
\cal R_0^{-1}\cal R_1
\end{equation}
defines a linear bijection $\Delta\co Y\to Y$. It is the unique linear operator satisfying
\begin{equation}
\label{relative-metric-comparison}
\langle x,y\rangle_1
=
\langle x,\Delta y\rangle_0,
\quad
x,y \in Y.
\end{equation}
Moreover, $\Delta$ is positive and symmetric with respect to $\langle\cdot,\cdot\rangle_0$, in the sense that
\begin{equation}
\label{relative-metric-selfadjoint-positive}
\langle\Delta x,y\rangle_0
=
\langle x,\Delta y\rangle_0
\quad\text{and}\quad
\langle x,\Delta x\rangle_0
=
\langle x,x\rangle_1>0
\end{equation}
for any $x,y\in Y$ with $x\neq0$.
If $Y$ is finite-dimensional, condition \eqref{common-Riesz-range} is automatic, $\Delta$ has spectrum contained in $(0,\infty)$ and
\begin{equation}
\label{relative-metric-square-root}
\langle x,y\rangle_1
=
\big\la \Delta^{\frac12}x,\Delta^{\frac12}y\big\ra_0,
\quad
x,y \in Y.
\end{equation}
\end{prop}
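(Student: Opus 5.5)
The argument is elementary linear algebra; I would carry it out in four steps.

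\emph{Injectivity of the Riesz maps.} Since each form is conjugate linear in the second variable, $\cal R_j$ is conjugate linear. It is injective: if $\cal R_j(y)=0$, then $\langle y,y\rangle_j=\cal R_j(y)(y)=0$, so $y=0$ by positive definiteness. Under \eqref{common-Riesz-range}, $\cal R_0$ is therefore a conjugate linear bijection from $Y$ onto the common range $\cal R_0(Y)=\cal R_1(Y)$. Hence $\Delta=\cal R_0^{-1}\cal R_1$ is well defined. It is linear, because it is a composition of two conjugate linear maps. It is bijective, since both factors are bijections onto the same set.

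\emph{The defining identity and uniqueness.} Unwinding the definition gives $\cal R_0(\Delta y)=\cal R_1(y)$, that is, $\langle x,\Delta y\rangle_0=\langle x,y\rangle_1$ for all $x$, which is \eqref{relative-metric-comparison}. For uniqueness, suppose $\Delta'$ also satisfies \eqref{relative-metric-comparison}. Then $\langle x,(\Delta-\Delta')y\rangle_0=0$ for all $x$. Choosing $x=(\Delta-\Delta')y$ and using definiteness gives $\Delta'=\Delta$.

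\emph{Symmetry and positivity.} I use the Hermitian symmetry of both forms:
\[
\langle\Delta x,y\rangle_0=\ovl{\langle y,\Delta x\rangle_0}=\ovl{\langle y,x\rangle_1}=\langle x,y\rangle_1=\langle x,\Delta y\rangle_0 .
\]
Positivity is immediate from \eqref{relative-metric-comparison}: $\langle x,\Delta x\rangle_0=\langle x,x\rangle_1>0$ for $x\neq0$.

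\emph{The finite-dimensional case.} Here $Y'$ is the algebraic dual and $\dim Y'=\dim Y$. Each $\cal R_j$ is injective and conjugate linear, so its image is a complex subspace of dimension $\dim Y$, hence all of $Y'$. Consequently $\cal R_0(Y)=\cal R_1(Y)=Y'$, and \eqref{common-Riesz-range} holds automatically. Next, $(Y,\langle\cdot,\cdot\rangle_0)$ is a finite-dimensional Hilbert space on which $\Delta$ is selfadjoint. The spectral theorem gives an orthonormal eigenbasis, and for each eigenvector $v$ with eigenvalue $\lambda$ we have $\lambda\langle v,v\rangle_0=\langle v,v\rangle_1>0$, so the spectrum lies in $(0,\infty)$. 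Define $\Delta^{\frac12}$ by functional calculus; it is positive and selfadjoint with $(\Delta^{\frac12})^2=\Delta$. Then
\[
\langle\Delta^{\frac12}x,\Delta^{\frac12}y\rangle_0=\langle x,\Delta y\rangle_0=\langle x,y\rangle_1,
\]
which is \eqref{relative-metric-square-root}.

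No step is a real obstacle. The only points needing care are the conjugate-linearity bookkeeping, which ensures that $\Delta$ comes out linear rather than conjugate linear, and the dimension count for the conjugate linear maps $\cal R_j$ in the finite-dimensional case.
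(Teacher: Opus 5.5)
Your proposal is correct and follows the paper's proof essentially step by step: injectivity of the Riesz maps from positive definiteness, linearity of $\Delta$ as a composition of two conjugate linear maps, the Hermitian-symmetry computation, and in finite dimension the dimension count followed by the spectral theorem. Your uniqueness argument, choosing $x=(\Delta-\Delta')y$, is just the injectivity of $\cal R_0$ written out, which is how the paper argues.
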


\begin{proof}
For $j\in\{0,1\}$, the map $\cal R_j$ is conjugate linear. Indeed, for $\lambda\in\mathbb C$ and $x,y\in Y$, we have
\[
\cal R_j(\lambda y)(x)
=
\langle x,\lambda y\rangle_j
=
\overline{\lambda}\langle x,y\rangle_j
=
\overline{\lambda}\cal R_j(y)(x).
\]
Moreover, $\cal R_j$ is injective. Indeed, if $\cal R_j(y)=0$, then
\[
\langle y,y\rangle_j
=
\cal R_j(y)(y)
=
0,
\]
and the positive definiteness of $\langle\cdot,\cdot\rangle_j$ gives $y=0$.
By \eqref{common-Riesz-range}, the map $\cal R_0^{-1}\cal R_1$ is well-defined on $Y$. Since both $\cal R_0^{-1}$ and $\cal R_1$ are conjugate linear, their composition $\Delta$ is linear. It is bijective, with inverse $
\Delta^{-1}
=
\cal R_1^{-1}\cal R_0$. For $x,y\in Y$, the definition of $\Delta$ gives
\[
\cal R_0(\Delta y)
=
\cal R_1(y).
\]
Evaluating both sides at $x$, we obtain
\[
\langle x,\Delta y\rangle_0
=
\cal R_0(\Delta y)(x)
=
\cal R_1(y)(x)
=
\langle x,y\rangle_1,
\]
which proves \eqref{relative-metric-comparison}.
Suppose that $T\co Y\to Y$ is another linear operator satisfying
\[
\langle x,y\rangle_1
=
\langle x,Ty\rangle_0,
\quad x,y\in Y.
\]
Then $\cal R_0(Ty)=\cal R_1(y)=\cal R_0(\Delta y)$. Since $\cal R_0$ is injective, we obtain $Ty=\Delta y$ for any $y \in Y$. This proves uniqueness.
Using \eqref{relative-metric-comparison} and the Hermitian property of the two forms, we have
\begin{align*}
\langle\Delta x,y\rangle_0
&=
\overline{\langle y,\Delta x\rangle_0}
=
\overline{\langle y,x\rangle_1}
=
\langle x,y\rangle_1
=
\langle x,\Delta y\rangle_0.
\end{align*}
Thus $\Delta$ is symmetric with respect to $\langle\cdot,\cdot\rangle_0$. Furthermore, we have
\[
\langle x,\Delta x\rangle_0
\ov{\eqref{relative-metric-comparison}}{=}
\langle x,x\rangle_1>0
\]
for any nonzero $x\in Y$. Hence $\Delta$ is positive. If $Y$ is finite-dimensional, the injective conjugate linear maps $\cal R_0,\cal R_1 \co Y \to Y'$ are automatically surjective. Thus \eqref{common-Riesz-range} is automatic. The spectral theorem applied to the positive selfadjoint operator $\Delta$ gives $\sigma(\Delta)\subset(0,\infty)$ and a positive square root $\Delta^{1/2}$. Hence
\[
\big\langle\Delta^{\frac12}x,\Delta^{\frac12}y\big\rangle_0
=
\langle x,\Delta y\rangle_0
\ov{\eqref{relative-metric-comparison}}{=}
\langle x,y\rangle_1,
\]
which proves \eqref{relative-metric-square-root}.
\end{proof}

\begin{remark} \normalfont
\label{rem-finite}
In finite dimension, $\Delta$ is a positive invertible selfadjoint operator.
\end{remark}

\begin{remark} \normalfont
\label{rem-relative-metric-Riesz-range}
The equality of the Riesz ranges in \eqref{common-Riesz-range}, rather than the surjectivity of the Riesz maps onto the whole algebraic dual, is the essential assumption. This formulation is useful when both Hermitian forms are represented in a common dual subspace, as happens below for the Levi forms and for the left and right tangent forms taking their Riesz representatives in $\Ran P^\dagger$.
\end{remark}

The family of Levi forms associated with points of the range can be compared by positive operators. This gives an intrinsic relative modular structure. Such an operator is obtained by raising one index of the second metric with the first one.

\begin{prop}
\label{prop-relative-operator-Hilbert-forms}
Let $Y$ be a finite-dimensional complex vector space equipped with two positive definite Hermitian forms $h_0$ and $h_1$, which we assume to be linear in the first variable. There exists a unique linear operator $\Delta_{1|0}\co Y\to Y$ such that
\begin{equation}
\label{relative-Hilbert-forms}
h_1(x,y)
=
h_0(x,\Delta_{1|0}y),
\quad
x,y\in Y.
\end{equation}
The operator $\Delta_{1|0}$ is invertible, positive and selfadjoint with respect to $h_0$. More precisely,
\begin{equation}
\label{relative-operator-selfadjoint}
h_0(\Delta_{1|0}x,y)
=
h_0(x,\Delta_{1|0}y),
\quad
x,y\in Y,
\end{equation}
and
\begin{equation}
\label{relative-operator-positive}
h_0(x,\Delta_{1|0}x)
=
h_1(x,x)>0,
\quad
x\in Y\setminus\{0\}.
\end{equation}
Consequently,
\begin{equation}
\label{relative-square-root-Hilbert-forms}
h_1(x,y)
=
h_0\left(\Delta_{1|0}^{\frac12}x,\Delta_{1|0}^{\frac12}y\right),
\quad
x,y \in Y.
\end{equation}
If $G_0$ and $G_1$ denote the Gram matrices of $h_0$ and $h_1$ in the same basis, so that
\[
h_j(x,y)=[y]^*G_j[x],
\quad
j \in \{0,1\},
\]
then
\begin{equation}
\label{relative-operator-Gram-matrices}
[\Delta_{1|0}]
=
G_0^{-1}G_1.
\end{equation}
We call $\Delta_{1|0}$ the \emph{relative operator} of $h_1$ with respect to $h_0$.
\end{prop}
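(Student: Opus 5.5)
The plan is to read this proposition as the finite-dimensional specialization of Proposition~\ref{prop-relative-Riesz-operator}, applied with $\langle\cdot,\cdot\rangle_0=h_0$ and $\langle\cdot,\cdot\rangle_1=h_1$. Since $Y$ is finite-dimensional, condition \eqref{common-Riesz-range} is automatic by that proposition. We then set $\Delta_{1|0}\ov{\mathrm{def}}{=}\cal R_0^{-1}\cal R_1$. This immediately gives:
\begin{itemize}
\item existence and uniqueness of a linear operator satisfying \eqref{relative-Hilbert-forms}, which is \eqref{relative-metric-comparison};
\item its bijectivity, hence invertibility;
\item the symmetry \eqref{relative-operator-selfadjoint} and positivity \eqref{relative-operator-positive}, both read off from \eqref{relative-metric-selfadjoint-positive};
\item the square-root identity \eqref{relative-square-root-Hilbert-forms}, which is \eqref{relative-metric-square-root}.
\end{itemize}
The only point to note is that the square root $\Delta_{1|0}^{\frac12}$ is the one given by the spectral theorem for the selfadjoint operator $\Delta_{1|0}$ on the finite-dimensional Hilbert space $(Y,h_0)$. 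Its spectrum lies in $(0,\infty)$ because $h_0(x,\Delta_{1|0}x)>0$ for $x\neq0$.

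The remaining new content is the Gram matrix formula \eqref{relative-operator-Gram-matrices}. Fix a basis $(e_1,\ldots,e_n)$ and write $[x]$ for coordinate columns. Let $D=[\Delta_{1|0}]$, so that $[\Delta_{1|0}y]=D[y]$. The convention $h_j(x,y)=[y]^*G_j[x]$ makes $G_j$ invertible, since $h_j$ is positive definite; indeed $G_j$ is a positive definite Hermitian matrix. Rewriting \eqref{relative-Hilbert-forms} in coordinates gives
\[
[y]^*G_1[x]=(D[y])^*G_0[x]=[y]^*D^*G_0[x]
\]
for all $x,y$. Hence $G_1=D^*G_0$. Taking adjoints and using $G_0^*=G_0$ and $G_1^*=G_1$, we get $G_1=G_0D$, that is, $D=G_0^{-1}G_1$.

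I do not expect a genuine obstacle here. The only delicate step is bookkeeping the conjugations correctly, namely whether $D$ or $D^*$ appears, given that the forms are linear in the first variable and $\Delta$ acts on the second. The Hermitian symmetry of the Gram matrices resolves this, as in the adjoint step above.
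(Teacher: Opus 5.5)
Your proposal is correct and follows the paper's route: the paper's proof uses the same construction $\Delta_{1|0}=\cal R_0^{-1}\cal R_1$ and the same coordinate computation $D^*G_0=G_1$, hence $G_0D=G_1$. The only difference is presentational: the paper re-derives the Riesz-map steps inline (bijectivity by dimension count, uniqueness, selfadjointness, positivity, square root), whereas you obtain them by citing Proposition~\ref{prop-relative-Riesz-operator}, which is legitimate since that result already covers the finite-dimensional case.
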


\begin{proof}
For $j \in \{0,1\}$, consider the Riesz map $\cal R_j\co Y\to Y^*$ defined by
$$
\cal R_j(y)(x)
\ov{\mathrm{def}}{=}
h_j(x,y).
$$
Since $h_j$ is positive definite, $\cal R_j$ is injective. Since $Y$ is finite-dimensional, $\dim Y=\dim Y^*$, and therefore $\cal R_j$ is bijective. Notice that $\cal R_j$ is conjugate linear. Hence the composition
\begin{equation}
\label{abstract-relative-Riesz-operator}
\Delta_{1|0}
\ov{\mathrm{def}}{=}
\cal R_0^{-1}\cal R_1
\end{equation}
is linear. By definition,
\[
\cal R_0(\Delta_{1|0}y)=\cal R_1(y),
\]
and therefore, for any $x,y \in Y$, we have
\[
h_0(x,\Delta_{1|0}y)
=
\cal R_0(\Delta_{1|0}y)(x)
=
\cal R_1(y)(x)
=
h_1(x,y).
\]
This proves \eqref{relative-Hilbert-forms}. It also proves uniqueness, since the nondegeneracy of $h_0$ implies that an operator satisfying \eqref{relative-Hilbert-forms} is uniquely determined.

We prove selfadjointness. Using \eqref{relative-Hilbert-forms} and the Hermitian property of $h_0$ and $h_1$, we have
\begin{align*}
h_0(\Delta_{1|0}x,y)
=
\overline{h_0(y,\Delta_{1|0}x)}
=
\overline{h_1(y,x)}
=
h_1(x,y)
=
h_0(x,\Delta_{1|0}y).
\end{align*}
Thus $\Delta_{1|0}$ is selfadjoint with respect to $h_0$. Moreover, for any nonzero $x \in Y$, we have
\[
h_0(x,\Delta_{1|0}x)
\ov{\eqref{relative-Hilbert-forms}}{=}
h_1(x,x)>0
\]
Hence $\Delta_{1|0}$ is positive and, in particular, injective. Since $Y$ is finite-dimensional, it is invertible.

The positive square root $\Delta_{1|0}^{\frac12}$ is therefore well-defined. Using its selfadjointness with respect to $h_0$, we obtain
\[
h_0\left(\Delta_{1|0}^{\frac12}x,\Delta_{1|0}^{\frac12}y\right)
=
h_0(x,\Delta_{1|0}y)
\ov{\eqref{relative-Hilbert-forms}}{=}
h_1(x,y),
\]
which proves \eqref{relative-square-root-Hilbert-forms}.

Finally, fix a basis of $Y$ and let $D$ be the matrix of $\Delta_{1|0}$. By assumption,
\[
h_0(x,\Delta_{1|0}y)
=
[D y]^*G_0[x]
=
[y]^*D^*G_0[x].
\]
Comparing this expression with
\[
h_1(x,y)
=
[y]^*G_1[x],
\]
we obtain $
D^*G_0=G_1$. Taking adjoints and using $G_0^*=G_0$ and $G_1^*=G_1$ gives $G_0D=G_1$. Hence
\[
D=G_0^{-1}G_1,
\]
which proves \eqref{relative-operator-Gram-matrices}.
\end{proof}

\begin{remark} \normalfont
\label{rem-relative-operator-positivity}
The positivity and selfadjointness of $\Delta_{1|0}$ in Proposition~\ref{prop-relative-operator-Hilbert-forms} are relative to the Hilbert structure $h_0$. In particular, the matrix $G_0^{-1}G_1$ need not be selfadjoint for the standard Euclidean inner product, although it is selfadjoint and positive for the inner product determined by $G_0$.
\end{remark}

\begin{prop}
\label{prop-closed-relative-metric-operator}
Let $\cal{H}$ be a complex Hilbert space equipped with a Hermitian inner product $\langle\cdot,\cdot\rangle_0$, linear in the first variable. Let $\cal D$ be a dense subspace of $\cal H$ equipped with a positive definite Hermitian form $\langle\cdot,\cdot\rangle_1$. Suppose that this form is closed with respect to $\langle\cdot,\cdot\rangle_0$, that is, $\cal D$ is complete for the norm
\[
\norm{x}_{\cal D}
\ov{\mathrm{def}}{=}
\left(\norm{x}_{\cal H}^2+\langle x,x\rangle_1\right)^{\frac12}.
\]
Then there exists a unique positive selfadjoint operator $\Delta$ on $\cal H$ such that
\begin{equation}
\label{relative-metric-form-domain}
\dom\Delta^{\frac12}=\cal D
\end{equation}
and
\begin{equation}
\label{relative-metric-form}
\langle x,y\rangle_1
=
\left\langle \Delta^{\frac12}x,\Delta^{\frac12}y\right\rangle_0,
\quad
x,y \in \cal D.
\end{equation}
In particular, for $x\in\cal D$ and $y\in\dom\Delta$, we have
\begin{equation}
\label{relative-metric-operator-form}
\langle x,y\rangle_1
=
\langle x,\Delta y\rangle_0.
\end{equation}
Moreover, $\ker\Delta=\{0\}$. We call $\Delta$ the \emph{relative metric operator} of $\langle\cdot,\cdot\rangle_1$ with respect to $\langle\cdot,\cdot\rangle_0$.

If $\cal D=\cal H$ and there exists $C>0$ such that
\begin{equation}
\label{relative-form-upper-bound}
\langle x,x\rangle_1
\leq
C\langle x,x\rangle_0,
\quad x \in \cal H,
\end{equation}
then $\Delta$ is bounded and \eqref{relative-metric-operator-form} holds for all $x,y\in\cal H$. If, in addition, there exists $c>0$ such that
\begin{equation}
\label{relative-form-lower-bound}
c\langle x,x\rangle_0
\leq
\langle x,x\rangle_1,
\quad x \in \cal H,
\end{equation}
then $\Delta$ is boundedly invertible and
\begin{equation}
\label{relative-operator-bounds}
c\Id\leq\Delta\leq C\Id.
\end{equation}
Consequently, if the two Hermitian forms define equivalent Hilbert norms on the same vector space, then $\Delta$ is a bounded positive invertible operator characterized by
\begin{equation}
\label{relative-operator-equivalent-Hilbert-norms}
\langle x,y\rangle_1
=
\langle x,\Delta y\rangle_0,
\quad x,y\in\cal H.
\end{equation}
\end{prop}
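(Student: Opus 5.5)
The plan is to reduce the statement to Kato's representation theorem for closed semibounded forms. The form $\langle\cdot,\cdot\rangle_1$ with domain $\cal D$ is densely defined, nonnegative and, by hypothesis, closed. Kato's second representation theorem then gives a unique nonnegative selfadjoint operator $\Delta$ with $\dom\Delta^{\frac12}=\cal D$ satisfying \eqref{relative-metric-form}. This is the uniqueness part of that theorem: two such operators have the same square-root domain and square-root norms, hence the same closed form, hence coincide.

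Concretely, I would proceed in the following order.

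\begin{enumerate}
\item Introduce the Hilbert space $(\cal D,\norm{\cdot}_{\cal D})$. Note that the inclusion $J\co\cal D\to\cal H$ is bounded, injective and has dense range.
\item Define $\Delta$ by: $y\in\dom\Delta$ with $\Delta y=w$ if and only if $\langle x,y\rangle_1=\langle x,w\rangle_0$ for all $x\in\cal D$.
\item Show that $\Id+\Delta$ is a bijection onto $\cal H$. Given $w\in\cal H$, the functional $x\mapsto\langle x,w\rangle_0$ is bounded on $(\cal D,\norm{\cdot}_{\cal D})$, so the Riesz representation theorem in $\cal D$ yields the preimage.
\item Conclude that $\Delta$ is selfadjoint and nonnegative. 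Its square root then has form domain $\cal D$: $\dom\Delta$ is a core for the form by density in the $\cal D$-norm, and one identifies $\norm{\Delta^{\frac12}x}^2=\langle x,x\rangle_1$.
\end{enumerate}

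Identity \eqref{relative-metric-operator-form} is then immediate from \eqref{relative-metric-form}, since $\Delta^{\frac12}$ is selfadjoint and $\Delta^{\frac12}y\in\dom\Delta^{\frac12}$ when $y\in\dom\Delta$. For $\ker\Delta=\{0\}$: if $\Delta y=0$, then $\langle y,y\rangle_1=\langle y,\Delta y\rangle_0=0$, so $y=0$ by positive definiteness.

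For the bounded case, assume $\cal D=\cal H$ and \eqref{relative-form-upper-bound}. Then $\norm{\Delta^{\frac12}x}^2\leq C\norm{x}^2$ on all of $\cal H$, so $\Delta^{\frac12}$ is everywhere defined and bounded by $C^{\frac12}$. Hence $\Delta$ is bounded with $\Delta\leq C\Id$, and \eqref{relative-metric-operator-form} extends to all $x,y\in\cal H$. Closedness of the form is automatic here, because the $\cal D$-norm is equivalent to the $\cal H$-norm. The lower bound \eqref{relative-form-lower-bound} gives $\langle x,\Delta x\rangle_0\geq c\norm{x}^2$. Together with selfadjointness, this makes $\Delta$ boundedly invertible with $c\Id\leq\Delta$. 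If $\langle\cdot,\cdot\rangle_1$ and $\langle\cdot,\cdot\rangle_0$ define equivalent norms, both bounds hold simultaneously, which yields \eqref{relative-operator-equivalent-Hilbert-norms}.

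The main obstacle is the identification $\dom\Delta^{\frac12}=\cal D$ rather than merely $\dom\Delta\subset\cal D$. I would quote Kato's theorem (Theorem VI.2.23) rather than reprove it. Alternatively, one can check it directly: for $x\in\cal D$ orthogonal in $\cal D$ to $\dom\Delta$, the relation $0=\langle x,(\Id+\Delta)y\rangle_0$ for all $y\in\dom\Delta$, combined with surjectivity of $\Id+\Delta$, gives $x=0$.
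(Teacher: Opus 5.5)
Your proposal is correct and follows the same approach as the paper. Both invoke Kato's representation theorem for densely defined closed nonnegative forms, derive $\langle x,y\rangle_1=\langle x,\Delta y\rangle_0$ from the selfadjointness of $\Delta^{\frac12}$, and get $\ker\Delta=\{0\}$ from positive definiteness. The only differences are minor: in the bounded case you bound $\Delta^{\frac12}$ directly via $\norm{\Delta^{\frac12}x}^2=\langle x,x\rangle_1\leq C\norm{x}^2$, whereas the paper builds $\Delta$ with the Riesz representation theorem and matches it to the form operator by uniqueness, and your Friedrichs-type sketch of the representation theorem is a correct optional supplement.
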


\begin{proof}
The existence of $\Delta$ follows from the representation theorem for densely defined closed positive sesquilinear forms. Indeed, the form
\[
q(x,y)
\ov{\mathrm{def}}{=}
\langle x,y\rangle_1,
\quad x,y\in\cal D,
\]
is densely defined, positive and closed on the Hilbert space $(\cal H,\langle\cdot,\cdot\rangle_0)$. Hence there exists a unique positive selfadjoint operator $\Delta$ on $\cal H$ such that $\dom\Delta^{\frac12}=\cal D$ and
\[
q(x,y)
=
\big\langle\Delta^{\frac12}x,\Delta^{\frac12}y\big\rangle_0,
\quad x,y \in \cal D.
\]
This proves \eqref{relative-metric-form-domain} and \eqref{relative-metric-form}.

Let now $x\in\cal D$ and $y\in\dom\Delta$. Since $y\in\dom\Delta$ if and only if $y\in\dom\Delta^{\frac12}$ and $\Delta^{\frac12}y\in\dom\Delta^{\frac12}$, the selfadjointness of $\Delta^{\frac12}$ gives
\[
\langle x,y\rangle_1
\ov{\eqref{relative-metric-form}}{=}
\big\langle\Delta^{\frac12}x,\Delta^{\frac12}y\big\rangle_0
=
\langle x,\Delta y\rangle_0,
\]
which proves \eqref{relative-metric-operator-form}.

Suppose that $x\in\ker\Delta$. Then $x\in\dom\Delta\subset\cal D$ and
\[
\la x,x \ra_1
\ov{\eqref{relative-metric-form}}{=}
\bnorm{\Delta^{\frac12}x}_{\cal H}^2
=0.
\]
Since $\langle\cdot,\cdot\rangle_1$ is positive definite, we obtain $x=0$. Hence $\ker\Delta = \{0\}$. Notice that this does not imply in general that $\Delta^{-1}$ is bounded, since $0$ may belong to the continuous spectrum of $\Delta$.

Suppose now that $\cal D=\cal H$ and that \eqref{relative-form-upper-bound} holds. For any $y \in \cal H$, the map $x\mapsto\langle x,y\rangle_1$ is a continuous linear functional on $(\cal H,\langle\cdot,\cdot\rangle_0)$, since by the Cauchy--Schwarz inequality for $\langle\cdot,\cdot\rangle_1$,
\[
|\langle x,y\rangle_1|
\leq
\langle x,x\rangle_1^{\frac12}\langle y,y\rangle_1^{\frac12}
\leq
C\norm{x}_{\cal H}\norm{y}_{\cal H}.
\]
By the Riesz representation theorem, there exists a unique element $\Delta y\in\cal H$ such that
\[
\langle x,y\rangle_1
=
\langle x,\Delta y\rangle_0,
\quad x \in \cal H.
\]
The uniqueness of the representing vector shows that the map $y\mapsto\Delta y$ is linear. Moreover,
\[
|\langle x,\Delta y\rangle_0|
=
|\langle x,y\rangle_1|
\leq
C\norm{x}_{\cal H}\norm{y}_{\cal H},
\]
and taking the supremum over $\norm{x}_{\cal H}\leq1$ yields $\norm{\Delta y}_{\cal H}\leq C\norm{y}_{\cal H}$. Thus $\Delta$ is bounded and $\norm{\Delta}\leq C$. This bounded operator coincides with the positive selfadjoint operator associated above with the closed form, by uniqueness in the representation theorem.

The Hermitian property of $\langle\cdot,\cdot\rangle_1$ gives, for $x,y\in\cal H$,
\[
\langle\Delta x,y\rangle_0
=
\overline{\langle y,\Delta x\rangle_0}
=
\overline{\langle y,x\rangle_1}
=
\langle x,y\rangle_1
=
\langle x,\Delta y\rangle_0,
\]
so $\Delta$ is selfadjoint. Furthermore, we have
\[
\la x,\Delta x\ra_0
=
\la x,x \ra_1
\geq 0,
\]
so $\Delta$ is positive.

Finally, if \eqref{relative-form-lower-bound} also holds, then
\[
c\norm{x}_{\cal H}^2
\leq
\langle x,\Delta x\rangle_0
\leq
C\norm{x}_{\cal H}^2,
\quad x \in \cal H.
\]
Therefore $c\Id\leq\Delta\leq C\Id$. In particular, $0\notin\sigma(\Delta)$ and $\Delta$ is boundedly invertible, with $\norm{\Delta^{-1}}\leq c^{-1}$. This proves \eqref{relative-operator-bounds} and the final assertion.
\end{proof}

\begin{remark} \normalfont
\label{rem-relative-metric-operator-unbounded}
In finite dimension, every positive definite Hermitian form is continuous and any two Hilbert norms are equivalent. Hence the relative metric operator of Proposition~\ref{prop-closed-relative-metric-operator} is automatically bounded and invertible. In infinite dimension, this need no longer be the case. The natural object associated with two Hilbertian geometries is in general an unbounded positive selfadjoint operator, and the fundamental identity is then the quadratic-form identity
\[
\langle x,y\rangle_1
=
\big\langle\Delta^{\frac12}x,\Delta^{\frac12}y\big\rangle_0,
\quad
x,y \in \dom \Delta^{\frac12},
\]
rather than the operator identity $\langle x,y\rangle_1=\langle x,\Delta y\rangle_0$, which requires $y\in\dom\Delta$.
\end{remark}

\begin{prop}
\label{prop-relative-Levi-operator}
Let $\Omega$ be an open subset of a complex Banach space $X$, let $\Phi \co \Omega \to \R$ be a plurisubharmonic function of class $C^2$, and let $P\co X\to X$ be a bounded projection satisfying the assumptions of Proposition~\ref{prop-Levi-contractive-projection}.  Assume that $\Ran P$ is finite-dimensional. Let $a,b \in \Omega \cap \Ran P$ and suppose that the restrictions of $\scr{L}_{\Phi,a}$ and $\scr{L}_{\Phi,b}$ to the subspace $\Ran P$ are nondegenerate. Then the conjugate linear maps
\[
\Lambda_a
\ov{\mathrm{def}}{=}
\Lambda_{\Phi,a}|_{\Ran P}
\quad\text{and}\quad
\Lambda_b
\ov{\mathrm{def}}{=}
\Lambda_{\Phi,b}|_{\Ran P}
\]
are bijections from $\Ran P$ onto $\Ran P^*$. Consequently, the formula
\begin{equation}
\label{def-relative-Levi-operator}
\Delta_{\Phi,b|a}
\ov{\mathrm{def}}{=}
\Lambda_a^{-1}\Lambda_b
\end{equation}
defines a linear bijection $\Delta_{\Phi,b|a} \co \Ran P \to \Ran P$. It is uniquely characterized by
\begin{equation}
\label{relative-Levi-representation}
\scr{L}_{\Phi,b}(x,y)
=\scr{L}_{\Phi,a}(x,\Delta_{\Phi,b|a}y),
\quad
x,y \in \Ran P.
\end{equation}
Moreover, $\Delta_{\Phi,b|a}$ is positive and selfadjoint for the Hilbert space structure defined by $\scr{L}_{\Phi,a}$. Finally, for $a,b,c \in \Omega \cap \Ran P$ for which the corresponding Levi forms are nondegenerate on $\Ran P$, we have
\begin{equation}
\label{relative-Levi-inverse}
\Delta_{\Phi,a|b}
=\Delta_{\Phi,b|a}^{-1}
\quad \text{and} \quad
\Delta_{\Phi,c|a}
=\Delta_{\Phi,b|a}\Delta_{\Phi,c|b}.
\end{equation}
\end{prop}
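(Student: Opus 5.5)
The plan is to reduce everything to Proposition~\ref{prop-relative-operator-Hilbert-forms} and Proposition~\ref{prop-relative-Riesz-operator}. The key input is the fixed-point relation \eqref{Levi-adjoint-fixed}. For $y\in\Ran P$ and $a\in\Omega\cap\Ran P$ it gives $P^*\Lambda_{\Phi,a}(y)=\Lambda_{\Phi,a}(y)$, so $\Lambda_a$ maps $\Ran P$ into $\Ran P^*$, and similarly for $\Lambda_b$.

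To show that $\Lambda_a$ is a bijection onto $\Ran P^*$, I will first prove injectivity. If $\Lambda_a(y)=0$ with $y\in\Ran P$, then $\scr L_{\Phi,a}(y,y)=\la y,\Lambda_a(y)\ra=0$. Nondegeneracy on $\Ran P$, which together with positivity \eqref{Levi-def-pos} means positive definiteness, forces $y=0$. For surjectivity I will count dimensions: Proposition~\ref{Prop-Fabian} gives $\Ran P^*\cong(\Ran P)^*$, so $\dim\Ran P^*=\dim\Ran P<\infty$. An injective conjugate linear map between complex spaces of equal finite dimension is bijective, since it is a real-linear injection between real spaces of equal dimension. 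Hence $\Delta_{\Phi,b|a}=\Lambda_a^{-1}\Lambda_b$ is well defined, and it is linear as a composition of two conjugate linear maps.

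For the characterization \eqref{relative-Levi-representation}, take $x,y\in\Ran P$. Then $\scr L_{\Phi,a}(x,\Delta y)=\la x,\Lambda_a\Delta y\ra=\la x,\Lambda_b y\ra=\scr L_{\Phi,b}(x,y)$. Uniqueness follows from the nondegeneracy of $\scr L_{\Phi,a}$ on $\Ran P$: if two operators both satisfy the identity, their difference $D$ satisfies $\scr L_{\Phi,a}(x,Dy)=0$ for all $x\in\Ran P$, and taking $x=Dy$ gives $Dy=0$. Alternatively, I can apply Proposition~\ref{prop-relative-operator-Hilbert-forms} with $h_0=\scr L_{\Phi,a}|_{\Ran P}$ and $h_1=\scr L_{\Phi,b}|_{\Ran P}$. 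That proposition gives a unique operator satisfying this identity, so $\Delta_{\Phi,b|a}=\Delta_{1|0}$, and it is automatically positive, selfadjoint for $h_0$, and invertible.

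The composition rules then follow from uniqueness. Writing $\Delta_{b|a}$ for $\Delta_{\Phi,b|a}$ and so on, we have
\[
\scr L_{\Phi,a}(x,\Delta_{b|a}\Delta_{c|b}y)=\scr L_{\Phi,b}(x,\Delta_{c|b}y)=\scr L_{\Phi,c}(x,y),
\]
so $\Delta_{c|a}=\Delta_{b|a}\Delta_{c|b}$. Taking $c=a$ and noting $\Delta_{a|a}=\Id$ gives $\Delta_{b|a}\Delta_{a|b}=\Id$; by symmetry (or finite dimension) this yields $\Delta_{a|b}=\Delta_{b|a}^{-1}$. Equivalently, at the level of maps, $\Lambda_a^{-1}\Lambda_b\Lambda_b^{-1}\Lambda_c=\Lambda_a^{-1}\Lambda_c$.

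The only step requiring care is the bijection onto $\Ran P^*$, namely the dimension identification of $\Ran P^*$ with $(\Ran P)^*$. Once $\Lambda_a$ is known to map into $\Ran P^*$, this reduces to the finite-dimensional counting argument above.
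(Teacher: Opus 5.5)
Your proposal is correct and follows essentially the paper's route: the fixed-point relation \eqref{Levi-adjoint-fixed}, injectivity from definiteness, a dimension count, and then the abstract finite-dimensional relative-operator result. The only difference in the last step is that you use Proposition~\ref{prop-relative-operator-Hilbert-forms} together with uniqueness, whereas the paper composes with the restriction isomorphism $\rho\co\Ran P^*\to(\Ran P)^*$ and applies Proposition~\ref{prop-relative-Riesz-operator}.

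One small fix concerns your citation of Proposition~\ref{Prop-Fabian}, which is stated for contractive $P$, and that is not assumed here. The equality $\dim\Ran P^*=\dim\Ran P$ nevertheless holds for any bounded projection, because restriction $\Ran P^*\to(\Ran P)^*$ has inverse $\phi\mapsto\phi\circ P$.
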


\begin{proof}
By \eqref{Levi-adjoint-fixed}, we have
\[
\Lambda_{\Phi,a}(\Ran P)\subset\Ran P^*
\quad\text{and}\quad
\Lambda_{\Phi,b}(\Ran P)\subset\Ran P^*.
\]
Suppose that $\Lambda_a(y)=0$ for some $y\in\Ran P$. Then
\[
0
=
\big\langle y,\Lambda_a(y)\big\rangle_{X,X^*}
\ov{\eqref{Levi-dual-map}}{=}
\scr L_{\Phi,a}(y,y).
\]
Since the restriction of $\scr L_{\Phi,a}$ to $\Ran P$ is nondegenerate, we obtain $y=0$. Thus $\Lambda_a$ is injective. Similarly, $\Lambda_b$ is injective. Since $
\dim\Ran P^*
=
\dim\Ran P$, both maps are bijections from $\Ran P$ onto $\Ran P^*$.
The restriction map $
\rho\co\Ran P^*\to(\Ran P)^*$ defined by
$$
\rho(x^*)\ov{\mathrm{def}}{=}x^*|_{\Ran P},
$$
is an isomorphism. Indeed, if $x^*\in\Ran P^*$ and $\rho(x^*)=0$, then $P^*x^*=x^*$ and, for any $z\in X$, we have
\[
\langle z,x^*\rangle_{X,X^*}
=
\langle z,P^*x^*\rangle_{X,X^*}
=
\langle Pz,x^*\rangle_{X,X^*}
=
0.
\]
Hence $x^*=0$, so $\rho$ is injective, and its surjectivity follows from the equality of dimensions.
For $x,y\in\Ran P$, we have
\[
\big[\rho\Lambda_a(y)\big](x)
=
\big\langle x,\Lambda_a(y)\big\rangle_{X,X^*}
\ov{\eqref{Levi-dual-map}}{=}
\scr L_{\Phi,a}(x,y),
\]
and similarly
\[
\big[\rho\Lambda_b(y)\big](x)
=
\scr L_{\Phi,b}(x,y).
\]
Thus $\rho\Lambda_a$ and $\rho\Lambda_b$ are precisely the Riesz maps of the two Hermitian forms $\scr L_{\Phi,a}|_{\Ran P}$ and $\scr L_{\Phi,b}|_{\Ran P}$. Proposition~\ref{prop-relative-Riesz-operator} therefore gives the relative metric operator
\[
\Delta_{\Phi,b|a}
=
(\rho\Lambda_a)^{-1}(\rho\Lambda_b)
=
\Lambda_a^{-1}\Lambda_b.
\]
It is a positive selfadjoint linear bijection on the Hilbert space determined by $\scr L_{\Phi,a}$ and satisfies
\[
\scr L_{\Phi,b}(x,y)
=
\scr L_{\Phi,a}(x,\Delta_{\Phi,b|a}y),
\quad
x,y\in\Ran P.
\]
Finally, we have
\[
\Delta_{\Phi,a|b}
=
\Lambda_b^{-1}\Lambda_a
=
(\Lambda_a^{-1}\Lambda_b)^{-1}
=
\Delta_{\Phi,b|a}^{-1},
\]
and
\[
\Delta_{\Phi,b|a}\Delta_{\Phi,c|b}
=
\Lambda_a^{-1}\Lambda_b\Lambda_b^{-1}\Lambda_c
=
\Lambda_a^{-1}\Lambda_c
=
\Delta_{\Phi,c|a}.
\]
This proves \eqref{relative-Levi-inverse}.
\end{proof}



\begin{cor}
\label{cor-relative-Levi-group}
Under the assumptions of Proposition~\ref{prop-relative-Levi-operator}, the family
\begin{equation}
\label{relative-Levi-unitary-group}
U_{b|a,t}
\ov{\mathrm{def}}{=}
\Delta_{\Phi,b|a}^{\i t},
\quad
t \in \R,
\end{equation}
is a one-parameter group of surjective isometries for the two Hilbert space structures induced on $\Ran P$ by $\scr L_{\Phi,a}$ and $\scr L_{\Phi,b}$. More precisely,
\begin{equation}
\label{relative-Levi-group-unitary-a}
\scr{L}_{\Phi,a}(U_{b|a,t}x,U_{b|a,t}y)
=\scr{L}_{\Phi,a}(x,y)
\end{equation}
and
\begin{equation}
\label{relative-Levi-group-unitary-b}
\scr{L}_{\Phi,b}(U_{b|a,t}x,U_{b|a,t}y)
=
\scr{L}_{\Phi,b}(x,y)
\end{equation}
for any $x,y \in \Ran P$ and any $t \in \R$. For any $s \in [0,1]$, the formula
\begin{equation}
\label{interpolated-Levi-forms}
\scr{L}_{a,b}^{(s)}(x,y)
\ov{\mathrm{def}}{=}
\scr{L}_{\Phi,a}(x,\Delta_{\Phi,b|a}^s y)
\end{equation}
defines a positive definite Hermitian form on the subspace $\Ran P$, with
\[
\scr{L}_{a,b}^{(0)}
=\scr{L}_{\Phi,a}
\quad\text{and}\quad
\scr{L}_{a,b}^{(1)}
=\scr{L}_{\Phi,b}.
\]
\end{cor}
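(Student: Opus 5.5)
Write $\Delta\ov{\mathrm{def}}{=}\Delta_{\Phi,b|a}$. By Proposition~\ref{prop-relative-Levi-operator}, $\Delta$ is a positive invertible operator on the finite-dimensional Hilbert space $(\Ran P,\scr L_{\Phi,a})$ and is selfadjoint for this inner product. The plan is to apply the spectral theorem to $\Delta$ and read off every assertion from a single orthonormal eigenbasis.

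Take an $\scr L_{\Phi,a}$-orthonormal basis $e_1,\dots,e_k$ of $\Ran P$ with $\Delta e_j=\lambda_je_j$ and $\lambda_j>0$; such a basis exists by Remark~\ref{rem-finite}. Every function of $\Delta$ is then diagonal in this basis. In particular $\Delta^{\i t}e_j=\lambda_j^{\i t}e_j$ and $\Delta^{s}e_j=\lambda_j^{s}e_j$. The group law $U_{b|a,t}U_{b|a,t'}=U_{b|a,t+t'}$ and $U_{b|a,0}=\Id$ follow at once from $\lambda^{\i t}\lambda^{\i t'}=\lambda^{\i(t+t')}$.

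For \eqref{relative-Levi-group-unitary-a}, note that $\Delta^{\i t}$ is diagonal with unimodular eigenvalues in an orthonormal basis. It is therefore unitary for $\scr L_{\Phi,a}$, and it is surjective with inverse $\Delta^{-\i t}$.

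For \eqref{relative-Levi-group-unitary-b}, use \eqref{relative-Levi-representation} to write
\[
\scr L_{\Phi,b}(U_{b|a,t}x,U_{b|a,t}y)=\scr L_{\Phi,a}(\Delta^{\i t}x,\Delta\Delta^{\i t}y).
\]
Since $\Delta$ commutes with $\Delta^{\i t}$, this equals $\scr L_{\Phi,a}(\Delta^{\i t}x,\Delta^{\i t}\Delta y)$. The $\scr L_{\Phi,a}$-unitarity already established turns it into $\scr L_{\Phi,a}(x,\Delta y)=\scr L_{\Phi,b}(x,y)$. The same computation can be checked directly on eigenvectors, since $\overline{\lambda_j^{\i t}}\lambda_l^{\i t}\lambda_l$ multiplies $\scr L_{\Phi,a}(e_j,e_l)=\delta_{jl}$.

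For the interpolated forms, $\Delta^s$ is again $\scr L_{\Phi,a}$-selfadjoint and positive, with eigenvalues $\lambda_j^s>0$. Hermitian symmetry of $\scr L^{(s)}_{a,b}$ follows exactly as in the proof of Proposition~\ref{prop-relative-operator-Hilbert-forms}. Positive definiteness follows from
\[
\scr L^{(s)}_{a,b}(x,x)=\sum_j\lambda_j^s|c_j|^2>0
\]
for $x=\sum_j c_je_j\neq0$. Finally, $\Delta^0=\Id$ gives $\scr L^{(0)}_{a,b}=\scr L_{\Phi,a}$, and $\Delta^1=\Delta$ together with \eqref{relative-Levi-representation} gives $\scr L^{(1)}_{a,b}=\scr L_{\Phi,b}$.

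No step is a real obstacle. The one point needing care is that selfadjointness refers to $\scr L_{\Phi,a}$ and not to any Euclidean structure, as Remark~\ref{rem-relative-operator-positivity} warns. This is why the functional calculus must be taken in an $\scr L_{\Phi,a}$-orthonormal eigenbasis.
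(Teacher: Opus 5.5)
Your proposal is correct and follows the paper's proof: $\Delta_{\Phi,b|a}^{\i t}$ is unitary for $\scr L_{\Phi,a}$, the same chain of equalities (using that $\Delta_{\Phi,b|a}$ commutes with its imaginary powers) gives the $\scr L_{\Phi,b}$-invariance, and positivity of $\Delta_{\Phi,b|a}^s$ handles the interpolated forms, where the paper writes $\scr L^{(s)}_{a,b}(x,y)=\scr L_{\Phi,a}(\Delta_{\Phi,b|a}^{s/2}x,\Delta_{\Phi,b|a}^{s/2}y)$ instead of expanding in an orthonormal eigenbasis. One small point: Hermitian symmetry of $\scr L^{(s)}_{a,b}$ comes from the $\scr L_{\Phi,a}$-selfadjointness of $\Delta_{\Phi,b|a}^s$, which is the reverse of the deduction in Proposition~\ref{prop-relative-operator-Hilbert-forms}, but your eigenbasis formula already gives it directly.
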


\begin{proof}
Since $\Delta_{\Phi,b|a}$ is positive and selfadjoint for $\scr{L}_{\Phi,a}$, its imaginary powers are unitary for $\scr{L}_{\Phi,a}$. This proves \eqref{relative-Levi-group-unitary-a}. Using \eqref{relative-Levi-representation} and the fact that the operator $\Delta_{\Phi,b|a}$ commutes with its imaginary powers, we obtain
\begin{align*}
\MoveEqLeft
\scr{L}_{\Phi,b}(U_{b|a,t}x,U_{b|a,t}y)
\ov{\eqref{relative-Levi-representation}}{=} \scr{L}_{\Phi,a}(U_{b|a,t}x,\Delta_{\Phi,b|a}U_{b|a,t}y) \\
&=\scr{L}_{\Phi,a}(U_{b|a,t}x,U_{b|a,t}\Delta_{\Phi,b|a}y)
=\scr{L}_{\Phi,a}(x,\Delta_{\Phi,b|a}y)
\ov{\eqref{relative-Levi-representation}}{=} \scr{L}_{\Phi,b}(x,y).
\end{align*}
This proves \eqref{relative-Levi-group-unitary-b}. For $0 \leq s \leq 1$, the operator $\Delta_{\Phi,b|a}^s$ is positive and invertible. Hence
\[
\scr{L}_{a,b}^{(s)}(x,y)
=\scr{L}_{\Phi,a}\left(\Delta_{\Phi,b|a}^{\frac s2}x,
\Delta_{\Phi,b|a}^{\frac s2}y\right),
\]
which shows that $\scr{L}_{a,b}^{(s)}$ is a positive definite Hermitian form. The endpoint identities follow immediately from the definition and \eqref{relative-Levi-representation}.
\end{proof}

The preceding construction is more informative when the relative operator is first defined on an ambient tangent Hilbert space and the range of the projection is subsequently shown to be reducing. 

\begin{prop}
\label{prop-ambient-relative-Levi-reduction}
Under the assumptions of Proposition~\ref{prop-Levi-contractive-projection}, let $a,b\in\Omega\cap\Ran P$. Suppose that the Levi forms $\scr{L}_{\Phi,a}$ and $\scr{L}_{\Phi,b}$ are nondegenerate on $X$ and that their Hilbertian norms are equivalent. Let $\cal H_a$ denote the Hilbert space completion of $X$ for the norm $x\mapsto\scr L_{\Phi,a}(x,x)^{\frac12}$. There exists a unique bounded positive invertible operator $\widehat\Delta_{\Phi,b|a} \co \cal H_a \to \cal H_a$ such that
\begin{equation}
\label{ambient-relative-Levi-representation}
\scr{L}_{\Phi,b}(x,y)
=\scr{L}_{\Phi,a}(x,\widehat\Delta_{\Phi,b|a}y),
\quad
x,y \in X.
\end{equation}
Moreover, the orthogonal extension of $P$ to $\cal H_a$ commutes with $\widehat\Delta_{\Phi,b|a}$:
\begin{equation}
\label{ambient-relative-Levi-commutation}
P\widehat\Delta_{\Phi,b|a}
=
\widehat\Delta_{\Phi,b|a}P.
\end{equation}
Consequently, the subspaces $\ovl{\Ran P}^{\cal H_a}$ and $\ovl{\ker P}^{\cal H_a}$ reduce the operator $\widehat\Delta_{\Phi,b|a}$. In particular, we have
\begin{equation}
\label{ambient-relative-Levi-group-commutation}
P\widehat\Delta_{\Phi,b|a}^{\i t}
=
\widehat\Delta_{\Phi,b|a}^{\i t}P,
\quad
t \in \R.
\end{equation}
\end{prop}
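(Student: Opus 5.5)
The plan is to build $\widehat\Delta_{\Phi,b|a}$ from Proposition~\ref{prop-closed-relative-metric-operator} and then obtain the commutation from the fact, established in Corollary~\ref{cor-Levi-Pythagoras-bicontractivity}, that $P$ is an orthogonal projection for \emph{both} Levi geometries.

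\textbf{Step 1: construction of $\widehat\Delta_{\Phi,b|a}$.} By assumption, there are constants $0<c\leq C$ with
\[
c\,\scr L_{\Phi,a}(x)\leq\scr L_{\Phi,b}(x)\leq C\,\scr L_{\Phi,a}(x),\quad x\in X.
\]
Hence $\scr L_{\Phi,b}$ extends by continuity to a bounded Hermitian form on $\cal H_a$, and it satisfies the same two bounds there. Since $\scr L_{\Phi,a}$ is nondegenerate, $\cal N_{\Phi,a}=\{0\}$, so $X$ embeds densely into $\cal H_a$. The final part of Proposition~\ref{prop-closed-relative-metric-operator}, applied with $\cal D=\cal H_a$, then yields a bounded positive operator with $c\Id\leq\widehat\Delta\leq C\Id$ satisfying \eqref{ambient-relative-Levi-representation}. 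Uniqueness follows from the density of $X$ and the nondegeneracy of the extended inner product.

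\textbf{Step 2: the extension of $P$.} Proposition~\ref{prop-Levi-contractive-projection}, applied at $a$, shows that $P$ extends to an orthogonal projection $\widehat P$ on $\cal H_a$. By \eqref{Levi-selfadjointness} at $a$ and by continuity, $\widehat P$ is selfadjoint for $\scr L_{\Phi,a}$. Since $b\in\Omega\cap\Ran P$ as well, \eqref{Levi-selfadjointness} at $b$ gives
\[
\scr L_{\Phi,b}(Pz,y)=\scr L_{\Phi,b}(z,Py),\quad z,y\in X.
\]
By the equivalence of norms, this identity extends to all $z,y\in\cal H_a$.

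\textbf{Step 3: commutation.} For $x,y\in\cal H_a$ I would compute
\[
\scr L_{\Phi,a}(x,\widehat\Delta\widehat Py)=\scr L_{\Phi,b}(x,\widehat Py)=\scr L_{\Phi,b}(\widehat Px,y)=\scr L_{\Phi,a}(\widehat Px,\widehat\Delta y)=\scr L_{\Phi,a}(x,\widehat P\widehat\Delta y).
\]
Nondegeneracy then gives $\widehat\Delta\widehat P=\widehat P\widehat\Delta$.

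\textbf{Step 4: consequences.} Since $\widehat P$ is an orthogonal projection commuting with $\widehat\Delta$, its range and kernel reduce $\widehat\Delta$. The range of $\widehat P$ is $\ovl{\Ran P}^{\cal H_a}$, because $\widehat P$ is continuous and idempotent and $X$ is dense. Likewise the kernel is $\ovl{\ker P}^{\cal H_a}$: any $h$ in it is the limit of $x_n-Px_n$ for $x_n\to h$ with $x_n\in X$. Finally, $\widehat P$ commutes with every Borel function of $\widehat\Delta$ by the spectral theorem. Because $\widehat\Delta$ is invertible, $\widehat\Delta^{\i t}$ is defined and unitary, which gives \eqref{ambient-relative-Levi-group-commutation}.

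The only delicate point is the extension step, namely justifying that the selfadjointness of $P$ for $\scr L_{\Phi,b}$ passes from $X$ to $\cal H_a$. This is exactly where the equivalence of the two norms is used. Once it holds, the commutation in Step 3 is a formal computation.
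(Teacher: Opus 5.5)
Your proposal is correct and follows the paper's proof. Both arguments use the Riesz representation for the two equivalent forms, the Levi selfadjointness of $P$ at both $a$ and $b$ (Corollary~\ref{cor-Levi-intertwining}), the same chain of equalities closed by nondegeneracy of $\scr L_{\Phi,a}$, and then the spectral theorem; you are also slightly more explicit than the paper in extending the $b$-selfadjointness to $\cal H_a$ and in identifying the range and kernel of the extended projection with $\ovl{\Ran P}^{\cal H_a}$ and $\ovl{\ker P}^{\cal H_a}$.
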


Here, we use the same notation for the continuous extensions of the two Levi forms to $\mathcal H_a$, and for the orthogonal extension of $P$ to $\mathcal H_a$. In particular, expressions involving $\widehat\Delta_{\Phi,b|a}y\in\mathcal H_a$ are evaluated using these extended forms.

\begin{proof}
The equivalence of the two Hilbertian norms implies, by the Riesz representation theorem, that there exists a unique bounded positive invertible operator $\widehat\Delta_{\Phi,b|a}$ satisfying \eqref{ambient-relative-Levi-representation}. By Corollary~\ref{cor-Levi-intertwining}, the projection $P$ is selfadjoint for both $\scr{L}_{\Phi,a}$ and $\scr{L}_{\Phi,b}$. Let $x,y \in X$. Using successively \eqref{ambient-relative-Levi-representation} and Levi selfadjointness, we obtain
\begin{align*}
\scr{L}_{\Phi,a}(x,\widehat\Delta_{\Phi,b|a}Py)
&=\scr{L}_{\Phi,b}(x,Py)
\ov{\eqref{Levi-selfadjointness}}{=} \scr{L}_{\Phi,b}(Px,y)
=\scr{L}_{\Phi,a}(Px,\widehat\Delta_{\Phi,b|a}y)
=\scr{L}_{\Phi,a}(x,P\widehat\Delta_{\Phi,b|a}y).
\end{align*}
The nondegeneracy of $\scr{L}_{\Phi,a}$ gives
\[
\widehat\Delta_{\Phi,b|a}Py
=
P\widehat\Delta_{\Phi,b|a}y, \quad
y \in X.
\]
By continuity, this identity extends to $\cal H_a$, proving \eqref{ambient-relative-Levi-commutation}. Since $P$ is an orthogonal projection on $\cal H_a$, commutation with the selfadjoint operator $\widehat\Delta_{\Phi,b|a}$ means that both its range and its kernel are reducing. Commutation with the Borel functional calculus, and in particular with the imaginary powers, follows.
\end{proof}

\subsection{Power functions}

A quasi-norm on a complex vector space $X$ is a map $\norm{\cdot}_X \co X \to [0,\infty)$ satisfying the following conditions:
\begin{enumerate}
\item $\norm{x}_X=0$ if and only if $x=0$;

\item $\norm{\lambda x}_X=|\lambda|\norm{x}_X$ for any $x\in X$ and any $\lambda\in\mathbb C$;

\item there exists a constant $C>0$ such that
\[
\norm{x+y}_X
\leq
C\bigl(\norm{x}_X+\norm{y}_X\bigr),
\quad x,y \in X.
\]
\end{enumerate}
A quasi-normed space is a vector space equipped with a quasi-norm. We refer to the survey \cite{Kal03} for more information on quasi-normed spaces. If $X$ is a topological vector space whose topology is defined by a quasi-norm which is uniformly continuous on bounded subsets of $X$, then $X$ is called a continuously quasi-normed space.

In the quasi-normed setting, plurisubharmonicity is understood through upper semicontinuity and the submean inequality on each complex affine line. Following \cite[p.~115]{DGT84} and \cite{Edg85}, a continuously quasi-normed space is said to be locally $\PL$-convex if, for any $x,y\in X$ and any sufficiently small $r>0$, we have
\[
\norm{x}_X
\leq
\frac1{2\pi}\int_0^{2\pi}
\norm{x+r\e^{\i t}y}_X
\d t.
\]
By \cite[Theorem~1.1 p.~40]{Ale81} and \cite[Proposition~2.2 p.~115]{DGT84}, this condition is equivalent to each of the following properties:
\begin{enumerate}
\item the function $
X\to[-\infty,\infty)$, $x\mapsto\log\norm{x}_X$ is plurisubharmonic, where $\log 0\ov{\mathrm{def}}{=}-\infty$;

\item for any, equivalently for some, $0 < p < \infty$, the function $X \to \R_+$, $x \mapsto \norm{x}_X^p$ is plurisubharmonic. Equivalently, we have
\[
\norm{x}_X
\leq
\left(
\frac1{2\pi}\int_0^{2\pi}
\norm{x+r\e^{\i t}y}_X^p
\d t
\right)^{\frac1p}
\]
for any $x,y\in X$ and any sufficiently small $r>0$.
\end{enumerate}
We refer also to \cite{Pav91}.

\begin{example}
\normalfont
\label{Banach-spaces}
As observed in \cite[Remark p.~116]{DGT84}, any complex Banach space is locally $\PL$-convex.
\end{example}

We now return to the Banach space setting, where the differential results of the preceding subsections apply. Let $X$ be a complex Banach space and let $1\leq p<\infty$. Consider the function
\begin{equation}
\label{def-de-Phi-p}
\Phi_p\co X \to \R_+,
\qquad
\Phi_p(x)
\ov{\mathrm{def}}{=}
\norm{x}_X^p.
\end{equation}
The function $\Phi_p$ is continuous and convex, hence plurisubharmonic. Let $P\co X\to X$ be a contractive projection. Then
\[
\Phi_p(Px)
\leq
\Phi_p(x),
\quad x\in X.
\]

Suppose that $a\in\Ran P$ and that $\Phi_p$ is of class $\C^2$ on an open neighborhood $U$ of $a$. Set
\[
\Omega
\ov{\mathrm{def}}{=}
U\cap P^{-1}(U).
\]
Then $\Omega$ is an open neighborhood of $a$. Moreover, $P(\Omega)\subset\Omega$. Indeed, if $x\in\Omega$, then $Px\in U$ and
\[
P(Px)=Px\in U,
\]
so that $Px\in P^{-1}(U)$. Hence $Px\in\Omega$.

Therefore Proposition~\ref{prop-Levi-contractive-projection} applies to the restriction of $\Phi_p$ to $\Omega$ and gives
\begin{equation}
\label{Levi-p-contractive}
\scr L_{\Phi_p,a}(Pz)
\leq
\scr L_{\Phi_p,a}(z),
\quad
z \in X.
\end{equation}
Thus every point of the range at which the power function $\Phi_p$ is locally of class $\C^2$ gives rise to a tangent Hilbertian geometry for which $P$ is contractive, and hence orthogonal after passing to the associated Hilbert completion.

\section{The case of noncommutative $\L^4$-spaces}
\label{sec-the-case-NC}

\subsection{The Levi form of $\norm{\cdot}_{\L^4(\cal{M})}^4$}

We now apply the preceding general principle to noncommutative $\L^4$-spaces. In this case, the Levi form admits an explicit algebraic expression. This is the point where the general Banach-space geometry begins to recover the Jordan triple structure of the ambient von Neumann algebra. Recall that $\Phi_4(x)
\ov{\mathrm{def}}{=} \norm{x}_{\L^4(\cal{M})}^4$ for any $x \in \L^4(\cal{M})$. The following exact fourth-moment formula is the main reason why the case $p=4$ is algebraically accessible.

\begin{lemma}
\label{lem-circular-fourth-moment}
Let $a,z \in \L^4(\cal{M})$ and let $r \geq 0$. Then
\begin{equation}
\label{circular-fourth-moment}
\frac1{2\pi}\int_0^{2\pi}
\norm{a+r\e^{\i t}z}_{\L^4(\cal{M})}^4 \d t
= \norm{a}_{\L^4(\cal{M})}^4
+ 2r^2\left(
\norm{a^*z}_{\L^2(\cal{M})}^2+\norm{za^*}_{\L^2(\cal{M})}^2
\right)
+ r^4\norm{z}_{\L^4(\cal{M})}^4.
\end{equation}
\end{lemma}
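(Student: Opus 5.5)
The plan is to expand the fourth power of the norm algebraically and integrate term by term against $\e^{\i t}$. Set $w=a+r\e^{\i t}z$. Then
\[
\norm{w}_{\L^4(\cal M)}^4=\tau\big((w^*w)^2\big)=\tau(w^*ww^*w).
\]
Here all products make sense as $\tau$-measurable operators: a product of four elements of $\L^4(\cal M)$ lies in $\L^1(\cal M)$ by Hölder's inequality. Expanding $w^*ww^*w$ multilinearly gives sixteen terms. Each term is a product of four factors, each factor being either $a$, $a^*$ or $r\e^{\i t}z$, $r\e^{-\i t}z^*$. The term in which $z$ or $z^*$ occurs $k$ times with $z$ and $\ell$ times with $z^*$ carries the scalar factor $r^{k+\ell}\e^{\i(k-\ell)t}$.

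Averaging over $t\in[0,2\pi]$ kills every term with $k\neq\ell$. The survivors are:
\begin{itemize}
\item $k=\ell=0$: this term is $\tau(a^*aa^*a)=\norm{a}_4^4$.
\item $k=\ell=2$: this term is $r^4\tau(z^*zz^*z)=r^4\norm{z}_4^4$.
\item $k=\ell=1$: there are four choices of the positions of the two $z$-type letters in the pattern $w^*\,w\,w^*\,w$. One choice is needed among the starred slots $\{1,3\}$ and one among the unstarred slots $\{2,4\}$.
\end{itemize}

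The four mixed terms are
\[
\tau(z^*za^*a),\quad \tau(z^*aa^*z),\quad \tau(a^*zz^*a),\quad \tau(a^*az^*z),
\]
each multiplied by $r^2$. Using traciality (valid for products landing in $\L^1$), these rewrite as follows:
\[
\tau(z^*aa^*z)=\norm{a^*z}_2^2,\qquad \tau(a^*zz^*a)=\norm{z^*a}_2^2=\norm{a^*z}_2^2.
\]
Similarly,
\[
\tau(z^*za^*a)=\tau(za^*az^*)=\norm{az^*}_2^2=\norm{za^*}_2^2,\qquad \tau(a^*az^*z)=\tau(za^*az^*)=\norm{za^*}_2^2.
\]
Summing gives $2r^2\big(\norm{a^*z}_2^2+\norm{za^*}_2^2\big)$, which is the claimed formula.

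The only point needing care is justifying the interchange of the integral with the trace and the use of cyclicity for unbounded operators. I would handle this by noting that the expansion is a finite trigonometric polynomial in $t$ whose coefficients are traces of $\L^1$ elements, so no limiting argument is needed. For cyclicity $\tau(xy)=\tau(yx)$ with $x\in\L^{4/3}$ and $y\in\L^4$ (or $x,y\in\L^2$), I would invoke the standard tracial property in noncommutative $\L^p$-spaces.

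If preferred, one can first prove the identity for $a,z\in\L^4(\cal M)\cap\cal M$ and extend it by density, since both sides are continuous in $(a,z)$ for the $\L^4$ norm by Hölder's inequality. This continuity and cyclicity bookkeeping is the only mild obstacle; the rest is routine expansion.
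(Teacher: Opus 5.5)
Your proposal is correct and is essentially the paper's proof: expand $\tau(w^*ww^*w)$ into finitely many $\L^1$ terms, let the average over $t$ discard every term of nonzero Fourier degree, and evaluate the four mixed terms by traciality. The paper merely groups the expansion differently, writing $w^*w=A+r\e^{\i t}B+r\e^{-\i t}B^*+r^2C$ with $A=a^*a$, $B=a^*z$, $C=z^*z$ before squaring, so that your four mixed terms appear as $\tau(AC+CA+BB^*+B^*B)$.
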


\begin{proof}
Put $A\ov{\mathrm{def}}{=}a^*a$, $B\ov{\mathrm{def}}{=}a^*z$ and $C\ov{\mathrm{def}}{=}z^*z$. Then
\[
(a+r\e^{\i t}z)^*(a+r\e^{\i t}z)
=A+r\e^{\i t}B+r\e^{-\i t}B^*+r^2C.
\]
By H\"older's inequality, $A,B,C$ belong to $\L^2(\cal M)$, and every product occurring after the square is expanded belongs to $\L^1(\cal M)$. Thus the expansion is an identity in $\L^1(\cal M)$, and we may apply the trace and integrate its finitely many terms separately. Integration over $t$ annihilates every term with a nonzero Fourier degree. Hence
\begin{align*}
\MoveEqLeft
\frac1{2\pi}\int_0^{2\pi} \norm{a+r \e^{\i t}z}_{\L^4(\cal{M})}^4 \d t 
\ov{\eqref{Def-norm-Lp}}{=}\frac1{2\pi}\int_0^{2\pi} \tr\big[(a+r \e^{\i t}z)^*(a+r\e^{\i t}z)\big]^2 \d t \\
&=
\tr(A^2)
+r^2\tr(AC+CA+BB^*+B^*B)
+r^4\tr(C^2).
\end{align*}
By the tracial property, we have
\[
\tr(AC)=\tr(CA)=\norm{za^*}_{\L^2(\cal M)}^2
\quad\text{and}\quad
\tr(BB^*)=\tr(B^*B)=\norm{a^*z}_{\L^2(\cal M)}^2.
\]
Together with $\tr(A^2)=\norm{a}_4^4$ and $\tr(C^2)=\norm{z}_4^4$, this proves \eqref{circular-fourth-moment}.
\end{proof}

By H\"older's inequality, the real four-linear form $\L^4(\cal{M})^4 \to \mathbb{R}$, $(x_1,x_2,x_3,x_4) \mapsto \Re \Tr(x_1^*x_2x_3^*x_4)$ is continuous on $\L^4(\cal{M})^4$. Consequently, $\Phi_4(x)=\Tr(x^*xx^*x)$ is a continuous real polynomial of degree four, and in particular is of class $\C^\infty$.

\begin{cor}
\label{Levi-form-noncom-L4}
For any $a \in \L^4(\cal{M})$, we have
\begin{equation}
\label{Levi-L4-diagonal}
\scr{L}_{\Phi_4,a}(z)
=2\left(\norm{a^*z}_{\L^2(\cal{M})}^2+\norm{za^*}_{\L^2(\cal{M})}^2\right), \quad z \in \L^4(\cal{M}),
\end{equation}
and
\begin{equation}
\label{Levi-L4-polar}
\scr{L}_{\Phi_4,a}(y,z)
=2\left(\tr(z^*aa^*y)+\tr(az^*ya^*)\right), \quad y,z \in \L^4(\cal{M}).
\end{equation}
\end{cor}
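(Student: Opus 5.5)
The diagonal formula \eqref{Levi-L4-diagonal} will follow directly from Lemma~\ref{lem-circular-fourth-moment} combined with the circular-average characterization \eqref{Levi-average-limit}. Since $\Phi_4$ is a continuous real polynomial of degree four, it is of class $\C^2$ on all of $\L^4(\cal M)$, so \eqref{Levi-average-limit} applies at every $a$. By \eqref{circular-fourth-moment}, the circular average of $\Phi_4(a+r\e^{\i t}z)$ minus $\Phi_4(a)$ equals
\[
2r^2\bigl(\norm{a^*z}_{\L^2(\cal M)}^2+\norm{za^*}_{\L^2(\cal M)}^2\bigr)+r^4\norm{z}_{\L^4(\cal M)}^4 .
\]
Dividing by $r^2$ and letting $r\to0$ gives the diagonal formula at once.

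For the polar formula \eqref{Levi-L4-polar}, I will write the diagonal values through traces. We have $\norm{a^*z}_2^2=\tr(z^*aa^*z)$ and $\norm{za^*}_2^2=\tr(az^*za^*)$, both well defined by H\"older's inequality. This suggests the candidate sesquilinear form
\[
h(y,z)=2\bigl(\tr(z^*aa^*y)+\tr(az^*ya^*)\bigr).
\]
The form $h$ is linear in $y$ and conjugate linear in $z$, and it is Hermitian, since $\ovl{\tr(w)}=\tr(w^*)$ gives $\ovl{h(z,y)}=h(y,z)$. Its diagonal $h(z,z)$ coincides with $\scr L_{\Phi_4,a}(z)$.

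A Hermitian sesquilinear form is determined by its diagonal through the polarization identity $\scr L(y,z)=\frac14\sum_{k=0}^3\i^k\scr L(y+\i^kz)$. This is the same identity used in the proof of \eqref{Levi-form-general}, which defined $\scr L_{\Phi,a}(\cdot,\cdot)$. Applying it to both $\scr L_{\Phi_4,a}$ and $h$, which agree on the diagonal, yields $\scr L_{\Phi_4,a}(y,z)=h(y,z)$.

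There is no real obstacle here. The only point needing care is justifying that each product lies in $\L^1(\cal M)$ so that traciality and conjugation of traces are legitimate. For instance, $z^*aa^*y$ is a product of four $\L^4$ elements, so it lies in $\L^1(\cal M)$ by H\"older's inequality.
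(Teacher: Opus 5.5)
Your proposal is correct and follows the paper's own proof: the diagonal formula comes from combining Lemma~\ref{lem-circular-fourth-moment} with the circular-average characterization \eqref{Levi-average-limit}, and the polar formula then follows by polarization. Your explicit check that $2\bigl(\tr(z^*aa^*y)+\tr(az^*ya^*)\bigr)$ is sesquilinear with the correct diagonal supplies the detail that the paper compresses into ``follows by polarization''.
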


\begin{proof}
We have
\begin{align*}
\MoveEqLeft
\scr{L}_{\Phi_4,a}(z)
\ov{\eqref{Levi-average-limit}}{=} \lim_{r \to 0}\frac1{r^2}
\left(
\frac1{2\pi}\int_0^{2\pi}\Phi_4(a+r\e^{\i t}z) \d t-\Phi_4(a)
\right) \\        
&\ov{\eqref{circular-fourth-moment}}{=} \lim_{r \to 0}\frac1{r^2}
\left(
2r^2(\norm{a^*z}_2^2+\norm{za^*}_2^2)
+ r^4\norm{z}_4^4
\right) 
=2\left(\norm{a^*z}_{\L^2(\cal{M})}^2+\norm{za^*}_{\L^2(\cal{M})}^2\right).
\end{align*}
The second formula follows by polarization.
\end{proof}

\subsection{Levi form and Peirce decomposition of a partial isometry}
\label{sec-tangent-Peirce}

There is a particularly transparent interpretation of the tangent Hilbert form at a tripotent. If $w \in \cal{M}$ is a tripotent, then, by Example~\ref{ex-trip-partial-iso}, $w$ is a partial isometry. Set $e \ov{\mathrm{def}}{=} ww^*$ and $f \ov{\mathrm{def}}{=} w^*w$. The multiplication operator $\mathrm{D}(w,w) \co \cal{M}\to \cal{M}$ associated with $w$ is
\begin{equation}
\label{Peirce-operator-partial-isometry}
\mathrm{D}(w,w)z
\ov{\eqref{Left-mult-JBW-triple}}{=} \{w,w,z\}
\ov{\eqref{triple product-C-star}}{=} \frac12(ww^*z+ zw^*w)
=\frac12(ez+zf), \quad z \in \cal M.
\end{equation}
The corresponding Peirce decomposition \eqref{Peirce-2} is especially explicit. If $e^\perp=1-e$ and $f^\perp=1-f$, the Peirce projections are given by
\begin{align*}
\MoveEqLeft
\P_2^w(z)         
=\mathrm{D}(w,w)(2 \mathrm{D}(w,w) -\Id_{\cal{M}})(z) 
\ov{\eqref{Peirce-operator-partial-isometry}}{=} \mathrm{D}(w,w)(ez+zf-z)\\
&\ov{\eqref{Peirce-operator-partial-isometry}}{=}\frac12\big(e(ez+zf-z)+(ez+zf-z)f\big)
=ezf.
\end{align*}
Similarly, we have
\begin{equation}
\label{Peirce-projections-partial-isometry}
\P_1^w(z)
=ezf^\perp+e^\perp zf, 
\quad \text{and} \quad
\P_0^w(z)
= e^\perp zf^\perp.
\end{equation}
We introduce the corresponding $\L^2$-Peirce subspaces 
\[
\cal{H}_2(w) \ov{\mathrm{def}}{=} e\L^2(\cal M)f,\qquad
\cal{H}_1(w) \ov{\mathrm{def}}{=} e\L^2(\cal M)f^\perp\oplus_2e^\perp\L^2(\cal M)f,\qquad
\cal{H}_0(w) \ov{\mathrm{def}}{=} e^\perp\L^2(\cal M)f^\perp.
\]
These subspaces are mutually orthogonal in the Hilbert space $\L^2(\cal{M})$. In the sequel, we consider the operators defined on $\L^2(\cal{M})$ instead of $\cal{M}$. From \eqref{Peirce-operator-partial-isometry}, we obtain
\begin{equation}
\label{Peirce-operator-spectral-decomposition}
\mathrm{D}(w,w)
=\P_2^w+\frac12 \P_1^w.
\end{equation}
In particular, the operator $\mathrm{D}(w,w)$ acts as multiplication by $1$, $\frac12$ and $0$ on $\cal{H}_2(w)$, $\cal{H}_1(w)$ and $\cal{H}_0(w)$, respectively; a value is an eigenvalue precisely when the corresponding subspace is nonzero.

Now, we show that the tangent Hilbert form at a tripotent is exactly the quadratic form associated with four times the Peirce operator. 

\begin{prop}
Suppose that $w \in \L^4(\cal{M}) \cap \cal{M}$ is a tripotent. The restriction of the Levi form $\scr L_{\Phi_4,w}$ to $\L^4(\cal M)\cap\L^2(\cal M)$ extends uniquely to a continuous Hermitian form on $\L^2(\cal M)$, and
\begin{equation}
\label{tangent-form-Peirce-operator}
\scr{L}_{\Phi_4,w}(z,y)
=4\big\la \mathrm{D}(w,w)z,y \big\ra_{\L^2(\cal{M})},
\quad y,z \in \L^2(\cal{M}).
\end{equation}
Moreover, we have
\begin{align}
\label{tangent-form-Peirce-decomposition}
\MoveEqLeft
\scr{L}_{\Phi_4,w}(z,z)
= 4\norm{\P_2^w(z)}_{\L^2(\cal{M})}^2+2\norm{\P_1^w(z)}_{\L^2(\cal{M})}^2,\quad z \in \L^2(\cal{M}).
\end{align}
\end{prop}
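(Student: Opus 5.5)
The plan is to start from the explicit polar formula of Corollary~\ref{Levi-form-noncom-L4}. With $a=w$, for $y,z\in\L^4(\cal M)\cap\L^2(\cal M)$ it reads
\[
\scr L_{\Phi_4,w}(z,y)=2\bigl(\tr(y^*ww^*z)+\tr(wy^*zw^*)\bigr).
\]
We use the trace property to rewrite the second term as $\tr(y^*zw^*w)$. Since $e=ww^*$ and $f=w^*w$, this gives
\[
\scr L_{\Phi_4,w}(z,y)=2\,\tr\bigl(y^*(ez+zf)\bigr)=2\la ez+zf,y\ra_{\L^2(\cal M)}.
\]
By \eqref{Peirce-operator-partial-isometry}, $ez+zf=2\mathrm D(w,w)z$, so the form equals $4\la \mathrm D(w,w)z,y\ra_{\L^2}$ on $\L^4\cap\L^2$.

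The use of traciality needs a short justification. Because $w\in\cal M$, the element $y^*z$ lies in $\L^1$ whenever $y,z\in\L^2$. We also have $\tr(wy^*zw^*)=\tr(y^*zw^*w)$ for $w$ bounded and $y^*z\in\L^1$. So every expression above makes sense for $y,z\in\L^2$.

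For the extension, the right-hand side $4\la \mathrm D(w,w)z,y\ra$ is a bounded Hermitian form on $\L^2(\cal M)$, since $\mathrm D(w,w)$ is bounded of norm at most $1$ and selfadjoint. The space $\L^4\cap\L^2$ contains $\L^1\cap\cal M$, which is dense in $\L^2$. A continuous form on a dense subspace has at most one continuous extension. This gives existence and uniqueness of the extension, together with \eqref{tangent-form-Peirce-operator}.

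For \eqref{tangent-form-Peirce-decomposition}, we insert the spectral decomposition \eqref{Peirce-operator-spectral-decomposition}, namely $\mathrm D(w,w)=\P_2^w+\tfrac12\P_1^w$. The key point is that $\P_0^w,\P_1^w,\P_2^w$, viewed on $\L^2$, are the orthogonal projections onto the mutually orthogonal subspaces $\cal H_0(w),\cal H_1(w),\cal H_2(w)$. They are built from left and right multiplications by the projections $e,e^\perp,f,f^\perp$, which are selfadjoint idempotents on $\L^2$. It follows that
\[
\la \mathrm D(w,w)z,z\ra=\norm{\P_2^wz}_2^2+\tfrac12\norm{\P_1^wz}_2^2,
\]
and multiplying by $4$ gives the claim.

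The only step needing care is the density and traciality justification in the first paragraph. The rest is algebraic bookkeeping.
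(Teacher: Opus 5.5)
Your proposal is correct and follows the paper's proof essentially step for step: the polar Levi formula plus traciality gives $2\tr\bigl(y^*(ez+zf)\bigr)=4\la \mathrm{D}(w,w)z,y\ra_{\L^2(\cal M)}$ on $\L^4(\cal M)\cap\L^2(\cal M)$, and density of this subspace in $\L^2(\cal M)$ gives the unique continuous extension. For the diagonal identity, the paper writes $z=ezf+ezf^\perp+e^\perp zf+e^\perp zf^\perp$ and computes $2\norm{ez}_2^2+2\norm{zf}_2^2$ directly. This uses the same $\L^2$-orthogonality of the four corners that you invoke through $\mathrm{D}(w,w)=\P_2^w+\frac12\P_1^w$.
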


\begin{proof}
For $y,z\in\L^4(\cal M)\cap\L^2(\cal M)$, the equalities $ww^*=e$ and $w^*w=f$, the tracial property and \eqref{Levi-L4-polar} give
\begin{align*}
\MoveEqLeft
\scr{L}_{\Phi_4,w}(z,y)
\ov{\eqref{Levi-L4-polar}}{=}2\tr(y^*ww^*z)+2\tr(wy^*zw^*) 
=2\tr(y^*ez)+2\tr(y^*zf) \\
&=2\tr(y^*(ez+zf)) 
\ov{\eqref{Peirce-operator-partial-isometry}}{=}4\big\la \mathrm{D}(w,w)z,y \big\ra_{\L^2(\cal{M})}. 
\end{align*}
The last expression defines a continuous Hermitian form on $\L^2(\cal M)$. Since $\L^4(\cal M)\cap\L^2(\cal M)$ is dense in $\L^2(\cal M)$, it is the unique continuous extension of the Levi form and proves \eqref{tangent-form-Peirce-operator}.

Write $
z=z_2+z_1^\ell+z_1^r+z_0$, where
\[
z_2 \ov{\mathrm{def}}{=} ezf,\qquad
z_1^\ell \ov{\mathrm{def}}{=}  ezf^\perp,\qquad
z_1^r \ov{\mathrm{def}}{=}  e^\perp zf,\qquad
z_0 \ov{\mathrm{def}}{=}  e^\perp zf^\perp.
\]
Since these four vectors are mutually orthogonal in the Hilbert space $\L^2(\cal{M})$, we have $ez=z_2+z_1^\ell$ and $zf=z_2+z_1^r$. Consequently, we have
\begin{align*}
\MoveEqLeft
\scr{L}_{\Phi_4,w}(z,z)
=2\norm{ez}_2^2+2\norm{zf}_2^2
=4\norm{z_2}_2^2+2\norm{z_1^\ell}_2^2+2\norm{z_1^r}_2^2
=4\norm{\P_2^w(z)}_2^2+2\norm{\P_1^w(z)}_2^2.
\end{align*}
This proves \eqref{tangent-form-Peirce-decomposition}.
\end{proof}

In particular, we have
\begin{equation}
\label{kernel-tangent-Peirce-zero}
\{z \in \L^2(\cal{M}) : \scr{L}_{\Phi_4,w}(z,z)=0\}
=\cal{H}_0(w).
\end{equation}
Hence the $\L^2$-Peirce-$0$ component is precisely the part which is invisible to the tangent Hilbert geometry at $w$, whereas the $\L^2$-Peirce-$2$ and $\L^2$-Peirce-$1$ components are detected with respective weights $4$ and $2$. Equivalently, $4\mathrm{D}(w,w)$ acts with the weights $4$, $2$ and $0$ on the three Peirce components, omitting any value whose component is zero. This gives a concrete link between the infinitesimal geometry of the $\L^4$-norm and the Peirce geometry of the ambient $\JB^*$-triple. In particular, any $z \in e\L^2(\cal M)f$ belongs to the $\L^2$-Peirce-$2$ subspace and
\begin{equation}
\label{full-tripotent-tangent-form}
\scr{L}_{\Phi_4,w}(z,y)
=4\la z,y \ra_{\L^2(\cal{M})},
\quad
z,y \in e\L^2(\cal{M})f.
\end{equation}
This explains why the existence of a full partial isometry in the range turns the tangent Hilbert geometry into the ordinary $\L^2$-geometry.

\section{Contractive projections on noncommutative $\L^4$-spaces}
\label{sec-L4}

\subsection{First order geometry and triple product}

Let $\cal{M}$ be a von Neumann algebra and let $P\co \L^4(\cal{M}) \to \L^4(\cal{M})$ be a bounded projection. We set $Y \ov{\mathrm{def}}{=} \Ran P$ and $Z \ov{\mathrm{def}}{=} \ker P$. We use the sesquilinear duality
\[
\la x,\eta\ra
\ov{\mathrm{def}}{=}
\tr(\eta^*x),
\quad
x \in \L^4(\cal{M}),
\eta \in \L^{\frac43}(\cal{M}).
\]
We denote by $P^\dagger \co \L^{\frac43}(\cal{M}) \to \L^{\frac43}(\cal{M})$ the adjoint map determined by
\begin{equation}
\label{def-adjoint-P-L4}
\tr\big((P^\dagger y)^*x\big)
=
\tr(y^*P(x)),
\quad
x \in \L^4(\cal{M}), y\in\L^{\frac43}(\cal{M}).
\end{equation}
Thus $P^\dagger$ is a projection, $\Ran P^\dagger=(\ker P)^\perp$, and we also set
\begin{equation}
\label{range-adjoint-annihilator}
Y^\sharp
\ov{\mathrm{def}}{=}\Ran P^\dagger=(\ker P)^\perp.
\end{equation}
For any element $x \in \L^4(\cal{M})$ with polar decomposition $x=u|x|$, define 
\begin{equation}
\label{duality-map-L4-recalled}
\frak{j}_4(x)
\ov{\mathrm{def}}{=}\frak{j}_{4,\L^{4}(\cal{M})}(x)  
=u|x|^3
=x x^* x \in \L^{\frac43}(\cal{M}).
\end{equation}
We have
\begin{equation}
\label{duality-map-norm-L4}
\norm{\mathfrak j_4(x)}_{\frac43}
=
\norm{x}_4^3
\quad\text{and}\quad
\tr(\mathfrak j_4(x)^*x)
=
\norm{x}_4^4.
\end{equation}
Likewise, if $y \in \L^{\frac43}(\cal{M})$ with polar decomposition $y=v|y|$, we set
\[
\mathfrak j_{\frac43}(y)
\ov{\mathrm{def}}{=} v|y|^{\frac13} \in \L^4(\cal{M}).
\]
Then
\begin{equation}
\label{inverse-duality-maps}
\mathfrak j_{\frac43}\mathfrak j_4(x)
\ov{\eqref{inverse-power-duality-mapping-NC}}{=} x
\quad \text{and} \quad
\mathfrak j_4\mathfrak j_{\frac43}(y)
\ov{\eqref{inverse-power-duality-mapping-NC}}{=}y.
\end{equation}

Recall that by \cite[(4.1.25) p.~465]{CGRP14} a $\C^*$-algebra $\cal{A}$ becomes a $\JB^*$-triple under its own norm and the triple product $\{x,y,z\} \ov{\mathrm{def}}{=} \frac12(xy^*z+zy^*x)$, where $x,y,z \in \cal{A}$. Inspired by this result, we introduce the $\L^4$-Jordan triple product $\{\cdot,\cdot,\cdot\} \co \L^4(\cal{M}) \times \L^4(\cal{M}) \times \L^4(\cal{M}) \to \L^{\frac43}(\cal{M})$ defined by the same formula
\begin{equation}
\label{ambient-Jordan-triple-product}
\{x,y,z\}
\ov{\mathrm{def}}{=}
\frac12(xy^*z+zy^*x),
\quad
x,y,z \in \L^4(\cal{M}).
\end{equation}
Note this map is well-defined by H\"older's inequality.

\begin{prop}
\label{prop-contractivity-cubic-characterization}
Let $P \co \L^4(\cal{M}) \to \L^4(\cal{M})$ be a bounded projection. The following assertions are equivalent.
\begin{enumerate}
\item The projection $P$ is contractive.
\item For any $x \in \Ran P$, we have $\frak{j}_4(x) \in \Ran P^\dagger$, i.e.,
\begin{equation}
\label{duality-vector-fixed-adjoint}
P^\dagger(\frak{j}_4(x))=\frak{j}_4(x).
\end{equation}
\item For any $x \in \Ran P$ and any $w \in \ker P$, we have
\begin{equation}
\label{cubic-orthogonality-range-kernel}
\tr\big((x x^* x)^*w\big)
=0.
\end{equation}
\item For any $x,y,z \in \Ran P$ and any $w \in \ker P$, we have
\begin{equation}
\label{triple-orthogonality-range-kernel}
\tr\big(\{x,y,z\}^* w\big)
=0.
\end{equation}
\end{enumerate}
If these conditions hold, then
\begin{equation}
\label{range-adjoint-duality-image}
\frak{j}_4(\Ran P)
=\Ran P^\dagger.
\end{equation}
In particular, although $\frak{j}_4$ is nonlinear, its range $\frak{j}_4(\Ran P)$ is a linear subspace of the Banach space $\L^{\frac43}(\cal{M})$.
\end{prop}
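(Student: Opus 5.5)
The plan is to prove the cycle $1\Rightarrow2\Rightarrow3\Rightarrow1$, then $3\Leftrightarrow4$, and finally the range identity \eqref{range-adjoint-duality-image}.

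For $1\Rightarrow2$, use that $\L^4(\cal M)$ is smooth and strictly convex. Lemma~\ref{Lemma-smooth}, transported through the sesquilinear identification of Remark~\ref{Remark-sesquilinear-duality}, gives $\frak j_4(x)\in\Ran P^\dagger$ for $x\in\Ran P$. Here one should check that $\iota_4$ intertwines the Banach adjoint $P^*$ with $P^\dagger$, so that $\iota_4(\Ran P^\dagger)=\Ran P^*$.

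For $2\Rightarrow3$, note that $\Ran P^\dagger=(\ker P)^\perp$ by \eqref{range-adjoint-annihilator}. Hence $\tr(\frak j_4(x)^*w)=0$ for $w\in\ker P$.

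For $3\Rightarrow1$, fix $z\in\L^4(\cal M)$ and put $x=P(z)$ and $w=z-P(z)\in\ker P$. By \eqref{duality-map-norm-L4} and condition 3,
\[
\norm{x}_4^4=\tr(\frak j_4(x)^*x)=\tr(\frak j_4(x)^*(x+w))=\tr(\frak j_4(x)^*z)\leq\norm{x}_4^3\norm{z}_4 .
\]
This gives $\norm{Pz}_4\leq\norm z_4$. This step is a direct first-order argument and needs nothing beyond Hölder's inequality.

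For $3\Leftrightarrow4$, the implication $4\Rightarrow3$ is the diagonal case $x=y=z$, since $\{x,x,x\}=xx^*x$. For $3\Rightarrow4$, polarize. Fix $w\in\ker P$ and consider the map $F(x)=\tr((xx^*x)^*w)$ on $\Ran P$. This map vanishes identically, and $xx^*x$ is a real-polynomial (cubic) expression, linear in $x$ twice and conjugate-linear once. Replace $x$ by $\sum_k\lambda_kx_k$ with $x_1,x_2,x_3\in\Ran P$ and $\lambda_k\in\mathbb C$. Expanding $\frak j_4(\sum\lambda_kx_k)=\sum_{i,j,l}\lambda_i\ovl{\lambda_j}\lambda_l\,x_ix_j^*x_l$ gives a polynomial in $\lambda,\ovl\lambda$ that vanishes identically. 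Its coefficients must therefore vanish, and the coefficient of the monomial $\lambda_1\ovl{\lambda_2}\lambda_3$ (after conjugation, since the trace is antilinear in the first slot) is $\tr((x_1x_2^*x_3+x_3x_2^*x_1)^*w)$. This yields condition 4. The only care needed is to treat $\lambda$ and $\ovl\lambda$ as independent variables, which is justified because a real-polynomial map $\mathbb C^3\to\mathbb C$ that vanishes identically has all coefficients zero.

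For the last statement, \eqref{range-adjoint-duality-image} is the reflexive case of Proposition~\ref{Prop-characterization-smooth-strict-convex}'s companion, the proposition giving $\frak j_{p,X}(Y)=\Ran P^*$, again transported via $\iota_4$. More concretely, let $\eta\in\Ran P^\dagger$. Applying Lemma~\ref{Lemma-smooth} to the contractive projection $P^\dagger$ on $\L^{4/3}$, whose adjoint is $P$, gives $\frak j_{4/3}(\eta)\in\Ran P$. Then $\eta=\frak j_4(\frak j_{4/3}(\eta))$ by \eqref{inverse-duality-maps}. Linearity of $\frak j_4(\Ran P)$ follows immediately.

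The main obstacle I expect is bookkeeping rather than substance: checking that the conjugate-linear identification $\iota_4$ converts $P^*$ into $P^\dagger$ and $\frak J_{\varphi_4}$ into $xx^*x$, so that the earlier Banach-space lemmas apply verbatim.
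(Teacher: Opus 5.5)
Your proposal is correct and follows the paper's proof essentially step by step: the cycle $1\Rightarrow2\Rightarrow3\Rightarrow1$ via Lemma~\ref{Lemma-smooth}, the annihilator description of $\Ran P^\dagger$ and the H\"older estimate, then the final range identity via Lemma~\ref{Lemma-smooth} applied to $P^\dagger$ together with $\frak j_4\frak j_{\frac43}=\Id$. The only cosmetic difference is in $3\Rightarrow4$. There you extract the coefficient of $\bar\lambda_1\lambda_2\bar\lambda_3$ from an identically vanishing polynomial in $(\lambda,\bar\lambda)$, whereas the paper writes down the explicit polarization identity \eqref{polarization-cubic-map} over fourth roots of unity, which performs the same coefficient extraction discretely.
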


\begin{proof}
1 $\Rightarrow$ 2. Under the conjugate linear isometric identification of $\L^{\frac43}(\cal M)$ with $(\L^4(\cal M))^*$ induced by the trace pairing, the Banach adjoint $P^*$ corresponds to $P^\dagger$. Hence the implication follows from Lemma~\ref{Lemma-smooth}.

2 $\iff$ 3. By \eqref{range-adjoint-annihilator}, condition \eqref{duality-vector-fixed-adjoint} is equivalent to
$\frak{j}_4(x) \in (\ker P)^\perp$, i.e., $x x^* x \in (\ker P)^\perp$, which is precisely \eqref{cubic-orthogonality-range-kernel}. Thus the second and third assertions are equivalent.

3 $\Rightarrow$ 1. We next show that the third assertion implies the first one. Let $x \in \L^4(\cal{M})$ and write
\[
x=y+z,
\quad
y=P(x)\in \Ran P,\quad z=x-P(x) \in \ker P.
\]
If $y\neq0$, then \eqref{cubic-orthogonality-range-kernel} and H\"older's inequality yield
\[
\norm{y}_4^4
=
\tr(\frak{j}_4(y)^*y)
=
\tr(\mathfrak j_4(y)^*x)
\leq
\norm{\mathfrak j_4(y)}_{\frac43}\norm{x}_4
=
\norm{y}_4^3\norm{x}_4.
\]
Consequently, $\norm{P(x)}_4=\norm{y}_4 \leq \norm{x}_4$. The same conclusion is trivial if $y=0$. Hence $P$ is contractive.

3 $\iff$ 4. It remains to compare the third and fourth assertions. Direct expansion gives the polarization identity
\begin{equation}
\label{polarization-cubic-map}
\{x,y,z\}
=
\frac1{32}
\sum_{\alpha,\beta \in \{1,-1,\i,-\i \}}
\alpha\overline{\beta}\,
\mathfrak j_4(x+\alpha y+\beta z).
\end{equation}

Indeed, expanding the right-hand side and summing over the fourth roots of unity eliminates every term except
\[
\frac12(xy^*z+zy^*x).
\]
Since $\Ran P$ is a subspace, condition \eqref{cubic-orthogonality-range-kernel} and \eqref{polarization-cubic-map} imply \eqref{triple-orthogonality-range-kernel}. Conversely, taking $x=y=z$ in \eqref{triple-orthogonality-range-kernel} gives \eqref{cubic-orthogonality-range-kernel}. This proves the equivalence of the four assertions.

Finally, we already know that $\mathfrak j_4( \Ran P) \subset \Ran P^\dagger$. Let $y \in \Ran P^\dagger$. Since $\norm{P^\dagger}=\norm{P}\leq1$, Lemma~\ref{Lemma-smooth}, applied to the contractive projection $P^\dagger \co \L^{\frac43}(\cal{M}) \to \L^{\frac43}(\cal{M})$, gives
\[
\frak{j}_{\frac43}(y)
\in
\Ran (P^\dagger)^\dagger
=
\Ran P.
\]
Here $(P^\dagger)^\dagger=P$ under the canonical reflexive identification coming from the trace duality.
Using \eqref{inverse-duality-maps}, we obtain
\[
y
\ov{\eqref{inverse-duality-maps}}{=}
\frak{j}_4\big(\frak{j}_{\frac43}(y)\big)
\in\frak{j}_4(\Ran P).
\]
This proves \eqref{range-adjoint-duality-image}.
\end{proof}

\begin{cor}
\label{cor-dual-triple-closure}
Let $P \co \L^4(\cal{M}) \to \L^4(\cal{M})$ be a contractive projection. Then
\begin{equation}
\label{dual-triple-closure}
\{\Ran P,\Ran P,\Ran P\}
= \Ran P^\dagger
=\frak {j}_4(\Ran P).
\end{equation}
Equivalently, for any $x,y,z \in \Ran P$, we have
\begin{equation}
\label{triple-fixed-by-adjoint}
P^\dagger\left(\frac12(xy^*z+zy^*x)\right)
=\frac12(xy^*z+zy^*x).
\end{equation}
\end{cor}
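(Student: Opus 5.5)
The plan is to read Corollary~\ref{cor-dual-triple-closure} off from Proposition~\ref{prop-contractivity-cubic-characterization}. Here $\{\Ran P,\Ran P,\Ran P\}$ is interpreted as the set of all triple products $\{x,y,z\}$ with $x,y,z\in\Ran P$; its linear span gives the same space once equality with a subspace is established. The second equality $\Ran P^\dagger=\frak j_4(\Ran P)$ is exactly \eqref{range-adjoint-duality-image}, so nothing new is needed there.

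For the inclusion $\{\Ran P,\Ran P,\Ran P\}\subset\Ran P^\dagger$, we use assertion 4 of Proposition~\ref{prop-contractivity-cubic-characterization}, which holds because $P$ is contractive. It says that $\tr(\{x,y,z\}^*w)=0$ for every $w\in\ker P$. Hence $\{x,y,z\}\in(\ker P)^\perp$, and this annihilator equals $\Ran P^\dagger$ by \eqref{range-adjoint-annihilator}. Since $P^\dagger$ is a projection, membership in its range is equivalent to being fixed by it, which is \eqref{triple-fixed-by-adjoint}. This argument also shows that \eqref{triple-fixed-by-adjoint} is equivalent to the inclusion.

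For the reverse inclusion, we take $\eta\in\Ran P^\dagger$. By \eqref{range-adjoint-duality-image} there is $x\in\Ran P$ with $\eta=\frak j_4(x)=xx^*x=\{x,x,x\}$. So $\eta$ already lies in $\{\Ran P,\Ran P,\Ran P\}$, with no spanning needed.

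The only point requiring care is the identification $(\ker P)^\perp=\Ran P^\dagger$ for the sesquilinear trace pairing. This is standard for adjoint projections: if $\tr(\eta^*w)=0$ for all $w\in\ker P$, then applying this to $w=x-Px$ gives $\tr(\eta^*x)=\tr(\eta^*Px)$, so $P^\dagger\eta=\eta$. The converse direction is immediate. With that in hand, there is no genuine obstacle.
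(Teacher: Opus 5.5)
Your proposal is correct and follows the paper's proof: the inclusion $\{\Ran P,\Ran P,\Ran P\}\subset\Ran P^\dagger$ comes from Proposition~\ref{prop-contractivity-cubic-characterization}, and the reverse inclusion comes from writing each element of $\Ran P^\dagger$ as $\frak j_4(y)=\{y,y,y\}$ with $y\in\Ran P$, via \eqref{range-adjoint-duality-image}. Your verification of $(\ker P)^\perp=\Ran P^\dagger$ only fills in a detail that the paper takes as given in \eqref{range-adjoint-annihilator}.
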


\begin{proof}
Proposition~\ref{prop-contractivity-cubic-characterization} gives the inclusion from left to right. Conversely, any element of $\Ran P^\dagger$ has the form $\frak j_4(y)=\{y,y,y\}$ for some $y\in\Ran P$, by \eqref{range-adjoint-duality-image}.
\end{proof}

\subsection{First order geometry and $2$-contractive projections on $\L^4$-spaces}

For $n \geq 1$, let $P_n \ov{\mathrm{def}}{=} \Id_{S_n^4} \ot P \co \L^4(\M_n(\cal M)) \to \L^4(\M_n(\cal{M}))$. Under the canonical trace duality, we have $
P_n^\dagger
=
\Id_{S_n^{\frac43}} \ot P^\dagger$ and hence
\begin{equation}
\label{range-adjoint-amplification}
\Ran P_n^\dagger
=
\M_n(\Ran P^\dagger).
\end{equation}

\begin{thm}
\label{thm-two-contractive-completely-contractive-L4}
Let $\cal{M}$ be a von Neumann algebra and let $P \co \L^4(\cal{M}) \to \L^4(\cal{M})$ be a bounded projection. The following assertions are equivalent.
\begin{enumerate}
\item $P$ is $2$-contractive.
\item For any $x,y,z \in \Ran P$, we have
\begin{equation}
\label{unsymmetrized-triple-dual-range}
xy^*z \in \Ran P^\dagger.
\end{equation}
Equivalently,
\begin{equation}
\label{unsymmetrized-triple-fixed}
P^\dagger(xy^*z)=xy^*z,
\quad
x,y,z \in \Ran P.
\end{equation}
\item $P$ is completely contractive.
\item For any integer $n \geq 1$, we have
\begin{equation}
\label{matrix-duality-map-range}
xx^*x\in \M_n(\Ran P^\dagger),
\quad
x \in \M_n(\Ran P).
\end{equation}
\end{enumerate}
In this case, for any $n \geq 1$,
\begin{equation}
\label{matrix-duality-map-equality}
\{xx^*x : x \in \M_n(\Ran P)\}
=
\M_n(\Ran P^\dagger).
\end{equation}
\end{thm}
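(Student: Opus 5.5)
The key tool is Proposition~\ref{prop-contractivity-cubic-characterization}, applied to the amplified projections $P_n=\Id_{S_n^4}\ot P$ on $\L^4(\M_n(\cal M))$. Since $\L^4(\M_n(\cal M))$ is again a noncommutative $\L^4$-space with trace $\Tr_n\ot\tau$, and $\Ran P_n=\M_n(\Ran P)$, that proposition together with \eqref{range-adjoint-amplification} shows that $P_n$ is contractive if and only if $xx^*x\in\M_n(\Ran P^\dagger)$ for every $x\in\M_n(\Ran P)$. This gives $3\Leftrightarrow 4$ at once. The final equality \eqref{matrix-duality-map-equality} is \eqref{range-adjoint-duality-image} applied to each contractive $P_n$. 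So it remains to prove $1\Rightarrow 2\Rightarrow 4$; the implication $3\Rightarrow 1$ is trivial.

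For $2\Rightarrow 4$, take $x=[x_{ij}]\in\M_n(\Ran P)$. The $(i,j)$ entry of $xx^*x$ is $\sum_{k,l}x_{ik}x_{lk}^*x_{lj}$. Each term is an unsymmetrized product of three elements of $\Ran P$, so it lies in $\Ran P^\dagger$ by \eqref{unsymmetrized-triple-dual-range}. Since $\Ran P^\dagger$ is a linear subspace, every entry lies in $\Ran P^\dagger$, and therefore $xx^*x\in\M_n(\Ran P^\dagger)$. The two formulations in assertion 2 are equivalent because $P^\dagger$ is a projection.

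The substantive step is $1\Rightarrow 2$. By the argument above with $n=2$, $2$-contractivity gives $XX^*X\in\M_2(\Ran P^\dagger)$ for every $X\in\M_2(\Ran P)$. Polarizing as in \eqref{polarization-cubic-map}, applied to $P_2$, yields
\[
\tfrac12\bigl(XY^*Z+ZY^*X\bigr)\in\M_2(\Ran P^\dagger),\quad X,Y,Z\in\M_2(\Ran P).
\]
To isolate $xy^*z$, I would choose matrix units so that one of the two symmetrized terms vanishes. Take $X=x\ot e_{11}$, $Y=y\ot e_{21}$ and $Z=z\ot e_{21}$, writing $u\ot e_{ij}$ for the matrix whose only nonzero entry is $u$ in position $(i,j)$. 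Then
\[
XY^*Z=xy^*z\ot e_{11}e_{12}e_{21}=xy^*z\ot e_{11},
\qquad
ZY^*X=zy^*x\ot e_{21}e_{12}e_{11}=zy^*x\ot e_{21}e_{11}=zy^*x\ot e_{21},
\]
because $e_{12}e_{11}=0$ fails but $e_{21}e_{12}=e_{22}$ and $e_{22}e_{11}=0$. So in fact $ZY^*X=0$, and hence $\frac12 xy^*z\ot e_{11}\in\M_2(\Ran P^\dagger)$. Reading off the $(1,1)$ entry gives $xy^*z\in\Ran P^\dagger$.

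The main obstacle is getting these placements right: one must confirm that the symmetric partner really vanishes for this choice, and if it does not, try other matrix-unit placements (for example $X=x\ot e_{12}$, $Y=y\ot e_{22}$, $Z=z\ot e_{21}$) until one cross term dies. It also has to be checked that Proposition~\ref{prop-contractivity-cubic-characterization} and the polarization identity apply verbatim to $\M_2(\cal M)$ with the trace $\Tr_2\ot\tau$. This holds because they only use that the ambient space is a tracial noncommutative $\L^4$-space.
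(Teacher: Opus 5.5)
Your proposal is correct and follows essentially the paper's proof: the cubic characterization of contractivity applied to $P_n$, polarization at level $2$, and a matrix-unit placement that kills the symmetric partner. The paper uses exactly your fallback placement $x\otimes e_{12}$, $y\otimes e_{22}$, $z\otimes e_{21}$, but your first choice also works. Do fix the displayed computation of $ZY^*X$: the intermediate equalities $=zy^*x\otimes e_{21}e_{11}=zy^*x\otimes e_{21}$ are wrong, and $e_{12}e_{11}=0$ does hold, so $e_{21}e_{12}e_{11}=0$ under either bracketing.
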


\begin{proof}
 $3 \Rightarrow 1$. It is immediate.

$1 \Rightarrow 2$. Suppose that $P$ is $2$-contractive. Then $P_2 \co \L^4(\M_2(\cal{M})) \to \L^4(\M_2(\cal{M}))$ is a contractive projection. By Proposition~\ref{prop-contractivity-cubic-characterization}, we have
\[
aa^*a \in \Ran P_2^\dagger
=
\M_2(\Ran P^\dagger),
\quad
a \in \M_2(\Ran P).
\]
Since $\M_2(\Ran P^\dagger)$ is linear, the polarization formula \eqref{polarization-cubic-map}, applied in $\M_2(\cal M)$, gives
\begin{equation}
\label{matrix-Jordan-triple-dual-range}
\frac12(ab^*c+cb^*a) \in \M_2(\Ran P^\dagger),
\quad
a,b,c \in \M_2(\Ran P).
\end{equation}

Let $x,y,z \in \Ran P$ and consider the elements $
a\ov{\mathrm{def}}{=}\begin{bmatrix}
 0    &  x \\
  0   &  0 \\
\end{bmatrix}$, $b \ov{\mathrm{def}}{=} \begin{bmatrix}
   0  &  0 \\
   0  &  y \\
\end{bmatrix}$ and $c \ov{\mathrm{def}}{=}
\begin{bmatrix}
    0& 0  \\
   z  & 0  \\
\end{bmatrix}$.
Then
$$
ab^*c
=\begin{bmatrix}
 0    &  x \\
  0   &  0 \\
\end{bmatrix}\begin{bmatrix}
   0  &  0 \\
   0  &  y \\
\end{bmatrix}^*\begin{bmatrix}
    0& 0  \\
   z  & 0  \\
\end{bmatrix}
=\begin{bmatrix}
  0  & xy^*  \\
  0   & 0  \\
\end{bmatrix}\begin{bmatrix}
    0& 0  \\
   z  & 0  \\
\end{bmatrix}
=\begin{bmatrix}
  xy^*z   &  0 \\
   0  &  0 \\
\end{bmatrix}.
$$
and
$$
cb^*a
=\begin{bmatrix}
    0& 0  \\
   z  & 0  \\
\end{bmatrix}
\begin{bmatrix}
   0  &  0 \\
   0  &  y \\
\end{bmatrix}^*
\begin{bmatrix}
 0    &  x \\
  0   &  0 \\
\end{bmatrix}
=\begin{bmatrix}
  0   &  0 \\
  0   &  0 \\
\end{bmatrix}\begin{bmatrix}
 0    &  x \\
  0   &  0 \\
\end{bmatrix}
=0.
$$
It follows from \eqref{matrix-Jordan-triple-dual-range} that
\[
\frac12\begin{bmatrix}
xy^*z&0\\
0&0
\end{bmatrix}
=\frac12(ab^*c+cb^*a)
\in \M_2(\Ran P^\dagger).
\]
Since $\M_2(\Ran P^\dagger)$ is a linear space, it follows that $xy^*z \in \Ran P^\dagger$. 

$2 \Rightarrow 3$. Now, assume that \eqref{unsymmetrized-triple-dual-range} holds. Let $n \geq 1$ and let $x=[x_{ij}] \in \M_n(\Ran P)$. For any integers $1 \leq i,l \leq n$, we have
\begin{equation}
\label{entries-cubic-matrix}
(xx^*x)_{il}
=
\sum_{j,k=1}^n x_{ij}x_{kj}^*x_{kl}.
\end{equation}
Each term on the right-hand side belongs to $\Ran P^\dagger$ by assumption. Hence
\[
xx^*x \in \M_n(\Ran P^\dagger)
=
\Ran P_n^\dagger.
\]
Proposition~\ref{prop-contractivity-cubic-characterization}, applied to the projection $P_n$, shows that $P_n$ is contractive. Since this holds for any $n \geq 1$, the projection $P$ is completely contractive. Thus $(2) \Rightarrow(3)$.

The equivalence with the fourth assertion follows from the same argument. Finally, once $P_n$ is contractive, Proposition~\ref{prop-contractivity-cubic-characterization} gives
\[
\Ran P_n^\dagger
=\{xx^*x : x \in \Ran P_n\}.
\]
Using \eqref{range-adjoint-amplification} and $\Ran P_n = \M_n(\Ran P)$, we obtain \eqref{matrix-duality-map-equality}.
\end{proof}

\begin{remark} \normalfont
The matricial hypothesis in Theorem~\ref{thm-two-contractive-completely-contractive-L4} cannot be replaced by scalar contractivity. Indeed, let $\sigma$ be the transpose on $S_2^4$ and let
\[
P
\ov{\mathrm{def}}{=}
\frac12(\Id+\sigma).
\]
The projection $P$ is contractive and its range is the space of symmetric $2\times2$ matrices. Moreover, $P^\dagger$ is the same symmetrization on $S_2^{\frac43}$. If $
x=e_{11}$, $y=e_{12}+e_{21}$, $z=e_{22}$ then $x,y,z\in\Ran P$, whereas
\[
xy^*z=e_{12}\notin\Ran P^\dagger.
\]
Consequently, Theorem~\ref{thm-two-contractive-completely-contractive-L4} shows that $P$ is not $2$-contractive.
\end{remark}

\begin{cor}
\label{cor-dual-TRO-product-L4}
Under the assumptions of Theorem~\ref{thm-two-contractive-completely-contractive-L4}, suppose that one of its equivalent conditions holds. Then
\begin{equation}
\label{dual-range-generated-by-triples}
\Ran P^\dagger
=
\{xy^*z : x,y,z \in \Ran P\}
=
\{yy^*y : y \in \Ran P\}.
\end{equation}
Consequently,
\begin{equation}
\label{kernel-ternary-annihilator}
\ker P
=\{w \in \L^4(\cal M) : \tr((xy^*z)^*w)=0 \text{ for all }x,y,z \in \Ran P\}.
\end{equation}
\end{cor}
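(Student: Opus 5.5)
The plan is to derive both identities from Proposition~\ref{prop-contractivity-cubic-characterization} and Theorem~\ref{thm-two-contractive-completely-contractive-L4}. Throughout, assume one of the equivalent conditions of the theorem holds; in particular $P$ is contractive (indeed $2$-contractive).

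\emph{Step 1: the chain of inclusions.} By assertion~2 of Theorem~\ref{thm-two-contractive-completely-contractive-L4}, every product $xy^*z$ with $x,y,z\in\Ran P$ lies in $\Ran P^\dagger$. Hence
\[
\{yy^*y : y\in\Ran P\}\subset\{xy^*z : x,y,z\in\Ran P\}\subset\Ran P^\dagger,
\]
where the first inclusion is the special case $x=y=z$. The middle set is a priori only a set of products, not a linear span. This causes no difficulty, because equality will be forced by the outer terms.

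\emph{Step 2: closing the chain.} Since $P$ is contractive, \eqref{range-adjoint-duality-image} gives $\Ran P^\dagger=\frak j_4(\Ran P)=\{yy^*y:y\in\Ran P\}$. So the two outer sets in the chain coincide, and therefore all three sets are equal. This proves \eqref{dual-range-generated-by-triples}.

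\emph{Step 3: the kernel.} By \eqref{range-adjoint-annihilator}, $\Ran P^\dagger=(\ker P)^\perp$ with respect to the trace pairing. Because $\L^4(\cal M)$ is reflexive and $\ker P$ is closed, the bipolar theorem gives $\ker P=(\Ran P^\dagger)_\perp$. Concretely, this is the set of $w$ with $\tr(\eta^*w)=0$ for every $\eta\in\Ran P^\dagger$. Substituting the description of $\Ran P^\dagger$ from Step~2, namely $\eta=xy^*z$, yields \eqref{kernel-ternary-annihilator}.

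The direct inclusion in Step~3 can also be checked by hand. If $w\in\ker P$, then
\[
\tr((xy^*z)^*w)=\tr\big((P^\dagger(xy^*z))^*w\big)=\tr\big((xy^*z)^*Pw\big)=0.
\]
For the reverse inclusion, suppose $w$ annihilates all the products $xy^*z$. Then $w$ annihilates $\Ran P^\dagger$, and by \eqref{def-adjoint-P-L4},
\[
\tr(\eta^*Pw)=\tr\big((P^\dagger\eta)^*w\big)=0
\]
for every $\eta\in\L^{\frac43}(\cal M)$. Hence $Pw=0$, so $w\in\ker P$.

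None of these steps is a real obstacle. The only point requiring care is that the set of products is not obviously linear, and Step~2 settles this through the equality $\frak j_4(\Ran P)=\Ran P^\dagger$.
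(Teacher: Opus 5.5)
Your proposal is correct and follows essentially the same route as the paper: the inclusion of the products $xy^*z$ in $\Ran P^\dagger$ from Theorem~\ref{thm-two-contractive-completely-contractive-L4}, the equality $\Ran P^\dagger=\frak j_4(\Ran P)$ from Proposition~\ref{prop-contractivity-cubic-characterization} to close the chain, and then $\Ran P^\dagger=(\ker P)^\perp$ for the kernel description. Your explicit check of both inclusions in the kernel identity just spells out what the paper leaves to the annihilator relation.
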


\begin{proof}
By Theorem~\ref{thm-two-contractive-completely-contractive-L4}, any product $xy^*z$ belongs to $\Ran P^\dagger$. Conversely, Proposition~\ref{prop-contractivity-cubic-characterization} gives
\[
\Ran P^\dagger
=\{yy^*y : y \in \Ran P\}.
\]
Since each diagonal product $yy^*y$ is a product of the required form, this proves \eqref{dual-range-generated-by-triples}. Equality \eqref{kernel-ternary-annihilator} follows from $\Ran P^\dagger=(\ker P)^\perp$.
\end{proof}

\subsection{The conjugate exponent $\frac43$}

The preceding result has the following dual consequence.

\begin{cor}
\label{cor-dual-two-contractive-L43}
Let $Q\co\L^{\frac43}(\cal M)\to\L^{\frac43}(\cal M)$ be a bounded projection. Then $Q$ is $2$-contractive if and only if it is completely contractive.

Suppose, in addition, that $Q$ is contractive, and set $Z\ov{\mathrm{def}}{=}\Ran Q$. Then
\begin{equation}
\label{dual-range-L43}
\Ran Q^\dagger
=
\frak j_{\frac43}(Z).
\end{equation}
In particular, $\frak j_{\frac43}(Z)$ is a linear subspace of $\L^4(\cal M)$ and
\begin{equation}
\label{dual-Jordan-condition-L43}
\big\{\frak j_{\frac43}(Z),\frak j_{\frac43}(Z),\frak j_{\frac43}(Z)\big\}
=Z.
\end{equation}
Under this scalar contractivity assumption, the following assertions are equivalent:
\begin{enumerate}
\item $Q$ is $2$-contractive;
\item $Q$ is completely contractive;
\item for any $x,y,z\in Z$, we have
\begin{equation}
\label{dual-TRO-condition-L43}
\frak j_{\frac43}(x)\frak j_{\frac43}(y)^*\frak j_{\frac43}(z)
\in Z.
\end{equation}
\end{enumerate}
If these conditions hold, then
\begin{equation}
\label{dual-exact-TRO-image-L43}
Z
=
\big\{\frak j_{\frac43}(x)\frak j_{\frac43}(y)^*\frak j_{\frac43}(z):x,y,z\in Z\big\}.
\end{equation}
\end{cor}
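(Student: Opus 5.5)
The plan is to transfer everything from $Q$ to its trace adjoint $P\ov{\mathrm{def}}{=}Q^\dagger\co\L^4(\cal M)\to\L^4(\cal M)$ and apply Theorem~\ref{thm-two-contractive-completely-contractive-L4}. By the trace duality $\L^{\frac43}(\M_n(\cal M))^*\cong\L^4(\M_n(\cal M))$ (conjugate linear, isometric), we have $(Q_n)^\dagger=(Q^\dagger)_n=P_n$ and $\norm{P_n}=\norm{Q_n}$ for every $n$. Hence $Q$ is $n$-contractive if and only if $P$ is. Moreover, $P$ is a projection with $P^\dagger=Q$ under the reflexive identification. The first assertion then follows at once: $Q$ $2$-contractive $\Rightarrow$ $P$ $2$-contractive $\Rightarrow$ $P$ completely contractive $\Rightarrow$ $Q$ completely contractive.

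Now assume $Q$ is contractive, so $P$ is contractive with $\Ran P^\dagger=Z$. For \eqref{dual-range-L43}, I apply Lemma~\ref{Lemma-smooth} to $Q$ on the smooth, strictly convex, reflexive space $\L^{\frac43}(\cal M)$, which gives $\frak j_{\frac43}(Z)\subset\Ran Q^\dagger$. For the reverse inclusion, take $u\in\Ran P$. Proposition~\ref{prop-contractivity-cubic-characterization} gives $\frak j_4(u)\in\Ran P^\dagger=Z$, and then \eqref{inverse-duality-maps} yields $u=\frak j_{\frac43}(\frak j_4(u))\in\frak j_{\frac43}(Z)$. Thus $\frak j_{\frac43}(Z)=\Ran P$ is linear. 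Identity \eqref{dual-Jordan-condition-L43} is Corollary~\ref{cor-dual-triple-closure} applied to $P$: $\{\Ran P,\Ran P,\Ran P\}=\Ran P^\dagger=Z$.

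For the three equivalences, (1)$\Leftrightarrow$(2) was shown above. For (2)$\Leftrightarrow$(3), observe that, since $\frak j_{\frac43}$ maps $Z$ onto $\Ran P$, condition \eqref{dual-TRO-condition-L43} says exactly that $uv^*w\in\Ran P^\dagger=Z$ for all $u,v,w\in\Ran P$. This is condition 2 of Theorem~\ref{thm-two-contractive-completely-contractive-L4} for $P$, which is equivalent to complete contractivity of $P$, hence of $Q$. Finally, \eqref{dual-exact-TRO-image-L43} is Corollary~\ref{cor-dual-TRO-product-L4} for $P$, namely $\Ran P^\dagger=\{uv^*w:u,v,w\in\Ran P\}$, rewritten through the bijection $\frak j_{\frac43}\co Z\to\Ran P$.

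The only delicate point is the bookkeeping of the sesquilinear dualities at the matrix levels. One must check that $(\Id_{S_n^{4/3}}\ot Q)^\dagger=\Id_{S_n^4}\ot Q^\dagger$ for the pairing $\Tr_n\ot\tau$, and that the norms agree, using $S_n^{\frac43}[\L^{\frac43}]^*=S_n^4[\L^4]$ isometrically via $\L^{\frac43}(\M_n\otvn\cal M)^*=\L^4(\M_n\otvn\cal M)$. Both hold entrywise, since $\Tr_n\ot\tau(y^*x)=\sum_{i,j}\tau(y_{ij}^*x_{ij})$.
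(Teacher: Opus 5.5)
Your proposal is correct and follows essentially the same route as the paper. You pass to $P=Q^\dagger$ on $\L^4(\cal M)$, use $(Q_n)^\dagger=P_n$ and $\norm{Q_n}=\norm{P_n}$, and then read off every assertion from Theorem~\ref{thm-two-contractive-completely-contractive-L4}, Proposition~\ref{prop-contractivity-cubic-characterization} and Corollaries~\ref{cor-dual-triple-closure} and~\ref{cor-dual-TRO-product-L4} applied to $P$. The only cosmetic difference is in \eqref{dual-range-L43}: you prove it by two separate inclusions (Lemma~\ref{Lemma-smooth} for $Q$, and the cubic characterization for $P$), whereas the paper applies the bijection $\frak j_{\frac43}$ to the identity $\frak j_4(\Ran P)=Z$.
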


\begin{proof}
Let
\[
P\ov{\mathrm{def}}{=}Q^\dagger
\co\L^4(\cal M)\to\L^4(\cal M)
\]
be the adjoint of $Q$ under the trace duality. Thus
\begin{equation}
\label{adjoint-Q-L43}
\tr\big(P(a)^*x\big)
=
\tr\big(a^*Q(x)\big),
\quad
x \in \L^{\frac43}(\cal M), a \in\L^4(\cal M).
\end{equation}
The map $P$ is a bounded projection and, by reflexivity, $P^\dagger=Q$. For any integer $n \geq 1$, set
\[
Q_n\ov{\mathrm{def}}{=}\Id_{S_n^{\frac43}}\ot Q
\quad\text{and}\quad
P_n\ov{\mathrm{def}}{=}\Id_{S_n^4}\ot P.
\]
Under the duality associated with $\Tr_n\ot\tr$, we have
\begin{equation}
\label{adjoint-amplification-L43}
Q_n^\dagger=P_n.
\end{equation}
Indeed, if $X=[x_{ij}]\in\L^{\frac43}(\M_n\otvn\cal M)$ and $A=[a_{ij}]\in\L^4(\M_n\otvn\cal M)$, then
\begin{align*}
(\Tr_n\ot\tr)\big(A^*Q_n(X)\big)
=
\sum_{i,j=1}^n\tr\big(a_{ij}^*Q(x_{ij})\big)
\ov{\eqref{adjoint-Q-L43}}{=}
\sum_{i,j=1}^n\tr\big(P(a_{ij})^*x_{ij}\big)
=
(\Tr_n\ot\tr)\big(P_n(A)^*X\big).
\end{align*}
Consequently, we have
\begin{equation}
\label{amplification-norm-duality-L43}
\norm{Q_n}
=
\norm{P_n},
\quad n \geq 1.
\end{equation}
It follows that $Q$ is $2$-contractive if and only if $P$ is $2$-contractive, and that $Q$ is completely contractive if and only if $P$ is completely contractive. Theorem~\ref{thm-two-contractive-completely-contractive-L4}, applied to $P$, proves the first assertion.

Suppose now that $Q$ is contractive. Then $P$ is contractive by \eqref{amplification-norm-duality-L43}. Set $Y\ov{\mathrm{def}}{=}\Ran P$. Since $P^\dagger=Q$, Proposition~\ref{prop-contractivity-cubic-characterization} gives
\[
\frak j_4(Y)
=
\Ran P^\dagger
=Z.
\]
Applying $\frak j_{\frac43}$ and using \eqref{inverse-duality-maps}, we obtain
\[
Y
=
\frak j_{\frac43}(Z).
\]
Since $Y=\Ran Q^\dagger$, this proves \eqref{dual-range-L43}. Corollary~\ref{cor-dual-triple-closure}, applied to $P$, yields
\[
\{Y,Y,Y\}
=
\Ran P^\dagger
=Z,
\]
which is precisely \eqref{dual-Jordan-condition-L43}.

Finally, Theorem~\ref{thm-two-contractive-completely-contractive-L4} gives
\[
P\text{ is $2$-contractive}
\quad\Longleftrightarrow\quad
YY^*Y\subset\Ran P^\dagger.
\]
Using $Y=\frak j_{\frac43}(Z)$, $\Ran P^\dagger=Z$ and \eqref{amplification-norm-duality-L43}, this is exactly the equivalence of the three assertions. Under these conditions, Corollary~\ref{cor-dual-TRO-product-L4}, applied to $P$, also gives \eqref{dual-exact-TRO-image-L43}.
\end{proof}

\subsection{Left and right tangent Hilbert spaces and 2-contractive projections}
\label{sec-left-right-contractivity}

By \eqref{Levi-L4-polar}, we have
\begin{equation}
\scr{L}_{\Phi_4,a}(y,z)
=2\left(\tr(z^*aa^*y)+\tr(az^*ya^*)\right), \quad y,z \in \L^4(\cal{M}).
\end{equation}
In particular, we have
\begin{equation}
\label{norm-Levi-2}
\frac12\scr L_{\Phi_4,a}(z,z)
=\norm{a^*z}_{\L^2(\cal{M})}^2+\norm{za^*}_{\L^2(\cal{M})}^2.
\end{equation}
We introduce the following positive semidefinite Hermitian forms
\begin{equation}
\label{left-right-inner}
\la z,w \ra_{\ell,a}
\ov{\mathrm{def}}{=}
\tr(w^*aa^*z)
\quad \text{and} \quad
\la z,w \ra_{r,a}
\ov{\mathrm{def}}{=}
\tr(aw^*za^*).
\end{equation}
By quotienting and completing, we obtain two Hilbert spaces $\cal{H}_{\ell,a}$ and $\cal{H}_{r,a}$ with norms
\begin{equation}
\norm{z}_{\ell,a}
=\norm{a^*z}_{\L^2(\cal{M})},
\quad \text{and} \quad
\norm{z}_{r,a}
=\norm{za^*}_{\L^2(\cal{M})}.
\end{equation}

The next result gives separate left and right tangent estimates. This is stronger than the estimate obtained from scalar contractivity, which only controls the sum of the two forms. The additional $2$-contractivity assumption separates the two sides.

\begin{prop}
\label{prop-left-right-tangent-estimates-two-contractive}
Let $P \co \L^4(\cal{M}) \to \L^4(\cal{M})$ be a $2$-contractive projection. For any $a \in \Ran P$ and any $z \in \L^4(\cal{M})$, we have
\begin{equation}
\label{separate-left-right-tangent-estimates}
\norm{P(z)}_{\ell,a}
\leq
\norm{z}_{\ell,a}
\quad \text{and} \quad
\norm{P(z)}_{r,a}
\leq
\norm{z}_{r,a}.
\end{equation}
Moreover, for any $a,y \in \Ran P$, we have
\begin{equation}
\label{left-right-Riesz-vectors-dual-range}
P^\dagger(aa^*y)
=aa^*y
\quad \text{and} \quad
P^\dagger(ya^*a)
=ya^*a.
\end{equation}
\end{prop}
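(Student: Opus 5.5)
Since $P$ is $2$-contractive, Theorem~\ref{thm-two-contractive-completely-contractive-L4} applies: for all $x,y,z \in \Ran P$ we have $xy^*z \in \Ran P^\dagger = (\ker P)^\perp$. We will prove the second assertion \eqref{left-right-Riesz-vectors-dual-range} first and then deduce the separate tangent estimates from it.

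\emph{Step 1: the Riesz vectors.} For $a,y\in\Ran P$, apply the unsymmetrized relation \eqref{unsymmetrized-triple-fixed} twice. With $(x,y,z)=(a,a,y)$ it gives $P^\dagger(aa^*y)=aa^*y$. With $(x,y,z)=(y,a,a)$ it gives $P^\dagger(ya^*a)=ya^*a$. This is \eqref{left-right-Riesz-vectors-dual-range}.

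\emph{Step 2: left and right orthogonality.} Let $y\in\Ran P$ and $w\in\ker P$. Since $aa^*y$ and $ya^*a$ lie in $(\ker P)^\perp$, we get
\[
\la w,y\ra_{\ell,a}=\tr\big((aa^*y)^*w\big)=0
\quad\text{and}\quad
\la w,y\ra_{r,a}=\tr(ay^*wa^*)=\tr\big((ya^*a)^*w\big)=0.
\]
The second identity uses the tracial property. Hölder's inequality guarantees that all products involved lie in $\L^1(\cal M)$, so the trace manipulations are justified.

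\emph{Step 3: the estimates.} Let $z\in\L^4(\cal M)$ and decompose $z=Pz+(z-Pz)$ with $Pz\in\Ran P$ and $z-Pz\in\ker P$. By Step 2 the cross terms vanish for each of the two positive semidefinite forms \eqref{left-right-inner}. This gives the Pythagorean identities
\[
\norm{z}_{\ell,a}^2=\norm{Pz}_{\ell,a}^2+\norm{z-Pz}_{\ell,a}^2
\quad\text{and}\quad
\norm{z}_{r,a}^2=\norm{Pz}_{r,a}^2+\norm{z-Pz}_{r,a}^2.
\]
Dropping the nonnegative second terms yields \eqref{separate-left-right-tangent-estimates}.

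The argument mirrors the proof of Corollary~\ref{cor-Levi-Pythagoras-bicontractivity}, with each one-sided form in place of the Levi form. There is no real obstacle. The only point needing care is that the right form pairs with $ya^*a$, not $a^*ay$, which is checked by cycling under the trace.
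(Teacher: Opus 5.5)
Your proof is correct, but for the first assertion it runs in the opposite direction from the paper's. The paper obtains \eqref{separate-left-right-tangent-estimates} directly from $2$-contractivity. It applies $P_2$ to $X_t=\begin{bmatrix} a & tz\\ 0 & 0\end{bmatrix}$ and to the column analogue. Using $X_tX_t^*=\operatorname{diag}(aa^*+t^2zz^*,0)$, it expands $\norm{X_t}_4^4=\norm{a}_4^4+2t^2\norm{a^*z}_2^2+t^4\norm{z}_4^4$ and lets $t\to0^+$. Only the Riesz-vector identities \eqref{left-right-Riesz-vectors-dual-range} are quoted from \eqref{unsymmetrized-triple-fixed}, exactly as in your Step 1.

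You instead derive everything from the ternary relation $YY^*Y\subset Y^\sharp$. You get one-sided orthogonality of $\Ran P$ and $\ker P$, with the other cross term vanishing by Hermitian symmetry of the forms, and then apply Pythagoras. Once Theorem~\ref{thm-two-contractive-completely-contractive-L4} is available your route is shorter, and it gives strictly more. It yields an exact Pythagorean identity, hence contractivity of $\Id-P$ and selfadjointness of $P$ for each one-sided form. This is the one-sided analogue of Corollary~\ref{cor-Levi-Pythagoras-bicontractivity}.

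The paper's variational route buys two things. First, it does not use the ternary characterization for the estimates. Combined with the argument of Proposition~\ref{prop-one-sided-tangent-criterion}, it therefore gives an independent proof of $1\Rightarrow2$ in that theorem. Second, and more importantly, it is the route that extends to $p\neq4$. There, Proposition~\ref{prop-gp-weighted-tangent-estimates} obtains the weighted estimates by the same block-matrix differentiation and then deduces the intertwining relations from them. This reversal is forced because the unweighted inclusion $YY^*Y\subset Y^\sharp$ fails for $p\neq4$ (Remark~\ref{rem-fr-scope-and-unweighted-products}).
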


\begin{proof}
Let $a \in \Ran P$ and let $z \in \L^4(\cal{M})$. For any $t > 0$, consider the element $X_t \ov{\mathrm{def}}{=}
\begin{bmatrix}
a & tz\\
0 & 0
\end{bmatrix}$ of the space $\L^4(\M_2(\cal{M}))$. Since the projection $P_2 \co \L^4(\M_2(\cal M)) \to \L^4(\M_2(\cal{M}))$ is contractive and $P(a)=a$, we have
\[
\norm{
\begin{bmatrix}
a & tP(z)\\
0 & 0
\end{bmatrix}}_{\L^4(\M_2(\cal M))}^4
=\norm{P_2(X_t)}_{\L^4(\M_2(\cal M))}^4
\leq \norm{X_t}_{\L^4(\M_2(\cal M))}^4
=\norm{
\begin{bmatrix}
a & tz\\
0 & 0
\end{bmatrix}}_{\L^4(\M_2(\cal M))}^4.
\]
Since $
X_tX_t^*
=
\begin{bmatrix}
aa^*+t^2zz^*&0\\
0&0
\end{bmatrix}$, we obtain
\begin{align*}
\MoveEqLeft
\norm{X_t}_{\L^4(\M_2(\cal M))}^4
=\tr\big[(aa^*+t^2zz^*)^2\big] 
= \norm{a}_{\L^4(\cal{M})}^4
+2t^2\norm{a^*z}_{\L^2(\cal{M})}^2
+t^4\norm{z}_{\L^4(\cal{M})}^4.
\end{align*}
Consequently, we have
\[
\norm{a}_{\L^4(\cal{M})}^4
+
2t^2\norm{a^*P(z)}_{\L^2(\cal{M})}^2
+
t^4\norm{P(z)}_{\L^4(\cal{M})}^4
\leq
\norm{a}_{\L^4(\cal{M})}^4
+
2t^2\norm{a^*z}_{\L^2(\cal{M})}^2
+
t^4\norm{z}_{\L^4(\cal{M})}^4.
\]
After division by $2t^2$ and passage to the limit $t\to 0^+$, we obtain the inequality
\[
\norm{a^*P(z)}_{\L^2(\cal{M})}
\leq
\norm{a^*z}_{\L^2(\cal{M})}.
\]
Using instead the column matrix $Y_t
\ov{\mathrm{def}}{=}
\begin{bmatrix}
a & 0\\
tz & 0
\end{bmatrix}$ and the identity $
Y_t^*Y_t
=
\begin{bmatrix}
a^*a+t^2z^*z&0\\
0&0
\end{bmatrix}$, we similarly obtain the inequality
\[
\norm{P(z)a^*}_{\L^2(\cal{M})}
\leq
\norm{za^*}_{\L^2(\cal{M})}.
\]
This proves \eqref{separate-left-right-tangent-estimates}. Finally, for $a,y \in \Ran P$, the two products $aa^*y$ and $ya^*a$ are fixed by $P^\dagger$ by \eqref{unsymmetrized-triple-fixed}. This proves \eqref{left-right-Riesz-vectors-dual-range}.
\end{proof}

Conversely, either one of the two families of tangent estimates already characterizes complete contractivity.

\begin{prop}
\label{prop-one-sided-tangent-criterion}
Let $P \co \L^4(\mathcal M) \to \L^4(\mathcal M)$ be a bounded projection. The following assertions are equivalent:
\begin{enumerate}
\item $P$ is completely contractive.
\item $\|u^*P(x)\|_{\L^2(\cal{M})}\leq\|u^*x\|_{\L^2(\cal{M})}$ for any $u\in \Ran P$ and any $x\in\L^4(\mathcal M)$.
\item $\|P(x)u^*\|_{\L^2(\cal{M})}\leq\|xu^*\|_{\L^2(\cal{M})}$ for any $u\in \Ran P$ and any $x\in\L^4(\mathcal M)$.
\end{enumerate}
\end{prop}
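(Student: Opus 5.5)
The implications $1\Rightarrow 2$ and $1\Rightarrow 3$ are immediate from Proposition~\ref{prop-left-right-tangent-estimates-two-contractive}. That proposition only uses $2$-contractivity, which is implied by complete contractivity: taking $a=u\in\Ran P$, the inequality \eqref{separate-left-right-tangent-estimates} is exactly the inequalities in 2 and 3. The substance is therefore the converse. By symmetry (replace $P$ by $x\mapsto P(x^*)^*$ acting on $\L^4(\cal M)$, which exchanges left and right products), it suffices to treat $2\Rightarrow 1$. For this I would verify condition 2 of Theorem~\ref{thm-two-contractive-completely-contractive-L4}, namely $xy^*z\in\Ran P^\dagger$ for all $x,y,z\in\Ran P$.

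Fix $u\in\Ran P$ and $x\in\L^4(\cal M)$. The left tangent form $\la z,w\ra_{\ell,u}=\tr(w^*uu^*z)$ is a positive semidefinite Hermitian form, and hypothesis 2 says $P$ contracts its seminorm. I would run the argument of Proposition~\ref{prop-Levi-contractive-projection} with this form in place of the Levi form: replace $x$ by $y+t\lambda k$ with $y\in\Ran P$, $k\in\ker P$, $t\in\R$, $\lambda\in\mathbb C$. Then the inequality $\norm{u^*y}_2^2\le\norm{u^*(y+t\lambda k)}_2^2$ holds for all real $t$. The first-order term in $t$ must vanish, and letting $\lambda$ range over $1$ and $\i$ gives $\tr(k^*uu^*y)=0$. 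Hence
\[
uu^*y\in(\ker P)^\perp=\Ran P^\dagger,\qquad u,y\in\Ran P.
\]
This is the one-sided analogue of \eqref{left-right-Riesz-vectors-dual-range}, obtained without assuming $2$-contractivity.

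Next I polarize in $u$. The map $u\mapsto uu^*y$ is a Hermitian square, and $\Ran P$ is a complex subspace. Summing $\alpha^{-1}(x+\alpha z)(x+\alpha z)^*y$ over $\alpha\in\{1,-1,\i,-\i\}$ yields $4\,xz^*y$, so
\[
xz^*y\in\Ran P^\dagger,\qquad x,y,z\in\Ran P.
\]
This is exactly condition \eqref{unsymmetrized-triple-dual-range}, after renaming the variables. Theorem~\ref{thm-two-contractive-completely-contractive-L4} then gives that $P$ is completely contractive.

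The one point to check carefully is that hypothesis 2 is only an inequality on seminorms. Extracting the orthogonality relation requires that the function $t\mapsto\norm{u^*(y+t\lambda k)}_2^2$ be a real quadratic polynomial in $t$ with minimum at $t=0$. It is, because $P(y+t\lambda k)=y$. Its derivative at $0$ is $2\Re\,\lambda\tr(k^*uu^*y)$. Note that scalar contractivity of $P$ is never needed separately: it comes out of the conclusion. Finally, all products lie in the right spaces by Hölder's inequality: $uu^*y\in\L^{\frac43}(\cal M)$ and $u^*k\in\L^2(\cal M)$.
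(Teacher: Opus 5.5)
Your proof is correct and follows the paper's own argument: you derive the one-sided orthogonality $uu^*y\in\Ran P^\dagger$ from the left tangent seminorm, polarize in $u$, and conclude with Theorem~\ref{thm-two-contractive-completely-contractive-L4}. Your first-variation computation replaces the paper's appeal to orthogonality of contractive Hilbert projections, and your reduction of 3 to 2 via $x\mapsto P(x^*)^*$ replaces its separate right-handed polarization; both are harmless variants. One small slip: $\sum_\alpha \alpha^{-1}(x+\alpha z)(x+\alpha z)^*=4zx^*$, not $4xz^*$ (use $\alpha$ in place of $\alpha^{-1}$ to get $4xz^*$). This does not affect the conclusion, since $x$ and $z$ both range over all of $\Ran P$.
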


\begin{proof}
The first assertion implies the other two by Proposition~\ref{prop-left-right-tangent-estimates-two-contractive}. Suppose that the second assertion holds. For fixed $u\in \Ran P$, the projection $P$ induces a contractive, hence orthogonal, projection on the Hilbert completion associated with the positive semidefinite form $h_u(x,z)=\Tr(z^*uu^*x)$. Therefore $h_u(x-Px,z)=0$ for any $z \in \Ran P$ and every $x \in \L^4(\cal{M})$. Equivalently, $uu^*z \in \Ran P^\dagger$. Polarization gives
\begin{equation*}
xy^*z
=\frac14\sum_{k=0}^3 \i^k(x+\i^ky)(x+\i^ky)^*z \in \Ran P^\dagger,
\quad x,y,z \in \Ran P.
\end{equation*}
Theorem~\ref{thm-two-contractive-completely-contractive-L4} implies complete contractivity. The third assertion is treated in the same way, using the forms $k_u(x,z)=\Tr(u z^*x u^*)$ and the polarization formula $xy^*z
=
\frac14\sum_{k=0}^3(-\i)^k
x(y+\i^kz)^*(y+\i^kz)$.
\end{proof}

Recall that, if $x=u|x|$ is the polar decomposition of a measurable operator, its left and right support projections are
\[
s_\ell(x)\ov{\mathrm{def}}{=}uu^*
\quad\text{and}\quad
s_r(x)\ov{\mathrm{def}}{=}u^*u.
\]
If $h$ is positive, we write
\[
s(h)\ov{\mathrm{def}}{=}s_\ell(h)=s_r(h).
\]
For a subspace $Y\subset\L^p(\cal M)$, we set
\[
s_\ell(Y)\ov{\mathrm{def}}{=}\bigvee_{x\in Y}s_\ell(x)
\quad\text{and}\quad
s_r(Y)\ov{\mathrm{def}}{=}\bigvee_{x\in Y}s_r(x).
\]
If $P$ is a projection, we write $s_\ell(P)\ov{\mathrm{def}}{=}s_\ell(\Ran P)$ and $s_r(P)\ov{\mathrm{def}}{=}s_r(\Ran P)$. Thus
\begin{equation}
\label{range-support-corner}
\Ran P
\subset
s_\ell(P)\L^p(\cal M)s_r(P).
\end{equation}

\subsection{Left and right parts of the Levi form}
\label{sec-modular-finite}
\begin{prop}
\label{prop-global-left-right-Levi-finite-dimensional}
Let $\cal M$ be a semifinite von Neumann algebra equipped with a normal semifinite faithful trace. Let
$
P\co\L^4(\cal M)\to\L^4(\cal M)$ be a finite-rank $2$-contractive projection with nonzero range $Y$. Put $
e\ov{\mathrm{def}}{=}s_\ell(Y)$ and $f\ov{\mathrm{def}}{=}s_r(Y)$. Let $(x_1,\ldots,x_k)$ be a linear basis of the space $Y$ and define
\begin{equation}
\label{global-Gram-densities}
a
\ov{\mathrm{def}}{=}
\sum_{j=1}^k x_jx_j^*,
\quad \text{and} \quad
b
\ov{\mathrm{def}}{=}
\sum_{j=1}^k x_j^*x_j.
\end{equation}
Then $a,b \in \L^2(\cal M)$,
\begin{equation}
\label{a-b-supports}
s(a)=s_\ell(P) 
\quad \text{and} \quad s(b)=s_r(P)
\end{equation}
and
\begin{equation}
\label{global-exact-density-ranges}
a(\Ran P)
=\Ran P^\dagger
=(\Ran P)b.
\end{equation}
If, in addition, $\cal M$ is finite-dimensional, then $a$ and $b$ are invertible in the reduced algebras $e\cal Me$ and $f\cal Mf$, respectively.
\end{prop}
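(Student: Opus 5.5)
The plan is to first settle integrability and supports, then obtain the two inclusions $a(\Ran P)\subset\Ran P^\dagger\supset(\Ran P)b$ from the unsymmetrized ternary relation, and finally upgrade them to equalities by an injectivity-plus-dimension argument. By H\"older's inequality each $x_jx_j^*$ and $x_j^*x_j$ lies in $\L^2(\cal M)$, so $a,b\in\L^2(\cal M)$, and both are positive. For supports, a vector $\xi$ satisfies $a\xi=0$ iff $\sum_j\norm{x_j^*\xi}^2=0$ iff $x_j^*\xi=0$ for all $j$. Hence $s(a)=\bigvee_j s_\ell(x_j)$. Since every $x\in Y$ is a linear combination of the $x_j$, we have $s_\ell(x)\leq\bigvee_j s_\ell(x_j)$, which gives $s(a)=s_\ell(Y)=e$. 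Symmetrically $s(b)=f$. This proves \eqref{a-b-supports}.

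For the inclusions, Theorem~\ref{thm-two-contractive-completely-contractive-L4} gives $x_jx_j^*y\in\Ran P^\dagger$ and $yx_j^*x_j\in\Ran P^\dagger$ for every $y\in Y$. Summing over $j$ yields $aY\subset\Ran P^\dagger$ and $Yb\subset\Ran P^\dagger$. Moreover, $\dim\Ran P^\dagger=\dim Y=k$, because $P^\dagger$ is the adjoint projection and $\Ran P^\dagger=(\ker P)^\perp$ has dimension equal to the rank of $P$. So it suffices to prove that the maps $y\mapsto ay$ and $y\mapsto yb$ are injective on $Y$.

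Injectivity is the key step, and I would use the tangent Hilbert forms. If $ay=0$, then
\[
0=\tr(y^*ay)=\sum_j\tr(y^*x_jx_j^*y)=\sum_j\norm{x_j^*y}_2^2 ,
\]
so $x_j^*y=0$ for every $j$. Since $y$ is a combination of the $x_j$, this gives $y^*y=0$, hence $y=0$. The right-hand case is symmetric: $yb=0$ gives $yx_j^*=0$ for all $j$, hence $yy^*=0$. Thus both maps are injective linear maps from a $k$-dimensional space into a $k$-dimensional space, hence bijections onto $\Ran P^\dagger$, which proves \eqref{global-exact-density-ranges}.

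The main obstacle I expect is making sure the trace computations are legitimate for unbounded $\tau$-measurable operators. Here every product involved is a product of four $\L^4$ elements, so it lies in $\L^1$ by H\"older's inequality, and the step $\tr(y^*x_jx_j^*y)=\norm{x_j^*y}_2^2$ is valid. The other implicit point is $\dim\Ran P^\dagger=\dim\Ran P$, which is standard duality for finite-rank projections.

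Finally, if $\cal M$ is finite-dimensional, $a$ is a positive element of $e\cal Me$ whose support is $e$. Its spectrum restricted to $e$ is therefore contained in $(0,\infty)$, so $a$ is invertible in $e\cal Me$. The same argument applies to $b$ in $f\cal Mf$.
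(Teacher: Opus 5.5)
Your proof is correct. Its architecture (supports, then the inclusions $aY\subset\Ran P^\dagger\supset Yb$ from Theorem~\ref{thm-two-contractive-completely-contractive-L4}, then injectivity plus $\dim\Ran P^\dagger=\dim Y$) coincides with the paper's. The genuine difference is the injectivity step.

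The paper argues by spectral truncation. With $e_n=\mathbf{1}_{[\frac1n,n]}(a)$ and $g_n(t)=t^{-1}\mathbf{1}_{[\frac1n,n]}(t)$, one has $e_ny=g_n(a)(ay)=0$, and $e_n\uparrow e=s(a)$ gives $y=ey=0$. That argument uses only $s(a)=e$ and $s_\ell(y)\leq e$, so it shows that left multiplication by $a$ is injective on the whole corner $e\L^4(\cal M)$. The same truncation is reused later, notably where injectivity on all of $e\cal Mf$ is really needed, namely for $v\mapsto A_p^\gamma vB_p^\gamma$ in Corollary~\ref{cor-fr-symmetric-Gram-deweighting}.

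Your route is more elementary: $0=\tr(y^*ay)=\sum_j\norm{x_j^*y}_2^2$, followed by $y^*y=\sum_j\overline{c_j}\,x_j^*y=0$ for $y=\sum_jc_jx_j$. It needs no functional calculus, no convergence in $\L^4$, and not even the support identity \eqref{a-b-supports}. The price is that it only applies to vectors in the span of the family defining $a$.

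One minor technical remark. For unbounded $x_j$, your vector-kernel computation of $s(a)$ needs the usual form-domain care. Alternatively, replace it by the operator identity $qaq=\sum_j(x_j^*q)^*(x_j^*q)$ for projections $q$, which shows directly that $aq=0$ if and only if $q\leq 1-s_\ell(x_j)$ for all $j$. The paper simply asserts this support identity.
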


%
%

\begin{proof}
The support identities follow from
\[
s(a)
\ov{\eqref{global-Gram-densities}}{=}
\bigvee_{j=1}^k s_\ell(x_j)
= s_\ell(P)
\quad\text{and}\quad
s(b)
\ov{\eqref{global-Gram-densities}}{=}
\bigvee_{j=1}^k s_r(x_j)
= s_r(P).
\]
By Theorem~\ref{thm-two-contractive-completely-contractive-L4}, for $x,y,z \in \Ran P$ we have $xy^*z \ov{\eqref{unsymmetrized-triple-dual-range}}{\in} \Ran P^\dagger$. Consequently, for any $y \in \Ran P$, we have
\[
ay
\ov{\eqref{global-Gram-densities}}{=}
\sum_{j=1}^k x_jx_j^*y
\in \Ran P^\dagger
\quad \text{and} \quad
yb
\ov{\eqref{global-Gram-densities}}{=}
\sum_{j=1}^k yx_j^*x_j
\in \Ran P^\dagger.
\]
Thus the left and right multiplication maps $L_a \co \Ran P \to \Ran P^\dagger$, $y \mapsto ay$ and $R_b \co \Ran P \to \Ran P^\dagger$, $y \mapsto yb$ are well-defined. 

We prove the injectivity without assuming that the densities have bounded inverses. Suppose that $ay=0$ for some $y\in Y$. For $n\geq1$, put $e_n=\mathbf1_{[\frac{1}{n},n]}(a)$ and let $g_n(t)=t^{-1}\mathbf1_{[\frac{1}{n},n]}(t)$, with $g_n(0)=0$. Since $g_n(a) \in \cal M$ and $g_n(a)a=e_n$, we obtain $e_ny=g_n(a)(ay)=0$. The projections $e_n$ increase strongly to $e$, so $e_ny\to ey=y$ in $\L^4(\cal M)$. Thus $y=0$. The same argument on the right shows that $yb=0$ implies $y=0$.

The two multiplication maps from $Y$ into $Y^\sharp$ are therefore injective. Since $P$ has finite rank and $\dim Y^\sharp=\dim Y$, both maps are surjective. This proves \eqref{global-exact-density-ranges}. Finally, if $\cal M$ is finite-dimensional, the support identities imply that $a$ and $b$ are invertible in $e\cal Me$ and $f\cal Mf$, respectively.
\end{proof}

\begin{prop}
\label{prop-global-left-right-Levi-finite-dimensional-bis}
Assume, in addition to the assumptions and notation of Proposition~\ref{prop-global-left-right-Levi-finite-dimensional}, that $\cal M$ is finite-dimensional. Define the forms
\begin{align}
\label{global-left-Levi-form}
\frak{H}^\ell(x,y)
&\ov{\mathrm{def}}{=}
\sum_{j=1}^k \tr(y^*x_jx_j^*x)
\ov{\eqref{global-Gram-densities}}{=} \tr(y^*ax), \quad x,y \in \Ran P, \\
\label{global-right-Levi-form}
\frak{H}^r(x,y)
&\ov{\mathrm{def}}{=}
\sum_{j=1}^k \tr(y^*xx_j^*x_j)
\ov{\eqref{global-Gram-densities}}{=} \tr(y^*xb), \quad x,y \in \Ran P.
\end{align}
These Hermitian forms are positive definite on the subspace $\Ran P$. The relative metric endomorphism defined in Proposition \ref{prop-relative-Riesz-operator} by
\begin{equation}
\label{global-Levi-comparison}
\frak{H}^r(x,y)
=\frak{H}^\ell(x,\Delta y), \quad x,y \in \Ran P,
\end{equation}
is given by
\begin{equation}
\label{global-relative-modular-operator}
\Delta(y)
\ov{\mathrm{def}}{=}
a^{-1}yb,
\quad
y\in \Ran P. 
\end{equation}
Furthermore, we have
\begin{equation}
\label{global-relative-complex-powers}
\Delta^z(y)
=a^{-z} y b^z,
\quad y \in \Ran P, z \in \mathbb{C}.
\end{equation}
\end{prop}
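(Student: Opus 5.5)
We first check that $\frak H^\ell$ and $\frak H^r$ are positive definite Hermitian forms on $Y=\Ran P$. Since $a=\sum_j x_jx_j^*\geq0$, we have
\[
\frak H^\ell(x,x)=\tr(x^*ax)=\norm{a^{\frac12}x}_{\L^2(\cal M)}^2\geq0 .
\]
If this vanishes, then $a^{\frac12}x=0$, hence $ax=0$. The injectivity of $y\mapsto ay$ established in the proof of Proposition~\ref{prop-global-left-right-Levi-finite-dimensional} then gives $x=0$. The same argument with $\frak H^r(x,x)=\norm{xb^{\frac12}}_2^2$ and the injectivity of $y\mapsto yb$ handles the right form. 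Hermitian symmetry is immediate from the tracial property and the selfadjointness of $a$ and $b$.

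Next we identify $\Delta$. Since $\cal M$ is finite-dimensional, Proposition~\ref{prop-global-left-right-Levi-finite-dimensional} shows that $a$ is invertible in $e\cal Me$ and $b$ in $f\cal Mf$. Let $a^{-1}$ and $b^{-1}$ denote these inverses, so that $aa^{-1}=a^{-1}a=e$ and $bb^{-1}=b^{-1}b=f$.

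The key point is that $y\mapsto a^{-1}yb$ maps $Y$ into $Y$. By \eqref{global-exact-density-ranges}, for $y\in Y$ the element $yb$ lies in $Y^\sharp=aY$. So there is a unique $y'\in Y$ (unique by injectivity of $L_a$) with $ay'=yb$. By \eqref{range-support-corner}, $y'\in e\L^4f$, hence $y'=ey'=a^{-1}ay'=a^{-1}yb$. Thus $\Delta_0(y)\ov{\mathrm{def}}{=}a^{-1}yb=L_a^{-1}R_b(y)$ is a linear endomorphism of $Y$.

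We then verify \eqref{global-Levi-comparison}:
\[
\frak H^\ell(x,\Delta_0y)=\tr\big((a^{-1}yb)^*ax\big)=\tr(by^*a^{-1}ax)=\tr(by^*ex)=\tr(y^*xb)=\frak H^r(x,y).
\]
Here we use $ex=x$ and the trace property. By the uniqueness statement of Proposition~\ref{prop-relative-Riesz-operator} (the Riesz-range condition being automatic in finite dimension), $\Delta=\Delta_0$.

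For the complex powers, consider the operator $T_z(y)=a^{-z}yb^z$, with powers taken in $e\cal Me$ and $f\cal Mf$. The maps $T_z$, defined on all of $e\L^4f$, form a group, $T_zT_w=T_{z+w}$, and $T_1$ extends $\Delta$. The main subtlety is to show that $T_z$ preserves $Y$ and agrees there with the functional calculus $\Delta^z$ of the $\frak H^\ell$-positive operator $\Delta$. We argue as follows.

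On the finite-dimensional space $e\L^2f$ with its standard trace inner product, $L_{a^{-1}}$ and $R_b$ are commuting positive invertible operators, so $T_1=L_{a^{-1}}R_b$ is positive. Its functional calculus is therefore $T_1^z=L_{a^{-z}}R_{b^z}=T_z$, by joint spectral calculus of commuting positive operators. Since $Y$ is a $T_1$-invariant subspace of a finite-dimensional space, it is invariant under every polynomial in $T_1$. Moreover, $T_1^z$ coincides on the spectrum of $T_1$ with an interpolating polynomial $p_z$, so $T_z(Y)=p_z(T_1)(Y)\subset Y$. Finally, $\Delta^z$, defined through the spectral decomposition of $\Delta$ (which is diagonalizable with positive spectrum by Proposition~\ref{prop-relative-operator-Hilbert-forms}), equals $p_z(\Delta)$, because the spectrum of $\Delta$ is contained in that of $T_1$. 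Hence $\Delta^z=p_z(T_1)|_Y=T_z|_Y$, which gives \eqref{global-relative-complex-powers}.
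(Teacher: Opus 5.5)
Your proof is correct and follows the paper's route. Positive definiteness comes from the injectivity of $y\mapsto ay$ and $y\mapsto yb$, the identity $\Delta=L_a^{-1}R_b$ from the uniqueness in Proposition~\ref{prop-relative-Riesz-operator}, and the complex powers from the joint functional calculus of the commuting operators $L_{a^{-1}}$ and $R_b$ on the corner $e\cal Mf$.

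The one difference is in the last step. The paper equips $e\cal Mf$ with the weighted inner product $\tr(\eta^*a\xi)$, which restricts to $\frak H^\ell$ on $\Ran P$, so invariance of $\Ran P$ under $T$ directly makes it reducing and the two functional calculi agree. You use the plain trace inner product instead and close the gap with the interpolating-polynomial argument. This is equally valid, since the functional calculus of a diagonalizable operator with positive spectrum does not depend on the inner product.
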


Here $a^{-1}$ denotes the inverse of $a$ in the reduced algebra $e\cal{M}e$. More generally, the powers $a^{-z}$ and $b^z$ in \eqref{global-relative-complex-powers} are computed in the reduced algebras $e\cal{M}e$ and $f\cal{M}f$, respectively.

\begin{proof}
Set $Y \ov{\mathrm{def}}{=} \Ran P$ and $Y^\sharp\ov{\mathrm{def}}{=} \Ran P^\dagger$. Since $a$ and $b$ have full supports, the two forms are positive definite. Indeed, we have
\[
\frak H^\ell(x,x)
=
\tr(x^*ax)
=
\bnorm{a^{\frac12}x}_2^2
\quad\text{and}\quad
\frak H^r(x,x)
=
\tr(x^*xb)
=
\bnorm{xb^{\frac12}}_2^2.
\]
Consider the trace-duality map $\iota \co Y^\sharp \to Y^*$ defined by
$$
[\iota(\eta)](x)
\ov{\mathrm{def}}{=}
\tr(\eta^*x),
\quad x \in Y,\eta \in Y^\sharp.
$$
This map is conjugate linear and injective. Indeed, if $\iota(\eta)=0$ for some $\eta\in Y^\sharp$, then $P^\dagger(\eta)=\eta$ and, for any $z \in \L^4(\cal M)$,
\[
\tr(\eta^*z)
=
\tr((P^\dagger\eta)^*z)
\ov{\eqref{def-adjoint-P-L4}}{=}
\tr(\eta^*Pz)
=
0.
\]
The nondegeneracy of the $\L^4$--$\L^{4/3}$ duality gives $\eta=0$. Since $\dim Y^\sharp=\dim Y$, the map $\iota$ is a conjugate linear bijection from $Y^\sharp$ onto $Y^*$. For $y\in Y$, define
\[
L_a(y)\ov{\mathrm{def}}{=}ay
\quad\text{and}\quad
R_b(y)\ov{\mathrm{def}}{=}yb.
\]
By \eqref{global-exact-density-ranges}, both maps are linear bijections from $Y$ onto $Y^\sharp$. Moreover, for $x,y\in Y$,
\[
[\iota L_a(y)](x)
=
\tr((ay)^*x)
=
\tr(y^*ax)
=
\frak H^\ell(x,y),
\]
whereas
\[
[\iota R_b(y)](x)
=
\tr((yb)^*x)
=
\tr(by^*x)
=
\tr(y^*xb)
=
\frak H^r(x,y).
\]
Thus $\iota L_a$ and $\iota R_b$ are the Riesz maps of $\frak H^\ell$ and $\frak H^r$, respectively. Proposition~\ref{prop-relative-Riesz-operator} gives
\[
\Delta
=
(\iota L_a)^{-1}(\iota R_b)
=
L_a^{-1}R_b.
\]
Consequently, we have
\[
\Delta(y)
=
a^{-1}yb,
\quad
y\in Y,
\]
which proves \eqref{global-relative-modular-operator}. Moreover,
\[
\frak H^r(x,y)
=
\frak H^\ell(x,\Delta y),
\quad
x,y\in Y,
\]
and $\Delta$ is positive and selfadjoint for the Hilbertian structure defined by $\frak H^\ell$. It remains to prove \eqref{global-relative-complex-powers}. On the full corner $e\cal Mf$, define
\[
T(\xi)
\ov{\mathrm{def}}{=}
a^{-1}\xi b.
\]
Equip $e\cal Mf$ with the Hermitian inner product
\[
\langle\xi,\eta\rangle_a
\ov{\mathrm{def}}{=}
\tr(\eta^*a\xi).
\]
For $\xi,\eta \in e\cal Mf$, we have
\begin{align*}
\langle T\xi,\eta\rangle_a
=
\tr(\eta^*a a^{-1}\xi b)
=
\tr(\eta^*\xi b)
\quad \text{and} \quad \langle\xi,T\eta\rangle_a
=
\tr((a^{-1}\eta b)^*a\xi)
=
\tr(b\eta^*\xi)
=
\tr(\eta^*\xi b).
\end{align*}
Hence $T$ is selfadjoint. Furthermore, we have
\[
\la T\xi,\xi \ra_a
=
\tr(\xi^*\xi b)
= \bnorm{\xi b^{\frac12}}_2^2
\geq0,
\]
so $T$ is positive. Equality \eqref{global-exact-density-ranges} gives
\[
T(Y)
=
a^{-1}Yb
=
Y.
\]
Thus $Y$ is an invariant subspace of the selfadjoint operator $T$ and is therefore reducing.
More explicitly, $L_{a^{-1}}$ and $R_b$ are commuting positive selfadjoint operators for $\langle\cdot,\cdot\rangle_a$. Indeed,
\[
\langle L_{a^{-1}}\xi,\eta\rangle_a
=
\tr(\eta^*\xi)
=
\langle\xi,L_{a^{-1}}\eta\rangle_a
\]
and
\[
\langle R_b\xi,\eta\rangle_a
=
\tr(\eta^*a\xi b)
=
\langle\xi,R_b\eta\rangle_a,
\]
while their quadratic forms are respectively $\norm{\xi}_2^2$ and $\bnorm{a^{\frac12}\xi b^{\frac12}}_2^2$. Consequently, the joint functional calculus gives
\[
T^z(\xi)
=
a^{-z}\xi b^z,
\quad
\xi\in e\cal Mf, z\in\mathbb C.
\]
Since $\Delta=T|_Y$, restricting this identity to $Y$ proves \eqref{global-relative-complex-powers}.
\end{proof}

\begin{remark} \normalfont
\label{rem-basis-dependence}
The Gram densities $a,b$ depend on the chosen basis of $\Ran P$ only through the Hermitian inner product on $\Ran P$ for which this basis is orthonormal. In particular, they are unchanged by unitary changes of basis, but not by arbitrary invertible changes of basis. Thus the construction attaches modular data to a choice of Hilbertian structure on $\Ran P$. It does not produce a single relative operator determined by $P$ alone. All the results below hold for any such choice.
\end{remark}

\begin{cor}
\label{cor-modular-complete-isometries}
Under the assumptions and notation of Proposition~\ref{prop-global-left-right-Levi-finite-dimensional-bis}, the linear maps $\Delta^{\i t}\co\Ran P\to\Ran P$, $t\in\R$, are surjective complete isometries for the inherited $\L^4$ operator space structure. Put $u_t \ov{\mathrm{def}}{=} a^{-\i t}+(1-e)$ and $v_t \ov{\mathrm{def}}{=} b^{\i t}+(1-f)$. Then
\[
P(u_txv_t)=u_tP(x)v_t,
\quad x \in \L^4(\cal M),\ t\in\R.
\]
In particular, we have
\[
P(a^{-\i t}xb^{\i t})
=a^{-\i t}P(x)b^{\i t},
\quad x \in e\L^4(\cal M)f.
\]
\end{cor}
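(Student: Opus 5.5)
Throughout, $\cal M$ is finite-dimensional and $e=s_\ell(P)$, $f=s_r(P)$. Then $a^{\i t}$ and $b^{\i t}$ are unitaries in $e\cal Me$ and $f\cal Mf$, so $u_t=a^{-\i t}+(1-e)$ and $v_t=b^{\i t}+(1-f)$ are unitaries in $\cal M$. The plan has three steps.

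\emph{Step 1: $\Delta^{\i t}$ is a surjective complete isometry of $\Ran P$.} Put $Y=\Ran P\subset e\L^4(\cal M)f$. By \eqref{global-relative-complex-powers},
\[
\Delta^{\i t}(y)=a^{-\i t}yb^{\i t}=u_tyv_t,\quad y\in Y,
\]
since $(1-e)y=0$ and $y(1-f)=0$. At level $n$, the amplification acts on $\M_n(Y)$ by $[y_{ij}]\mapsto (I_n\ot u_t)[y_{ij}](I_n\ot v_t)$. This is two-sided multiplication by unitaries of $\M_n(\cal M)$, so it preserves the norm of $\L^4(\M_n(\cal M))$. Hence $\Delta^{\i t}$ is a complete isometry of $Y$ into itself. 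It is surjective because $\Delta^{-\i t}$ is its inverse, and $\Delta^{\pm\i t}$ maps $Y$ into $Y$ by Proposition~\ref{prop-global-left-right-Levi-finite-dimensional-bis}.

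\emph{Step 2: the conjugated projection.} Define $Q_t(x)=u_t^*P(u_txv_t)v_t^*$. This is a projection, it is completely contractive (Theorem~\ref{thm-two-contractive-completely-contractive-L4}, composed with unitary conjugations), and its range is $u_t^*Yv_t^*=\Delta^{-\i t}(Y)=Y$. Since $\L^4(\cal M)$ is smooth, Proposition~\ref{prop-Sullivan} (Cohen--Sullivan) says $Y$ is the range of at most one contractive projection. Therefore $Q_t=P$, which is exactly $P(u_txv_t)=u_tP(x)v_t$.

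\emph{Step 3: the corner case.} For $x\in e\L^4(\cal M)f$ we have $u_txv_t=a^{-\i t}xb^{\i t}$, and likewise $u_tP(x)v_t=a^{-\i t}P(x)b^{\i t}$ because $P(x)\in Y\subset e\L^4f$. Substituting these into Step 2 gives the final identity.

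The only delicate point is correctly identifying $\Ran Q_t$ with $Y$, and this rests on the invariance $\Delta^{\i t}(Y)=Y$ established earlier. The uniqueness argument itself is immediate.
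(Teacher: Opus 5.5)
Your proposal is correct and follows essentially the same argument as the paper: you identify $\Delta^{\i t}$ with two-sided multiplication by the unitaries $u_t,v_t$, and you derive the covariance identity from uniqueness of the contractive projection onto $\Ran P$ (Proposition~\ref{prop-Sullivan}) applied to $x\mapsto u_t^*P(u_txv_t)v_t^*$. Only the contractivity of $Q_t$ is needed for Cohen--Sullivan, so your appeal to complete contractivity is harmless but unnecessary.
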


\begin{proof}
The identity $\Delta^{\i t}(y)=a^{-\i t}yb^{\i t}$ shows that these maps are restrictions of left and right multiplication by unitaries in the reduced algebras, so they are complete isometries. To obtain the covariance relation on the whole corner, put $u_t=a^{-\i t}+(1-e)$ and $v_t=b^{\i t}+(1-f)$. The map $\mathcal U_t(x)=u_txv_t$ is a surjective complete isometry on $\L^4(\cal{M})$ and preserves $Y$. Thus $\mathcal U_t^{-1}P\cal{U}_t$ is another contractive projection onto $Y$. Uniqueness gives $P \cal{U}_t= \cal{U}_tP$, which restricts to the displayed identity.
\end{proof}

\subsection{2-contractive projections on $\L^4$-spaces and ternary rings of operators}
\label{sec-2-contract-TRO}

\begin{prop}
\label{prop-linking-modular-interpretation}
Under the assumptions and notation of Proposition~\ref{prop-global-left-right-Levi-finite-dimensional-bis}, in particular with $\cal M$ finite-dimensional, consider the corner von Neumann algebra
\[
\cal R
\ov{\mathrm{def}}{=}
\begin{bmatrix}
e\cal{M}e&e\cal{M}f\\
f\cal{M}e&f\cal{M}f
\end{bmatrix}.
\]
Here we identify $\cal R$ with the corner $s(\M_2\otvn\cal M)s$ where $s \ov{\mathrm{def}}{=}
\begin{bmatrix}
e&0\\
0&f
\end{bmatrix}$ and equip it with the faithful trace $
\tr_{\cal R}
\ov{\mathrm{def}}{=}
(\Tr_2\ot\tau)|_{\cal R}$. The corner $e \cal{M} f$ has left support $e$ and right support $f$, so $\cal R$ is its linking von Neumann algebra. Define the positive invertible density
\begin{equation}
\label{linking-density}
D
\ov{\mathrm{def}}{=}
\begin{bmatrix}
a^2&0\\
0&b^2
\end{bmatrix}.
\end{equation}
Let $(\sigma_t^{D})_{t \in \R}$ be the modular automorphism group of the faithful positive functional $\cal{R} \to \mathbb{C}$, $X \mapsto \tr_{\cal R}(DX)$. On the upper-right corner, we have
\begin{equation}
\label{linking-modular-group-corner}
\sigma_t^{D}(y)
=a^{2\i t}yb^{-2\i t},
\quad
y\in e\cal Mf.
\end{equation}
Consequently, the relative operator $\Delta$ of Proposition~\ref{prop-global-left-right-Levi-finite-dimensional-bis} satisfies
\begin{equation}
\label{relative-group-as-linking-modular-group}
\Delta^{\i t}(y)
=
\sigma_{-\frac t2}^{D}(y), 
\quad y \in \Ran P.
\end{equation}
If $\Delta_D$ denotes the standard modular operator on $\L^2(\cal R)$ associated with $D$, then on the upper-right corner
\begin{equation}
\label{standard-linking-modular-operator}
\Delta_{D}(y)
=a^2yb^{-2},
\end{equation}
and hence
\begin{equation}
\label{relative-operator-half-modular}
\Delta
=
\Delta_{D}^{-\frac12}
\end{equation}
as operators on the invariant subspace $\Ran P$.
\end{prop}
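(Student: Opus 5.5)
The argument is a direct computation in the finite-dimensional corner algebra $\cal R$, followed by a comparison with the formula $\Delta^{z}(y)=a^{-z}yb^{z}$ from Proposition~\ref{prop-global-left-right-Levi-finite-dimensional-bis}.

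\emph{Structure of $\cal R$ and the density.} Since $\cal M$ is finite-dimensional, $\cal R=s(\M_2\otvn\cal M)s$ is a finite-dimensional von Neumann algebra with unit $s$. The restricted trace $\tr_{\cal R}$ is faithful. By Proposition~\ref{prop-global-left-right-Levi-finite-dimensional}, $a$ and $b$ are positive and invertible in $e\cal Me$ and $f\cal Mf$. Hence $D=\operatorname{diag}(a^2,b^2)$ is positive and invertible in $\cal R$, and the functional $\varphi_D(X)=\tr_{\cal R}(DX)$ is a faithful positive functional.

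\emph{The modular group.} For a faithful functional given by a density against a faithful trace, the modular group is the inner group $\sigma_t^D(X)=D^{\i t}XD^{-\i t}$. I would verify this through the KMS condition, which reduces to traciality: the function $F(t)=\varphi_D(\sigma_t^D(X)Y)=\tr_{\cal R}(D^{1+\i t}XD^{-\i t}Y)$ extends analytically, and $F(t+\i)=\tr_{\cal R}(D^{\i t}XD^{1-\i t}Y)=\varphi_D(Y\sigma_t^D(X))$. Since
\[
D^{\i t}=\begin{bmatrix} a^{2\i t}&0\\ 0&b^{2\i t}\end{bmatrix},
\]
compressing to the $(1,2)$ corner gives $\sigma_t^D(y)=a^{2\i t}yb^{-2\i t}$ for $y\in e\cal Mf$. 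Consequently $\sigma_{-t/2}^D(y)=a^{-\i t}yb^{\i t}$. Setting $z=\i t$ in \eqref{global-relative-complex-powers} gives $\Delta^{\i t}(y)=a^{-\i t}yb^{\i t}$, which is exactly \eqref{relative-group-as-linking-modular-group}.

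\emph{The modular operator.} I would identify $\L^2(\cal R)$ with $\cal R$ equipped with $\langle X,Y\rangle=\tr_{\cal R}(Y^*X)$. The standard modular operator of $\varphi_D$ is then $\Delta_D(\xi)=D\xi D^{-1}$, i.e.\ $L_DR_{D^{-1}}$. This follows from the relation $S(XD^{1/2})=X^*D^{1/2}$, together with the polar decomposition $S=J\Delta_D^{1/2}$, where $J\xi=\xi^*$ and $\Delta_D^{1/2}\xi=D^{1/2}\xi D^{-1/2}$. Alternatively, the identity $\Delta_D^{\i t}=\mathrm{Ad}(D^{\i t})$ together with functional calculus gives the same result. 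On the corner this reads $\Delta_D(y)=a^2yb^{-2}$.

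\emph{Conclusion.} The operators $L_{a^2}$ and $R_{b^{-2}}$ commute, so $\Delta_D^{-1/2}(y)=a^{-1}yb$. This equals $\Delta(y)$ by \eqref{global-relative-modular-operator}. The subspace $\Ran P$ is invariant because $a^{-1}Yb=Y$ by \eqref{global-exact-density-ranges}.

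The main point needing care is the convention for $\Delta_D$ and its functional calculus. The relevant operators are $L_{a^{2}}$ and $R_{b^{-2}}$ on the Hilbert space $e\L^2(\cal M)f$, which is an $\L^2$-corner with the plain trace inner product. This is a different Hilbert structure from the $\frak H^\ell$-structure on which $\Delta$ was defined. The identity $\Delta=\Delta_D^{-1/2}$ should therefore be stated as an equality of linear maps on $\Ran P$. Both sides are given by the same joint functional calculus of the commuting left and right multiplications, so the Hilbert structure used to define the square root does not matter. That independence is what I would make explicit.
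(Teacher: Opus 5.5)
Your proposal is correct and follows the paper's proof. Both arguments rest on the tracial formulas $\sigma_t^D=\mathrm{Ad}(D^{\i t})$ and $\Delta_D=L_DR_{D^{-1}}$, which the paper simply quotes and you verify via the KMS condition and the polar decomposition of $S$; both then compress to the $(1,2)$ corner and compare with $\Delta^z(y)=a^{-z}yb^z$.

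The one assertion of the statement you do not address is that $\cal R$ is the linking algebra of $e\cal Mf$. The paper obtains it from $e=\bigvee_{x\in\Ran P}s_\ell(x)$, which forces the weak$^*$-closed ideal of $e\cal Me$ generated by $(e\cal Mf)(e\cal Mf)^*$ to have support $e$, hence to equal $e\cal Me$, and symmetrically on the right with $f\cal Mf$.
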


\begin{proof}
Since $\Ran P\subset e\cal Mf$ and $e$ is the join of the left supports of the elements of $\Ran P$, the weakly closed ideal generated by $(e\cal Mf)(e\cal Mf)^*$ in $e\cal Me$ has support $e$ and is therefore equal to $e\cal Me$. Similarly, the weakly closed ideal generated by $(e\cal Mf)^*(e\cal Mf)$ is $f\cal Mf$. This proves the linking-algebra assertion.

In the tracial realization of a finite-dimensional von Neumann algebra, the modular automorphism group associated with an invertible density $D$ is
\[
\sigma_t^{D}(X)
=D^{\i t}X D^{-\i t}.
\]
Since $D^{\i t}
\ov{\eqref{linking-density}}{=}
\begin{bmatrix}
a^{2\i t}&0\\
0&b^{2\i t}
\end{bmatrix}$, we obtain
\[
\sigma_t^{D}\left(\begin{bmatrix}
0&y\\
0&0
\end{bmatrix}\right)
=D^{\i t}
\begin{bmatrix}
0&y\\
0&0
\end{bmatrix}
D^{-\i t}
=\begin{bmatrix}
a^{2\i t}&0\\
0&b^{2\i t}
\end{bmatrix}\begin{bmatrix}
0&y\\
0&0
\end{bmatrix}
\begin{bmatrix}
a^{-2\i t}&0\\
0&b^{-2\i t}
\end{bmatrix}
=
\begin{bmatrix}
0&a^{2\i t}yb^{-2\i t}\\
0&0
\end{bmatrix},
\]
which proves \eqref{linking-modular-group-corner}. Taking the parameter $-\frac t2$ gives
\[
\sigma_{-\frac t2}^{D}(y)
=
a^{-\i t}yb^{\i t}
=
\Delta^{\i t}(y),
\]
where we used \eqref{global-relative-complex-powers}. This proves \eqref{relative-group-as-linking-modular-group}. The standard modular operator on the tracial Hilbert space $\L^2(\cal R)$ is
\[
\Delta_{D}
=
L_{D}R_{D^{-1}}.
\]
Its restriction to the upper-right corner is therefore
\[
y\mapsto a^2yb^{-2},
\]
which proves \eqref{standard-linking-modular-operator}. Its inverse square root is $
y\mapsto a^{-1}yb$, which is precisely $\Delta$. This proves \eqref{relative-operator-half-modular}.
\end{proof}

Following \cite{Arh24b} (see also \cite[p.~869]{KaR02}), we recall the rectangular noncommutative $\L^p$-space of a $\W^*$-$\TRO$. Let $\cal V$ be a $\W^*$-$\TRO$, let $\cal R(\cal V)$ be its linking von Neumann algebra, and assume that $\cal R(\cal V)$ is $\sigma$-finite and equipped with a normal faithful state $\varphi$. If $q\in\cal R(\cal V)$ is the projection defining the upper-right corner $\cal V=q\cal R(\cal V)(1-q)$, then
\[
\L^p(\cal V,\varphi)
\ov{\mathrm{def}}{=}
q\L^p(\cal R(\cal V),\varphi)(1-q),
\quad
1 \leq p < \infty,
\]
where the ambient spaces are Kosaki's noncommutative $\L^p$-spaces introduced in \cite{Kos84}. We also set $
\L^\infty(\cal V,\varphi)
\ov{\mathrm{def}}{=}
\cal V$. If, in addition, $q$ belongs to the centralizer of $\varphi$, then the symmetric Kosaki realization preserves the four corners determined by $q$ and $1-q$. In this case, $\L^p(\cal V,\varphi)$ identifies with the closure of the algebraic corner $\cal V=q\cal R(\cal V)(1-q)$ in the symmetric realization of $\L^p(\cal R(\cal V),\varphi)$.

We shall use the following finite-dimensional realization. Let $\cal R$ be a finite-dimensional von Neumann algebra with faithful trace $\tr_{\cal R}$, and let $\psi(X)=\tr_{\cal R}(dX)$ for a positive invertible density $d\in\cal R$. In the symmetric realization of Kosaki's space, an element $X\in\cal R$ represents the vector
\begin{equation}
\label{Kosaki-symmetric-realization}
d^{\frac1{2p}}X d^{\frac1{2p}}
\in\L^p(\cal R,\tr_{\cal R}),
\end{equation}
and its $\L^p(\cal R,\psi)$-norm is the tracial $\L^p$-norm of this vector. We use the same realization at every matrix level, with density $\Id_{\M_n}\ot d$. If $q\in\cal R$ is a projection commuting with $d$, then the symmetric realization preserves the corners determined by $q$. Indeed,
\[
d^{\frac1{2p}}qX(1-q)d^{\frac1{2p}}
=
q\,d^{\frac1{2p}}Xd^{\frac1{2p}}(1-q),
\quad
X \in \cal R.
\]
Equivalently, $q$ belongs to the centralizer of the state $\psi$, and the algebraic upper-right corner is represented by the corresponding upper-right corner of the tracial $\L^p$-space.

The preceding interpretation also explains the quarter-powers appearing in the deweighting procedure. We shall use the following consequence of Takesaki's conditional expectation theorem and the $\L^p$-extension of conditional expectations from \cite[Lemma~2.2]{JuX03}.

\begin{lemma}
\label{lem-modular-invariant-Kosaki-subspace}
Let $\cal N$ be a von Neumann subalgebra of a von Neumann algebra $\cal R$ with the same unit. Let $\psi$ be a normal faithful state on $\cal R$ and suppose that
\begin{equation}
\label{modular-invariance-subalgebra}
\sigma_t^\psi(\cal N)=\cal N,
\quad t\in\R.
\end{equation}
Let $\varphi\ov{\mathrm{def}}{=}\psi|_{\cal N}$. Suppose that  $1 \leq p \leq \infty$. The canonical inclusion $\cal N\subset\cal R$ induces a complete isometry
\begin{equation}
\label{Kosaki-complete-isometric-inclusion}
J_p\co \L^p(\cal N,\varphi) \to \L^p(\cal R,\psi).
\end{equation}
Moreover, the range of $J_p$ is the range of a completely contractive projection. If $\cal R$ is finite-dimensional, let $\tr_{\cal R}$ be a faithful trace, and let $d_{\cal R}\in\cal R$ and $d_{\cal N}\in\cal N$ be the densities of $\psi$ and $\varphi$ with respect to $\tr_{\cal R}$ and $\tr_{\cal R}|_{\cal N}$, respectively. For $1\leq p<\infty$, the maps above are characterized in the symmetric tracial realizations by
\begin{align*}
J_p\big(d_{\cal N}^{\frac1{2p}}x d_{\cal N}^{\frac1{2p}}\big)
&=d_{\cal R}^{\frac1{2p}}x d_{\cal R}^{\frac1{2p}},
&&x\in\cal N,\\
\E_p\big(d_{\cal R}^{\frac1{2p}}x d_{\cal R}^{\frac1{2p}}\big)
&=d_{\cal N}^{\frac1{2p}}\E(x)d_{\cal N}^{\frac1{2p}},
&&x\in\cal R.
\end{align*}
These identities also hold at every matrix level, with the corresponding amplified densities. In particular, the projection $J_p\E_p$ is represented on $\L^p(\cal R,\tr_{\cal R})$ by
\begin{equation*}
\xi\mapsto d_{\cal R}^{\frac1{2p}}
\E\big(d_{\cal R}^{-\frac1{2p}}\xi d_{\cal R}^{-\frac1{2p}}\big)
d_{\cal R}^{\frac1{2p}}.
\end{equation*}
\end{lemma}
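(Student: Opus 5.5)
The plan is to build everything from Takesaki's theorem, to pass to $\L^p$ through the extension of conditional expectations of \cite[Lemma~2.2]{JuX03}, and to read off the explicit formulas from the symmetric realization \eqref{Kosaki-symmetric-realization} in the finite-dimensional case.

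First, by Takesaki's theorem, the invariance \eqref{modular-invariance-subalgebra} gives a unique normal faithful conditional expectation $\E\co\cal R\to\cal N$ with $\psi\circ\E=\psi$. It commutes with the modular groups, $\E\circ\sigma_t^\psi=\sigma_t^\varphi\circ\E$, and $\sigma_t^\psi|_{\cal N}=\sigma_t^\varphi$. By \cite[Lemma~2.2]{JuX03}, the inclusion and $\E$ extend to maps $J_p\co\L^p(\cal N,\varphi)\to\L^p(\cal R,\psi)$ and $\E_p\co\L^p(\cal R,\psi)\to\L^p(\cal N,\varphi)$. Here $J_p$ is isometric, $\E_p$ is contractive, and $\E_pJ_p=\Id$.

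Second, for complete isometry and complete contractivity, apply the same construction to the amplified data. Take $\M_n\otvn\cal N\subset\M_n\otvn\cal R$ with the state $\mathrm{tr}_n\ot\psi$, whose modular group is $\Id\ot\sigma_t^\psi$; this still leaves $\M_n\otvn\cal N$ invariant, and its conditional expectation is $\Id\ot\E$. The operator space structure on Kosaki/Haagerup spaces is the one for which $S_n^p[\L^p(\cal R,\psi)]=\L^p(\M_n\otvn\cal R,\mathrm{tr}_n\ot\psi)$ completely isometrically. Under this identification the amplified $J_p$ and $\E_p$ are $\Id\ot J_p$ and $\Id\ot\E_p$. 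Hence $J_p$ is a complete isometry and $J_p\E_p$ is a completely contractive projection onto $\Ran J_p$.

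Third, for the finite-dimensional formulas, $d_{\cal R}$ commutes with $\cal N$'s modular data. More precisely, invariance means $d_{\cal R}^{\i t}\cal N d_{\cal R}^{-\i t}=\cal N$, and $\psi\circ\E=\psi$ gives $d_{\cal N}=\E^{\tau}(d_{\cal R})$, where $\E^\tau$ is the trace-preserving expectation onto $\cal N$. The key identity I expect to need is $d_{\cal R}^{\i t}=d_{\cal N}^{\i t}u_t$ with $u_t$ in the relative commutant $\cal N'\cap\cal R$. This should follow from $\sigma^\psi_t|_{\cal N}=\sigma^\varphi_t$: the element $d_{\cal N}^{-\i t}d_{\cal R}^{\i t}$ then commutes with $\cal N$, and the $u_t$ form a one-parameter group commuting with $d_{\cal N}$. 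Analytic continuation then gives $d_{\cal R}^{s}=d_{\cal N}^{s}h^{s}$ with $h$ positive, $h\in\cal N'\cap\cal R$, commuting with $d_{\cal N}$. With this, the map $d_{\cal N}^{1/2p}xd_{\cal N}^{1/2p}\mapsto d_{\cal R}^{1/2p}xd_{\cal R}^{1/2p}$ is the concrete form of Kosaki's inclusion. In the symmetric realization, the $\L^p(\cal N,\varphi)$ and $\L^p(\cal R,\psi)$ norms of $x\in\cal N$ are the tracial norms of these two vectors, and the isometry of $J_p$ from the second step says exactly that they agree. The formula for $\E_p$ follows by duality: check that it is the adjoint of $J_{p^*}$ under the trace pairings, using $\tr_{\cal R}(y\,\E(x))=\tr_{\cal R}(\E^\tau(y)x)$ together with the factorization of $d_{\cal R}$. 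Composing the two formulas gives the stated expression for $J_p\E_p$. The matrix-level identities follow by replacing the densities with $\Id_{\M_n}\ot d$.

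The main obstacle is the third step, namely identifying the abstract Kosaki/\cite{JuX03} maps with the concrete symmetric-realization formulas. This is where the factorization $d_{\cal R}=d_{\cal N}h$ with $h$ in the relative commutant must be used carefully, both to keep the symmetric $\frac1{2p}$-powers compatible and to verify the duality computation for $\E_p$.
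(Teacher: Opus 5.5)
Your first two steps are the paper's argument. Takesaki's theorem gives $\E$, and \cite[Lemma~2.2]{JuX03} gives $J_p$ and $\E_p$ with $\E_pJ_p=\Id$. Running the same construction for $\Id_{\M_n}\ot\E$ on $\M_n\otvn\cal N\subset\M_n\otvn\cal R$ then yields the complete isometry and the completely contractive projection. (With a normalized $\mathrm{tr}_n$, your identification with $S_n^p[\L^p(\cal R,\psi)]$ holds only up to the harmless factor $n^{\frac1p}$; the paper uses the density $\Id_{\M_n}\ot d$.)

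You differ in the finite-dimensional part. The paper only invokes the common-core construction of \cite{JuX03} at $\theta=\frac12$, whereas you verify the formulas by hand. Your factorization is correct: $\sigma_s^\psi(d_{\cal N}^{\i t})=\sigma_s^\varphi(d_{\cal N}^{\i t})=d_{\cal N}^{\i t}$ shows the two densities commute, so $h=d_{\cal N}^{-1}d_{\cal R}$ is positive, invertible and lies in $\cal N'\cap\cal R$. It also gives you more than you use. For $x\in\cal N$ and $y=d_{\cal N}^{\frac1{2p}}xd_{\cal N}^{\frac1{2p}}$, one has $d_{\cal R}^{\frac1{2p}}xd_{\cal R}^{\frac1{2p}}=yh^{\frac1p}$ and $|yh^{\frac1p}|^p=|y|^ph$. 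Isometry of the concrete map therefore reduces to $\E^\tau(h)=1$, which follows from $d_{\cal N}=\E^\tau(d_{\cal R})=d_{\cal N}\E^\tau(h)$. The same works with $\Id_{\M_n}\ot h$ at every matrix level, so you need not route this through Step 2. Combined with the duality step, this gives a self-contained proof in the finite-dimensional case, the only one used later in the paper, with no interpolation and no need to identify the abstract Junge--Xu maps.

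The duality step, however, rests on a false identity. Requiring $\tr_{\cal R}(y\,\E(x))=\tr_{\cal R}(\E^\tau(y)x)$ for all $x,y$ is equivalent to $\E=\E^\tau$, i.e.\ to $h=1$. This fails already for $\cal N=\mathbb{C}1\subset\mathbb{C}^2$ with $\psi$ not proportional to $\tr_{\cal R}$. Used literally, it leads to the requirement $\tr_{\cal R}(wx)=\tr_{\cal R}(xwh)$ for all $x\in\cal R$, $w\in\cal N$, which holds only when $h=1$.

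The correct input is the $\psi$-invariance and $\cal N$-bimodularity of $\E$: $\tr_{\cal R}(d_{\cal N}w\E(x))=\tr_{\cal R}(d_{\cal R}wx)$ for $w\in\cal N$, equivalently $\E(x)=\E^\tau(hx)$. With it, take $x\in\cal R$, $y\in\cal N$ and $w=d_{\cal N}^{\frac12}yd_{\cal N}^{\frac12}$. Both pairings $\tr_{\cal R}\big(d_{\cal N}^{\frac1{2p}}\E(x)d_{\cal N}^{\frac1{2p}}\,d_{\cal N}^{\frac1{2p^*}}yd_{\cal N}^{\frac1{2p^*}}\big)$ and $\tr_{\cal R}\big(d_{\cal R}^{\frac1{2p}}xd_{\cal R}^{\frac1{2p}}\,d_{\cal R}^{\frac1{2p^*}}yd_{\cal R}^{\frac1{2p^*}}\big)$ then equal $\tr_{\cal R}(xwh)$. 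So the concrete formula for $\E_p$ is indeed the trace adjoint of $J_{p^*}$, and the rest of your argument goes through.
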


\begin{proof}
By Takesaki's theorem \cite[Theorem~IX.4.2]{Tak03}, the modular invariance assumption \eqref{modular-invariance-subalgebra} yields a normal faithful conditional expectation $\E\co\cal R\to\cal N$ such that $\psi\circ E=\psi$. At $p=\infty$, the inclusion and $\E$ are completely contractive and their composition on $\cal N$ is the identity. At $p=1$, the corresponding statement follows by passing to the preadjoint and the restriction map. Since these endpoint maps are compatible with the Kosaki interpolation couples, complex interpolation \cite{BeL76}, or equivalently \cite[Lemma~2.2]{JuX03}, gives completely contractive maps $
J_p\co\L^p(\cal N,\varphi)\to\L^p(\cal R,\psi)$, $
\E_p\co\L^p(\cal R,\psi)\to\L^p(\cal N,\varphi)$ with $\E_pJ_p=\Id_{\L^p(\cal N,\varphi)}$. Indeed, the contractive extensions are the state-preserving extensions of $E$ to noncommutative $\L^p$-spaces, and applying the same construction to $\Id_{\M_n}\ot E$ gives complete contractivity at every matrix level. Consequently, we have
\[
\norm{x}_p
=\norm{\E_pJ_p(x)}_p
\leq\norm{J_p(x)}_p
\leq\norm{x}_p,
\quad x \in \L^p(\cal N,\varphi),
\]
and the same inequalities hold at every matrix level. Thus $J_p$ is a complete isometry. Finally, the map $J_p\E_p$ is a completely contractive projection whose range is $J_p(\L^p(\cal N,\varphi))$.

The formulas in the symmetric realizations follow from the common-core construction in \cite[Section~2, Lemma~2.1 and Lemma~2.2]{JuX03}, taking the symmetric parameter $\theta = \frac{1}{2}$. Applying the same construction to the matrix amplifications gives the corresponding formulas at every matrix level.
\end{proof}

\begin{cor}
\label{cor-finite-dimensional-modular-deweighting}
Let $\cal{M}$ be a finite-dimensional von Neumann algebra. Let $P \co \L^4(\cal M) \to \L^4(\cal M)$ be a $2$-contractive projection with $\Ran P\neq\{0\}$. Under the assumptions and notation of Section~\ref{sec-modular-finite}, let $e \ov{\mathrm{def}}{=} s_\ell(P)$, $f \ov{\mathrm{def}}{=} s_r(P)$ and define the subspace
\begin{equation}
\label{finite-dimensional-deweighted-space}
\cal V
\ov{\mathrm{def}}{=}
a^{-\frac14}(\Ran P)b^{-\frac14}
\end{equation}
of $e\cal M f$. Here and below, all negative powers of $a$ and $b$ are taken in the reduced algebras $e\cal Me$ and $f\cal Mf$, respectively. Then $\cal V$ is a finite-dimensional $\W^*$-$\TRO$ and
\begin{equation}
\label{finite-dimensional-weighted-range}
\Ran P=a^{\frac14}\cal Vb^{\frac14}.
\end{equation}
Consider the von Neumann algebra $
\cal R
\ov{\mathrm{def}}{=}
\begin{bmatrix}
e\cal Me&e\cal Mf\\
f\cal Me&f\cal Mf
\end{bmatrix}$ and let
$D
\ov{\mathrm{def}}{=}
\begin{bmatrix}
a^2&0\\
0&b^2
\end{bmatrix}$. Then the linking von Neumann algebra $\cal R(\cal V)$ is a unital von Neumann subalgebra of $\cal R$ and is invariant under the modular automorphism group associated with $D$:
\begin{equation}
\label{linking-algebra-V-modular-invariance}
\sigma_t^{D}(\cal R(\cal V))
=
\cal R(\cal V),
\quad t \in \R.
\end{equation}
Set
\begin{equation}
\label{normalization-linking-state}
c
\ov{\mathrm{def}}{=}
\tr_{\cal R}(D)
\quad \text{and} \quad
\psi(X)
\ov{\mathrm{def}}{=}
c^{-1}\tr_{\cal R}(D X),
\quad X \in \cal R,
\end{equation}
and let $\varphi\ov{\mathrm{def}}{=}\psi|_{\cal R(\cal V)}$. Then $\varphi$ is a normal faithful state on $\cal R(\cal V)$, and the map defined on the finite-dimensional space $\L^4(\cal{V},\varphi)=\cal V$ by
\begin{equation}
\label{complete-isometry-deweighted-space}
U(v)
\ov{\mathrm{def}}{=}
c^{-\frac14}a^{\frac14}vb^{\frac14},
\quad v \in \cal V,
\end{equation}
is a surjective complete isometry $U \co \L^4(\cal V,\varphi) \to \Ran P$. In particular, the space $\Ran P$ is completely isometric to a rectangular noncommutative $\L^4$-space.
\end{cor}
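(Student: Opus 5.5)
The plan is to obtain the ternary structure of $\cal V$ algebraically from the two ingredients already proved: the ternary relation $YY^*Y\subset Y^\sharp$ (Theorem~\ref{thm-two-contractive-completely-contractive-L4}) and the exact density relation $aY=Y^\sharp=Yb$ (Proposition~\ref{prop-global-left-right-Levi-finite-dimensional}). Here $Y=\Ran P$. The complete isometry will then come from Lemma~\ref{lem-modular-invariant-Kosaki-subspace}.

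\emph{Step 1: invariance under all real powers.} In the proof of Proposition~\ref{prop-global-left-right-Levi-finite-dimensional-bis}, $Y$ was shown to be a reducing subspace of the positive operator $T(\xi)=a^{-1}\xi b$ on $e\cal Mf$. Since $T^z(\xi)=a^{-z}\xi b^{z}$, this gives
\[
a^{-s}Yb^{s}=Y,\quad s\in\R.
\]
Because $a^{-1/4}$ and $b^{-1/4}$ are invertible in the corners, we also have $\Ran P=a^{1/4}\cal Vb^{1/4}$ and $s_\ell(\cal V)=e$, $s_r(\cal V)=f$.

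\emph{Step 2: $\cal V$ is a TRO.} For $y_1,y_2,y_3\in Y$ we must show that
\[
a^{-\frac14}y_1b^{-\frac12}y_2^*a^{-\frac12}y_3b^{-\frac14}\in a^{-\frac14}Yb^{-\frac14},
\]
that is, $y_1b^{-\frac12}y_2^*a^{-\frac12}y_3\in Y$. By Step 1, $y_1':=a^{\frac12}y_1b^{-\frac12}$ and $y_3':=a^{-\frac12}y_3b^{\frac12}$ lie in $Y$. Hence
\[
y_1b^{-\frac12}y_2^*a^{-\frac12}y_3=a^{-\frac12}\,y_1'y_2^*y_3'\,b^{-\frac12}.
\]
By the ternary relation and the density relation, $y_1'y_2^*y_3'\in Y^\sharp=aY$. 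The expression is therefore in $a^{\frac12}Yb^{-\frac12}=Y$, again by Step 1. Since $\cal V$ is finite-dimensional it is weak* closed, so it is a $\W^*$-$\TRO$.

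\emph{Step 3: linking algebra and modular invariance.} Because the supports of $\cal V$ are $e$ and $f$, the linking algebra $\cal R(\cal V)$ sits in $\cal R$ with the same unit $s$. Proposition~\ref{prop-linking-modular-interpretation} gives $\sigma_t^D(v)=a^{2\i t}vb^{-2\i t}$ on the upper-right corner. The powers of $a$ commute with $a^{-1/4}$, and likewise for $b$, so Step 1 gives $\sigma^D_t(\cal V)=\cal V$. Since $\sigma_t^D$ is a $*$-automorphism, it then preserves $\Span \cal V\cal V^*$, $\Span\cal V^*\cal V$ and hence $\cal R(\cal V)$. The state $\psi$ has the same modular group as $D$, and $\varphi=\psi|_{\cal R(\cal V)}$ is faithful.

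\emph{Step 4: the complete isometry.} Apply Lemma~\ref{lem-modular-invariant-Kosaki-subspace} with $\cal N=\cal R(\cal V)$ and $p=4$. The density of $\psi$ is $d_{\cal R}=c^{-1}D$. For $v\in\cal V$, viewed in the upper-right corner, the lemma sends the element of $\L^4(\cal V,\varphi)$ represented by $v$ to
\[
d_{\cal R}^{\frac18}vd_{\cal R}^{\frac18}=c^{-\frac14}a^{\frac14}vb^{\frac14}=U(v),
\]
and this is an isometry at every matrix level. For this identification we use that the corner projection $q=\mathrm{diag}(e,0)$ lies in $\cal R(\cal V)$ and commutes with both $D$ and $d_{\cal N}$. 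The latter holds because $q$ is in the centralizer of $\psi$, hence of $\varphi$. So the symmetric realizations preserve corners, and $\L^4(\cal V,\varphi)$ is the upper-right corner of $\L^4(\cal R(\cal V),\varphi)$. The tracial $\L^4$-norm of the corner of $\L^4(\cal R,\tr_{\cal R})$ coincides with the $\L^4(\cal M)$-norm, also at all matrix levels. Surjectivity onto $\Ran P$ follows from $\Ran P=a^{1/4}\cal Vb^{1/4}$.

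I expect Step 4 to be the main obstacle: checking carefully that the abstract Kosaki corner space $\L^4(\cal V,\varphi)$, defined through $d_{\cal N}$ rather than $d_{\cal R}$, is identified with $\cal V$ in a way compatible with the lemma's formula for $J_4$, and that this holds uniformly at every matrix level. By contrast, Step 2 is a short algebraic check once Step 1 is in hand.
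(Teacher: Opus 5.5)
Your proposal is correct and follows essentially the paper's proof. For the TRO property you combine invariance of $\Ran P$ under $y\mapsto a^{-z}yb^{z}$ with $YY^*Y\subset Y^\sharp=aY$; for the complete isometry you use modular invariance of $\cal R(\cal V)$, the centralizer property of $q$, and Lemma~\ref{lem-modular-invariant-Kosaki-subspace} with $d^{1/8}J(v)d^{1/8}=c^{-1/4}J(a^{1/4}vb^{1/4})$. Your support argument via invertibility of $a^{-1/4}$ and $b^{-1/4}$ in the corners is a slicker substitute for the paper's operator lower bounds. One small correction: Step~3 uses the imaginary exponent $z=-2\i t$, so Step~1 should be stated for all $z\in\mathbb{C}$ rather than only $s\in\R$; your reducing-subspace argument already gives this.
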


\begin{proof}
Set $Y\ov{\mathrm{def}}{=}\Ran P$. The identity \eqref{finite-dimensional-weighted-range} follows directly from the definition of $\cal V$ and from $Y=eYf$. We first prove that $\cal V$ is a $\TRO$. For $z \in \mathbb{C}$, define
\begin{equation}
\label{def-Theta-z}
\Theta^z(v)
\ov{\mathrm{def}}{=}
a^{-z}vb^z,
\quad v\in\cal V.
\end{equation}
If $v=a^{-\frac14}yb^{-\frac14}$ with $y\in Y$, then
\[
\Theta^z(v)
=\Theta^z(a^{-\frac{1}{4}}yb^{-\frac{1}{4}})
\ov{\eqref{def-Theta-z}}{=}
a^{-\frac14}a^{-z}yb^zb^{-\frac14}
\ov{\eqref{global-relative-complex-powers}}{=}
a^{-\frac14}\Delta^z(y)b^{-\frac14}.
\]
Since $\Delta^z(\Ran P)=\Ran P$ by \eqref{global-relative-complex-powers}, we obtain $\Theta^z(\cal V)=\cal V$, and $\Theta^z$ is a bijection of $\cal V$ with inverse $\Theta^{-z}$. Let $v_1,v_2,v_3 \in \cal V$ and consider the elements
\begin{equation}
\label{def-des-yi}
y_1
\ov{\mathrm{def}}{=}
a^{\frac14}v_1b^{\frac14}, \quad 
y_2
\ov{\mathrm{def}}{=} a^{\frac14}v_2b^{\frac14}
\quad \text{and} \quad 
y_3 \ov{\mathrm{def}}{=} a^{\frac14}v_3b^{\frac14}
\end{equation}
of the subspace $\Ran P$. By Theorem~\ref{thm-two-contractive-completely-contractive-L4}, the element $
r
\ov{\mathrm{def}}{=}
y_1y_2^*y_3$ belongs to the subspace $\Ran P^\dagger$. Since $\Ran P^\dagger \ov{\eqref{global-exact-density-ranges}}{=} a(\Ran P)$, there exists $x \in \Ran P$ such that 
\begin{equation}
\label{r=ax}
r 
=a x.
\end{equation}
By definition \eqref{finite-dimensional-deweighted-space} of the subspace $\cal{V}$, we have $a^{-\frac14}x b^{-\frac14} \in \cal{V}$. A direct computation gives
\begin{align*}
a^{-\frac14} x b^{-\frac14}
\ov{\eqref{r=ax}}{=} a^{-\frac54}rb^{-\frac14}
=a^{-\frac54} y_1y_2^*y_3  b^{-\frac14}
\ov{\eqref{def-des-yi}}{=} a^{-1}v_1b^{\frac12}v_2^* a^{\frac12}v_3
\ov{\eqref{def-Theta-z}}{=} \Theta^1(v_1)\Theta^{-\frac12}(v_2)^*v_3.
\end{align*}
Thus $
\Theta^1(v_1)\Theta^{-\frac12}(v_2)^*v_3
\in \cal{V}$. Since $\Theta^1$ and $\Theta^{-\frac12}$ are bijections of $\cal{V}$, their values range independently over $\cal V$. Hence
\[
u_1u_2^*u_3 \in \cal{V},
\quad
u_1,u_2,u_3 \in \cal{V}.
\]
Consequently, we have the inclusion $\cal{V} \cal{V}^*\cal{V} \subset \cal{V}$. Since $\cal V$ is finite-dimensional, it is weak* closed, and therefore it is a $\W^*$-$\TRO$. Moreover, if we consider the map $J \co \cal{M} \to\M_2(\cal{M})$, $v \mapsto
\begin{bmatrix}
0&v\\
0&0
\end{bmatrix}$, then
\begin{equation}
\label{symmetric-L4-linking-realization}
J(a^{\frac14}vb^{\frac14})
=D^{\frac18} J(v) D^{\frac18},
\quad
v \in \cal V.
\end{equation}
Indeed, we have
\begin{align*}
\MoveEqLeft
D^{\frac18}
J(v)
D^{\frac18}
=D^{\frac18}
\begin{bmatrix}
0&v\\
0&0
\end{bmatrix}
D^{\frac18}
\ov{\eqref{linking-density}}{=}
\begin{bmatrix}
a^{\frac14}&0\\
0&b^{\frac14}
\end{bmatrix}
\begin{bmatrix}
0&v\\
0&0
\end{bmatrix}
\begin{bmatrix}
a^{\frac14}&0\\
0&b^{\frac14}
\end{bmatrix} \\
&=\begin{bmatrix}
0 & a^{\frac14}v \\
0 & 0
\end{bmatrix}\begin{bmatrix}
a^{\frac14}&0\\
0&b^{\frac14}
\end{bmatrix}
=
\begin{bmatrix}
0&a^{\frac14}vb^{\frac14}\\
0&0
\end{bmatrix}
=J(a^{\frac14}vb^{\frac14}),
\end{align*}
which proves \eqref{symmetric-L4-linking-realization}. We next check that the linking algebra of $\cal V$ has the same unit as $\cal R$. Let $(x_1,\ldots,x_k)$ be the basis used in \eqref{global-Gram-densities} and put
\[
v_j
\ov{\mathrm{def}}{=}
a^{-\frac14}x_jb^{-\frac14}
\in\cal V,
\quad
j\in\{1,\ldots,k\}.
\]
Since $b$ is positive and invertible in the reduced finite-dimensional algebra $f\cal Mf$, the functional calculus gives
\begin{equation}
\label{lower-bound-b-inverse}
b^{-\frac12}
\geq
\norm{b}_{\L^\infty(\cal M)}^{-\frac12}f.
\end{equation}
Consequently, we deduce that
\begin{align}
\label{inter-765}
\sum_{j=1}^k v_jv_j^*
= a^{-\frac14}
\Bigg(\sum_{j=1}^k x_jb^{-\frac12}x_j^*\Bigg)
a^{-\frac14}
\ov{\eqref{lower-bound-b-inverse}}{\geq}
\norm{b}_{\L^\infty(\cal M)}^{-\frac12}
a^{-\frac14}
\Bigg(\sum_{j=1}^k x_jx_j^*\Bigg)
a^{-\frac14}
\ov{\eqref{global-Gram-densities}}{=}
\norm{b}_{\L^\infty(\cal M)}^{-\frac12}a^{\frac12}.
\end{align}
The operator on the left-hand side of \eqref{inter-765} belongs to $e\cal Me$, so its support is dominated by $e$. On the other hand, \eqref{inter-765} and $s(a^{\frac{1}{2}})=e$ imply that its support dominates $e$. Hence
\[
s\bigg(\sum_{j=1}^k v_jv_j^*\bigg)
= e,
\]
and therefore $s_\ell(\cal V)=e$. Similarly, we have $
a^{-\frac12}
\geq
\norm{a}_{\L^\infty(\cal M)}^{-\frac12}e$, and hence
\begin{align*}
\sum_{j=1}^k v_j^*v_j
=
b^{-\frac14}
\Bigg(\sum_{j=1}^k x_j^*a^{-\frac12}x_j\Bigg)
b^{-\frac14}
\geq
\norm{a}_{\L^\infty(\cal M)}^{-\frac12}
b^{-\frac14}
\Bigg(\sum_{j=1}^k x_j^*x_j\Bigg)
b^{-\frac14}
=
\norm{a}_{\L^\infty(\cal M)}^{-\frac12}b^{\frac12}.
\end{align*}
It follows that
\[
s\left(\sum_{j=1}^k v_j^*v_j\right)
=
f,
\]
and therefore $s_r(\cal V) = f$. Consequently, we have $1_{\cal R(\cal V)}
=
\begin{bmatrix}
e&0\\
0&f
\end{bmatrix}
=
1_{\cal R}$. Thus $\cal R(\cal V)$ is a unital von Neumann subalgebra of $\cal R$. Now, we use Proposition~\ref{prop-linking-modular-interpretation}. If $v \in \cal V$ and $t \in \R$, then
\begin{align*}
\sigma_t^{D}(J(v))
&\ov{\eqref{linking-modular-group-corner}}{=}
J(a^{2\i t}vb^{-2\i t})
 \ov{\eqref{def-Theta-z}}{=} 
J(\Theta^{-2\i t}(v)).
\end{align*}
Since $\Theta^{-2\i t}(\cal V)=\cal V$, we obtain
\[
\sigma_t^{D}(J(\cal V))=J(\cal V),
\quad t \in \R.
\]
The map $\sigma_t^{D} \co \cal R \to \cal R$ is a $*$-automorphism. It therefore also leaves $J(\cal V)^*$ invariant, as well as the von Neumann algebras generated by $J(\cal V)J(\cal V)^*$ and $J(\cal V)^*J(\cal V)$. Since $
\cal R(\cal V)
 \ov{\eqref{Linking-algebra}}{=} 
\begin{bmatrix}
\ovl{\Span\cal V\cal V^*}^{\w^*}&\cal V\\
\cal V^*&\ovl{\Span\cal V^*\cal V}^{\w^*}
\end{bmatrix}$, we conclude that $
\sigma_t^{D}(\cal R(\cal V))
=
\cal R(\cal V)$ for any $t \in \R$. This proves \eqref{linking-algebra-V-modular-invariance}. Let $
q
\ov{\mathrm{def}}{=}
\begin{bmatrix}
e&0\\
0&0
\end{bmatrix}
\in\cal R(\cal V)$. Then
\[
1_{\cal R(\cal V)}-q
=
\begin{bmatrix}
0&0\\
0&f
\end{bmatrix}
\quad \text{and} \quad
\cal V
=
q\cal R(\cal V)(1_{\cal R(\cal V)}-q).
\]
Since the density $D$ is block diagonal, we have $
qD=Dq$. Hence $q$ is fixed by $(\sigma_t^D)_{t\in\R}$. Since the modular group of $\psi$ is $(\sigma_t^D)_{t\in\R}$ and $\cal R(\cal V)$ is globally invariant under this group, Takesaki's theorem gives
\[
\sigma_t^\varphi
=
\sigma_t^\psi|_{\cal R(\cal V)}
=
\sigma_t^D|_{\cal R(\cal V)},
\quad
t \in \R.
\]
Consequently, we have
\[
\sigma_t^\varphi(q)=q,
\quad
t \in \R,
\]
so that $q$ belongs to the centralizer of $\varphi$. In particular, the symmetric Kosaki realization of $\L^p(\cal R(\cal V),\varphi)$ preserves the upper-right corner defining $\L^p(\cal V,\varphi)$. Applying Lemma~\ref{lem-modular-invariant-Kosaki-subspace}, the inclusion $\cal R(\cal V)\subset\cal R$ induces a complete isometry 
$$
J_4\co
\L^4(\cal R(\cal V),\varphi)
\to
\L^4(\cal R,\psi).
$$ 
Since $q$ belongs to the centralizer of $\varphi$ and commutes with the density $d \ov{\mathrm{def}}{=} c^{-1}D$ of $\psi$, the symmetric realizations on both sides preserve the corners determined by $q$. Hence $J_4$ restricts to a complete isometry
\[
J_4 \co \L^4(\cal{V},\varphi) \to q\L^4(\cal{R},\psi)(1-q).
\]
Recall that $d = c^{-1}D$, the density of $\psi$ with respect to $\tr_{\cal R}$. By the definition of the canonical inclusion $J_4$ in Lemma~\ref{lem-modular-invariant-Kosaki-subspace}, the image of $v\in\cal V$ is represented in the ambient symmetric realization by $d^{\frac18} J(v) d^{\frac18}$. Hence, for any $v \in \cal V$, we have
\begin{align*}
\MoveEqLeft
\norm{v}_{\L^4(\cal V,\varphi)}
=\bnorm{d^{\frac18}J(v)d^{\frac18}}_{\L^4(\cal R,\tr_{\cal{R}})}
=c^{-\frac14}\bnorm{D^{\frac18}J(v)D^{\frac18}}_{\L^4(\cal{R},\tr_{\cal R})}\\
&\ov{\eqref{symmetric-L4-linking-realization}}{=}
c^{-\frac14}\bnorm{J(a^{\frac14}vb^{\frac14})}_{\L^4(\cal{R},\tr_{\cal R})}
=c^{-\frac14}\bnorm{a^{\frac14}vb^{\frac14}}_{\L^4(\cal M)}
\ov{\eqref{complete-isometry-deweighted-space}}{=}
\norm{U(v)}_{\L^4(\cal{M})}.
\end{align*}
The same calculation at the $n$th matrix level uses the density $\Id_{\M_n}\ot d$ and the map $\Id_{\M_n}\ot J$. Hence $U$ is a complete isometry. Moreover, we have
\[
U(\cal V)
\ov{\eqref{complete-isometry-deweighted-space}}{=} 
c^{-\frac14}a^{\frac14}\cal Vb^{\frac14}
\ov{\eqref{finite-dimensional-weighted-range}}{=}
c^{-\frac14}\Ran P
=\Ran P,
\]
since $\Ran P$ is a linear subspace. Hence $U$ is surjective. This completes the proof.
\end{proof}

\begin{prop}
\label{prop-explicit-weighted-TRO-expectation}
Retain the assumptions and notation of Corollary~\ref{cor-finite-dimensional-modular-deweighting}. Put $\cal N\ov{\mathrm{def}}{=}\cal R(\cal V)$, and let $\E\co\cal R\to\cal N$ be the $\psi$-preserving conditional expectation provided by Takesaki's theorem. Define $\E_{12}\co e\cal M f\to\cal V$ by
\begin{equation*}
\E\left(\begin{bmatrix}
0&z\\0&0\end{bmatrix}\right)
=\begin{bmatrix}0&\E_{12}(z)\\0&0
\end{bmatrix}.
\end{equation*}
Then the original projection satisfies
\begin{equation}
\label{explicit-original-projection-formula}
P(x)
=a^{\frac{1}{4}}\E_{12}\big(a^{-\frac{1}{4}}e x f b^{-\frac{1}{4}}\big)b^{\frac{1}{4}},
\quad x \in \L^4(\mathcal M).
\end{equation}
In particular, $P$ is contractively decomposable.
\end{prop}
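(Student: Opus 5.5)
The plan is to construct a second contractive projection onto $\Ran P$ from $\E$, and then invoke uniqueness via Proposition~\ref{prop-Sullivan}. Write $d=c^{-1}D$ and let $Q\co\L^4(\cal R,\tr_{\cal R})\to\L^4(\cal R,\tr_{\cal R})$ be the completely contractive projection $J_4\E_4$ of Lemma~\ref{lem-modular-invariant-Kosaki-subspace}. In the symmetric realization it is
\[
\xi\mapsto d^{\frac18}\E\big(d^{-\frac18}\xi d^{-\frac18}\big)d^{\frac18}.
\]
The scalar $c$ cancels, so $D$ may be used in place of $d$. Since $D$ is block diagonal and $q$ lies in the centralizer of $\psi$, $\E$ commutes with the corner projections $q$ and $1-q$. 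This is the standard fact that $\E(qXq')=q\E(X)q'$ for $q,q'\in\cal N$, because $\E$ is an $\cal N$-bimodule map. Hence $Q$ maps the upper-right corner $J(e\L^4(\cal M)f)$ into itself.

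Using \eqref{symmetric-L4-linking-realization} (that is, $D^{\pm\frac18}J(v)D^{\pm\frac18}=J(a^{\pm\frac14}vb^{\pm\frac14})$ for any $v$), the corner restriction is
\[
Q_{12}(y)=a^{\frac14}\E_{12}\big(a^{-\frac14}yb^{-\frac14}\big)b^{\frac14},\qquad y\in e\L^4(\cal M)f.
\]
Its range is $a^{\frac14}\cal Vb^{\frac14}=\Ran P$ by \eqref{finite-dimensional-weighted-range}, and $Q_{12}$ restricted to $\Ran P$ is the identity. I then define $\widetilde P(x)=Q_{12}(exf)$ on $\L^4(\cal M)$. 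The compression $x\mapsto exf$ is completely contractive, and $J\co e\L^4(\cal M)f\to\L^4(\cal R)$ is a complete isometry. Therefore $\widetilde P$ is a completely contractive projection on $\L^4(\cal M)$ with range $\Ran P$. Since $\L^4(\cal M)$ is smooth and $P$ is contractive with the same range, Proposition~\ref{prop-Sullivan} gives $P=\widetilde P$, which is \eqref{explicit-original-projection-formula}.

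For contractive decomposability, I would exhibit $P$ as the off-diagonal corner of a positive map. Consider
\[
\Psi\co\L^4(\M_2(\cal M))\to\L^4(\M_2(\cal M)),\qquad \Psi(X)=D^{\frac18}\E\big(D^{-\frac18}sXsD^{-\frac18}\big)D^{\frac18},
\]
with $s=\mathrm{diag}(e,f)$ and the constant $c$ absorbed as above. This map is completely positive, being a composition of compression, conjugation by positive elements and the completely positive map $\E$. It is also completely contractive, being the compression followed by $J_4\E_4$. Writing $\Psi$ in block form, the $(1,2)$ entry is $x_{12}\mapsto P(x_{12})$. The $(2,1)$ entry is $P^\circ$, because $\E$ is $*$-preserving and $D$ is selfadjoint. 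The diagonal entries $v_1,v_2$ are the corner maps $x\mapsto a^{\frac14}\E_{11}(a^{-\frac14}exea^{-\frac14})a^{\frac14}$ and the analogous map with $b$ and $f$. These are contractive, being corners of a contractive map restricted to diagonal blocks. Hence $\norm{P}_{\dec}\leq\max\{\norm{v_1},\norm{v_2}\}\leq1$.

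The main obstacle is to justify carefully that $\E$ respects the $2\times2$ block structure and that the symmetric-realization formula of Lemma~\ref{lem-modular-invariant-Kosaki-subspace} applies to the corners. The first point follows because $q\in\cal N$ and $\E$ is an $\cal N$-bimodule map; I would verify it first. After that, the remaining steps reduce to the uniqueness of contractive projections.
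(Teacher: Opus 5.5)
Your proposal is correct and follows essentially the same route as the paper. Both arguments take the symmetric $\L^4$ realization $X\mapsto D^{\frac18}\E\bigl(D^{-\frac18}XD^{-\frac18}\bigr)D^{\frac18}$ of the $\psi$-preserving conditional expectation, extend it by compression with $s=\mathrm{diag}(e,f)$, and use $\cal N$-bimodularity of $\E$ to get the block form. The upper-right entry is then a contractive projection onto $a^{\frac14}\cal V b^{\frac14}=\Ran P$, so it equals $P$ by Cohen--Sullivan uniqueness, and complete positivity of the block map gives $\norm{P}_{\dec}\leq 1$. The only difference is that you first identify $P$ and afterwards set up the block map, while the paper reads both conclusions off a single block map.
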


\begin{proof}
The diagonal corner projections belong to $\cal{N}$. By $\cal{N}$-bimodularity, $\E$ preserves the four corners, so $\E_{12}$ is well-defined. The symmetric $\mathrm L^4$ realization of the state-preserving conditional expectation is
\begin{equation*}
\cal{T}(X)
=D^{\frac{1}{8}}\E\big(D^{-\frac{1}{8}}XD^{-\frac{1}{8}}\big)D^{\frac{1}{8}},
\quad X \in \L^4(\mathcal R,\Tr_{\mathcal R}).
\end{equation*}
Normalization of the density by the scalar $c$ cancels from this formula. The map $\mathcal T$ is completely positive and completely contractive: complete positivity follows directly from the displayed formula, and complete contractivity follows from the state-preserving $\L^4$ extension of $\E$. It is a projection because $\E^2=\E$.

Let $s=\operatorname{diag}(e,f) \in \M_2(\cal{M})$. Identifying $\cal{R}$ with $s\mathrm M_2(\cal{M})s$, extend $\mathcal T$ to $\L^4(\M_2\overline\otimes\mathcal M)$ by $\widetilde{\mathcal T}(X)=\mathcal T(sXs)$. This remains completely positive and completely contractive. Since $D$ is block diagonal and $\E$ preserves corners, the extension has block form
\begin{equation*}
\widetilde{\mathcal T}\left(\begin{bmatrix}
x_{11}&x_{12}\\
x_{21}&x_{22}
\end{bmatrix}\right)
=\begin{bmatrix}
v_1(x_{11})&Q(x_{12})\\
Q^\circ(x_{21})&v_2(x_{22})
\end{bmatrix},
\end{equation*}
where $\|v_1\|,\|v_2\|\leq1$ and $Q$ is the right-hand side of \eqref{explicit-original-projection-formula}. Thus $\|Q\|_{\dec} \leq 1$. Its range is $a^{\frac{1}{4}}\mathcal Vb^{\frac{1}{4}}=\Ran P$, and it fixes this range because $\E_{12}|_{\mathcal V}=\Id_{\mathcal V}$. Therefore $Q$ is a contractive projection onto $\Ran P$. Uniqueness of contractive projections onto a fixed nonzero subspace of the smooth space $\L^4(\mathcal M)$ gives $Q=P$.
\end{proof}

\begin{remark} \normalfont
\label{rem-Levi-modular-summary}
By Example \ref{E-finite-dim-TRO}, the $\W^*$-TRO $\cal{V}$ is completely isometric to a finite $\ell^\infty$-direct sum of rectangular matrix spaces.
\end{remark}


The preceding explicit representation immediately yields the following consequence.

\begin{cor}
\label{cor-contractively-decomposable}
Assume that the von Neumann algebra $\cal{M}$ is finite-dimensional. A $2$-contractive projection $P \co \L^4(\cal M) \to \L^4(\cal{M})$ is contractively decomposable.
\end{cor}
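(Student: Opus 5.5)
The argument splits according to whether $\Ran P$ is zero. If $\Ran P=\{0\}$, then $P=0$. The zero map is contractively decomposable: take $v_1=v_2=0$, so the block map is identically zero, which is trivially completely positive, and $\norm{0}_{\dec}=0\leq1$. From now on assume $\Ran P\neq\{0\}$.

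Since $\cal M$ is finite-dimensional, $P$ has finite rank. Moreover, $P$ is $2$-contractive, so all the hypotheses of Section~\ref{sec-modular-finite} hold. Proposition~\ref{prop-global-left-right-Levi-finite-dimensional} then gives a basis $(x_1,\ldots,x_k)$ of $\Ran P$ and Gram densities $a,b$, with $a$ invertible in $e\cal Me$ and $b$ invertible in $f\cal Mf$, where $e=s_\ell(P)$ and $f=s_r(P)$. Corollary~\ref{cor-finite-dimensional-modular-deweighting} supplies the $\W^*$-$\TRO$ $\cal V=a^{-\frac14}(\Ran P)b^{-\frac14}$ and the modular invariance of its linking algebra. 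Proposition~\ref{prop-explicit-weighted-TRO-expectation} then applies verbatim and yields the explicit formula
\[
P(x)=a^{\frac14}\E_{12}\big(a^{-\frac14}exfb^{-\frac14}\big)b^{\frac14},
\]
together with the conclusion $\norm{P}_{\dec}\leq1$. This is exactly the statement.

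The one point to check carefully is the decomposability claim inside Proposition~\ref{prop-explicit-weighted-TRO-expectation}. There, $\widetilde{\cal T}(X)=\cal T(sXs)$ with $s=\operatorname{diag}(e,f)$ is completely positive. Its off-diagonal entries are $P$ and $P^\circ$, and its diagonal entries $v_1,v_2$ are compressions of the contraction $\widetilde{\cal T}$ to diagonal corners, so $\norm{v_1},\norm{v_2}\leq1$. The off-diagonal entry coincides with $P$ on all of $\L^4(\cal M)$, and not merely on $e\L^4(\cal M)f$, because the cutdown $x\mapsto exf$ is built into the formula.

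I do not expect a genuine obstacle. The substantive work, namely the $\TRO$ structure of $\cal V$ and the uniqueness of the contractive projection (Proposition~\ref{prop-Sullivan}, using smoothness of $\L^4$), has already been carried out. The corollary amounts to assembling these earlier results and treating the degenerate case $\Ran P=\{0\}$.
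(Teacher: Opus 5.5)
Your proposal is correct and follows the paper's own proof: dispose of the trivial case $\Ran P=\{0\}$ (where $P=0$), and otherwise invoke Proposition~\ref{prop-explicit-weighted-TRO-expectation}, whose block-map construction $\widetilde{\cal T}$ together with the Cohen--Sullivan uniqueness argument gives $\norm{P}_{\dec}\leq1$. Your check that the diagonal entries $v_1,v_2$ are contractive compressions of $\widetilde{\cal T}$, and that the cutdown $x\mapsto exf$ makes the off-diagonal entry agree with $P$ on all of $\L^4(\cal M)$, matches the argument given there.
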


%

\begin{proof}
If $\Ran P=\{0\}$, then $P=0$ and the assertion is immediate. Otherwise, the result follows from Proposition~\ref{prop-explicit-weighted-TRO-expectation}.
\end{proof}

\section{Finite matrix-level criteria at even exponents}
\label{sec-even-exponents}

The polynomial argument used for the exponent $4$ extends to every even integral exponent. The matrix level obtained below is sufficient. We do not claim that it is optimal.

\begin{thm}
\label{thm-even-exponent-finite-level}
Let $m$ be an integer with $m \geq 2$. Set $p \ov{\mathrm{def}}{=}2m$ and $N_m \ov{\mathrm{def}}{=}
\left\lceil\frac{2m}{3}\right\rceil$. Let $P \co \L^p(\cal M) \to \L^p(\cal M)$ be a bounded projection. We denote by $P^\dagger$ its adjoint under the sesquilinear trace duality and set $
Y\ov{\mathrm{def}}{=}\Ran P$ and $Y^\sharp\ov{\mathrm{def}}{=}\Ran P^\dagger$. The following assertions are equivalent.
\begin{enumerate}
\item The projection $P$ is $N_m$-contractive.
\item For any $y_1,\ldots,y_{2m-1}\in Y$, we have
\begin{equation}
\label{alternating-word-dual-range-even}
y_1y_2^*y_3\cdots y_{2m-2}^*y_{2m-1}
\in Y^\sharp.
\end{equation}
\item The projection $P$ is completely contractive.
\end{enumerate}
If these conditions hold, then
\begin{equation}
\label{exact-alternating-word-dual-range-even}
\begin{split}
Y^\sharp
&=
\left\{
y_1y_2^*y_3\cdots y_{2m-2}^*y_{2m-1}
:
y_1,\ldots,y_{2m-1}\in Y
\right\}
=
\left\{
y(y^*y)^{m-1}:y\in Y
\right\}.
\end{split}
\end{equation}
Moreover, for any integer $n \geq 1$, we have
\begin{equation}
\label{matrix-duality-map-equality-even}
\Ran P_n^\dagger
=
\left\{
X(X^*X)^{m-1}:X\in\M_n(Y)
\right\}
=
\M_n(Y^\sharp),
\end{equation}
where $P_n\ov{\mathrm{def}}{=}\Id_{S_n^p}\ot P$.
\end{thm}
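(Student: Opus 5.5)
We follow the scheme of Theorem~\ref{thm-two-contractive-completely-contractive-L4}, replacing the cubic map by the power duality mapping $\frak j_p(x)=u|x|^{p-1}=x(x^*x)^{m-1}$ at $p=2m$. The first step is the analogue of Proposition~\ref{prop-contractivity-cubic-characterization}: for a bounded projection $Q$ on a noncommutative $\L^{p}$-space (in particular $P_n$ on $\L^p(\M_n\otvn\cal M)$), $Q$ is contractive if and only if $\frak j_p(x)\in\Ran Q^\dagger$ for all $x\in\Ran Q$. Necessity is Lemma~\ref{Lemma-smooth}. For sufficiency we write $x=y+w$ with $y=Q(x)$ and $w\in\ker Q$, and use Hölder to get $\norm{y}_p^p=\tr(\frak j_p(y)^*x)\leq\norm{y}_p^{p-1}\norm{x}_p$. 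In that case one also gets $\frak j_p(\Ran Q)=\Ran Q^\dagger$, by the same inverse-duality argument as before. We combine this with $\Ran P_n^\dagger=\M_n(Y^\sharp)$.

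For $2\Rightarrow3$: for $X\in\M_n(Y)$, each entry of $X(X^*X)^{m-1}$ is a sum of alternating words $x_{i j_1}x_{j_2j_1}^*x_{j_2j_3}\cdots$ of length $2m-1$ in entries of $X$. These lie in $Y^\sharp$ by hypothesis, so $P_n$ is contractive for every $n$. The implication $3\Rightarrow1$ is trivial.

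For $1\Rightarrow2$ the task is to extract an arbitrary word $w=y_1y_2^*y_3\cdots y_{2m-1}$ from the elements $\frak j_p(X)$, $X\in\M_{N}(Y)$ with $N=N_m$. Since $\M_N(Y^\sharp)$ is a linear space and contains $\frak j_p(X)$ for every $X\in\M_N(Y)$, it contains every torus Fourier coefficient of the polynomial map
\[
(\lambda_1,\ldots,\lambda_{2m-1})\mapsto \frak j_p\Big(\sum_k\lambda_k X_k\Big),
\]
with $X_k\in\M_N(Y)$ and $\lambda\in\T^{2m-1}$. Note that $\frak j_p(X)=X X^*X\cdots X$ is a real polynomial in $X$ and $X^*$, with $m$ factors $X$ and $m-1$ factors $X^*$. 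Taking the coefficient of $\lambda_1\bar\lambda_2\lambda_3\cdots\lambda_{2m-1}$ isolates the multilinear sum over all assignments of the $X_k$ to the slots: odd-indexed $k$ go to $X$-slots and even-indexed $k$ to $X^*$-slots. We put $X_k=y_k\ot e_{i_k j_k}$, a single matrix unit. A term is then nonzero only if consecutive indices chain, with $X$-slots following $e_{ij}$ and $X^*$-slots following $e_{ji}$. The plan is to choose the matrix units so that the only chaining order is $1,2,\ldots,2m-1$, which makes the Fourier coefficient equal to $w\ot e_{ij}$ for some $(i,j)$. Then $w\in Y^\sharp$.

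The combinatorial core, and the main obstacle, is designing this path with only $N=\lceil 2m/3\rceil$ indices. Consider the walk on vertices $\{1,\ldots,N\}$ (row indices, with columns identified with rows) that alternates forward along $X$-edges and backward along $X^*$-edges. We need the edge labels to force a unique Eulerian ordering of the $2m-1$ edges compatible with the alternation. I would first try the periodic pattern $e_{12},e_{22}^*$-type steps, advancing one vertex every three letters so that each triple of letters uses a fresh index. This gives roughly $2m/3$ indices and suggests exactly $N_m$. The point to verify carefully is that no other permutation of the $X_k$ among slots of the same type produces a valid chain. The distinct parities of $\lambda_k$ versus $\bar\lambda_k$ already separate $X$-slots from $X^*$-slots, so only permutations within each class need to be excluded. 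For $m=2$ this recovers the $2\times2$ choice used for $\L^4$.

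Finally, \eqref{exact-alternating-word-dual-range-even} and \eqref{matrix-duality-map-equality-even} follow as in Corollary~\ref{cor-dual-TRO-product-L4}. We use $Y^\sharp=\frak j_p(Y)$, the fact that diagonal words $y(y^*y)^{m-1}$ are words, and the first-step identity $\frak j_p(\Ran P_n)=\Ran P_n^\dagger$.
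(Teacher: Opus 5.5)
Your overall architecture is the paper's. It has four pieces: the criterion that $Q$ is contractive iff $\frak j_p(\Ran Q)\subset\Ran Q^\dagger$ (via Lemma~\ref{Lemma-smooth} and H\"older), the entrywise expansion of $X(X^*X)^{m-1}$ for $2\Rightarrow3$, a torus Fourier coefficient at level $N_m$ for $1\Rightarrow2$, and \eqref{power-duality-map-range-equality} applied to each $P_n$ for the range identities.

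\textbf{The gap.} The implication $1\Rightarrow2$ is the only nontrivial content of the theorem and the only place where the value $N_m$ enters, and you leave it as a heuristic. You never fix the matrix units, and you do not prove that the identity is the only ordering surviving in the chosen entry; ``$e_{12},e_{22}^*$-type steps'' does not determine a construction.

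\textbf{A smaller missing argument.} You claim that the coefficient of $\lambda_1\bar\lambda_2\cdots\lambda_{2m-1}$ puts each odd label once in an $X$-slot and each even label once in an $X^*$-slot. On $\T^{2m-1}$ the character only records $a_k-b_k$, where $a_k$ and $b_k$ count the unstarred and starred occurrences of label $k$. The conclusion comes from counting: there are $m$ odd labels and $m$ unstarred slots, so $m=\sum_k a_k\geq\sum_{k\text{ odd}}a_k\geq m$. This forces $a_k=1,b_k=0$ for odd $k$ and $a_k=0,b_k=1$ for even $k$, so the surviving terms are indexed by parity-preserving permutations $\pi$.

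\textbf{The missing construction.} Put $v_k=1+\lfloor k/3\rfloor$ for $0\leq k\leq 2m-1$, so that $v_0=1$ and $v_{2m-1}=\lceil 2m/3\rceil=N_m$. Let $F_k=E_{v_{k-1},v_k}$, and take $X_k=y_k\otimes F_k$ for odd $k$ and $X_k=y_k\otimes F_k^*$ for even $k$.
\begin{enumerate}
\item A surviving term has matrix part $F_{\pi(1)}\cdots F_{\pi(2m-1)}$ and operator part $y_{\pi(1)}y_{\pi(2)}^*\cdots y_{\pi(2m-1)}$.
\item Its $(1,N_m)$-entry is nonzero iff the edges $v_{k-1}\to v_k$, taken in the order $\pi$, form a trail from $1$ to $N_m$ using each edge once.
\item At each vertex $s<N_m$ the edges are two loops, with labels $3s-2$ and $3s-1$, and one forward edge $s\to s+1$, with label $3s$. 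The remaining zero, one or two edges are loops at $N_m$, again with consecutive labels.
\item Since $v_k$ is nondecreasing, there are no backward edges. Hence the forward edges are used in their natural order, and the loops at $s$ are used between arrival at $s$ and departure from it.
\item The only remaining freedom is to swap the two loops at one vertex. Their labels are consecutive, hence of opposite parity, so a parity-preserving $\pi$ cannot swap them.
\end{enumerate}
The $(1,N_m)$-entry of the Fourier coefficient is therefore exactly $y_1y_2^*y_3\cdots y_{2m-2}^*y_{2m-1}$, which lies in $Y^\sharp$.

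Two loops per vertex is the most this parity argument tolerates: a third loop at the same vertex would share a parity with one of the other two and could be interchanged with it. That is why one fresh index per three letters, and hence $\lceil 2m/3\rceil$, is the right count here.
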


\begin{proof}
The implication $3\Rightarrow1$ is immediate. We first prove $1\Rightarrow2$. Put $
r\ov{\mathrm{def}}{=}2m-1$. Suppose that $P_{N_m}$ is contractive. At matrix level $N_m$, the power duality mapping is
\[
\frak j_p(X)
=
X(X^*X)^{m-1},
\quad
X \in \L^p(\M_{N_m} \otvn \cal M).
\]
Moreover, we have $
P_{N_m}^\dagger
=
\Id_{S_{N_m}^{p^*}}\ot P^\dagger$ and $\Ran P_{N_m}^\dagger
=
\M_{N_m}(Y^\sharp)$. Consequently, Lemma~\ref{Lemma-smooth} gives
\begin{equation}
\label{matrix-duality-inclusion-even}
X(X^*X)^{m-1}
\in
\M_{N_m}(Y^\sharp),
\quad
X \in \M_{N_m}(Y).
\end{equation}
Fix $y_1,\ldots,y_r \in Y$. For $0 \leq k \leq r$, set $
v_k
\ov{\mathrm{def}}{=}
1+\left\lfloor\frac{k}{3}\right\rfloor$. Then $v_0=1$ and $v_r=N_m$. Let $(E_{ij})_{1\leq i,j\leq N_m}$ be the canonical matrix units of $\M_{N_m}$ and put
\[
F_k
\ov{\mathrm{def}}{=}
E_{v_{k-1},v_k},
\quad
A_k
\ov{\mathrm{def}}{=}
\begin{cases}
F_k\ot y_k,&k\text{ odd},\\
F_k^*\ot y_k,&k\text{ even},
\end{cases}
\quad 1\leq k\leq r.
\]
Thus $A_k \in \M_{N_m}(Y)$ for any integer $k$. For any $\zeta=(\zeta_1,\ldots,\zeta_r) \in \T^r$, define
\[
X(\zeta)
\ov{\mathrm{def}}{=}
\sum_{k=1}^r\zeta_kA_k.
\]
By \eqref{matrix-duality-inclusion-even}, we have $
\frak j_p(X(\zeta))
\in
\M_{N_m}(Y^\sharp)$ for any $\zeta \in \T^r$. Let $\mu$ be the normalized Haar measure on $\T^r$. Consider the Fourier coefficient
\begin{equation}
\label{Fourier-coefficient-even}
C
\ov{\mathrm{def}}{=}
\int_{\T^r}
\left(
\prod_{\substack{1\leq k\leq r\\ k\text{ odd}}}
\overline{\zeta_k}
\right)
\left(
\prod_{\substack{1\leq k\leq r\\ k\text{ even}}}
\zeta_k
\right)
\frak j_p(X(\zeta))
\d\mu(\zeta).
\end{equation}
Repeated applications of H\"older's inequality show that the map $\zeta \mapsto \frak j_p(X(\zeta))$ is continuous from $\T^r$ into the space $\L^{p^*}(\M_{N_m}\otvn\cal M)$. Hence the integral in \eqref{Fourier-coefficient-even} is a Bochner integral. Since $\M_{N_m}(Y^\sharp)$ is a closed linear subspace, we obtain
\begin{equation}
\label{Fourier-coefficient-in-dual-range}
C\in\M_{N_m}(Y^\sharp).
\end{equation}

We determine the $(1,N_m)$-entry of $C$. Expanding the duality mapping gives
\[
\frak j_p(X(\zeta))
=
\sum_{k_1,\ldots,k_r=1}^r
\zeta_{k_1}\overline{\zeta_{k_2}}
\zeta_{k_3}\cdots
\overline{\zeta_{k_{r-1}}}\zeta_{k_r}
A_{k_1}A_{k_2}^*A_{k_3}\cdots A_{k_{r-1}}^*A_{k_r}.
\]
For a fixed term, let $a_k$ be the number of occurrences of the label $k$ in an unstarred position and let $b_k$ be its number of occurrences in a starred position. Orthogonality of the characters of $\T^r$ shows that this term survives in \eqref{Fourier-coefficient-even} only if
\begin{equation}
\label{Fourier-exponent-conditions-even}
a_k-b_k
=
\begin{cases}
1,&k\text{ odd},\\
-1,&k\text{ even}.
\end{cases}
\end{equation}
There are $m$ unstarred positions and $m-1$ starred positions. Since there are $m$ odd labels, \eqref{Fourier-exponent-conditions-even} gives
\[
m
=
\sum_{k=1}^r a_k
\geq
\sum_{\substack{1\leq k\leq r\\k\text{ odd}}}a_k
\geq m.
\]
Consequently, equality holds throughout. We obtain
\[
a_k=1,\quad b_k=0
\quad(k\text{ odd}),
\quad
a_k=0,\quad b_k=1
\quad(k\text{ even}).
\]
Thus the surviving terms are indexed by permutations $\pi$ of $\{1,\ldots,r\}$ satisfying
\[
\pi(j)\equiv j\pmod 2,
\quad 1\leq j\leq r.
\]
For such a permutation, the corresponding product is
\[
F_{\pi(1)}F_{\pi(2)} \cdots F_{\pi(r)}
\ot
y_{\pi(1)}y_{\pi(2)}^* \cdots y_{\pi(r-1)}^*y_{\pi(r)}.
\]
We claim that the identity permutation is the only such permutation for which the matrix product has a nonzero $(1,N_m)$-entry. Indeed, the matrix unit $F_k$ is the directed edge
\[
v_{k-1} \to v_k.
\]
For any vertex $1 \leq s<N_m$, the effective edge list contains, in this order, two loops at $s$ and the forward edge from $s$ to $s+1$. The remaining zero, one or two edges are loops at $N_m$. A composable path from $1$ to $N_m$ which uses every edge must use the forward edges in their natural order, and every loop at a vertex must occur after arrival at that vertex and before departure from it. Hence the order is forced, except possibly for interchanging the two loops at a fixed vertex. The labels of these two loops are consecutive and therefore have opposite parity. Such an interchange is excluded by $\pi(j) \equiv j\pmod2$. The same argument applies to the two possible terminal loops. This proves the claim.

Consequently, we have
\[
C_{1,N_m}
=
y_1y_2^*y_3\cdots y_{r-1}^*y_r.
\]
It follows from \eqref{Fourier-coefficient-in-dual-range} that this alternating word belongs to $Y^\sharp$. This proves \eqref{alternating-word-dual-range-even}.

We next prove $2\Rightarrow3$. Let $n\geq1$ and let $X=[x_{ij}]\in\M_n(Y)$. For $1\leq i_0,i_{2m-1}\leq n$, we have
\begin{align}
\label{entries-even-duality-map}
\MoveEqLeft
\bigl[X(X^*X)^{m-1}\bigr]_{i_0,i_{2m-1}}\\
&=
\sum_{i_1,\ldots,i_{2m-2}=1}^n
\left[
\prod_{\ell=0}^{m-2}
\left(
x_{i_{2\ell},i_{2\ell+1}}
x_{i_{2\ell+2},i_{2\ell+1}}^*
\right)
\right]
x_{i_{2m-2},i_{2m-1}}. \nonumber
\end{align}
Every summand on the right-hand side is an alternating product of $2m-1$ elements of $Y$. Hence the second assertion gives
\[
\frak j_p(X)
=
X(X^*X)^{m-1}
\in
\M_n(Y^\sharp)
=
\Ran P_n^\dagger.
\]

Let $W\in\L^p(\M_n\otvn\cal M)$ and put $X\ov{\mathrm{def}}{=}P_n(W)$. Since $
W-X\in\ker P_n$ and $
\frak j_p(X)\in\Ran P_n^\dagger=(\ker P_n)^\perp$, 
we have
\[
(\Tr_n\ot\tr)\bigl(\frak j_p(X)^*(W-X)\bigr)=0.
\]
Consequently, we have
\begin{align*}
\MoveEqLeft
\norm{X}_p^p
=(\Tr_n\ot\tr)\bigl(\frak j_p(X)^*X\bigr)
=(\Tr_n\ot\tr)\bigl(\frak j_p(X)^*W\bigr)
\leq
\norm{\frak j_p(X)}_{p^*}\norm{W}_p
=
\norm{X}_p^{p-1}\norm{W}_p.
\end{align*}
If $X\neq0$, division by $\norm{X}_p^{p-1}$ gives
\[
\norm{P_n(W)}_p\leq\norm{W}_p.
\]
The same conclusion is immediate when $X=0$. Hence $P_n$ is contractive. Since $n\geq1$ was arbitrary, $P$ is completely contractive.

It remains to prove the exact range descriptions. Applying \eqref{power-duality-map-range-equality}, under the trace-duality identification, to each contractive amplification $P_n$, we obtain
\[
\Ran P_n^\dagger
=
\frak j_p(\Ran P_n)
=
\left\{
X(X^*X)^{m-1}:X\in\M_n(Y)
\right\}.
\]
Together with $\Ran P_n^\dagger=\M_n(Y^\sharp)$, this proves \eqref{matrix-duality-map-equality-even}. Taking $n=1$ gives
\[
Y^\sharp
=
\left\{
y(y^*y)^{m-1} : y \in \Ran P
\right\}.
\]
Each element on the right-hand side is an alternating product of $2m-1$ elements of $\Ran P$, whereas the second assertion gives the reverse inclusion for arbitrary alternating products. This proves \eqref{exact-alternating-word-dual-range-even}.
\end{proof}

The preceding theorem has an immediate dual counterpart.

\begin{cor}
\label{cor-conjugate-even-exponent-finite-level}
Let $m$ be an integer with $m \geq 2$. Set $p \ov{\mathrm{def}}{=}2m$, $q\ov{\mathrm{def}}{=}p^*
=\frac{2m}{2m-1}$, $N_m\ov{\mathrm{def}}{=} \left\lceil\frac{2m}{3}\right\rceil$. Let $Q\co\L^q(\cal M) \to \L^q(\cal M)$ be a bounded projection and put $Z\ov{\mathrm{def}}{=}\Ran Q$. The following assertions are equivalent.
\begin{enumerate}
\item The projection $Q$ is $N_m$-contractive.
\item The projection $Q$ is completely contractive.
\item The projection $Q$ is contractive and, for any $z_1,\ldots,z_{2m-1} \in Z$, we have
\begin{equation}
\label{dual-alternating-word-conjugate-even}
\frak j_q(z_1)
\frak j_q(z_2)^*
\frak j_q(z_3)\cdots
\frak j_q(z_{2m-2})^*
\frak j_q(z_{2m-1})
\in Z.
\end{equation}
\end{enumerate}
If these conditions hold, then
\begin{equation}
\label{exact-dual-alternating-word-conjugate-even}
Z
=\big\{
\frak j_q(z_1)
\frak j_q(z_2)^*\cdots
\frak j_q(z_{2m-2})^*
\frak j_q(z_{2m-1})
:z_1,\ldots,z_{2m-1}\in Z
\big\}.
\end{equation}
Moreover, for any integer $n \geq 1$, we have
\begin{equation}
\label{matrix-duality-map-equality-conjugate-even}
\Ran Q_n^\dagger
=
\frak j_{q,\L^q(\M_n\otvn\cal M)}\bigl(\M_n(Z)\bigr)
=
\M_n(\Ran Q^\dagger),
\end{equation}
where $Q_n\ov{\mathrm{def}}{=}\Id_{S_n^q}\ot Q$.
\end{cor}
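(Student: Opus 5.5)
We will deduce everything from Theorem~\ref{thm-even-exponent-finite-level} by trace duality, exactly as Corollary~\ref{cor-dual-two-contractive-L43} was deduced from Theorem~\ref{thm-two-contractive-completely-contractive-L4}. Set $P\ov{\mathrm{def}}{=}Q^\dagger\co\L^p(\cal M)\to\L^p(\cal M)$. By reflexivity $P$ is a bounded projection with $P^\dagger=Q$. The computation used in the proof of Corollary~\ref{cor-dual-two-contractive-L43} (entrywise pairing under $\Tr_n\ot\tr$) gives $Q_n^\dagger=P_n$ and therefore $\norm{Q_n}=\norm{P_n}$ for every $n\geq1$. Consequently $Q$ is $N_m$-contractive (resp. completely contractive) if and only if $P$ is. The equivalence $1\Leftrightarrow2$ then follows from $1\Leftrightarrow3$ of Theorem~\ref{thm-even-exponent-finite-level} applied to $P$.

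For assertion 3, we first assume $Q$ is contractive, so $P$ is contractive. Put $Y\ov{\mathrm{def}}{=}\Ran P$. Since $Y^\sharp=\Ran P^\dagger=Z$, the identity \eqref{power-duality-map-range-equality}, applied to $P$ in the trace-duality realization, gives $\frak j_p(Y)=Z$. Applying $\frak j_q$ and the inverse relation \eqref{inverse-power-duality-mapping} (in the form \eqref{inverse-power-duality-mapping-NC}, valid for general $p$ since $\frak j_q\frak j_p=\Id$), we get $\frak j_q(Z)=Y$, and $\frak j_q\co Z\to Y$ is a bijection.

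Condition \eqref{dual-alternating-word-conjugate-even} is therefore literally condition \eqref{alternating-word-dual-range-even} for $P$, since $z_i\mapsto\frak j_q(z_i)$ ranges over all of $Y$. Hence, under contractivity, 3 is equivalent to $P$ being completely contractive, which is equivalent to 2. Conversely, 2 implies that $Q$ is contractive, and then 3 holds by the same equivalence. The set equality \eqref{exact-dual-alternating-word-conjugate-even} is \eqref{exact-alternating-word-dual-range-even} rewritten via $Y=\frak j_q(Z)$.

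For \eqref{matrix-duality-map-equality-conjugate-even}, we apply the $\L^q$ analogue of the range equality \eqref{power-duality-map-range-equality} to the contractive projection $Q_n$ on the reflexive smooth strictly convex space $\L^q(\M_n\otvn\cal M)$. This gives $\Ran Q_n^\dagger=\frak j_q(\Ran Q_n)=\frak j_q(\M_n(Z))$. Moreover, $Q_n^\dagger=P_n$ has range $\M_n(\Ran P)=\M_n(\Ran Q^\dagger)$.

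The only point needing care is the bookkeeping of the sesquilinear identification. One must check that $\frak j_q$ in the sense of Remark~\ref{Remark-sesquilinear-duality}, namely $v|y|^{q-1}$, is the inverse of $\frak j_p=u|x|^{p-1}$ and is compatible with amplifications. This is a routine polar-decomposition check and presents no real obstacle.
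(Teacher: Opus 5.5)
Your proposal is correct and follows essentially the same route as the paper. You dualize to $P=Q^\dagger$, use $\norm{Q_n}=\norm{P_n}$ together with Theorem~\ref{thm-even-exponent-finite-level}, identify $\Ran P=\frak j_q(Z)$ and $\Ran P^\dagger=Z$ under contractivity, and apply \eqref{power-duality-map-range-equality} to $Q_n$ for the matrix-level identity; the only cosmetic difference is that you get $\frak j_q(Z)=\Ran P$ by applying the range equality to $P$ and inverting with $\frak j_q\frak j_p=\Id$, whereas the paper applies it directly to $Q$.
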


\begin{proof}
Set $
P\ov{\mathrm{def}}{=}Q^\dagger
\co
\L^p(\cal M)\to\L^p(\cal M)$. By reflexivity, $P^\dagger=Q$. Moreover, under the matrix trace duality,
\[
(Q_n)^\dagger=P_n,
\quad n \geq 1.
\]
Consequently, we have
\begin{equation}
\label{adjoint-amplification-norm-equality-even}
\norm{Q_n}
=
\norm{(Q_n)^\dagger}
=
\norm{P_n},
\quad n \geq 1.
\end{equation}
It follows from Theorem~\ref{thm-even-exponent-finite-level} that $Q$ is $N_m$-contractive if and only if $P$ is $N_m$-contractive, if and only if $P$ is completely contractive, if and only if $Q$ is completely contractive. This proves the equivalence of the first two assertions.

Suppose that $Q$ is $N_m$-contractive. Then $Q$ is contractive. Equation \eqref{power-duality-map-range-equality}, under the trace-duality identification, gives
\[
\frak j_q(Z)
=
\Ran Q^\dagger
=
\Ran P,
\quad
\Ran P^\dagger
=
\Ran Q
=Z.
\]
Applying \eqref{alternating-word-dual-range-even} to $P$ gives \eqref{dual-alternating-word-conjugate-even}.

Conversely, suppose that $Q$ is contractive and that \eqref{dual-alternating-word-conjugate-even} holds. Again, we have
\[
\Ran P
=
\Ran Q^\dagger
=
\frak j_q(Z),
\quad
\Ran P^\dagger
=Z.
\]
Thus \eqref{dual-alternating-word-conjugate-even} is precisely the second assertion of Theorem~\ref{thm-even-exponent-finite-level} for $P$. Hence $P$ is completely contractive. Equation \eqref{adjoint-amplification-norm-equality-even} then shows that $Q$ is completely contractive.

Finally, \eqref{exact-dual-alternating-word-conjugate-even} follows from \eqref{exact-alternating-word-dual-range-even} applied to $P$. Since any amplification $Q_n$ is contractive, applying \eqref{power-duality-map-range-equality} to $Q_n$ gives
\[
\frak j_{q,\L^q(\M_n\otvn\cal M)}\bigl(\M_n(Z)\bigr)
=
\Ran Q_n^\dagger
=
\M_n(\Ran Q^\dagger),
\]
which proves \eqref{matrix-duality-map-equality-conjugate-even}.
\end{proof}

The preceding finite-level equivalences are not restricted to the semifinite tracial setting. The same argument extends to Haagerup noncommutative $\L^p$-spaces associated with arbitrary von Neumann algebras.

\begin{thm}
\label{thm-Haagerup-even-finite-level}
Let $\cal{M}$ be an arbitrary von Neumann algebra, and let $\L^p(\mathcal M)$ denote its Haagerup noncommutative $\L^p$-space. Let $m \in \mathbb N$ with $m\geq2$, and put $p=2m$ and $N_m=\lceil \frac{2m}{3}\rceil$. Let $P \co \L^p(\cal{M}) \to \L^p(\mathcal M)$ be a bounded projection, and write $Y=\Ran P$ and $Y^\sharp = \Ran P^\dagger$. The following conditions are equivalent: 

\begin{enumerate}
\item $P$ is $N_m$-contractive,
	
\item any alternating product $y_1y_2^*y_3\cdots y_{2m-2}^*y_{2m-1}$ with all $y_j \in Y$, belongs to $Y^\sharp$,

\item $P$ is completely contractive.
\end{enumerate}
The corresponding equivalence between $N_m$-contractivity and complete contractivity also holds at the conjugate exponent $p^*=\frac{2m}{2m-1}$.
\end{thm}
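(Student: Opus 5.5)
The plan is to rerun the proof of Theorem~\ref{thm-even-exponent-finite-level} verbatim in the Haagerup setting. That proof used only a handful of structural facts, and each has a Haagerup counterpart. First, $\L^p(\cal M)$ is smooth and strictly convex for $1<p<\infty$. Second, for $x=u|x|$ the power duality mapping, realized through the sesquilinear pairing $\la x,y\ra=\tr(y^*x)$ with the Haagerup trace functional $\tr\co\L^1(\cal M)\to\mathbb C$, is
\[
\frak j_p(x)=u|x|^{p-1}=x(x^*x)^{m-1}.
\]
Third, we need H\"older's inequality for products of Haagerup $\L^{p_i}$-elements and the tracial property $\tr(xy)=\tr(yx)$ for $x\in\L^p$, $y\in\L^{p^*}$. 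Fourth, we need the identification $\L^p(\M_n\otvn\cal M)=S^p_n[\L^p(\cal M)]$, realized entrywise with the Haagerup space of $\M_n\otvn\cal M$ and trace $\Tr_n\ot\tr$. I will first verify the duality formula exactly as in Proposition~\ref{prop-J-lp}. The candidate $u|x|^{p-1}$ has $\L^{p^*}$-norm $\norm{x}_p^{p-1}$, and $\tr((u|x|^{p-1})^*x)=\tr(|x|^p)=\norm{x}_p^p$. Uniqueness then follows from smoothness. With this in hand, Lemma~\ref{Lemma-smooth} and \eqref{power-duality-map-range-equality} apply unchanged, and so does the identity $\Ran P_n^\dagger=\M_n(Y^\sharp)$ together with $Y^\sharp=(\ker P)^\perp$.

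For $1\Rightarrow2$ I will reproduce the Fourier-coefficient argument. Take the matrix units $F_k=E_{v_{k-1},v_k}$ in $\M_{N_m}$, the elements $A_k=F_k\ot y_k$ or $F_k^*\ot y_k$, and $X(\zeta)=\sum\zeta_kA_k$. Lemma~\ref{Lemma-smooth} applied to the contractive projection $P_{N_m}$ puts $\frak j_p(X(\zeta))$ in $\M_{N_m}(Y^\sharp)$. The map $\zeta\mapsto\frak j_p(X(\zeta))$ is a polynomial in $\zeta,\bar\zeta$ with coefficients that are alternating words in $\L^{p^*}$ by H\"older, so it is continuous. The Bochner integral $C$ therefore lies in the closed subspace $\M_{N_m}(Y^\sharp)$. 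The combinatorial analysis of surviving terms and of the $(1,N_m)$ entry is purely algebraic in the matrix units and transfers word for word, giving $y_1y_2^*\cdots y_{2m-1}\in Y^\sharp$.

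For $2\Rightarrow3$, expand $X(X^*X)^{m-1}$ entrywise as in \eqref{entries-even-duality-map}, which places $\frak j_p(X)$ in $\M_n(Y^\sharp)=(\ker P_n)^\perp$. The H\"older computation
\[
\norm{X}_p^p=\tr(\frak j_p(X)^*W)\leq\norm{X}_p^{p-1}\norm{W}_p,
\]
with $X=P_n(W)$, then gives $\norm{P_n}\leq1$ for every $n$. The implication $3\Rightarrow1$ is trivial.

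For the conjugate exponent I will repeat Corollary~\ref{cor-conjugate-even-exponent-finite-level}. Set $P=Q^\dagger$ on $\L^p$, use reflexivity of Haagerup spaces, and note that $(Q_n)^\dagger=P_n$ under the $\Tr_n\ot\tr$ duality, so that $\norm{Q_n}=\norm{P_n}$. The main obstacle I expect is justifying the foundational Haagerup facts in full generality, especially the amplification identification and the compatibility of the $\L^p$–$\L^{p^*}$ duality with matrix levels and with adjoints of amplified maps. For these I would cite the standard references (Terp's notes and the Pisier--Xu survey) rather than reprove them. I would also make sure that the polar decomposition and the functional calculus used in $\frak j_p$ stay within the $\tau$-measurable operators affiliated with the crossed product.
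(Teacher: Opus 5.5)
Your proposal is correct and follows the paper's own route. The paper's proof likewise lists the needed Haagerup facts (smoothness and strict convexity, continuity of H\"older products, trace cyclicity, $\frak j_p(x)=u|x|^{p-1}$, and compatibility of $\L^p(\M_n\otvn\cal M)=S_n^p[\L^p(\cal M)]$ with products and trace duality), transfers the torus Fourier-coefficient argument and the entrywise H\"older argument from Theorem~\ref{thm-even-exponent-finite-level} unchanged, and obtains the conjugate exponent from $(P_n)^\dagger=(P^\dagger)_n$ and $\norm{P_n}=\norm{(P^\dagger)_n}$.
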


\begin{proof}
In this proof, $\Tr$ denotes the canonical trace functional on Haagerup $\L^1$-spaces, including their matrix amplifications, rather than the semifinite trace of the crossed product. We use the standard properties of Haagerup noncommutative $\L^p$-spaces. For $1<p<\infty$, these spaces are smooth and strictly convex, the H\"older product $
\L^{p_1}(\cal M)\times\cdots\times\L^{p_k}(\cal M)
\longrightarrow
\L^r(\cal M)$ where $
\frac1r=\sum_{j=1}^k\frac1{p_j}$, is continuous whenever the exponents are admissible, and the canonical trace satisfies the usual cyclicity relation for H\"older products. In particular, an alternating product of $2m-1$ elements of $\L^{2m}(\cal M)$ belongs to $\L^{\frac{2m}{2m-1}}(\cal M)$. If $x=u|x|$, the power duality mapping for the sesquilinear trace pairing is represented by
\[
\frak j_p(x)=u|x|^{p-1}.
\]
For $p=2m$, this becomes $
\frak j_{2m}(x)
=
x(x^*x)^{m-1}$. Finally, we have $
\L^p(\M_n\otvn\cal M)
=
S_n^p[\L^p(\cal M)]$ completely isometrically, and this identification is compatible with the H\"older products above and with trace-adjoint duality.

If $P_{N_m}$ is contractive, Lemma~\ref{Lemma-smooth} yields $\mathfrak j_p(X) \in \mathrm M_{N_m}(Y^\sharp)$ for any $X \in \M_{N_m}(Y)$. The torus Fourier coefficient and matrix-unit construction in the proof of Theorem~\ref{thm-even-exponent-finite-level} use only this inclusion, continuity of the polynomial product, and multiplication of matrix units. They therefore apply without change and show that any alternating word of length $2m-1$ in $Y$ belongs to $Y^\sharp$.

Conversely, suppose that all such words belong to $Y^\sharp$. Expanding each entry of $X(X^*X)^{m-1}$ gives $\mathfrak j_p(X) \in \M_n(Y^\sharp)=\Ran P_n^\dagger$ for any $X \in \M_n(Y)$. If $W \in \L^p(\mathrm M_n\overline\ot \mathcal M)$ and $X=P_n(W)$, trace-adjoint duality and H\"older's inequality give
\begin{equation*}
\|X\|_p^p
=\Tr(\mathfrak j_p(X)^*W)
\leq\|X\|_p^{p-1}\|W\|_p.
\end{equation*}
Thus every $P_n$ is contractive. The reverse implication from complete contractivity is immediate. Finally, $(P_n)^\dagger=(P^\dagger)_n$ and $\|P_n\|=\|(P^\dagger)_n\|$ give the assertion at the conjugate exponent.
\end{proof}

\begin{remark} \normalfont
For Schatten spaces, \cite[Theorem~1.1]{LRR09}, combined with uniqueness of norm-one projections in smooth Banach spaces, implies that $2$-contractivity of a projection is equivalent to complete contractivity for any $1<p<\infty$, $p\neq2$. Theorem~\ref{thm-Haagerup-even-finite-level} shows that, on the Haagerup $\L^{2m}$-space of an arbitrary von Neumann algebra, the finite level $
N_m=\left\lceil\frac{2m}{3}\right\rceil$ is sufficient for complete contractivity. We do not claim that this level is optimal. In particular, $N_m=2$ for $p=4$ and $p=6$.
\end{remark}

\section{Weighted tangent geometry for $p>2$}
\label{sec-weighted-tangent-general-p}

Throughout this section, let $\cal M$ be a semifinite von Neumann algebra equipped with a normal semifinite faithful trace $\Tr$. Fix $2<p<\infty$, and put $p^*=\frac{p}{p-1}$, $r=\frac{p}{p-2}$ and $\alpha = \frac{p-2}{2}$. Thus $\frac{1}{p^*}=\frac{1}{p}+\frac{1}{r}$ and $\frac{1}{2}=\frac{1}{p}+\frac{1}{2r}$. Let $P \co \L^p(\cal M) \to \L^p(\cal M)$ be a $2$-contractive projection. Its trace adjoint $P^\dagger\co\L^{p^*}(\cal M) \to \L^{p^*}(\cal M)$ is determined by
\begin{equation}
\label{gp-trace-adjoint}
\Tr\big((P^\dagger\eta)^*x\big)
=\Tr\big(\eta^*P(x)\big),
\quad x \in \L^p(\cal M), \eta \in \L^{p^*}(\cal M).
\end{equation}
We write $Y=\Ran P$ and $Y^\sharp=\Ran P^\dagger$. No positivity assumption on $P$ is imposed.

\subsection{Separate weighted tangent estimates}

\begin{prop}
\label{prop-gp-weighted-tangent-estimates}
For $u \in \Ran P$, put $a_u=(uu^*)^\alpha$ and $b_u=(u^*u)^\alpha$. Then $a_u,b_u \in \L^r(\cal M)_+$, and for any $z \in \L^p(\cal M)$, we have
\begin{equation}
\label{gp-separate-weighted-estimates}
\begin{split}
\big\|(uu^*)^{\frac{p-2}{4}}P(z)\big\|_2
&\leq\big\|(uu^*)^{\frac{p-2}{4}}z\big\|_2,\\
\big\|P(z)(u^*u)^{\frac{p-2}{4}}\big\|_2
&\leq\big\|z(u^*u)^{\frac{p-2}{4}}\big\|_2.
\end{split}
\end{equation}
Moreover, for any $x \in \L^p(\cal M)$, the following intertwining identities hold :
\begin{equation}
\label{gp-weighted-intertwining}
P^\dagger(a_ux)=a_uP(x),
\quad
P^\dagger(xb_u)=P(x)b_u.
\end{equation}
In particular, we have $a_uY\subset Y^\sharp$ and $Yb_u\subset Y^\sharp$.
\end{prop}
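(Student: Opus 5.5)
We would mimic the proof of Proposition~\ref{prop-left-right-tangent-estimates-two-contractive}, replacing the exact fourth-moment expansion by a second-order Taylor expansion of $t\mapsto \norm{X_t}_p^p$. First, $a_u=(uu^*)^\alpha$ lies in $\L^r(\cal M)_+$ because $\tr((uu^*)^{\alpha r})=\tr(|u^*|^p)<\infty$, and similarly for $b_u$. Fix $u\in Y$ and $z\in\L^p(\cal M)$, and consider in $\L^p(\M_2(\cal M))$ the row matrix
\[
X_t=\begin{bmatrix} u & tz\\ 0&0\end{bmatrix},
\qquad
\norm{X_t}_p^p=\tr\big((uu^*+t^2zz^*)^{\frac p2}\big).
\]
Since $P_2$ is contractive and $P(u)=u$, we get $\tr((uu^*+t^2P(z)P(z)^*)^{p/2})\leq\tr((uu^*+t^2zz^*)^{p/2})$ for all $t$. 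Then we would expand $g(s)=\tr((A+sB)^{p/2})$ with $A=uu^*$, $B\geq0$, to first order in $s=t^2$: by the trace derivative formula for $\C^1$ functions (valid in the semifinite setting for $p/2>1$), $g'(0)=\frac p2\tr(A^{\frac p2-1}B)$. Dividing by $t^2$ and letting $t\to0$ yields $\tr(a_uP(z)P(z)^*)\leq\tr(a_uzz^*)$; hmm, this is the right-weighted form $\norm{a_u^{1/2}P(z)}_2$ with $a_u^{1/2}=(uu^*)^{(p-2)/4}$, which is exactly the first inequality of \eqref{gp-separate-weighted-estimates}. The column matrix $\begin{bmatrix}u&0\\tz&0\end{bmatrix}$ gives the second inequality via $u^*u+t^2z^*z$.

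For \eqref{gp-weighted-intertwining}, we would argue as in Proposition~\ref{prop-one-sided-tangent-criterion}: the form $h(x,w)=\tr(w^*a_ux)$ is positive semidefinite and continuous on $\L^p$ by H\"older ($\frac1p+\frac1r+\frac1p=1$), and $P$ is contractive for it, hence induces a contractive, thus orthogonal and selfadjoint, projection on the Hilbert completion. Exactly as in Corollary~\ref{cor-Levi-intertwining}, this gives $h(Px,w)=h(x,Pw)$ for all $x,w\in\L^p(\cal M)$, i.e. $\tr(w^*a_uP(x))=\tr(P(w)^*a_ux)=\tr(w^*P^\dagger(a_ux))$, using \eqref{gp-trace-adjoint}. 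Since $a_uP(x)$ and $P^\dagger(a_ux)$ lie in $\L^{p^*}$ and $w$ ranges over $\L^p$, nondegeneracy gives $P^\dagger(a_ux)=a_uP(x)$. The right identity is identical with $k(x,w)=\tr(w^*xb_u)$. Taking $x\in Y$ gives $a_uY\subset Y^\sharp$, $Yb_u\subset Y^\sharp$.

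The main obstacle is the first-order expansion of $s\mapsto\tr((A+sB)^{p/2})$ at $s=0^+$ for unbounded $\tau$-measurable $A,B$ with merely $A,B\in\L^{p/2}$, especially for $2<p<4$ where $x\mapsto x^{p/2}$ is not $\C^2$ at $0$. I would first try the known Fréchet differentiability of $x\mapsto\norm{x}_{p/2}^{p/2}$ on $\L^{p/2}(\cal M)_{\mathrm{sa}}$ for $p/2>1$ (derivative $\frac p2 |A|^{\frac p2-1}\sgn A$), which only needs $\C^1$ and suffices since we divide by $t^2=s$ only once; alternatively, one can note that the function is convex in $s$ and use monotone difference quotients plus truncation of $A$ by spectral projections.
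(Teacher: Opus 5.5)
Your proposal is correct and follows the paper's proof essentially step by step. The paper uses the same row and column matrices (parametrized directly by $s^{1/2}$) and differentiates $\Tr((uu^*+sK)^{p/2})$ once at $s=0^+$ via the Gâteaux derivative $q\Tr(H^{q-1}K)$ of $H\mapsto\Tr(H^q)$ on $\L^q(\cal M)$ with $q=p/2>1$, so that, exactly as you observe, only $\C^1$-type differentiability is needed. It then derives the intertwining identities from orthogonality of the induced projection on the completions of $\Tr(y^*a_ux)$ and $\Tr(y^*xb_u)$, combined with nondegeneracy of trace duality.

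Two small points. Calling $\Tr(a_uP(z)P(z)^*)$ a ``right-weighted'' form is a slip: it is the left one, which is how you in fact use it. The case $u=0$, where $a_u=b_u=0$, is trivial.
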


\begin{proof}
The identities $2\alpha r=p$ and $\|u^*\|_p=\|u\|_p$ give
\begin{equation*}
\|a_u\|_r=\|b_u\|_r=\|u\|_p^{p-2}.
\end{equation*}
H\"older's inequality therefore shows that $a_u^{1/2}z,zb_u^{1/2}\in\L^2(\cal M)$ and $a_uz,zb_u\in\L^{p^*}(\cal M)$ whenever $z\in\L^p(\cal M)$.

Put $q=p/2>1$. For $H,K\in\L^q(\cal M)_+$, the norm-derivative formula \eqref{derivation-duality-mapping-2}, together with the represented duality mapping $\frak j_q(H)=H^{q-1}$, yields
\begin{equation}
\label{gp-trace-power-derivative}
\lim_{s\downarrow0}\frac{\Tr((H+sK)^q)-\Tr(H^q)}{s}
=q\Tr(H^{q-1}K).
\end{equation}
When $H=0$, this follows directly from $q>1$. Thus \eqref{gp-trace-power-derivative} does not require $H$ to be invertible. Only first-order differentiability on $\L^q(\cal M)$ is used.

Fix $u\in Y$ and $z\in\L^p(\cal M)$. For $s\geq0$, consider
\begin{equation*}
R_s=\begin{bmatrix}u&s^{1/2}z\\0&0\end{bmatrix}
\in\L^p(\M_2\overline\otimes\cal M).
\end{equation*}
Since $P(u)=u$ and $P_2$ is contractive, the identity
\begin{equation*}
R_sR_s^*=\begin{bmatrix}uu^*+szz^*&0\\0&0\end{bmatrix}
\end{equation*}
gives
\begin{equation*}
\Tr\big((uu^*+sP(z)P(z)^*)^q\big)
\leq\Tr\big((uu^*+szz^*)^q\big).
\end{equation*}
Both sides agree at $s=0$. Subtracting their common value, dividing by $s>0$, and applying \eqref{gp-trace-power-derivative}, we obtain
\begin{equation*}
\Tr\big(a_uP(z)P(z)^*\big)\leq\Tr(a_uzz^*).
\end{equation*}
By H\"older's inequality and trace cyclicity, these terms are $\|a_u^{1/2}P(z)\|_2^2$ and $\|a_u^{1/2}z\|_2^2$, respectively. This proves the first estimate.

Using instead $C_s=\begin{bmatrix}u&0\\s^{1/2}z&0\end{bmatrix}$ and computing $C_s^*C_s$, we obtain
\begin{equation*}
\Tr\big((u^*u+sP(z)^*P(z))^q\big)
\leq\Tr\big((u^*u+sz^*z)^q\big).
\end{equation*}
The same differentiation gives $\Tr(b_uP(z)^*P(z))\leq\Tr(b_uz^*z)$, which is the second estimate in \eqref{gp-separate-weighted-estimates}.

Define continuous positive semidefinite Hermitian forms on $\L^p(\cal M)$ by
\begin{equation}
\label{gp-local-weighted-forms}
h_u^\ell(x,y)=\Tr(y^*a_ux),
\qquad
h_u^r(x,y)=\Tr(y^*xb_u).
\end{equation}
They are linear in the first variable. By \eqref{gp-separate-weighted-estimates}, $P$ is contractive for each associated seminorm. After quotienting by its null space and completing, the induced projection is therefore orthogonal. Consequently,
\begin{equation*}
h_u^\ell(Px,y)=h_u^\ell(x,Py),
\qquad
h_u^r(Px,y)=h_u^r(x,Py),
\quad x,y \in \L^p(\cal M).
\end{equation*}
Using \eqref{gp-trace-adjoint} and the nondegeneracy of trace duality gives \eqref{gp-weighted-intertwining}. Taking $x\in Y$ proves the two range inclusions.
\end{proof}

\subsection{Gram densities for finite-rank projections}

\begin{prop}
\label{prop-gp-Gram-densities}
Suppose that $P$ has finite rank and $Y\neq\{0\}$. Choose a basis $(x_1,\ldots,x_d)$ of $Y$, and put
\begin{equation}
\label{gp-Gram-densities}
A_p
=\sum_{j=1}^d(x_jx_j^*)^{\frac{p-2}{2}},
\qquad
B_p
=\sum_{j=1}^d(x_j^*x_j)^{\frac{p-2}{2}}.
\end{equation}
Let $e=s_\ell(Y)$ and $f=s_r(Y)$. Then $A_p,B_p\in\L^r(\cal M)_+$, $s(A_p)=e$, $s(B_p)=f$, and
\begin{equation}
\label{gp-exact-density-ranges}
A_pY
=Y^\sharp
=YB_p.
\end{equation}
The multiplication maps $L_{A_p}\co x\mapsto A_px$ and $R_{B_p}\co x\mapsto xB_p$ are bounded from $\L^p(\cal M)$ into $\L^{p^*}(\cal M)$ and satisfy
\begin{equation}
\label{gp-global-intertwining}
P^\dagger L_{A_p}=L_{A_p}P,
\qquad
P^\dagger R_{B_p}=R_{B_p}P.
\end{equation}
Their restrictions to $Y$ are linear bijections onto $Y^\sharp$.
\end{prop}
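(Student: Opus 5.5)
The plan is to follow the proof of Proposition~\ref{prop-global-left-right-Levi-finite-dimensional}, replacing the ternary relation of Theorem~\ref{thm-two-contractive-completely-contractive-L4} by the weighted intertwining identities of Proposition~\ref{prop-gp-weighted-tangent-estimates}. The argument has four steps.

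\textbf{Integrability and supports.} Since $(x_jx_j^*)^{\alpha}=a_{x_j}$ and $(x_j^*x_j)^\alpha=b_{x_j}$, Proposition~\ref{prop-gp-weighted-tangent-estimates} shows that each summand lies in $\L^r(\cal M)_+$, with norm $\|x_j\|_p^{p-2}$. Hence $A_p,B_p\in\L^r(\cal M)_+$. For positive operators, the support of a finite sum is the join of the supports, and $s((x_jx_j^*)^\alpha)=s(x_jx_j^*)=s_\ell(x_j)$ because $\alpha>0$. Therefore $s(A_p)=\bigvee_j s_\ell(x_j)$.

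This join equals $e=s_\ell(Y)$. Indeed, every $y\in Y$ is a linear combination of the $x_j$, so $s_\ell(y)\leq\bigvee_j s_\ell(x_j)$, since the left support of a sum is dominated by the join of the left supports. The same argument on the right gives $s(B_p)=f$.

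\textbf{Boundedness and intertwining.} By H\"older's inequality with $\frac1{p^*}=\frac1p+\frac1r$, the maps $L_{A_p}$ and $R_{B_p}$ are bounded from $\L^p(\cal M)$ into $\L^{p^*}(\cal M)$. Summing the identities \eqref{gp-weighted-intertwining} over $u=x_j$ gives
\[
P^\dagger(A_px)=\sum_j P^\dagger(a_{x_j}x)=\sum_j a_{x_j}P(x)=A_pP(x),
\]
and likewise $P^\dagger(xB_p)=P(x)B_p$. This is \eqref{gp-global-intertwining}. Taking $x\in Y$ gives $A_pY\subset Y^\sharp$ and $YB_p\subset Y^\sharp$.

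\textbf{Injectivity on $Y$.} Suppose $A_py=0$ with $y\in Y$. I repeat the spectral cutoff argument of Proposition~\ref{prop-global-left-right-Levi-finite-dimensional}. Put $e_n=\mathbf 1_{[1/n,n]}(A_p)$ and $g_n(t)=t^{-1}\mathbf 1_{[1/n,n]}(t)$. Then $g_n(A_p)\in\cal M$ and $g_n(A_p)A_p=e_n$, so $e_ny=g_n(A_p)(A_py)=0$.

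Since $A_p$ is $\tau$-measurable, $\mathbf 1_{(n,\infty)}(A_p)\to0$ strongly. Together with $\mathbf 1_{(0,1/n)}(A_p)\to 0$, this shows $e_n\uparrow s(A_p)=e$ strongly. It follows that $e_ny\to ey=y$ in $\L^p(\cal M)$, because $e_n\to e$ strongly and $p<\infty$. Hence $y=0$, and the right-hand case is symmetric.

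\textbf{Bijectivity.} Since $P$ has finite rank, $\dim Y^\sharp=\dim\Ran P^\dagger=\dim Y$. An injective linear map from $Y$ into $Y^\sharp$ is therefore onto, which yields \eqref{gp-exact-density-ranges} and the bijectivity of the restrictions.

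The only delicate point is the strong convergence $e_ny\to y$ in $\L^p$ when $A_p$ is unbounded and lacks a bounded inverse. The cutoff argument handles this using only measurability and $y=ey$, exactly as before.
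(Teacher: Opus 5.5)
Your proposal is correct and follows essentially the same route as the paper. Both arguments get membership in $\L^r$ and boundedness from Proposition~\ref{prop-gp-weighted-tangent-estimates} and H\"older's inequality, identify the supports as joins, and sum \eqref{gp-weighted-intertwining} over the basis. Both then prove injectivity on $Y$ by spectral truncation and conclude from the dimension count $\dim Y^\sharp=\dim Y$; your extra check that $\bigvee_j s_\ell(x_j)=s_\ell(Y)$ simply makes explicit a step the paper leaves implicit.
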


\begin{proof}
Membership in $\L^r(\cal M)$ and the boundedness of the multiplication maps follow from Proposition~\ref{prop-gp-weighted-tangent-estimates} and H\"older's inequality. Taking a strictly positive power preserves supports, and the support of a finite sum of positive measurable operators is the join of their supports. Hence
\begin{equation*}
s(A_p)=\bigvee_{j=1}^d s_\ell(x_j)
=e,
\qquad
s(B_p)=\bigvee_{j=1}^d s_r(x_j)
=f.
\end{equation*}
Summing \eqref{gp-weighted-intertwining} over the basis gives \eqref{gp-global-intertwining}, and therefore $A_pY\subset Y^\sharp$ and $YB_p\subset Y^\sharp$.

We verify injectivity without using bounded inverses of the densities. If $A_py=0$ for $y\in Y$, put $e_n=\mathbf1_{[1/n,n]}(A_p)$ and $g_n(t)=t^{-1}\mathbf1_{[1/n,n]}(t)$, with $g_n(0)=0$. Then $g_n(A_p)\in\cal M$ and
\begin{equation*}
e_ny=g_n(A_p)(A_py)=0.
\end{equation*}
Since $e_n\uparrow e$ strongly, $e_ny\to ey=y$ in $\L^p(\cal M)$, so $y=0$. Spectral truncation of $B_p$ gives the analogous conclusion for right multiplication. Finally, $\dim Y^\sharp=\dim Y=d$. The two injective maps from $Y$ into $Y^\sharp$ are therefore surjective, proving \eqref{gp-exact-density-ranges}.
\end{proof}

\subsection{Relative operators and modular realization}

\begin{thm}
\label{thm-gp-relative-modular-structure}
Retain the assumptions and notation of Proposition~\ref{prop-gp-Gram-densities}. The formulas
\begin{equation}
\label{gp-global-Hilbert-forms}
H_p^\ell(x,y)=\Tr(y^*A_px),
\qquad
H_p^r(x,y)=\Tr(y^*xB_p),
\quad x,y\in Y,
\end{equation}
define positive definite Hermitian forms on $Y$. Define
\begin{equation}
\label{gp-relative-operator}
\Delta_p=(L_{A_p}|_Y)^{-1}(R_{B_p}|_Y),
\end{equation}
where both restricted multiplication maps have codomain $Y^\sharp$. Then $\Delta_p$ is a positive invertible selfadjoint operator on $(Y,H_p^\ell)$ and
\begin{equation}
\label{gp-relative-identities}
A_p\Delta_p(y)=yB_p,
\qquad
H_p^r(x,y)=H_p^\ell(x,\Delta_py),
\quad x,y\in Y.
\end{equation}
Its imaginary powers satisfy
\begin{equation}
\label{gp-imaginary-powers}
\Delta_p^{\i t}(y)=A_p^{-\i t}yB_p^{\i t},
\quad y \in Y, t \in \R.
\end{equation}
Here imaginary powers of $A_p$ and $B_p$ are taken on their supports, and are unitaries of $e\cal Me$ and $f\cal Mf$, respectively. In particular, these maps form a one-parameter group of surjective complete isometries of $Y$ for its inherited $\L^p$ operator space structure. They are also unitary for both forms in \eqref{gp-global-Hilbert-forms}.

Consider the corner von Neumann algebra
\begin{equation}
\label{gp-linking-density}
\cal R=
\begin{bmatrix}e\cal Me&e\cal Mf\\f\cal Me&f\cal Mf\end{bmatrix},
\qquad
D_p
= \begin{bmatrix}
A_p^r&0\\
0&B_p^r
\end{bmatrix}.
\end{equation}
Equip $\cal R$ with the restriction $\Tr_{\cal R}$ of $\Tr_2\ot\Tr$. Then $D_p\in\L^1(\cal R)_+$ has support $1_{\cal R}$. Consequently,
\begin{equation}
\label{gp-linking-state}
c_p
=\Tr_{\cal R}(D_p),
\qquad
\psi_p(X)
=c_p^{-1}\Tr_{\cal R}(D_pX),
\quad X \in \cal R,
\end{equation}
define a finite positive constant $c_p$ and a normal faithful state $\psi_p$.

For $y\in e\L^p(\cal M)f$, write $\jmath(y)=\begin{bmatrix}0&y\\
0&0\end{bmatrix}$. Let $\sigma_{t,p}^{\psi_p}$ denote the canonical extension of the modular automorphism $\sigma_t^{\psi_p}$ to the tracial space $\L^p(\cal R)$. Then
\begin{equation}
\label{gp-relative-group-modular}
\jmath(\Delta_p^{\i t}y)
=\sigma_{-\frac{t}{r},p}^{\psi_p}(\jmath(y)),
\quad y \in Y, t\in \R.
\end{equation}
Thus the time rescaling is $-\frac{t}{r}=-(p-2)\frac{t}{p}$.

Finally, let $\Delta_{\psi_p}$ be the standard modular operator on $\L^2(\cal R,\Tr_{\cal R})$ associated with the vector $c_p^{-\frac{1}{2}}D_p^{\frac{1}{2}}$. The map
\begin{equation}
\label{gp-Hilbert-embedding}
W_p \co (Y,H_p^\ell) \to \L^2(\cal R,\Tr_{\cal R}),
\qquad
W_p(y)
=\jmath(A_p^{1/2}y),
\end{equation}
is a linear isometry. Its finite-dimensional range $\cal K_p=W_p(Y)$ reduces $\Delta_{\psi_p}$, and
\begin{equation}
\label{gp-standard-modular-identification}
W_p\Delta_pW_p^{-1}
=\big(\Delta_{\psi_p}|_{\cal K_p}\big)^{-\frac{1}{r}}.
\end{equation}
No finite-dimensionality assumption on the ambient algebra $\cal M$ is needed.
\end{thm}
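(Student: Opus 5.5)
The plan follows the $p=4$ template of Propositions~\ref{prop-global-left-right-Levi-finite-dimensional-bis} and \ref{prop-linking-modular-interpretation}, using Proposition~\ref{prop-gp-Gram-densities} in place of \eqref{global-exact-density-ranges}.

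\textbf{Step 1: the two forms and $\Delta_p$.} Positive definiteness comes from the identities
\[
H_p^\ell(x,x)=\|A_p^{1/2}x\|_2^2,\qquad H_p^r(x,x)=\|xB_p^{1/2}\|_2^2 .
\]
Suppose one of these vanishes for some $x\in Y$. Then $A_px=0$, resp.\ $xB_p=0$, and the spectral truncation argument with supports $e$, $f$ (already used in Proposition~\ref{prop-gp-Gram-densities}) gives $x=0$.

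Next I would introduce the conjugate-linear trace-duality map $\iota\colon Y^\sharp\to Y^*$. It is a bijection, exactly as in the proof of Proposition~\ref{prop-global-left-right-Levi-finite-dimensional-bis}. Then $\iota L_{A_p}$ and $\iota R_{B_p}$ are the Riesz maps of $H_p^\ell$ and $H_p^r$. Proposition~\ref{prop-relative-Riesz-operator} now gives three things at once: \eqref{gp-relative-identities}, positivity and selfadjointness for $H_p^\ell$, and invertibility.

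\textbf{Step 2: imaginary powers.} This is the step I expect to be the main obstacle, since $A_p$, $B_p$ may now be unbounded with unbounded inverses on their supports. I would try to avoid working on the whole corner. Consider the family $\Theta_t(y)=A_p^{-\i t}yB_p^{\i t}$, where the unitaries $A_p^{-\i t}\in e\cal Me$ and $B_p^{\i t}\in f\cal Mf$ are taken on the supports.

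First attempt: the uniqueness argument of Corollary~\ref{cor-modular-complete-isometries}. Set $u_t=A_p^{-\i t}+(1-e)$, $v_t=B_p^{\i t}+(1-f)$ and $\cal U_t(x)=u_txv_t$. The map $P'=\cal U_t^{-1}P\cal U_t$ is another contractive projection, so one needs $\cal U_t(Y)=Y$; but that is essentially what is to be proved. So instead I would argue spectrally on $Y$ directly.

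The identity $A_p\Delta_p y=yB_p$ gives $A_p^n\Delta_p^n y=yB_p^n$ by induction, provided the intermediate vectors are meaningful. Since $\Delta_p$ is diagonalizable on finite-dimensional $Y$, it suffices to treat an eigenvector $\Delta_p y=\lambda y$ with $\lambda>0$. Then $\lambda A_py=yB_p$ as elements of $\L^{p^*}$. From this I would deduce, in $\L^2$ of the linking algebra, that $\jmath(y)$ lies in the $\lambda$-eigenspace of the affiliated positive operator $L_{A_p}R_{B_p^{-1}}$ (closure of the product of commuting strongly commuting left and right multiplications). The cleaner route is to use the vector $\xi=A_p^{1/2}y\in\L^2$ and read the relation as
\[
L_{A_p}^{1/2}\xi=\lambda^{-1}R_{B_p}L_{A_p}^{-1/2}\xi .
\]
Joint spectral calculus of the commuting selfadjoint operators $\log L_{A_p}$ and $\log R_{B_p}$ then gives $A_p^{s}\xi B_p^{-s}=\lambda^{-s}\xi$ for imaginary $s$. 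Removing $A_p^{1/2}$ yields $A_p^{-\i t}yB_p^{\i t}=\lambda^{\i t}y=\Delta_p^{\i t}y$, which is \eqref{gp-imaginary-powers}.

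Consequences of \eqref{gp-imaginary-powers}:
\begin{itemize}
\item $\Delta_p^{\i t}$ is two-sided multiplication by unitaries, hence a complete isometry;
\item it is unitary for $H_p^\ell$ because $\Delta_p$ is positive selfadjoint for $H_p^\ell$;
\item it is unitary for $H_p^r$ by the computation in Corollary~\ref{cor-relative-Levi-group}.
\end{itemize}

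\textbf{Step 3: linking algebra and modular group.} Since $A_p,B_p\in\L^r$, we have $D_p\in\L^1(\cal R)_+$. Its support is $\mathrm{diag}(e,f)=1_{\cal R}$ by Proposition~\ref{prop-gp-Gram-densities}, so $\psi_p$ is a normal faithful state. In the tracial setting, $\sigma_t^{\psi_p}(X)=D_p^{\i t}XD_p^{-\i t}$, and this extends to $\L^p(\cal R)$ by the same formula. On the upper corner this gives
\[
\sigma_{t,p}^{\psi_p}(\jmath(y))=\jmath\bigl(A_p^{\i rt}yB_p^{-\i rt}\bigr).
\]
Substituting $t\mapsto -t/r$ and comparing with \eqref{gp-imaginary-powers} gives \eqref{gp-relative-group-modular}.

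\textbf{Step 4: the Hilbert embedding.} $W_p$ is isometric because $\|A_p^{1/2}y\|_2^2=H_p^\ell(y,y)$. The standard modular operator for the vector $c_p^{-1/2}D_p^{1/2}$ is $\Delta_{\psi_p}=L_{D_p}R_{D_p^{-1}}$, which acts on the corner as $\xi\mapsto A_p^r\xi B_p^{-r}$. Hence its power $-1/r$ acts as $\xi\mapsto A_p^{-1}\xi B_p$.

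For an eigenvector $y$ of $\Delta_p$, Step~2 shows that $W_py$ is an eigenvector of $\Delta_{\psi_p}$ with eigenvalue $\lambda^{-r}$. So $\cal K_p$ is spanned by eigenvectors and is therefore reducing. On $\cal K_p$, the operator $\Delta_{\psi_p}^{-1/r}$ has eigenvalue $\lambda$ on $W_py$, which is \eqref{gp-standard-modular-identification}. Only supports, H\"older's inequality and affiliated functional calculus are used, so no finite-dimensionality of $\cal M$ is needed.
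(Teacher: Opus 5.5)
Your Steps~1, 3 and 4 agree with the paper's proof. Obtaining $\Delta_p$ from the Riesz maps $\iota L_{A_p}$, $\iota R_{B_p}$ and Proposition~\ref{prop-relative-Riesz-operator} is a harmless variant of the paper's direct computation. Exhibiting $\cal K_p$ as spanned by eigenvectors of $\Delta_{\psi_p}$ is equivalent to the paper's argument through invariance under the unitaries $\Delta_{\psi_p}^{\i t}$; Stone's theorem turns $\Delta_{\psi_p}^{\i t}W_py=\lambda^{-\i rt}W_py$ into an eigenvector statement for the unbounded $\Delta_{\psi_p}$.

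The gap is in Step~2, which is exactly where the theorem is nontrivial for semifinite $\cal M$. The relation $\lambda A_py=yB_p$ is an identity in $\L^{p^*}(\cal M)$ between products with unbounded densities. Only $y\in\L^p$ and $A_py,yB_p\in\L^{p^*}$ are guaranteed. For the positive rank-one projection onto $\mathbb C y$ with $y\geq 0$, one can have $y\in\ell^p\setminus\ell^2$, and on $\L^p[0,1]$ one can have $A_py=y^{p-1}\notin\L^2$. So $\jmath(y)$ is in general not a vector of $\L^2(\cal R)$. Likewise, your identity $L_{A_p}^{1/2}\xi=\lambda^{-1}R_{B_p}L_{A_p}^{-1/2}\xi$ is not an equation between $\L^2$ vectors in the domains of the operators involved: $\xi=A_p^{1/2}y$ need not lie in $\dom L_{A_p}^{1/2}$, and $L_{A_p}^{-1/2}\xi=y$ need not lie in $\L^2$. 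The phrase ``joint spectral calculus then gives'' therefore skips the one thing that has to be proved. That is how an intertwining relation with unbounded multipliers yields the same relation for bounded Borel functions of them.

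The missing idea is to make the relation bounded before invoking any functional calculus. The paper adds $y$ to both sides and multiplies by the bounded resolvents $R=(\lambda A_p+e)^{-1}\in e\cal Me$ and $S=(B_p+f)^{-1}\in f\cal Mf$, which gives $Ry=yS$. Polynomial approximation then yields $g(R)y=yg(S)$ for $g\in\C([0,1])$. Next it approximates $s\mapsto((1-s)/s)^{\i t}$ by continuous functions vanishing near $0$ and $1$, which is allowed since $R$ and $S$ have no spectral atoms there. This gives bounded strong$^*$ convergence to $(\lambda A_p)^{\i t}$ and $B_p^{\i t}$, and that passes to the limit on the fixed vector $y\in\L^p$.

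Your $\L^2$ route can also be repaired by truncation. Take $g_1,g_2$ bounded Borel with compact support in $(0,\infty)$, and set $h(\mu,\nu)=(\lambda\mu-\nu)g_1(\mu)g_2(\nu)$. This bounded function satisfies $h(L_{A_p},R_{B_p})\xi=A_p^{1/2}g_1(A_p)(\lambda A_py-yB_p)g_2(B_p)=0$. Hence the joint spectral measure of $\xi$ is carried by the line $\nu=\lambda\mu$; it has no mass on the axes, by injectivity of $L_{A_p}$ and $R_{B_p}$ on $e\L^2(\cal M)f$. This gives $A_p^{\i t}\xi B_p^{-\i t}=\lambda^{-\i t}\xi$. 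Either way, this step must be supplied rather than asserted.
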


\begin{proof}
H\"older's inequality makes both forms in \eqref{gp-global-Hilbert-forms} well-defined, and
\begin{equation*}
H_p^\ell(y,y)=\|A_p^{\frac{1}{2}}y\|_2^2,
\qquad
H_p^r(y,y)=\|yB_p^{\frac{1}{2}}\|_2^2.
\end{equation*}
Since $Y\subset e\L^p(\cal M)f$, $s(A_p)=e$ and $s(B_p)=f$, spectral truncation as in the preceding proof shows that these quantities vanish only when $y=0$. Thus both forms are positive definite.

By its definition, $\Delta_p$ is a linear bijection and satisfies the first identity in \eqref{gp-relative-identities}. For $x,y \in Y$, we obtain
\begin{equation*}
H_p^\ell(x,\Delta_py)
=\Tr\big((A_p\Delta_py)^*x\big)
=\Tr\big((yB_p)^*x\big)
=\Tr(y^*xB_p)
=H_p^r(x,y).
\end{equation*}
The Hermitian property and positive definiteness of $H_p^r$ imply that $\Delta_p$ is selfadjoint and positive for $H_p^\ell$. Since $Y$ is finite-dimensional and $\Delta_p$ is invertible, it has a basis of eigenvectors with strictly positive eigenvalues.

We next prove \eqref{gp-imaginary-powers}, taking particular care with possibly unbounded densities. Let $y\in Y$ be an eigenvector, with $\Delta_py=\lambda y$ and $\lambda>0$. Then
\begin{equation}
\label{gp-eigenvector-intertwining}
\lambda A_py=yB_p.
\end{equation}
All these products belong to $\L^{p^*}(\cal M)$. Put $R=(\lambda A_p+e)^{-1}\in e\cal Me$ and $S=(B_p+f)^{-1}\in f\cal Mf$. Multiplying \eqref{gp-eigenvector-intertwining} after adding $y$ to both sides by these bounded resolvents gives $Ry=yS$. Polynomial approximation therefore yields
\begin{equation}
\label{gp-continuous-functional-intertwining}
g(R)y=yg(S),
\quad g \in \C([0,1]).
\end{equation}
Functional calculi here are taken in the reduced algebras. Neither $R$ nor $S$ has a spectral atom at $0$ or $1$: the resolvents have trivial kernels on their supports, and $A_p$ and $B_p$ also have trivial kernels there.

Fix $t\in\R$. Choose continuous functions $\chi_n\co[0,1]\to[0,1]$ with compact support in $(0,1)$ and $\chi_n(s)\to1$ for any $0<s<1$. Define $g_n(s)=\chi_n(s)((1-s)/s)^{\i t}$ on $(0,1)$ and $g_n(0)=g_n(1)=0$. The spectral theorem gives bounded strong* convergence
\begin{equation*}
g_n(R)\longrightarrow(\lambda A_p)^{\i t},
\qquad
g_n(S)\longrightarrow B_p^{\i t}.
\end{equation*}
Bounded strong* convergence of multipliers implies convergence in $\L^p$ after multiplication by a fixed $\L^p$-vector. For completeness, this follows by approximating that vector in $\L^p$ by bounded operators $z$ with finite trace support and using
\begin{equation*}
\|T_nz\|_p^p\leq\|T_nz\|_\infty^{p-2}\|T_nz\|_2^2
\end{equation*}
when $T_n\to0$ strongly and is uniformly bounded; the right multiplication statement follows by taking adjoints. Passing to the limit in \eqref{gp-continuous-functional-intertwining} gives $(\lambda A_p)^{\i t}y=yB_p^{\i t}$, and hence
\begin{equation*}
A_p^{-\i t}yB_p^{\i t}=\lambda^{\i t}y=\Delta_p^{\i t}y.
\end{equation*}
Applying this to an eigenbasis proves \eqref{gp-imaginary-powers} on all of $Y$.

The maps in \eqref{gp-imaginary-powers} and their inverses are restrictions of left and right unitary multiplications on the rectangular corner. They are therefore complete isometries for the inherited operator space structure. Unitarity for $H_p^\ell$ follows from the spectral theorem. The relation $H_p^r(x,y)=H_p^\ell(x,\Delta_py)$ and commutation of $\Delta_p$ with its imaginary powers imply unitarity for $H_p^r$.

The polar partial isometries of elements of $Y$ belong to $e\cal Mf$, and the joins of their left and right supports are $e$ and $f$. Thus $e\cal Mf$ has full left and right supports in the reduced algebras, so $\cal R$ is its linking von Neumann algebra. Since $A_p,B_p\in\L^r(\cal M)$, we have $D_p\in\L^1(\cal R)_+$. Its support is $\operatorname{diag}(e,f)=1_{\cal R}$, so \eqref{gp-linking-state} defines a normal faithful state. By the tracial modular formula, see \cite{Tak03}, its modular automorphism group is
\begin{equation}
\label{gp-modular-action}
\sigma_t^{\psi_p}(X)=D_p^{\i t}XD_p^{-\i t},
\quad X\in\cal R, t\in\R.
\end{equation}
The scalar normalization $c_p$ cancels from this formula. The same unitary conjugation defines its isometric extension to any tracial $\L^q(\cal R)$, $1\leq q<\infty$. On the upper-right corner, it gives
\begin{equation*}
\sigma_{t,p}^{\psi_p}(\jmath(y))
=\jmath\big(A_p^{\i rt}yB_p^{-\i rt}\big),
\quad y \in e\L^p(\cal M)f.
\end{equation*}
Combining this with \eqref{gp-imaginary-powers} proves \eqref{gp-relative-group-modular}.

The map $W_p$ is well-defined by $1/2=1/p+1/(2r)$ and H\"older's inequality. Moreover,
\begin{equation*}
\langle W_px,W_py\rangle_{\L^2(\cal R)}
=\Tr(y^*A_px)=H_p^\ell(x,y),
\quad x,y \in Y,
\end{equation*}
so it is an isometry. The standard modular group on the tracial Hilbert space is $\Delta_{\psi_p}^{\i t}\xi=D_p^{\i t}\xi D_p^{-\i t}$. Consequently,
\begin{equation}
\label{gp-Hilbert-modular-intertwining}
\Delta_{\psi_p}^{\i t}W_py
=\jmath\big(A_p^{1/2}A_p^{\i rt}yB_p^{-\i rt}\big)
=W_p\Delta_p^{-\i rt}y.
\end{equation}
Thus $\cal K_p$ is invariant under the modular unitaries and their inverses, and hence reduces their selfadjoint generator and $\Delta_{\psi_p}$. Restricting \eqref{gp-Hilbert-modular-intertwining} to this finite-dimensional subspace and applying the spectral theorem gives
\begin{equation*}
\Delta_{\psi_p}|_{\cal K_p}=W_p\Delta_p^{-r}W_p^{-1},
\end{equation*}
which is equivalent to \eqref{gp-standard-modular-identification}.
\end{proof}

\begin{cor}
\label{cor-gp-modular-covariance}
Under the assumptions of Theorem~\ref{thm-gp-relative-modular-structure}, put $U_t=A_p^{-\i t}+(1-e)$ and $V_t=B_p^{\i t}+(1-f)$. Then
\begin{equation}
\label{gp-modular-covariance}
P(U_txV_t)
=U_tP(x)V_t,
\quad x \in \L^p(\cal M), t \in\R.
\end{equation}
\end{cor}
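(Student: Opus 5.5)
The plan is to copy the argument of Corollary~\ref{cor-modular-complete-isometries} and combine it with the uniqueness of contractive projections on smooth spaces (Proposition~\ref{prop-Sullivan}).

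First, $U_t$ and $V_t$ are unitaries of $\cal M$. Indeed, by Theorem~\ref{thm-gp-relative-modular-structure}, $A_p^{-\i t}$ is a unitary of $e\cal Me$ (functional calculus on the support $e$), so $U_t=A_p^{-\i t}+(1-e)$ is unitary in $\cal M$; the same holds for $V_t$ with $f$. Hence the map
\[
\cal U_t\co\L^p(\cal M)\to\L^p(\cal M),\qquad x\mapsto U_txV_t,
\]
is a surjective isometry with inverse $x\mapsto U_t^*xV_t^*$.

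Next, $\cal U_t(Y)=Y$. Every $y\in Y$ satisfies $y=eyf$, so $(1-e)y=0$ and $y(1-f)=0$. Therefore
\[
\cal U_t(y)=A_p^{-\i t}yB_p^{\i t}=\Delta_p^{\i t}(y)\in Y
\]
by \eqref{gp-imaginary-powers}. Since $\Delta_p^{\i t}$ is a bijection of $Y$ with inverse $\Delta_p^{-\i t}$, we get $\cal U_t(Y)=Y$, and also $\cal U_t^{-1}(Y)=Y$.

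Then I set $Q_t=\cal U_t^{-1}P\cal U_t$. This is a bounded projection, and it is contractive because $\cal U_t$ is an isometry. Its range is $\cal U_t^{-1}(\Ran P)=Y$. The space $\L^p(\cal M)$ is smooth for $1<p<\infty$, and $P$ is contractive (it is $2$-contractive). So Proposition~\ref{prop-Sullivan} gives $Q_t=P$, that is, $P\cal U_t=\cal U_tP$, which is \eqref{gp-modular-covariance}.

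I do not expect any serious obstacle. The one point that needs care is that $A_p$ and $B_p$ may be unbounded elements of $\L^r(\cal M)$. Their imaginary powers are still honest unitaries of the reduced algebras, because they are taken through the spectral theorem on the supports $e$ and $f$; so $U_t,V_t\in\cal M$ and $\cal U_t$ is indeed an isometry of $\L^p(\cal M)$. The case $Y=\{0\}$ is excluded by the standing assumption $Y\neq\{0\}$, and it would in any case be trivial since then $P=0$.
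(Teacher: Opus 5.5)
Your proposal is correct and is essentially the paper's argument. The paper's proof also conjugates $P$ by the surjective isometry $x\mapsto U_txV_t$, notes that this map preserves $Y$ by \eqref{gp-imaginary-powers}, and concludes $P\,\cal U_t=\cal U_t P$ from the uniqueness of contractive projections onto a fixed subspace of the smooth space $\L^p(\cal M)$ (Proposition~\ref{prop-Sullivan}).
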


\begin{proof}
The map $\cal U_t\co x\mapsto U_txV_t$ is a surjective complete isometry of $\L^p(\cal M)$, and \eqref{gp-imaginary-powers} gives $\cal U_t(Y)=Y$. Thus $\cal U_t^{-1}P\cal U_t$ and $P$ are contractive projections onto the same nonzero subspace. Since $\L^p(\cal M)$ is smooth, Proposition~\ref{prop-Sullivan} gives $\cal U_t^{-1}P\cal U_t=P$, proving the assertion.
\end{proof}

\begin{remark} \normalfont
If $\cal M$ is finite-dimensional, then $A_p$ and $B_p$ are invertible in their reduced algebras, and \eqref{gp-relative-operator} becomes $\Delta_p(y)=A_p^{-1}yB_p$. For a general semifinite algebra, the definition through the restricted multiplication maps avoids any assumption that these inverses are bounded or even $\Tr$-measurable. Likewise, the embedding $W_p$ is used because $Y$ need not be contained in the original tracial $\L^2$-space.

For $p=4$, we have $r=2$, $A_4=\sum_jx_jx_j^*$ and $B_4=\sum_jx_j^*x_j$, so the modular time rescaling is $-\frac{t}{2}$. The preceding results establish weighted tangent and modular structures for any $p > 2$. The next subsection shows that, in the finite-rank case, these weighted identities lead to a ternary description of the range and to complete contractivity.
\end{remark}

\subsection{Ternary structure and complete contractivity in the finite-rank case}
\label{sec-fr-arbitrary-exponents}

We now complete the weighted construction for finite-rank projections. The appropriate ternary structure is obtained after a change of density: for $p\neq4$, one cannot in general expect the unweighted inclusion $YY^*Y\subset Y^\sharp$.

We shall use Propositions~\ref{prop-gp-weighted-tangent-estimates} and \ref{prop-gp-Gram-densities}, together with Theorem~\ref{thm-gp-relative-modular-structure}, for finite spanning families rather than just bases. Their proofs remain valid with this modification: the support identities only require that the family span $Y$, and the dimension argument uses $\dim Y^\sharp =\dim Y$, not the length of the family.

\begin{lemma}
\label{lem-fr-bounded-normalization}
Let $1<r<\infty$, let $A\in\L^r(\cal M)_+$ and put $e=s(A)$. If $a\in\L^r(\cal M)_+$ satisfies $a\leq A$, there exists a unique $C\in e\cal Me$ with $0\leq C\leq e$ such that $a= A^{\frac{1}{2}}CA^{\frac{1}{2}}$.
\end{lemma}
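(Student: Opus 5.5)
The plan is a Douglas-type factorization carried out on the $\L^2$-level with $\tau$-measurable operators. Uniqueness is the easy part. Suppose $C_1,C_2\in e\cal Me$ both satisfy $A^{\frac12}C_iA^{\frac12}=a$. Then $A^{\frac12}(C_1-C_2)A^{\frac12}=0$. Since $s(A^{\frac12})=e$, the spectral truncation argument already used in the proof of Proposition~\ref{prop-global-left-right-Levi-finite-dimensional} applies: multiply on the left and on the right by $g_n(A^{\frac12})$ with $g_n(t)=t^{-1}\mathbf 1_{[\frac1n,n]}(t)$. This gives $e_n(C_1-C_2)e_n=0$, where $e_n\uparrow e$ strongly. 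Letting $n\to\infty$ yields $C_1-C_2=e(C_1-C_2)e=0$.

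For existence, I would first define $C$ on the dense range of $A^{\frac12}$, in the spirit of Douglas' lemma. Since $0\le a\le A$ as $\tau$-measurable operators, we have $\|a^{\frac12}\xi\|\le\|A^{\frac12}\xi\|$ for $\xi$ in a common core; I would work in the standard form, or equivalently with measurable operators and the form inequality. Hence there is a contraction $T$ with $a^{\frac12}=TA^{\frac12}$: it is defined on $\Ran A^{\frac12}$ by $A^{\frac12}\xi\mapsto a^{\frac12}\xi$, extended by continuity to $\ovl{\Ran A^{\frac12}}=eH$, and set to $0$ on $(1-e)H$. Then $a=A^{\frac12}T^*TA^{\frac12}$. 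Put $C\ov{\mathrm{def}}{=}T^*T$. We get $0\le C\le e$ because $T$ is a contraction vanishing on $(1-e)H$, and also $C=eCe$.

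It remains to check that $T$, and hence $C$, belongs to $\cal M$. The map $T$ is uniquely determined by $a^{\frac12}$ and $A^{\frac12}$, which are affiliated with $\cal M$. So for every unitary $w\in\cal M'$ we get $wTw^*=T$, by uniqueness on $\Ran A^{\frac12}$ and on $(1-e)H$. Therefore $T\in\cal M''=\cal M$.

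I expect the main obstacle to be the unbounded, $\tau$-measurable setting. There one must justify $\|a^{\frac12}\xi\|\le\|A^{\frac12}\xi\|$ on a common core, and the equality $a=A^{\frac12}CA^{\frac12}$ in the measurable sense rather than merely on a core. If that becomes delicate, I would instead use the truncations $A_n=Ae_n$ and $C_n\ov{\mathrm{def}}{=}g_n(A)^{\frac12}\,a\,g_n(A)^{\frac12}$. From $a\le A$ one gets $0\le C_n\le e_n$, and these $C_n$ are bounded. I would then pass to a weak* cluster point $C$ in the unit ball of $e\cal Me$. Finally, I would show $A^{\frac12}CA^{\frac12}=a$ by pairing against $\L^{r^*}$-elements supported in $e_m$. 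These pairings stabilize because $e_mC_ne_m=g_m\text{-compressions}$ of $a$ once $n\ge m$, and the equality then extends to all of $e$ using $e_m\uparrow e$ and the fact that $a=eae$ (as $a\le A$ forces $s(a)\le e$).
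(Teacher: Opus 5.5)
Your proposal is correct. Your fallback argument is essentially the paper's proof, while your primary Douglas-factorization route is genuinely different; both approaches read uniqueness off compressions by spectral projections of $A$.

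\textbf{The paper's route (your fallback).} The paper sets $e_n=\mathbf 1_{[\frac1n,n]}(A)$ and $R_n=A^{-\frac12}e_n$, and puts $C_n=R_naR_n$. Then $0\le C_n\le e_n$ and $e_mC_ne_m=C_m$ for $m\le n$. A weak* cluster point $C$ satisfies $e_nCe_n=C_n$, hence $e_nA^{\frac12}CA^{\frac12}e_n=e_nae_n$. The paper concludes by the $\mathrm{L}^r$-convergence of compressions by $e_n\uparrow e$ on operators supported by $e$. This tacitly uses $s(a)\le e$, which you correctly make explicit.

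\textbf{What your Douglas route buys.}
\begin{itemize}
\item It yields the stronger factorization $a^{\frac12}=TA^{\frac12}$ with a contraction $T=Te\in\mathcal M$, so $C=T^*T$ comes with a square root.
\item It is canonical: no cluster point is chosen.
\item It needs only $\tau$-measurability of $a$ and $A$, not membership in $\mathrm{L}^r$ or $r<\infty$.
\end{itemize}

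\textbf{Its cost: domain bookkeeping.} The steps you flagged as delicate are standard.
\begin{itemize}
\item The form inequality $\|a^{\frac12}\xi\|\le\|A^{\frac12}\xi\|$ holds on $\dom A\cap\dom a$.
\item This subspace is $\tau$-dense and $\mathcal M'$-invariant. So the closure of $A^{\frac12}$ restricted to it is a $\tau$-measurable operator contained in $A^{\frac12}$, hence equal to it. Thus it is a core.
\item Closedness of $a^{\frac12}$ gives $TA^{\frac12}\subset a^{\frac12}$.
\item Since $\tau$-measurable operators have no proper $\tau$-measurable extensions, this becomes an equality in the algebra of measurable operators. Hence $a=A^{\frac12}T^*TA^{\frac12}$ there.
\end{itemize}

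\textbf{What the paper's route buys.} It never touches unbounded domains; it works only with the bounded truncations $C_n$. The price is a compactness argument and the $\mathrm{L}^r$-norm convergence $e_nxe_n\to x$, which is where $r<\infty$ enters.
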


\begin{proof}
Put $e_n=\mathbf1_{[\frac{1}{n},n]}(A)$ and let $R_n=A^{-\frac{1}{2}}e_n$ be defined by bounded Borel functional calculus. The measurable operator $C_n=R_naR_n$ satisfies $0\leq C_n\leq e_n$, so $C_n$ belongs to $e\cal Me$. If $m\leq n$, then $e_mC_ne_m=C_m$. A weak* cluster point $C$ of the bounded sequence $(C_n)$ therefore satisfies $0 \leq C\leq e$ and $e_nCe_n=C_n$ for any $n$. It follows that
\begin{equation*}
e_nA^{\frac{1}{2}}CA^{\frac{1}{2}}e_n=e_nae_n.
\end{equation*}
Both $A^{\frac{1}{2}}CA^{\frac{1}{2}}$ and $a$ belong to $\L^r(\cal M)$, and compression by $e_n\uparrow e$ converges to the identity in $\L^r$ on operators supported by $e$. Hence $a=A^{\frac{1}{2}}CA^{\frac{1}{2}}$. The same compressions recover $e_nCe_n$ from $a$, proving uniqueness.
\end{proof}

\begin{lemma}
\label{lem-fr-invariant-algebra-factorization}
Let $\cal A\subset e\cal Me$ be a finite-dimensional von Neumann algebra with unit $e$. Suppose that $A\in\L^r(\cal M)_+$ has support $e$ and satisfies $A^{\i t}\cal A A^{-\i t}=\cal A$ for any $t\in\R$. Then there exist a positive invertible element $d\in\cal A$ and a positive operator $c\in\L^r(\cal M)$ with support $e$, affiliated with $\cal A'\cap e\cal Me$, such that $A=cd=dc$.
\end{lemma}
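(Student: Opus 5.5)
The plan is to realize the automorphism group $\theta_t(x)=A^{\i t}xA^{-\i t}$ of $\cal A$ as an inner group generated by a positive invertible $d\in\cal A$, and then to take $c=Ad^{-1}$.

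First, $\theta_t$ is a strongly continuous one-parameter group of $*$-automorphisms of the finite-dimensional von Neumann algebra $\cal A$. Continuity holds because $A^{\i t}$ is a strongly continuous unitary group in $e\cal Me$, and $\cal A$ is finite-dimensional. Every automorphism of a finite-dimensional von Neumann algebra permutes its minimal central projections. By continuity and connectedness of $\R$, each $\theta_t$ therefore fixes every central projection $z_i$ of $\cal A$. On each block $z_i\cal A\cong\M_{n_i}$, every automorphism is inner, by Skolem--Noether. So $\theta_t$ is an inner continuous one-parameter group of $\cal A$.

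Standard finite-dimensional Lie theory then gives a selfadjoint $h\in\cal A$ with $\theta_t(x)=\e^{\i th}x\e^{-\i th}$ for all $t$. Concretely, the derivation $\delta=\frac{\d}{\d t}\theta_t|_{t=0}$ is a $*$-derivation of $\cal A$, hence inner, $\delta=\i[h,\cdot]$ with $h=h^*$. Then $\theta_t=\e^{t\delta}$ gives the formula. Put $d=\e^{h}$, which is positive and invertible in $\cal A$ with $d^{\i t}=\e^{\i th}$.

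Next I set $u_t=d^{-\i t}A^{\i t}$, a unitary of $e\cal Me$. It commutes with $\cal A$, because $A^{\i t}xA^{-\i t}=d^{\i t}xd^{-\i t}$ for $x\in\cal A$. Thus $u_t\in\cal A'\cap e\cal Me$.

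The key point, and the step I expect to be the main obstacle, is to show that $d$ commutes with $A$ and that $(u_t)$ is a one-parameter group. The identity $\theta_t(d)=d^{\i t}dd^{-\i t}=d$ gives $A^{\i t}dA^{-\i t}=d$, so $d$ commutes with all $A^{\i t}$. Since $d$ is bounded, it then commutes with the spectral projections of $A$ (Stone's theorem), and hence with $A$ as an affiliated operator. Consequently $d^{\i s}$ commutes with $A^{\i t}$, and
\[
u_su_t=d^{-\i s}A^{\i s}d^{-\i t}A^{\i t}=d^{-\i(s+t)}A^{\i(s+t)}.
\]
So $(u_t)$ is a strongly continuous unitary group in the von Neumann algebra $\cal A'\cap e\cal Me$. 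By Stone's theorem, $u_t=c^{\i t}$ for a positive nonsingular operator $c$ on the support $e$, affiliated with $\cal A'\cap e\cal Me$.

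Finally, $A$, $d$ and $c$ pairwise strongly commute: $c$ commutes with $d\in\cal A$, and with $A$ because $u_t$ commutes with $A^{\i s}$. The joint functional calculus gives $A^{\i t}=d^{\i t}c^{\i t}$, hence $A=dc=cd$. Since $d^{-1}$ is bounded, $c=d^{-1}A\in\L^r(\cal M)$ with $s(c)=s(A)=e$.
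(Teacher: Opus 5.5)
Your proposal is correct and follows essentially the same route as the paper. In both, $\operatorname{Ad}(A^{\i t})|_{\cal A}$ is implemented as $\operatorname{Ad}(\e^{\i th})$ through an inner $*$-derivation, $d=\e^{h}$ is fixed by the group and so commutes with $A$, and $c$ is obtained with imaginary powers $A^{\i t}d^{-\i t}$ commuting with $\cal A$. The only difference is cosmetic: you build $c$ from the unitary group $d^{-\i t}A^{\i t}$ via Stone's theorem and recover $A=dc$ by joint functional calculus, whereas the paper defines $c=Ad^{-1}$ directly; your version simply supplies more detail.
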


\begin{proof}
The restrictions of $\operatorname{Ad}(A^{\i t})$ form a norm-continuous one-parameter automorphism group of $\cal A$. They fix its center pointwise. On each full matrix summand, the generator is an inner $*$-derivation. Consequently, there exists $k=k^*\in\cal A$ such that
\begin{equation*}
A^{\i t}xA^{-\i t}
=\e^{\i tk}x\e^{-\i tk},
\quad x \in \cal A, t \in \R.
\end{equation*}
Set $d=\e^k$. Since the displayed automorphisms fix $d$, this element commutes with the spectral projections of $A$. Thus $c=Ad^{-1}$ is positive, belongs to $\L^r(\cal M)$, and has support $e$. Moreover, $c^{\i t}=A^{\i t}d^{-\i t}$ commutes with $\cal A$ for any $t$. The spectral theorem shows that $c$ is affiliated with $\cal A'\cap e\cal Me$.
\end{proof}

\begin{thm}
\label{thm-fr-general-p-TRO}
Let $2<p<\infty$ and let $P\co\L^p(\cal M) \to \L^p(\cal M)$ be a finite-rank $2$-contractive projection with nonzero range $Y$. Put $Y^\sharp=\Ran P^\dagger$, $e=s_\ell(Y)$ and $f=s_r(Y)$. There exist a finite-dimensional $\W^*$-$\TRO$ $\cal V\subset e\cal Mf$ with left and right supports $e,f$, and an operator $h\in\L^p(\cal M)_+$ with $s(h)=e$, such that
\begin{equation}
\label{fr-weighted-range-structure}
Y
=h\cal V,
\qquad
Y^\sharp
=h^{p-1}\cal V.
\end{equation}
The operator $h$ commutes with the left von Neumann algebra $\mathrm M(\cal V)=\operatorname{span}(\cal V\cal V^*)$. Moreover, $P$ is completely contractive.
\end{thm}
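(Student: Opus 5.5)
The plan is to imitate the $p=4$ deweighting of Corollary~\ref{cor-finite-dimensional-modular-deweighting}. The difference is that the ternary relation $YY^*Y\subset Y^\sharp$ is no longer available. It has to be replaced by the one-sided weighted inclusions of Proposition~\ref{prop-gp-weighted-tangent-estimates}, namely $(uu^*)^{\alpha}Y\subset Y^\sharp$ for every $u\in Y$. Throughout, I work with a finite spanning family of $Y$, as allowed by the remark preceding Lemma~\ref{lem-fr-bounded-normalization}. I write $A=A_p$, with support $e$, so that $AY=Y^\sharp$ by Proposition~\ref{prop-gp-Gram-densities}.

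\emph{Step 1: a finite-dimensional left algebra.} For $u\in Y$ we have $(uu^*)^\alpha\leq A$ after rescaling, since $(uu^*)^\alpha$ is one summand of a Gram density built from a spanning family containing $u$. Lemma~\ref{lem-fr-bounded-normalization} then gives $(uu^*)^\alpha=A^{\frac12}C_uA^{\frac12}$ with $0\leq C_u\leq e$. From $(uu^*)^\alpha Y\subset Y^\sharp=AY$ I get operators $T_u\co Y\to Y$ with $(uu^*)^\alpha y=A\,T_u(y)$. Informally, $T_u=A^{-\frac12}C_uA^{\frac12}$ on $Y$.

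Let $\cal A\subset e\cal Me$ be the von Neumann algebra generated by the $C_u$ together with $e$. I want to show that $\cal A$ is finite-dimensional. The idea is that $\cal A$ acts faithfully on the finite-dimensional space $A^{\frac12}Y$, whose left support is $e$. The modular covariance of Corollary~\ref{cor-gp-modular-covariance} shows that $u\mapsto A^{-\i t}u B^{\i t}$ preserves $Y$. Hence the family $\{C_u\}$, and therefore $\cal A$, is invariant under $\operatorname{Ad}(A^{\i t})$. Lemma~\ref{lem-fr-invariant-algebra-factorization} then yields a factorization $A=cd=dc$, with $d\in\cal A$ positive and invertible and $c$ affiliated with $\cal A'\cap e\cal Me$.

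\emph{Step 2: the TRO and the weight.} I set $h\ov{\mathrm{def}}{=}c^{\frac1{p-2}}$, which lies in $\L^p(\cal M)_+$ since $c\in\L^r(\cal M)$ and $r(p-2)=p$. I then define $\cal V\ov{\mathrm{def}}{=}h^{-1}Y$, the inverse being taken on the support $e$ by spectral truncation, exactly as in the injectivity argument of Proposition~\ref{prop-gp-Gram-densities}.

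To prove $\cal V\cal V^*\cal V\subset\cal V$, I combine three facts:
\begin{itemize}
\item the exact identity $Y^\sharp=\frak j_p(Y)=\{y(y^*y)^{\alpha}:y\in Y\}$, given by \eqref{power-duality-map-range-equality} in the trace realization;
\item the description $Y^\sharp=AY=h^{p-2}dY$;
\item the relation $h\in\cal A'$.
\end{itemize}
With these, the conditions $(vv^*)^\alpha$-type elements map $\cal V$ into $\cal V$, up to the invertible factor $d\in\cal A$, which can be absorbed. The left multiplier algebra of $\cal V$ is then generated by positive functions of $vv^*$, and hence by $vv^*$ itself, since $\alpha>0$ and the functional calculus is available in finite dimensions. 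This gives $vv^*\cal V\subset\cal V$. Polarization, as in the proof of Proposition~\ref{prop-one-sided-tangent-criterion}, then gives $\cal V\cal V^*\cal V\subset\cal V$. This identifies $\mathrm M(\cal V)$ with a subalgebra of $\cal A$, so $h$ commutes with it. It also gives $Y=h\cal V$ and $Y^\sharp=h^{p-2}\cdot hd'\cal V=h^{p-1}\cal V$, after absorbing $d$ into $\cal V$. The support statements follow from $s(A)=e$ and $s(B)=f$.

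\emph{Step 3: complete contractivity.} For $X\in\M_n(\cal V)$, the element $hX$ satisfies
\[
\frak j_p(hX)=hX\big(X^*h^2X\big)^{\alpha}.
\]
Since $h$ commutes with $\mathrm M(\M_n(\cal V))\supset XX^*$-type elements, the factor $h^2$ can be moved through the TRO product, and $\frak j_p(hX)$ lands in $h^{p-1}\M_n(\cal V)=\M_n(Y^\sharp)=\Ran P_n^\dagger$. The Hölder argument from the proof of Theorem~\ref{thm-even-exponent-finite-level}, which is valid for any $p$ by Lemma~\ref{Lemma-smooth}, then shows that every $P_n$ is contractive.

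The main obstacle I expect is Step~2, specifically showing that $\cal V$ is closed under the ternary product rather than merely invariant under the one-sided multipliers. Equivalently, the difficulty is justifying the commutation manipulation $X^*h^2X=(\text{something in }\M_n(\cal V)^*\M_n(\cal V))$, which requires $h$ to commute with the full left algebra of $\cal V$. If the direct route fails, I would first try to prove $\cal A=\mathrm M(\cal V)$ exactly, using the minimality of $\cal A$ together with the spanning-family freedom of Remark~\ref{rem-basis-dependence}.
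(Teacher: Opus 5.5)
Your outline (normalized weights via Lemma~\ref{lem-fr-bounded-normalization}, a finite-dimensional algebra $\cal A$ acting faithfully on $\cal K=A^{\frac12}Y$, modular invariance, $A=cd$, $h=c^{\frac1{p-2}}$) is the paper's. However, Step~2, which you rightly flag as the crux, is not carried out, and the sketch you give relies on exactly what has to be proved. To use $h\in\cal A'$ to write $(uu^*)^\alpha=h^{p-2}(vv^*)^\alpha$ for $u=hv$, you must already know that $vv^*$ lies in $\cal A$, or at least commutes with $h$. You also never show that $\cal V=h^{-1}Y$ consists of bounded operators, since $h^{-1}$ is only defined by truncation.

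The missing step is to substitute the factorization back into the normalized weights. Since $c$ commutes with $d,C_u\in\cal A$ and $A^{\frac12}=c^{\frac12}d^{\frac12}$, one gets
\[
(uu^*)^\alpha=A^{\frac12}C_uA^{\frac12}=c\,k_u,
\qquad
k_u=d^{\frac12}C_ud^{\frac12}\in\cal A_+,
\]
which is a product of commuting positive operators. Hence $|u^*|=h\,k_u^{\frac1{p-2}}$. The polar decomposition $u=|u^*|w_u$ then gives $u=hv_u$ with $v_u=k_u^{\frac1{p-2}}w_u\in e\cal Mf$ bounded and $v_uv_u^*=k_u^{\frac2{p-2}}\in\cal A$. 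Separately, $\cal K=A^{\frac12}Y=c^{\frac r2}d^{\frac12}\cal V$ is $\cal A$-invariant. Cancelling $c^{\frac r2}$, which commutes with $\cal A$ and has support $e$, gives $\cal A\cal V\subset\cal V$; this is what legitimately ``absorbs $d$''. Then $\cal V\cal V^*\cal V\subset\cal A\cal V\subset\cal V$, $\mathrm M(\cal V)\subset\cal A$ (so $h$ commutes with it), and $Y^\sharp=AY=h^{p-1}d\cal V=h^{p-1}\cal V$, after which your Step~3 goes through. Your fallback of proving $\cal A=\mathrm M(\cal V)$ is unnecessary: only $\mathrm M(\cal V)\subset\cal A$ is needed, and it falls out of this computation.

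This computation needs $C_u$ for enough vectors $u$, which exposes a second problem, in Step~1. You define $C_u$ for every $u\in Y$ relative to a fixed $A$, claiming $(uu^*)^\alpha\leq A$ ``after rescaling''. This is unjustified: the Gram density of another spanning family containing $u$ is a different operator. Moreover, $t\mapsto t^\alpha$ is not operator monotone when $\alpha>1$, and $(\sum_jX_j)^\alpha\leq C\sum_jX_j^\alpha$ fails in general when $\alpha<1$.

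The paper instead fixes a finite family $\cal F$ consisting of a basis $u_1,\ldots,u_m$ together with the polarizing vectors $u_j+\i^ku_l$ ($j<l$), and builds $A$ from $\cal F$. It then uses only the translates $u_t=A^{-\i t}uB^{\i t}$ with $u\in\cal F$. For these, $(u_tu_t^*)^\alpha=A^{-\i t}(uu^*)^\alpha A^{\i t}\leq A$ holds automatically, and this keeps $\cal A$ invariant under $\operatorname{Ad}(A^{\i t})$. The polarizing vectors are exactly what turns $v_{u_j}v_{u_j}^*\in\cal A$ into $\cal V\cal V^*\subset\cal A$, via $v_{u_j+\i^ku_l}=v_{u_j}+\i^kv_{u_l}$ (injectivity of left multiplication by $h$).

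Your derivation of the $C_u$-invariance of $\cal K$ directly from $(uu^*)^\alpha Y\subset AY$, by cancelling $A^{\frac12}$, is fine. It is a valid substitute for the paper's use of the orthogonal projection $Q$.
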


\begin{proof}
Put $\alpha=(p-2)/2$ and $r=p/(p-2)$. Choose a basis $u_1,\ldots,u_m$ of $Y$, and let $\cal F$ be the finite family consisting of these vectors and the vectors $u_j+\i^k u_l$, where $j<l$ and $k\in\{0,1,2,3\}$. Define
\begin{equation}
\label{fr-enlarged-Gram-densities}
A=\sum_{u\in\cal F}(uu^*)^\alpha,
\qquad
B=\sum_{u\in\cal F}(u^*u)^\alpha.
\end{equation}
The finite-spanning-family versions of Proposition~\ref{prop-gp-Gram-densities} and Theorem~\ref{thm-gp-relative-modular-structure} give
\begin{equation}
\label{fr-weighted-invariance}
A,B\in\L^r(\cal M)_+,
\quad s(A)=e,
\quad s(B)=f,
\quad A^{-\i t}YB^{\i t}=Y
\quad(t\in\R).
\end{equation}

\smallskip
\noindent\textit{The weighted Hilbert projection.}
Let $\cal H=e\L^2(\cal M)f$ and define $J(x)=A^{1/2}x$ on $e\L^p(\cal M)f$. H\"older's inequality makes this map well-defined. It is injective and has dense range in $\cal H$. To check density, let $z\in e\cal Mf\cap\L^2(\cal M)$ and put $e_n=\mathbf1_{[1/n,n]}(A)$. Since $p>2$, the bounded $\L^2$-vector $z$ belongs to $\L^p$. Thus $x_n=A^{-1/2}e_nz$ belongs to $e\L^p(\cal M)f$, and $J(x_n)=e_nz\to z$ in $\L^2$. Such vectors $z$ are dense in $\cal H$.

Summing the left inequalities in \eqref{gp-separate-weighted-estimates} over $\cal F$ gives $\|J(Px)\|_2\leq\|Jx\|_2$. Consequently,
\begin{equation}
\label{fr-Hilbert-projection}
QJx=JPx,
\quad x\in e\L^p(\cal M)f,
\end{equation}
defines a contractive projection on a dense subspace of $\cal H$. Its extension is an orthogonal projection with finite-dimensional range $\cal K=J(Y)$.

The left support of $\cal K$ is $e$. Indeed, suppose that $b\in e\cal Me$ satisfies $bA^{\frac{1}{2}}u_j=0$ for any $j$. Put $T=bA^{\frac{1}{2}} \in \L^{2r}(\cal M)$. Spectral truncation of $|u_j|$ in the equality $Tu_j=0$ gives $T s_\ell(u_j)=0$. Since $\bigvee_j s_\ell(u_j)=e$ and $T=Te$, it follows that $T=0$. Spectral truncation of $A$ then gives $b=0$.

\smallskip
\noindent\textit{A finite-dimensional algebra of normalized weights.}
For $u\in\cal F$, put $a_u=(uu^*)^\alpha$. Since $0\leq a_u\leq A$, Lemma~\ref{lem-fr-bounded-normalization} gives a positive contraction $C_u\in e\cal Me$ such that
\begin{equation}
\label{fr-normalized-weights}
a_u=A^{1/2}C_uA^{1/2}.
\end{equation}
For $t\in\R$, define $u_t=A^{-\i t}uB^{\i t}\in Y$ and $C_u(t)=A^{-\i t}C_uA^{\i t}$. Unitary covariance of functional calculus gives
\begin{equation*}
(u_tu_t^*)^\alpha
=A^{-\i t}a_uA^{\i t}
=A^{\frac{1}{2}}C_u(t)A^{\frac{1}{2}}.
\end{equation*}
The left weighted form associated with $u_t$ is therefore
\begin{equation*}
h_{u_t}^\ell(x,z)
=\langle C_u(t)Jx,Jz\rangle_{\cal H}.
\end{equation*}
By \eqref{gp-weighted-intertwining}, $P$ is selfadjoint for this form. Combining this with \eqref{fr-Hilbert-projection} and the selfadjointness of $Q$ shows that
\begin{equation}
\label{fr-normalized-commutation}
QL_{C_u(t)}
=L_{C_u(t)}Q
\quad\text{on }\cal H.
\end{equation}
Let $\cal A$ be the von Neumann subalgebra of $e\cal Me$ generated by $e$ and all the operators $C_u(t)$, for $u\in\cal F$ and $t\in\R$. Equation~\eqref{fr-normalized-commutation} shows that left multiplication by $\cal A$ leaves $\cal K$ reducing. Its restriction to $\cal K$ is a unital $*$-representation. This representation is faithful because $\cal K$ has left support $e$. Hence $\cal A$ embeds into $\B(\cal K)$ and is finite-dimensional. Its definition also gives $A^{\i t}\cal A A^{-\i t}=\cal A$ for any $t \in \R$.

Apply Lemma~\ref{lem-fr-invariant-algebra-factorization} to write $A=cd$, where $d \in \cal A$ is positive and invertible and $c\in\L^r(\cal M)_+$ has support $e$ and commutes with $\cal A$. Put
\begin{equation}
\label{fr-commuting-density}
h
=c^{\frac{1}{p-2}} \in \L^p(\cal M)_+.
\end{equation}
For $u \in \cal F$, let $k_u=d^{\frac{1}{2}}C_ud^{\frac{1}{2}} \in \cal A_+$. Then \eqref{fr-normalized-weights} becomes $a_u=ck_u$. Since $c$ commutes with $k_u$, joint functional calculus gives
\begin{equation}
\label{fr-recovering-bounded-vectors}
|u^*|
=(a_u){\frac{1}{p-2}}
=h k_u^{\frac{1}{p-2}}.
\end{equation}
Write the left polar decomposition of $u$ as $u=|u^*|w_u$, and put $v_u=k_u^{\frac{1}{p-2}}w_u\in e\cal Mf$. Then
\begin{equation}
\label{fr-bounded-vector-products}
u=hv_u,
\qquad
v_uv_u^*
=k_u^{\frac{2}{p-2}} \in \cal A.
\end{equation}
Here $s(|u^*|) = s(k_u)$, since $s(h)=e$ and $h$ commutes with $k_u$.

\smallskip
\noindent\textit{The deweighted range is a TRO.}
Let $\cal V=\operatorname{span}\{v_{u_1},\ldots,v_{u_m}\}$. Multiplication by $h$ is injective on $e\cal Mf$, so $Y=h\cal V$ and $\dim\cal V=m$. The same injectivity shows that
\begin{equation*}
v_{u_j+\i^k u_l}=v_{u_j}+\i^k v_{u_l}.
\end{equation*}
In particular, every $v_u$ occurring in \eqref{fr-bounded-vector-products} belongs to $\cal V$. Since $A=cd$ and $\frac{r}{2}=\frac{1}{2}+\frac{1}{p-2}$, we have
\begin{equation*}
\cal K=A^{\frac{1}{2}}Y
=c^{r/2}d^{\frac{1}{2}}\cal V.
\end{equation*}
For $b\in\cal A$ and $v\in\cal V$, invariance of $\cal K$ gives some $v'\in\cal V$ with
\begin{equation*}
bc^{\frac{r}{2}}d^{\frac{1}{2}}v
=c^{\frac{r}{2}}d^{\frac{1}{2}}v'.
\end{equation*}
The operator $c^{\frac{r}{2}}$ commutes with $\cal A$ and has support $e$, so cancellation gives $v'=d^{-\frac{1}{2}}bd^{\frac{1}{2}}v$. Since $d^{-1/2}\cal A d^{1/2}=\cal A$, this proves $\cal A\cal V\subset\cal V$.

The diagonal products $v_{u_j}v_{u_j}^*$ belong to $\cal A$ by \eqref{fr-bounded-vector-products}. For $j<l$, polarization gives
\begin{equation*}
v_{u_j}v_{u_l}^*
=\frac14\sum_{k=0}^3\i^k
\big(v_{u_j}+\i^k v_{u_l}\big)
\big(v_{u_j}+\i^k v_{u_l}\big)^*
\in\cal A.
\end{equation*}
Taking adjoints gives the remaining products. Thus $\cal V\cal V^*\subset\cal A$, and therefore
\begin{equation*}
\cal V\cal V^*\cal V
\subset\cal A\cal V
\subset\cal V.
\end{equation*}
The finite-dimensional space $\cal V$ is weak* closed, so it is a $\W^*$-$\TRO$. The polar decompositions in \eqref{fr-bounded-vector-products} give the same left and right supports for $u_j$ and $v_{u_j}$, hence $s_\ell(\cal V)=e$ and $s_r(\cal V)=f$. Finally, $h$ commutes with $\cal A$, and therefore with $\mathrm M(\cal V)\subset\cal A$.

\smallskip
\noindent\textit{Power duality at every matrix level.}
For a bounded operator $v=w|v|$ and $1<s<\infty$, write $F_s(v)=w|v|^{s-1}$. This notation does not require $v$ to belong to the ambient tracial $\L^s$-space. For $v\in\cal V$, the commutation just obtained gives
\begin{equation}
\label{fr-power-duality-factorization}
\frak j_p(hv)
=\big((hv)(hv)^*\big)^{\frac{p-2}{2}}hv
=h^{p-1}(vv^*)^{\frac{p-2}{2}}v
=h^{p-1}F_p(v).
\end{equation}
Functional calculus in the finite-dimensional linking algebra of $\cal V$ shows that $F_s(\cal V)\subset\cal V$ for any $1 < s < \infty$. The identities $F_{p^*}F_p=\Id$ and $F_pF_{p^*}=\Id$ consequently give $F_p(\cal V)=\cal V$. Since $P$ is contractive, \eqref{power-duality-map-range-equality}, interpreted under trace duality, and \eqref{fr-power-duality-factorization} yield $Y^\sharp=\frak j_p(Y)=h^{p-1}\cal V$.

Let $n\geq1$, put $h_n=\Id_{\M_n}\ot h$, and write $X=h_nV$ for $X\in\M_n(Y)$ and $V\in\M_n(\cal V)$. The same computation, now in the matrix amplification, gives
\begin{equation}
\label{fr-all-matrix-duality}
\frak j_p(X)=h_n^{p-1}F_p(V)
\in\M_n(h^{p-1}\cal V)
=\M_n(Y^\sharp)
=\Ran P_n^\dagger.
\end{equation}
For arbitrary $Z\in\L^p(\M_n\overline\otimes\cal M)$, set $X=P_nZ$. Since $Z-X \in \ker P_n$, equation~\eqref{fr-all-matrix-duality} and H\"older's inequality imply
\begin{equation*}
\|X\|_p^p
=\Tr\big(\frak j_p(X)^*Z\big)
\leq\|X\|_p^{p-1}\|Z\|_p.
\end{equation*}
The trace here is the amplified trace. Hence $\|P_nZ\|_p\leq\|Z\|_p$, including the trivial case $X=0$. Since $n$ is arbitrary, $P$ is completely contractive.
\end{proof}

\begin{cor}
\label{cor-fr-symmetric-Gram-deweighting}
Under the assumptions of Theorem~\ref{thm-fr-general-p-TRO}, choose any basis $x_1,\ldots,x_m$ of $Y$ and define $A_p,B_p$ by \eqref{gp-Gram-densities}. Put $\gamma=1/(2(p-2))$. Then the $\TRO$ constructed above satisfies
\begin{equation}
\label{fr-symmetric-Gram-deweighting}
Y=A_p^\gamma\cal V B_p^\gamma.
\end{equation}
More precisely, any $y \in Y$ has a unique representation $y=A_p^\gamma vB_p^\gamma$ with $v \in \cal V$. Thus $A_p^{-\gamma}YB_p^{-\gamma}$ can be interpreted as a space of bounded solutions, without assuming that the negative powers are measurable operators. This space is independent of the chosen basis.
\end{cor}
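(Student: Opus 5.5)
The plan is to compare the Gram densities $A_p,B_p$ with the weight $h$ and the $\TRO$ $\cal V$ produced by Theorem~\ref{thm-fr-general-p-TRO}, and then to read off the symmetric factorization.

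\textbf{Left density.} Write $x_j=hv_j$ with $v_j\in\cal V$. Since $h$ commutes with $\mathrm M(\cal V)\ni v_jv_j^*$, joint functional calculus gives
\[
(x_jx_j^*)^{\alpha}=h^{p-2}(v_jv_j^*)^{\alpha}.
\]
Hence $A_p=h^{p-2}a_{\cal V}$, where $a_{\cal V}\ov{\mathrm{def}}{=}\sum_j(v_jv_j^*)^\alpha\in\mathrm M(\cal V)_+$ commutes with $h$. Because the $v_j$ span $\cal V$ and $s_\ell(\cal V)=e$, the element $a_{\cal V}$ is invertible in the finite-dimensional algebra $\mathrm M(\cal V)$. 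Therefore
\[
A_p^\gamma=h^{\frac12}a_{\cal V}^{\gamma}\qquad\text{since }(p-2)\gamma=\tfrac12 .
\]

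\textbf{Right density.} This is the asymmetric part. Apply Theorem~\ref{thm-fr-general-p-TRO} to the projection $P^\circ(x)=P(x^*)^*$, whose range is $Y^*$ and which is still finite-rank and $2$-contractive. Taking adjoints gives $Y=\cal V'k$, with $\cal V'$ a finite-dimensional $\W^*$-$\TRO$ and $k\in\L^p(\cal M)_+$ with $s(k)=f$ commuting with $\mathrm N(\cal V')$. The same computation then gives $B_p=k^{p-2}b_{\cal V'}$ and $B_p^\gamma=b_{\cal V'}^\gamma k^{\frac12}$, with $b_{\cal V'}\in\mathrm N(\cal V')$ invertible.

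\textbf{Identifying the two descriptions (main obstacle).} We now have $Y=h\cal V=\cal V'k$. I would first try to prove that $h^{\frac12}\cal V=\cal V'k^{\frac12}$ and that this space is again a $\TRO$. The tool is the modular covariance $A_p^{-\i t}YB_p^{\i t}=Y$ from Theorem~\ref{thm-gp-relative-modular-structure}, with the imaginary powers factored as $h^{-\i t(p-2)}a_{\cal V}^{-\i t}$ and $b_{\cal V'}^{\i t}k^{\i t(p-2)}$. Analytic continuation of this finite-dimensional invariance, as in the proof of Corollary~\ref{cor-finite-dimensional-modular-deweighting}, should allow the outer weights to be moved: passing to $t=\i s$ and using the commutation of $h$ with $\mathrm M(\cal V)$ and of $k$ with $\mathrm N(\cal V')$, I expect to obtain $h^{\theta}\cal V k^{-\theta}$-type invariance, and in particular $h^{-\frac12}Yk^{-\frac12}\subset e\cal Mf$. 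Once this is available, invariance of $\cal V$ under $\mathrm M(\cal V)$ and $\mathrm N(\cal V)$ gives
\[
a_{\cal V}^{\gamma}\cal V\,b_{\cal V'}^{\gamma}=\cal V,
\]
and therefore $A_p^\gamma\cal VB_p^\gamma=h\cal V=Y$. If this direct route fails, I would instead verify $Y^\sharp=A_pY=YB_p$ (Proposition~\ref{prop-gp-Gram-densities}) against $Y^\sharp=h^{p-1}\cal V$, and solve for the right weight from that identity.

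\textbf{Uniqueness and basis independence.} For uniqueness, suppose $A_p^\gamma vB_p^\gamma=0$. Spectral truncation $e_n=\mathbf 1_{[1/n,n]}(A_p)$, exactly as in the proof of Proposition~\ref{prop-global-left-right-Levi-finite-dimensional}, gives $e_nvB_p^\gamma=0$, so $vB_p^\gamma=0$; the right-hand analogue then gives $v=0$. This uses $s(A_p)=e$, $s(B_p)=f$ and $v\in e\cal Mf$. Independence of the basis follows from the same injectivity: $\cal V$ is characterized intrinsically as the set of bounded $v\in e\cal Mf$ with $A_p^\gamma vB_p^\gamma\in Y$. A change of basis replaces $A_p$ by $h^{p-2}a'_{\cal V}$ with another invertible $a'_{\cal V}\in\mathrm M(\cal V)$, and similarly on the right, and these factors are absorbed by $\mathrm M(\cal V)\cal V\,\mathrm N(\cal V)=\cal V$.
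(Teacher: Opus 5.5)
There is a genuine gap, and it sits exactly at the step you call the main obstacle. Your left computation $A_p^\gamma=h^{\frac12}a_{\cal V}^\gamma$ is fine. The detour through $P^\circ$, however, produces a second $\TRO$ $\cal V'$ and a right weight $k$ that are a priori unrelated to $(h,\cal V)$, and nothing in the proposal connects them.

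\begin{itemize}
\item The line $a_{\cal V}^{\gamma}\cal V\,b_{\cal V'}^{\gamma}=\cal V$ needs $\cal V\,\mathrm N(\cal V')\subset\cal V$, which essentially means $\cal V'=\cal V$. You never prove this.
\item The final equality $A_p^\gamma\cal V B_p^\gamma=h\cal V$ requires moving a half power of $h$ across $\cal V$ into a half power of $k$. That is, you need a relation of the form $h^{\frac12}v=v'k^{\frac12}$ with $v,v'\in\cal V$, which is the very thing to be shown.
\item The analytic continuation ``as in Corollary~\ref{cor-finite-dimensional-modular-deweighting}'' is only announced (``I expect to obtain''), and it is not a formality. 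In that corollary $a,b$ are bounded and invertible. For a general semifinite $\cal M$, the densities $A_p,B_p$ may be unbounded, with inverses that need not be measurable. So $A_p^{-s}YB_p^{s}$ for real $s$ would first need a meaning, for instance on eigenvectors of $\Delta_p$ via $\lambda A_py=yB_p$ and functional-calculus intertwining.
\item The fallback via $Y^\sharp=YB_p=h^{p-1}\cal V$ is not developed at all.
\end{itemize}

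The missing idea is that no second application of the theorem is needed: you can build the right weight directly on the same $\TRO$.
\begin{itemize}
\item Take a basis $v_1,\ldots,v_m$ of $\cal V$ and put $S=\sum_jv_j^*v_j$, which is invertible in $\mathrm N(\cal V)$. Set $w_j=v_jS^{-\frac12}\in\cal V$, so that $\sum_jw_j^*w_j=f$.
\item Define $k=\sum_jw_j^*hw_j\in\L^p(\cal M)_+$. Because $vw_j^*\in\mathrm M(\cal V)$ commutes with $h$, you get $vk=h\sum_jvw_j^*w_j=hv$ for every $v\in\cal V$.
\item From this intertwining, $s(k)=f$ and $k$ commutes with $\mathrm N(\cal V)$. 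Resolvent intertwining followed by functional calculus gives $h^{\frac12}v=vk^{\frac12}$.
\item Write $x_j=hz_j=z_jk$ with $z_j\in\cal V$. Then $A_p=h^{p-2}S_\ell$ and $B_p=k^{p-2}S_r$, with $S_\ell\in\mathrm M(\cal V)$ and $S_r\in\mathrm N(\cal V)$ invertible.
\item Hence $A_p^\gamma\cal VB_p^\gamma=h^{\frac12}S_\ell^\gamma\cal VS_r^\gamma k^{\frac12}=h^{\frac12}\cal Vk^{\frac12}=h\cal V=Y$.
\end{itemize}
Your uniqueness argument by spectral truncation and your basis-independence remark are correct once this factorization is in place, since $\cal V$ does not depend on the chosen basis $x_1,\ldots,x_m$.
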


\begin{proof}
Let $Y=h\cal V$ be as in Theorem~\ref{thm-fr-general-p-TRO}. The right algebra $\mathrm N(\cal V)=\operatorname{span}(\cal V^*\cal V)$ is finite-dimensional with unit $f$. Choose a basis $v_1,\ldots,v_m$ of $\cal V$, put $S=\sum_jv_j^*v_j$, and define $w_j=v_jS^{-1/2}\in\cal V$, taking the inverse in $\mathrm N(\cal V)$. Then $\sum_jw_j^*w_j=f$. Set
\begin{equation*}
k=\sum_jw_j^*h w_j\in\L^p(\cal M)_+.
\end{equation*}
For $v\in\cal V$, commutation of $h$ with $\mathrm M(\cal V)$ gives
\begin{equation*}
vk=\sum_jvw_j^*h w_j
=h\sum_jvw_j^*w_j=hv.
\end{equation*}
The positivity of $h$ and $s(h)=e$, together with $\sum_jw_j^*w_j=f$, imply $s(k)=f$. Taking adjoints in $hv=vk$ and multiplying by elements of $\cal V$ shows that $k$ commutes with $\mathrm N(\cal V)$. Resolvent intertwining, followed by spectral functional calculus, also gives $h^{1/2}v=vk^{1/2}$.

Write the chosen basis of $Y$ as $x_j=hz_j=z_jk$, with $z_1,\ldots,z_m$ a basis of $\cal V$. Then
\begin{equation*}
A_p
=h^{p-2}S_\ell,
\qquad
B_p
=k^{p-2}S_r,
\end{equation*}
where $S_\ell=\sum_j(z_jz_j^*)^{(p-2)/2}\in\mathrm M(\cal V)$ and $S_r=\sum_j(z_j^*z_j)^{(p-2)/2}\in\mathrm N(\cal V)$. These positive elements have supports $e,f$ and are invertible in their respective finite-dimensional algebras. Therefore
\begin{equation*}
A_p^\gamma\cal V B_p^\gamma
=h^{\frac{1}{2}}S_\ell^\gamma\cal V S_r^\gamma k^{\frac{1}{2}}
=h^{\frac{1}{2}}\cal V k^{\frac{1}{2}}
=h\cal V=Y.
\end{equation*}
Here $\mathrm M(\cal V)\cal V\subset\cal V$ and $\cal V\mathrm N(\cal V)\subset\cal V$ imply $S_\ell^\gamma\cal V S_r^\gamma=\cal V$. Since $s(A_p)=e$ and $s(B_p)=f$, the map $v \mapsto A_p^\gamma vB_p^\gamma$ is injective on $e\cal Mf$, by spectral truncation. This proves uniqueness and the asserted interpretation. The same space $\cal V$ works for every basis, proving the final assertion.
\end{proof}

\begin{cor}
\label{cor-fr-all-exponents-complete-contractivity}
Let $1 < p < \infty$ with $p \neq 2$. Any finite-rank $2$-contractive projection on the tracial noncommutative space $\L^p(\cal M)$ is completely contractive.
\end{cor}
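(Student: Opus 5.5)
We split into the cases $p>2$ and $1<p<2$. For $2<p<\infty$ there is nothing new to prove: a finite-rank $2$-contractive projection $P$ on $\L^p(\cal M)$ is completely contractive by the last assertion of Theorem~\ref{thm-fr-general-p-TRO}. If $\Ran P=\{0\}$, then $P=0$, which is trivially completely contractive, so the nonzero-range hypothesis of that theorem costs nothing.

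For $1<p<2$, we argue by trace duality, exactly as in Corollary~\ref{cor-dual-two-contractive-L43} and Corollary~\ref{cor-conjugate-even-exponent-finite-level}. Let $Q\co\L^p(\cal M)\to\L^p(\cal M)$ be a finite-rank $2$-contractive projection, and put $P\ov{\mathrm{def}}{=}Q^\dagger\co\L^{p^*}(\cal M)\to\L^{p^*}(\cal M)$. Here $p^*=\frac{p}{p-1}\in(2,\infty)$.

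\emph{Step 1 (structure of $P$).} The map $P$ is a bounded projection, since $(Q^\dagger)^2=(Q^2)^\dagger$.

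\emph{Step 2 (finite rank).} $P$ has finite rank, because $\Ran P=(\ker Q)^\perp$ is isomorphic to the dual of $\L^p(\cal M)/\ker Q\cong\Ran Q$, which is finite-dimensional.

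\emph{Step 3 (norms of amplifications).} Under the amplified trace pairing $\Tr_n\ot\Tr$ we have $(Q_n)^\dagger=P_n$; this is the entrywise computation already carried out in the proof of Corollary~\ref{cor-dual-two-contractive-L43}. Since trace duality identifies $\L^{p^*}(\M_n\otvn\cal M)$ isometrically with the dual of $\L^p(\M_n\otvn\cal M)$, it follows that
\[
\norm{P_n}=\norm{Q_n}\quad\text{for every } n\geq1.
\]

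\emph{Step 4 (conclusion).} Because $Q$ is $2$-contractive, $\norm{P_2}=\norm{Q_2}\leq1$, so $P$ is a finite-rank $2$-contractive projection on $\L^{p^*}(\cal M)$ with $p^*>2$. The first case then gives $\norm{P_n}\leq1$ for all $n$, and hence $\norm{Q_n}\leq1$ for all $n$ by Step 3. Thus $Q$ is completely contractive.

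There is no real obstacle here, since the substance lies in Theorem~\ref{thm-fr-general-p-TRO}. The only points that need care are the isometric nature of the amplified trace duality (the standard duality $S_n^{p}[\L^p]^*=S_n^{p^*}[\L^{p^*}]$) and the finite-rankness of the adjoint. One should also note that the semifiniteness assumption is used on both sides of the duality, which is consistent with the standing hypotheses of Section~\ref{sec-weighted-tangent-general-p}.
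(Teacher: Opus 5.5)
Your proposal is correct and follows the paper's proof. The case $p>2$ is the last assertion of Theorem~\ref{thm-fr-general-p-TRO}, and the case $1<p<2$ is handled by passing to the trace adjoint $Q^\dagger$, a finite-rank $2$-contractive projection on $\L^{p^*}(\cal M)$, and transferring complete contractivity back via $\norm{Q_n}=\norm{(Q^\dagger)_n}$; your extra checks (finite rank of the adjoint, the isometric amplified trace duality, the zero projection) are simply details the paper leaves implicit.
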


\begin{proof}
The zero projection is immediate, and Theorem~\ref{thm-fr-general-p-TRO} handles $p > 2$. If $1<p<2$, then $p^* > 2$ and $P^\dagger$ is a finite-rank $2$-contractive projection on $\L^{p^*}(\cal M)$. The theorem makes $P^\dagger$ completely contractive. Since $(P_n)^\dagger=(P^\dagger)_n$ and $\|P_n\|=\|(P^\dagger)_n\|$ for any integer $n$, the same follows for $P$.
\end{proof}

\begin{remark} \normalfont
\label{rem-fr-scope-and-unweighted-products}
The finite-rank hypothesis is used when the algebra of normalized weights is represented faithfully on the finite-dimensional space $\cal K$. It makes that algebra finite-dimensional, so its modular action can be implemented internally. The argument does not prove complete contractivity of an arbitrary infinite-rank $2$-contractive projection.

The unweighted inclusion $YY^*Y \subset Y^\sharp$ is false for $p\neq4$, even in rank one. Indeed, on $\L^p(\mathbb C^2)=\ell_p^2$, the projection
\begin{equation*}
P(x_1,x_2)
=\frac{x_1+2^{p-1}x_2}{1+2^p}(1,2)
\end{equation*}
is completely contractive. At matrix level $n$, this follows from the triangle inequality in $S_n^p$ and the scalar H\"older inequality applied to the two matrix norms. Its range is $Y=\mathbb C(1,2)$, whereas $Y^\sharp=\mathbb C(1,2^{p-1})$ and $YY^*Y=\mathbb C(1,8)$. These last two lines coincide only when $p=4$. Formula~\eqref{fr-weighted-range-structure}, rather than an unweighted cubic inclusion, is the appropriate replacement for general $p$.
\end{remark}


\paragraph{Competing interests} The author declares that he has no competing interests.

\paragraph{Data availability} No data sets were generated during this study.

{\footnotesize

\vspace{0.2cm}

\noindent C\'edric Arhancet\\ 
\noindent 6 rue Didier Daurat, 81000 Albi, France\\
URL: \href{http://sites.google.com/site/cedricarhancet}{https://sites.google.com/site/cedricarhancet}\\
cedric.arhancet@protonmail.com\\
ORCID: 0000-0002-5179-6972 

}

\end{document}